\documentclass[a4paper]{article}

\usepackage[cyr]{aeguill}
\usepackage[english]{babel}
\usepackage[utf8]{inputenc}
\usepackage[T1]{fontenc}

\usepackage[a4paper,top=3cm,bottom=2cm,left=3cm,right=3cm,marginparwidth=1.75cm]{geometry}

\usepackage{amsmath}
\usepackage{graphicx}
\graphicspath{ {./images/} }
\usepackage[colorinlistoftodos]{todonotes}
\usepackage[colorlinks=true, allcolors=blue]{hyperref}
\usepackage{amssymb}
\usepackage{amsthm}
\usepackage{bbold}
\usepackage[all]{xy}
\usepackage{dirtytalk}
\usepackage[titletoc,title]{appendix}
\usepackage{ stmaryrd }


\newcommand{\vertiii}[1]{{\left\vert\kern-0.25ex\left\vert\kern-0.25ex\left\vert #1 
    \right\vert\kern-0.25ex\right\vert\kern-0.25ex\right\vert}}

\newtheorem{theorem}{Theorem}[section]
\newtheorem{corollary}[theorem]{Corollary}
\newtheorem{lemma}[theorem]{Lemma}
\theoremstyle{definition}
\newtheorem{definition}{Definition}[section]
\newtheorem{example}{Example}[section]
\newtheorem{proposition}[theorem]{Proposition}
\newtheorem{remark}[example]{Remark}
\newtheorem{notations}[example]{Notations}

\setlength\parindent{0pt}

\title{Fourier decay of equilibrium states for bunched attractors}
\author{Gaétan Leclerc}
\date{ }

\begin{document}

\maketitle

\begin{abstract}

Let $M$ be a closed manifold, and let $f:M \rightarrow M$ be a $C^{2+\alpha}$ Axiom A diffeomorphism. Suppose that $f$ has an attractor $\Omega$ with codimension 1 stable lamination. Under a generic nonlinearity condition and a suitable bunching condition, we prove polynomial Fourier decay in the unstable direction for a large class of invariant measures on $\Omega$. Our result applies in particular for the measure of maximal entropy. We construct in the appendix an explicit solenoid that satisfies the nonlinearity and bunching assumption.
\end{abstract}

\section{Introduction}

\subsection{On fractals and chaotic dynamical systems}

It is known since the work of Fatou and Julia that fractals appears as natural invariant sets in various dynamical systems, for example as attractors or repellers. The geometric properties of such invariant subsets depends intimately with the nature of the map $f$, and it is tempting to study the way they relate to each other. One way to relate the geometry of invariant subsets $K$ to the nature of the dynamics is by the mean of invariant measures with support $K$. In the context of hyperbolic dynamics, a natural choice is given by the so-called equilibrium states, a family of probability measures that encompasses the measure of maximal entropy or the SRB measure. \\

If we fix some probability measure with support $K \subset \mathbb{R}^d$, the Fourier transform being a linear bijection $\mathcal{S}'(\mathbb{R}^d) \rightarrow \mathcal{S}'(\mathbb{R}^d)$, the information contained in $\mu$ are the same that the one that are contained in $$\widehat{\mu}(\xi) := \int_K e^{i \xi \cdot x} d\mu(x).$$ 
In a sense, studying $\mu$ or studying $\widehat{\mu}$ should brings the same quantities of information about the dynamics and the geometry of the support. Yet for equilibrium states, even though much is known on the measure in its spatial representation, we have a very poor understanding of its Fourier transform in the general setting. One of the most basic question one can ask about $\widehat{\mu}$ is its behavior when $\xi$ goes to infinity. Does $\widehat{\mu}(\xi) \longrightarrow 0$, and if yes, at which rate ? From the geometric point of view, this question is linked to the notion of Fourier dimension of a set. Recall that for a set $K \subset \mathbb{R}^d$, we call the Fourier dimension of $K$ the quantity $$ \dim_F(K) := \sup \left\{ \alpha \in [0,d] \ | \ \exists \mu \in \mathcal{P}(K) , \exists C>0 , \forall \xi \in \mathbb{R}^d, \  |\widehat{\mu}(\xi)| \leq C(1+|\xi|)^{-\alpha/2}  \right\} $$

where $\mathcal{P}(K)$ denotes the set of probability measures supported on $K$. The Fourier dimension of a set can be thought as a way to quantify additive chaos in the set $K$. It is always less than $\dim_H(E)$, the Hausdorff dimension of $K$. \\

In a probabilistic setting, the work of Salem \cite{Sa51}, Kahane \cite{Ka66}, Bluhm \cite{Bl96} (to cite a few) suggest that sets satisfying $\dim_H K = \dim_F K$ are, in some sense, generic. But except from some specific arithmetic constructions (see \cite{Kau81}, \cite{QR03}, \cite{Ha17} and \cite{FH20}), explicit examples of such sets are hard to find. Even (nontrivial) explicit sets with positive Fourier dimension are difficult to construct. 

\subsection{Recent development and main results}

The first explicit and \say{natural} fractal with positive Fourier dimension (in the sense that it is an invariant set for some chaotic dynamical system) is due to Bourgain and Dyatlov in $\cite{BD17}$. They prove that the limit set of a non-elementary Fuschian Schottky group, seen as a subset of $\mathbb{R}$, has positive Fourier dimension (notice that such sets are Cantor sets). More precisely, Patterson-Sullivan measures exhibit polynomial Fourier decay in this context. \\

This paper introduced a new method to prove Fourier decay of invariant measures based on previous results of Bourgain from additive combinatorics, called "sum-product phenomenons" (see \cite{Bo10}, or  \cite{Gr09} for an accessible introduction). A concrete example of such sum-product result is given in the following theorem.

\begin{theorem}[\cite{BD17}]

For all $\delta>0$, there exists $\varepsilon_1,\varepsilon_2>0$ and $k \in \mathbb{N}$ such that the following holds. Let $\mathcal{Z}$ be a finite set and $\zeta : \mathcal{Z} \rightarrow [1/2,2]$ a map. Let $\eta>1$ be large enough. Assume that for all $\sigma \in \left[ \eta^{-1}, \eta^{-\varepsilon_1} \right]$, $$   \# \left\{ (\mathbf{b}, \mathbf{c}) \in \mathcal{Z}^2 \ , \ |\zeta(\mathbf{b}) - \zeta(\mathbf{c})|< \sigma  \right\} \leq \# \mathcal{Z}^{2} \sigma^\delta . \quad (*) $$
Then
$$ \Big{|} {\# \mathcal{Z}^{-k}} \sum_{(\mathbf{b}_1, \dots , \mathbf{b}_k) \in \mathcal{Z}^k} e^{i \eta \zeta(\mathbf{b}_1) \dots \zeta(\mathbf{b}_k)} \Big{|} \leq \eta^{-\varepsilon_2} .$$

\end{theorem}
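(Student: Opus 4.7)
My plan is to reduce the claim to an application of Bourgain's discretized sum-product theorem \cite{Bo10}. First, let $\mu$ denote the probability measure on $[1/2,2]$ obtained by averaging Dirac masses at $\zeta(\mathbf{b})$, $\mathbf{b}\in\mathcal{Z}$. The hypothesis $(*)$ then becomes the Frostman-type energy estimate
$$(\mu\otimes\mu)\bigl(\{(x,y):|x-y|<\sigma\}\bigr)\leq\sigma^\delta\quad\text{for all }\sigma\in[\eta^{-1},\eta^{-\varepsilon_1}],$$
while the quantity to estimate is $S=\widehat{\nu}(\eta)$, where $\nu$ is the push-forward of $\mu^{\otimes k}$ under the product map $(t_1,\ldots,t_k)\mapsto t_1\cdots t_k$. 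Thus the claim amounts to polynomial Fourier decay at frequency $\eta$ of the $k$-fold multiplicative self-convolution of a Frostman measure.

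\textbf{$L^2$ reduction.} The next step would be Cauchy--Schwarz in one variable, say $t_1$, yielding
$$|S|^2\leq\int\Bigl|\widehat{\mu}\bigl(\eta\,t_2\cdots t_k\bigr)\Bigr|^2\,d\mu(t_2)\cdots d\mu(t_k),$$
whose expanded phase is $\eta(t_1-t_1')t_2\cdots t_k$. The differences $t_1-t_1'$ are exactly the random variables to which the non-concentration hypothesis $(*)$ applies directly, and iterating further Cauchy--Schwarz operations would bring more such differences into the phase, at the price of raising $S$ to a higher dyadic power.

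\textbf{Invoking sum-product.} I would then discretize $\mu$ at scale $\sigma=\eta^{-1}$ to extract a finite set $A\subset[1/2,2]$ of cardinality $N$ that inherits from $(*)$ the non-concentration property
$$|A\cap B(x,r)|\leq N(r/\sigma)^\delta\qquad\text{for all }r\in[\sigma,\sigma^{\varepsilon_1}].$$
This is precisely the input to Bourgain's discretized sum-product theorem: for $k=k(\delta)$ sufficiently large, the theorem forces polynomial spreading of the $k$-fold product set $A\cdot A\cdots A$. Via the Balog--Szemerédi--Gowers and Plünnecke--Ruzsa inequalities together with a Plancherel argument, this spreading translates into the Fourier decay bound $|S|\leq\eta^{-\varepsilon_2}$ that we seek.

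\textbf{Main obstacle.} The hardest step by far will be the invocation of the discretized sum-product theorem, which I would treat as a black box since its proof is a delicate induction drawing on Bourgain's sum-product dichotomies together with the full additive-combinatorial machinery. The remaining work is a calibration exercise: one must choose $\varepsilon_1$ small enough that $(*)$ covers every intermediate scale manipulated during the iteration, and $k$ large enough that the accumulated power gain outweighs the $2^\ell$-th root loss incurred at each Cauchy--Schwarz step. This final bookkeeping is worked out explicitly in \cite{BD17}.
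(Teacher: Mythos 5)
Your opening reduction---replace the sum by the empirical measure $\lambda:=\#\mathcal{Z}^{-1}\sum_{\mathbf{b}\in\mathcal{Z}}\delta_{\zeta(\mathbf{b})}$, observe that $(*)$ is exactly the non-concentration $\lambda\otimes\lambda\left(\{(x,y):|x-y|<\sigma\}\right)\leq\sigma^{\delta}$, and read the exponential sum as the Fourier transform at frequency $\eta$ of the $k$-fold multiplicative convolution of $\lambda$---is precisely how this statement is treated here: Theorem 1.1 is quoted from \cite{BD17}, and the only argument supplied in the paper (Section 5, Proposition 5.1 together with the Dirac-mass computation in the proof of Corollary 5.3) is exactly this reformulation, the analytic core being cited as Proposition 3.2 of \cite{BD17}, which itself rests on \cite{Bo10}. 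Up to that point you and the paper coincide.

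Where you go further and sketch a proof of the core estimate itself, there are genuine gaps. First, the discretized non-concentration you claim, $|A\cap B(x,r)|\leq N(r/\sigma)^{\delta}$ with $\sigma=\eta^{-1}$, is vacuous on the stated range: since $r\geq\sigma$ one has $(r/\sigma)^{\delta}\geq 1$, so every $N$-point set satisfies it and it cannot feed any sum-product theorem. What $(*)$ actually gives is the energy bound $\lambda\otimes\lambda\left(\{|x-y|<r\}\right)\leq r^{\delta}$ (equivalently a pointwise bound with exponent $\delta/2$, or a pointwise bound off an exceptional set), and it is in this energy form that the hypothesis must be carried into the machine---this is how Proposition 5.1 states it. Second, and more seriously, the step from ``polynomial spreading of the $k$-fold product set'' to the bound $|S|\leq\eta^{-\varepsilon_2}$ is not a calibration exercise in Balog--Szemer\'edi--Gowers, Pl\"unnecke--Ruzsa and Plancherel: converting the discretized sum-product/ring theorem into polynomial Fourier decay of multiplicative convolutions is the main content of Bourgain's theorem in \cite{Bo10} (repackaged as Proposition 3.2 in \cite{BD17}), and involves a multi-scale induction and an $L^{2}$-flattening argument that your plan neither reproduces nor outlines; the Cauchy--Schwarz iteration you describe only produces differences $t_1-t_1'$ in the phase and does not by itself interact with multiplicative structure. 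In effect you invoke one black box (discretized sum-product) while the step you need is a different, stronger one (Fourier decay of multiplicative convolutions of non-concentrated measures); if you are permitted to cite the latter, the entire proof collapses to the Dirac-mass argument of Corollary 5.3 and your intermediate steps become unnecessary, whereas if you are not, the decisive implication is missing from your proposal.
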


Intuitively, the idea is that multiplications may spread the phase so that cancellations happens in the sum. The idea to prove Fourier decay, then, is to use the autosimilarity of equilibrium states to break the Fourier transform into a sum of exponentials on which this sum-product phenomenon applies. One of the difficulty in the strategy being to actually prove the \say{non concentration hypothesis} $(*)$ made on the phase.  \\

The result was generalized in 2019 with the work of Li, Naud and Pan \cite{LNP19} in the context of limits sets for general Kleinian Schottky groups. In this context, the limit set is still a Cantor set, but in the complex plane. \\

The method was generalized further in 2020 by Sahlsten and Stevens \cite{SS20} for very general Cantor sets in the real line. They prove that under a \emph{nonlinearity condition} on the underlying dynamics (in this context, expanding maps on the Cantor set), equilibrium states exhibit polynomial Fourier decay, and hence the Fourier dimension of those Cantor sets is positive. An important remark is that, in the context of the triadic Cantor set $C$, the underlying dynamics is the shift map $x \mapsto 3x\mod 1$, which is linear, and the Fourier dimension of $C$ is zero. \\

The strategy was pushed past the case of Cantor sets by the author in 2021 \cite{Le21} who studied the case of Julia sets for hyperbolic rational maps in the complex plane. The result is the following: when the Julia set is not included in a circle, any equilibrium state associated to $C^1$ potentials enjoys polynomial Fourier decay. The use of Markov partitions, and the control over the dynamics provided by the conformal setting, allowed us to adapt the previous method in this case. Notice that the condition made on the Julia set is a form of nonlinearity condition, as it implies for example that the dynamics is not conjugated (via a Möbius transformation) to $z \mapsto z^d$, which is linear on the circle. \\

In this paper we explore the method in the context of attractors for codimension one Axiom A diffeomorphisms. Our main goal was to investigate the Fourier dimension of such attractors in the nonlinear setting. By the nature of the existing procedure, Fourier decay of equilibrium state couldn't be achieved, but we get a result of polynomial Fourier decay \emph{in the unstable direction} for nonlinear dynamics, as long as a suitable bunching condition is satisfied. \\

Even though this bunching condition seems quite restrictive, we think that the result should hold for attractors of generic codimension one Axiom A diffeomorphisms, as the nonlinearity condition is generic, and as we only use the bunching condition for one argument (to be able to use some Dolgopyat's estimates). On the other hand, the bunching condition has the advantage to allow one to easily construct explicit examples of attractors on which our theorem applies. We prove the genericity of the nonlinearity condition in the appendix A and we give an example of such a construction in the appendix B. \\

It should be stressed that if one succeed to relax the bunching condition, then Fourier decay in the unstable direction for equilibrium state would hold for generic codimension one Anosov diffeomorphisms. In dimension 2, replacing $f$ by $f^{-1}$ would then give Fourier decay in the stable direction, which should be enough to prove that equilibrium states for generic Anosov diffeomorphisms on surfaces exhibit Fourier decay. \\

Let us precise our setting. Let $M$ be a closed manifold, let $f:M \rightarrow M$ be a $C^{2+\alpha}$ Axiom A diffeomorphism (that if, $f$ is $C^2$, and ($d^2f)$ is $\alpha$-Hölder for some $\alpha>0$). Suppose that $f$ has an attractor $\Omega$, and suppose that its stable lamination is codimension 1 (in particular, it is $C^{1+\alpha}$). The nonlinearity condition is defined by putting an independance condition on some Lyapunov exponents.

\begin{definition}

Define $\Omega_{\text{per}} := \{ x \in \Omega \ \text{periodic} \}$. For any $x \in \Omega_{\text{per}}$ with minimal period $n$, define its local unstable Lyapunov exponent $\widehat{\lambda}(x)$ by the formula

$$ \widehat{\lambda}(x) = \frac{1}{n} \sum_{k=0}^{n-1} \ln | \partial_u f( f^k(x) )| $$
where $|\partial_u f|$ is the absolute value of the derivative of $f$ in the unstable direction. \\
We say that $\Omega$ satisfy the nonlinearity condition (NL) if 
$$ \text{dim}_\mathbb{Q} \text{Vect}_\mathbb{Q} \widehat{\lambda}\left( \Omega_\text{per} \right) = \infty .$$

\end{definition}

\begin{definition}
We introduce our bunching condition: see for example \cite{Ha97}, \cite{GRH21}, or \cite{ABV14}, \cite{HP69} for similar conditions in the case of flows. We say that the bunching condition (B) is satisfied if
$$ \forall p \in \Omega, \ |\partial_u f(p)| \cdot \| { {(df)_{p}}_{|E^s_p} } \| < 1 ,$$
where the norm on $(df)_{|E^s}$  is the operator norm induced by the riemannian metric on $M$. It implies that $\Omega$ is a proper attractor. In this case, by continuity of $df$ and compacity of $\Omega$, there exists some $\alpha > 0$ such that $$ \forall p \in \Omega, \ |\partial_u f(p)| \cdot \| { {(df)_{p}}_{|E^s_p} } \|^{1/(1+\alpha)} \leq 1 .$$

In this case, the stable bunching parameter $b^s$ (see $\cite{GRH21}$, and \cite{Ha97} for the pointwise version) satisfies
$$ b^s(p) = 1 + \frac{\ln  \| { {(df)_{p}}_{|E^s_p} } \|^{-1} }{\ln |\partial_u f(p)| } \geq 2+\alpha. $$

This bunching condition implies that the stable lamination is $C^{2+\alpha}$. Also, notice that in dimension 2, the bunching condition becomes $$ \forall p \in \Omega, \ |\det (df)_p| < 1 .$$

That is, we only need the dynamical system $(\Omega,f)$ to be dissipative.

\end{definition}

\begin{theorem}
Let $M$ be a closed manifold. Let $f:M \rightarrow M$ be a $C^{2+\alpha}$ Axiom A diffeomorphism. Suppose that $f$ has an attractor $\Omega$ with codimension 1 stable lamination. Moreover, suppose that $f$ satisfies the nonlinearity condition (NL) and the bunching condition (B). Then the following holds.\\

Let $\mu$ be an equilibrium state for some Hölder potential. For any $C^{\alpha}$ map $\chi:\Omega \rightarrow \mathbb{R}$ and for any $C^{1+\alpha}$ map $\phi:\Omega \rightarrow \mathbb{R}$ such that $\underset{\text{supp} \ \chi}{\inf } |\partial_u \phi|> 0$, we have

$$ \exists C>1, \exists \varepsilon>0, \ \forall \xi \in \mathbb{R}, \ \left|\int_\Omega e^{i \xi \phi} \chi \ d\mu \right| \leq C(1+|\xi|)^{-\varepsilon}.$$

\end{theorem}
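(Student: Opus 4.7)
The plan is to adapt the Bourgain--Dyatlov strategy, as developed by Sahlsten--Stevens and in the author's earlier work on hyperbolic Julia sets, to the setting of codimension-one Axiom A attractors. The first step is to produce a good symbolic model for the dynamics: fix a Markov partition $(R_i)_{i \in A}$ of $\Omega$ of small diameter, so that $f$ is semi-conjugated to a subshift of finite type via a coding map. Since the stable lamination is codimension one and, thanks to the bunching condition (B), of regularity $C^{2+\alpha}$, I may quotient each rectangle $R_i$ along its local stable leaves. This produces an unstable Cantor set $J \subset \mathbb{R}$ (after a chart) and a $C^{2+\alpha}$ expanding Markov map $T \colon J \to J$. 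The chosen equilibrium state $\mu$ projects to an equilibrium state $\bar{\mu}$ on $J$ for a Hölder potential $\bar{\psi}$, and it suffices to establish polynomial decay of oscillatory integrals $\int_J e^{i\xi \bar\phi}\, \bar\chi\, d\bar\mu$, where $\bar\phi$ and $\bar\chi$ are suitable projections of $\phi$ and $\chi$ along stable leaves, with regularity inherited from the $C^{2+\alpha}$ holonomy. The hypothesis $\inf_{\mathrm{supp}\, \chi} |\partial_u \phi| > 0$ passes to $\inf_{\mathrm{supp}\, \bar\chi} |\bar\phi'| > 0$.

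With this reduction in hand, I would exploit the invariance of $\bar\mu$ under the normalized transfer operator $\mathcal{L}_{\bar\psi}$, and write
$$ \int e^{i\xi \bar\phi}\, \bar\chi\, d\bar\mu \;=\; \int \mathcal{L}_{\bar\psi}^{\,n+m}\!\left( e^{i\xi \bar\phi} \bar\chi \right) d\bar\mu \;=\; \int \sum_{\mathbf{a} \in A^n,\, \mathbf{b} \in A^m} e^{S_{n+m} \bar\psi \circ g_{\mathbf{a}} g_{\mathbf{b}}(x)} e^{i\xi\, \bar\phi \circ g_{\mathbf{a}} g_{\mathbf{b}}(x)}\, (\bar\chi)(\cdots) \, d\bar\mu(x), $$
where the $g_{\mathbf{a}}, g_{\mathbf{b}}$ are inverse branches of iterates of $T$ along admissible words. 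I would choose $n,m$ so that $|g_{\mathbf{a}}'| \approx |\xi|^{-\beta}$ and $|g_{\mathbf{b}}'| \approx |\xi|^{-\beta'}$ for carefully tuned $\beta, \beta' \in (0,1)$, and Taylor-expand the phase $\bar\phi \circ g_{\mathbf{a}} \circ g_{\mathbf{b}}$ around a reference point in $J$. This brings the inner oscillatory sum over $\mathbf{b}$ into the shape required by the sum-product Theorem 1 of BD17 quoted above, with $\zeta(\mathbf{b})$ built from $g_{\mathbf{b}}'(\ast)$ and $\eta \sim |\xi|\cdot |g_{\mathbf{a}}'(\ast)| \cdot |\bar\phi'|$. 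Summation in $\mathbf{a}$ is controlled by Gibbs--bowen distortion estimates for $S_{n}\bar\psi$.

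The main obstacle is verifying the non-concentration input $(*)$ for the family $\{\zeta(\mathbf{b})\}$ at polynomial scales $\sigma \in [|\xi|^{-1}, |\xi|^{-\varepsilon_1}]$, uniformly in $\mathbf{a}$. I plan to attack it in two layers. First, a qualitative ruling-out of exact coincidences: if $\log g_{\mathbf{b}}'$ were cohomologous to a function taking values in a finite-dimensional $\mathbb{Q}$-vector subspace, then by Livsic the Birkhoff sums over periodic orbits, which are exactly $n\cdot \widehat\lambda(x)$, would also be confined to such a subspace, contradicting (NL); this gives infinitely many periodic points whose Lyapunov exponents are $\mathbb{Q}$-linearly independent. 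Second, a quantitative large-deviation/equidistribution statement at scale $\sigma$: this is obtained from a Dolgopyat-type spectral bound for the twisted transfer operator $\mathcal{L}_{\bar\psi - it \log|T'|}$, which requires enough regularity of $T$ and of the stable holonomy---precisely what the bunching condition (B) secures via the $C^{2+\alpha}$ stable lamination. Combining the two yields the $\sigma^\delta$-type bound, feeding into the sum-product theorem and ultimately producing a bound of the form $|\xi|^{-\varepsilon}$ for the original integral.
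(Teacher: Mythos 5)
Your proposal follows essentially the same route as the paper: quotient along the codimension-one stable lamination to obtain a one-dimensional expanding Markov factor on the unstable sets, use the invariant transfer operator to rewrite the oscillatory integral as a Bourgain--Dyatlov sum of exponentials, and establish the non-concentration input by combining the cohomological consequence of (NL) with Dolgopyat-type spectral bounds for the twisted operator, whose required $C^{1+\alpha}$ regularity of $\log|T'|$ is exactly what the bunching condition (B) secures. The only step you gloss over is the reduction of $\int e^{i\xi\phi}\chi\,d\mu$ to the unstable factor: since $\phi$ is not constant along stable leaves a naive projection creates a phase error of order $|\xi|$, and the paper instead uses the $f$-invariance of $\mu$ to replace $\phi,\chi$ by $\phi\circ f^K,\chi\circ f^K$ with $K\sim\ln|\xi|$ (whence an extra block of length $K$ in the word decomposition to absorb the distortion of $f^K$) --- also note that, $\Omega$ being an attractor, the unstable factor is a finite union of $C^{1+\alpha}$ curves rather than a Cantor set, though this does not affect the argument.
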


\begin{corollary}
Every attractor for nonlinear and dissipative Axiom A diffeomorphism in dimension 2 exhibit polynomial Fourier decay of equilibrium states in the unstable direction. 
\end{corollary}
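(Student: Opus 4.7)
The plan is to see this corollary as a direct restriction of Theorem 1.1 to surfaces, so it suffices to verify that each hypothesis of the main theorem is automatic (or built into the statement) in the two-dimensional setting. Three conditions need to be checked: the codimension-one stable lamination, the bunching condition (B), and the nonlinearity condition (NL).

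First, the codimension-one hypothesis is automatic. For a non-trivial Axiom A attractor $\Omega$ in a surface $M$, the hyperbolic splitting $T_\Omega M = E^s \oplus E^u$ must satisfy $\dim E^u = \dim E^s = 1$: both summands are nontrivial by hyperbolicity, and in dimension two their dimensions sum to $2$. Hence the stable manifolds are one-dimensional curves in $M$, i.e.\ codimension one (and in particular $C^{1+\alpha}$ as remarked). Second, as already noted in the remark following the definition of (B), in dimension two the bunching condition is nothing but the dissipativity condition $|\det(df)_p| < 1$ on $\Omega$. The cleanest way to see this is to work in an adapted (Lyapunov) metric, in which $E^s \perp E^u$ so that $|\det(df)_p| = |\partial_u f(p)| \cdot \|(df)_p|_{E^s_p}\|$; alternatively one passes to a high iterate $f^n$, for which $|\det(df^n)_p| < \lambda^n$ forces pointwise bunching uniformly. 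Either reduction is harmless since equilibrium states are insensitive both to the choice of smooth Riemannian metric and to passing to an iterate (with the corresponding rescaled potential). Third, the word \emph{nonlinear} in the corollary is read as the condition (NL), the only notion of nonlinearity introduced in the paper.

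With the three hypotheses thus verified, Theorem 1.1 applies directly and delivers, for every equilibrium state $\mu$ of a Hölder potential and every admissible pair $(\chi,\phi)$ with $\inf_{\mathrm{supp}\,\chi} |\partial_u \phi|>0$, the polynomial decay $\big| \int_\Omega e^{i\xi \phi} \chi\, d\mu \big| \leq C(1+|\xi|)^{-\varepsilon}$, which is exactly what is meant by polynomial Fourier decay in the unstable direction. There is essentially no obstacle in the derivation: the only subtlety is the identification of (B) with dissipativity, handled above, and the rest is bookkeeping.
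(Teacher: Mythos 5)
Your proposal is correct and follows essentially the same route as the paper, which states the corollary without a separate proof because it is immediate from Theorem 1.2 together with the remark in Definition 1.2 that in dimension two the splitting forces $\dim E^u = \dim E^s = 1$ and the bunching condition (B) reduces to dissipativity $|\det(df)_p|<1$. In fact you are slightly more careful than the paper, since you justify the identification $|\det(df)_p| = |\partial_u f(p)|\cdot \|(df)_p|_{E^s_p}\|$ (which strictly speaking involves the angle between $E^s$ and $E^u$) via an adapted metric or a high iterate, a point the paper's remark simply asserts.
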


\subsection{Strategy of the proof}

The goal is to find a way to adapt the existing method for expanding maps in the case of Axiom A diffeomorphisms. For this, we use the standard construction using Markov partitions and then reduce the problem to the one-dimensional case.

\begin{itemize}
    \item In section 2, we collect facts about thermodynamic formalism in the context of Axiom A diffeomorphisms and recall classic constructions. The section 2.7 is devoted to a preliminary regularity result for equilibrium states in our context. In the section 2.8 we state a large deviation result about Birkhoff sums.
    \item In the section 3 we use the large deviations to derive order of magnitude for some dynamically-related quantities.
    \item The proof of Theorem 1.2 begins in the section 4. Using the invariance of the equilibrium state by the dynamics, we first reduce the Fourier transform to an integral on a union of local unstable manifolds, thus reducing the problem to a one dimensional expanding map.
    \item We use a transfer operator to carefully approximate the integral by a sum of exponentials. We then use a generalized version of Theorem 1.1.
    \item We prove the non-concentration hypothesis that is needed to conclude in section 6.
\end{itemize}

\subsection{Acknowledgments}

I would like to thank my PhD supervisor, Frederic Naud, for numerous helpful conversations and for pointing out various helping references, especially on the topic of total non-linearity and Dolgopyat's estimates. I would also like to thank Sébastien Gouëzel and Christophe Dupont for some interesting discussions at the seminar of ergodic theory in Rennes.

\section{Thermodynamic formalism for Axiom A diffeomorphisms}

\subsection{Axiom A diffeomorphisms and basic sets}

We recall standard results about Axiom A diffeomorphisms. An introduction to the topic can be found in \cite{BS02}. A more in-depth study can be found in \cite{KH95}. For an introduction to the thermodynamic formalism of Axiom A diffeomorphisms, we suggest the classic lectures notes of Bowen \cite{Bo75}. 

\begin{definition}

Let $f: M \rightarrow M$ be a diffeomorphism of a closed  $\mathcal{C}^\infty$ riemannian manifold $M$. A compact set $\Lambda \subset M$ is said to be hyperbolic for $f$ if $f(\Lambda) = \Lambda$, and if for each $x \in \Lambda$, the tangent space $T_xM$ can be written as a direct sum $$ T_xM = E^s_x \oplus E^u_x $$
of subspaces such that
\begin{enumerate}
    \item $\forall x\in \Lambda$,  $(df)_x(E^s_x) = E^s_{f(x)}$ and $(df)_x(E^u_x) = E^u_{f(x)} $
    \item $\exists C>0, \ \exists \kappa \in (0,1), \ \forall x \in \Lambda, $
    $$ \forall v \in E^s_x, \ \forall n \geq 0,  \ \|(df^n)_x(v)\| \leq C \kappa^n \| v \| $$
    and $$ \forall v \in E^u_x, \ \forall n \geq 0, \  \|(df^n)_x(v)\| \geq C^{-1} \kappa^{-n} \| v \| .$$
\end{enumerate}

\end{definition}

It then follows that $E_x^s$ and $E_x^u$ are continuous sub-bundles of $T_xM$. 

\begin{remark}
We can always choose the metric so that $C=1$ in the previous definition: this is called an adapted metric (or a Mather metric) and we will fix one from now on. See \cite{BS02} for a quick proof.
\end{remark}

\begin{definition}
A point $x \in M$ is called non-wandering if, for any open neighborhood $U$ of $x$, $$ U \cap \bigcup_{n > 0} f^n(U)  \neq \emptyset.$$ We denote the set of all non-wandering points by $\Omega(f)$.
\end{definition}

It is easy to check that the non-wandering set of $f$ is a compact invariant subset of $M$. Also, any periodic point of $f$ can be seen to lie in $\Omega(f)$. The definition of Axiom A diffeomorphisms is chosen so that the dynamical system $(f,\Omega(f))$ exhibit a chaotic behavior similar to the one found in symbolic dynamics. Namely:

\begin{definition}
A diffeomorphism $f: M \rightarrow M$ is said to be Axiom A if \begin{itemize}
    \item $\Omega(f)$ is a hyperbolic set for $f$,
    \item $ \Omega(f) = \overline{\{ x \in M \ | \ \exists n > 0, \ f^n(x) = x  \} } . $
\end{itemize} 
\end{definition}

In general, the non-wandering set of an Axiom A diffeomorphism can be written as the union of smaller invariant compact sets. Those are the sets on which we usually work.

\begin{theorem}
One can write $\Omega(f) = \Omega_1 \cup \dots \Omega_k$, where $\Omega_i$ are nonempty compact disjoint sets, such that 
\begin{itemize}
    \item $f(\Omega_i)=\Omega_i$, and $f_{|\Omega_i}$ is topologically transitive,
    \item $\Omega_i = X_{1,i} \cup \dots \cup X_{r_i,i}$ where the $X_{j,i}$ are disjoint compact sets,  $f(X_{j,i}) = X_{j+1,i}$ ($X_{r_i+1,i}=X_{1,i}$) and $f^{n_i}_{| X_{j,i}}$ are all topologically mixing.
\end{itemize}
The sets $\Omega_i$ are called \textbf{basic sets}.
\end{theorem}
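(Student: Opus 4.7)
The plan is to prove Smale's spectral decomposition by building the basic sets from equivalence classes of homoclinically related periodic points, and then refining each transitive piece into mixing cyclic components. I rely throughout on the stable/unstable manifold theorem (valid at every point of the hyperbolic set $\Omega(f)$) and on Axiom A, which gives density of periodic points in $\Omega(f)$.

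First I would set up the local product structure. For $\varepsilon>0$ small and $x\in\Omega(f)$, define local stable and unstable manifolds $W^s_\varepsilon(x), W^u_\varepsilon(x)$ of uniform size. A standard argument using the adapted metric and the graph transform shows that for $x,y\in\Omega(f)$ sufficiently close, $W^s_\varepsilon(x)\cap W^u_\varepsilon(y)$ consists of a unique point, which I denote $[x,y]$, and that $[x,y]\in\Omega(f)$ (this last step uses the shadowing lemma for Axiom A, or equivalently the density of periodic points together with a closing argument). On the set of periodic points $\mathrm{Per}(f)\subset\Omega(f)$ I then define the relation
\[ p\sim q \iff W^u(p)\cap W^s(q)\neq \emptyset \ \text{ and }\ W^s(p)\cap W^u(q)\neq \emptyset, \]
transverse intersections being understood. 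Reflexivity and symmetry are immediate; transitivity is the content of the \emph{inclination lemma} (the $\lambda$-lemma): if $W^u(p)$ accumulates transversally on a point of $W^u(q)$, then by iterating one shows $W^u(p)$ contains arbitrarily long pieces $C^1$-close to any compact piece of $W^u(q)$, so that an intersection with $W^s(r)$ transfers from $q$ to $p$.

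Next I build the $\Omega_i$. Let $[p_1],\dots,[p_k]$ be the equivalence classes (there are finitely many; this uses that $M$ is compact and that each class is open in $\mathrm{Per}(f)$ via the local product structure, so their closures cover the compact set $\overline{\mathrm{Per}(f)}=\Omega(f)$ and form a finite partition). Set $\Omega_i := \overline{[p_i]}$. Each $\Omega_i$ is compact and $f$-invariant since $\sim$ is $f$-invariant. Disjointness comes from the fact that two distinct equivalence classes, being open in $\mathrm{Per}(f)$, have disjoint closures once one verifies that any accumulation point of $[p_i]$ is homoclinically related to $p_i$ (again via local product structure and closing). To prove $f|_{\Omega_i}$ topologically transitive, I show that the unstable manifold of any $p\in[p_i]$ is dense in $\Omega_i$: given any open $U\subset\Omega_i$ meeting a periodic $q\in[p_i]$, the homoclinic relation provides a transverse intersection $W^u(p)\pitchfork W^s(q)$, and applying the $\lambda$-lemma gives pieces of $W^u(p)$ $C^1$-close to $W^u(q)$ near $q$; since $W^u(q)\subset\Omega_i$ accumulates on $q$, we get $W^u(p)\cap U\neq\emptyset$, and a standard Baire-type argument promotes this to transitivity.

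Finally, for the cyclic mixing decomposition, I fix one basic set $\Omega_i$ and look at a small open-in-$\Omega_i$ set $V$ with $f^n(V)\cap V\neq\emptyset$ for some $n\geq 1$ (possible by transitivity). Let $r_i:=\gcd\{n\geq 1 : V\cap f^n(V)\neq \emptyset\}$; a short check shows $r_i$ does not depend on the choice of $V$. Define
\[ X_{1,i} := \overline{\bigcup_{k\geq 0} f^{k r_i}(V)}, \qquad X_{j,i} := f^{j-1}(X_{1,i}),\ 1\leq j\leq r_i. \]
Cyclicity $f(X_{j,i})=X_{j+1,i}$ is built in, and disjointness follows from the definition of $r_i$ together with transitivity. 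Topological mixing of $f^{r_i}$ on $X_{1,i}$ reduces, via a standard number-theoretic argument (any $\gcd$-one set of return times eventually contains all sufficiently large integers), to showing that for any two open sets $U,W\subset X_{1,i}$ the set $\{n\geq 1 : f^{n r_i}(U)\cap W\neq\emptyset\}$ has gcd one, which again follows from transitivity plus the $\lambda$-lemma. The hard part throughout is establishing transitivity of $f|_{\Omega_i}$ from the homoclinic relation; everything else is bookkeeping once the $\lambda$-lemma, local product structure, and shadowing are in hand.
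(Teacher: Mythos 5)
This statement is Smale's spectral decomposition theorem; the paper does not prove it but recalls it as a classical result (the references given at the start of Section 2.1 are \cite{Bo75}, \cite{KH95}, \cite{BS02}), so your sketch has to be measured against the standard proofs. Your overall route — homoclinic relation on periodic points, local product structure of $\Omega(f)$ via closing/shadowing, the inclination lemma for transitivity of the relation and density of unstable manifolds, then a cyclic refinement — is indeed the classical one. But there is a concrete error at the central step: the equivalence classes of your relation $p\sim q \iff W^u(p)\cap W^s(q)\neq\emptyset\neq W^s(p)\cap W^u(q)$ are \emph{not} $f$-invariant, so $\Omega_i:=\overline{[p_i]}$ need not satisfy $f(\Omega_i)=\Omega_i$ and "transitivity of $f|_{\Omega_i}$" is not even well posed. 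The relation is only $f$-equivariant: $f$ permutes the classes. Simple counterexample: take two disjoint copies of a mixing hyperbolic system swapped by $f$ (or just a hyperbolic periodic saddle orbit of period $2$ forming a basic set); for $p$ periodic one has $W^u(p)$ and $W^s(f(p))$ contained in different components, so $p\not\sim f(p)$, and $\overline{[p]}$ is one of the cyclic pieces $X_{j,i}$, not the basic set. The fix is standard: either define the relation on periodic \emph{orbits} (i.e. $p\sim q$ iff $W^u(\mathcal{O}(p))\cap W^s(\mathcal{O}(q))\neq\emptyset$ and symmetrically), whose class closures are the $f$-invariant $\Omega_i$, or keep your pointwise relation, observe that $f$ permutes the finitely many class closures, and define each $\Omega_i$ as the union of one cycle of that permutation — in which case the class closures themselves are (after the bookkeeping) exactly the mixing components $X_{j,i}$, and your separate $\gcd$-of-return-times construction becomes redundant.

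A second, lesser gap is in the mixing step. For a general transitive map, the number-theoretic reduction you invoke does not work: $N(U,W)=\{n: f^{n}(U)\cap W\neq\emptyset\}$ having $\gcd$ one (or even $N(U,U)$ being a semigroup) does not force cofiniteness — irrational rotations are transitive, have $\gcd$-one return times, and are not mixing, and no finite cyclic decomposition into mixing pieces exists for them. So the actual content of mixing of $f^{r_i}$ on $X_{j,i}$ must come from hyperbolicity: a nonempty open $U\subset X_{j,i}$ contains a disc of $W^u(q)$ for some periodic $q$ in the class, forward iterates of that disc accumulate ($\lambda$-lemma, in uniform $C^1$ fashion) onto unstable manifolds that are dense in $X_{j,i}$, and this gives $f^{mr_i}(U)\cap W\neq\emptyset$ for \emph{all} large $m$, not just a syndetic set. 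You gesture at this ("transitivity plus the $\lambda$-lemma"), but as written the weight is placed on the arithmetic argument, which by itself proves nothing. Relatedly, when applying the inclination lemma you should route the heteroclinic intersections through the local product structure (the points $[p,q]$, where transversality is automatic and uniform along $\Omega(f)$), since transversality of arbitrary intersection points of global manifolds is not given. With these repairs your argument matches the treatment in \cite{KH95} (Theorem 18.3.1) and \cite{BS02}, which is exactly what the paper appeals to.
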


\begin{remark}
Notice that any basic set $\Omega$ that satisfies the nonlinearity hypothesis (NL) is necessarily a perfect set. Indeed, if $p \in \Omega$ were an isolated point, then by transitivity $p$ should have dense orbit in $\Omega$, but at the same time be a periodic point. In would imply that $\Omega$ is composed of a unique periodic orbit, which is forbidden by the nonlinearity hypothesis.
\end{remark}

For this section, we fix a basic set (not necessarily an attractor) $\Omega \subset M$ for an Axiom A diffeomorphism $f$. 

\subsection{Equilibrium states}

\begin{definition}

We say that a map $\varphi: X \subset M \rightarrow \mathbb{R}$ is Hölder if there exists $\alpha \in (0,1)$ and $C > 0$ such that $\forall x,y \in X, \ |\varphi(x) - \varphi(y)| \ \leq C d(x,y)^\alpha $, where $d$ is the natural geodesic distance induced by the metric on $M$. We note $C^\alpha(X)$ the set of $\alpha$-Hölder maps on $X$. If $\alpha > 1$, we denote by $C^{\alpha}(M)$ the set of functions that are $\lfloor \alpha \rfloor$ times differentiable, with $(\alpha - \lfloor \alpha \rfloor)$-Hölder derivatives.

\end{definition}

\begin{remark}

A theorem by McShane \cite{Mc34} proves that any $\alpha$-Hölder map defined on a subset of $M$ can always be extended to an $\alpha$-Hölder map on all $M$. Hence, even if for some definitions the potentials $\varphi$ need only to be defined on $\Omega$, we will always be able to consider them as maps in $C^\alpha(M)$ if necessary.

\end{remark}

\begin{definition}{\cite{Bo75}, \cite{Ru78}}
Let $\psi : \Omega \rightarrow \mathbb{R}$ be a Hölder potential.
Define the pressure of $\psi$ by

$$ P(\psi) := \sup_{\mu \in \mathcal{P}_f(\Omega)} \left\{ h_f(\mu) + \int_\Omega \psi d\mu \right\}, $$

where $\mathcal{P}_f(\Omega)$ is the compact set of all probability measures supported on $\Omega$ that are $f$-invariant, and where $h_f(\mu)$ is the entropy of $\mu$. There exists a unique measure $\mu_\psi \in M_f(\Omega)$ such that $$ P(\psi) = h_f(\mu_\psi) + \int_\Omega \psi d\mu_\psi .$$

This measure has support equal to $\Omega$, is ergodic on $(\Omega,f)$, and is called the equilibrium state associated to $\psi$.

\end{definition}

Two particular choices of potentials stands among the others. The first one is the \textbf{constant potential} $\psi := C$. In this case, the equilibrium measure is the \textbf{measure of maximal entropy} (which is known to be linked with the repartition of periodic orbits, see for example \cite{Be16}). \\

Another natural choice is the \textbf{geometric potential} $$ \psi := - \log |\det (df)_{|u} |, $$
where $\det (df)_{|u}$ stands for the determinant of the linear map $ {(df)_x} : E^u_x \rightarrow E^u_{f(x)} $, which is well defined up to a sign since the $E^u$ are equipped with a scalar product. In the case where $\Omega$ is an attractor, that is, if there exists a neighborhood $U$ of $\Omega$ such that $$\bigcap_{n \geq 0} f^n(U) = \Omega,$$ then the associated equilibrium measure is called a \textbf{SRB measure}. In this case, for any continuous function $g:U \rightarrow \mathbb{R}$ and for lebesgue almost all $x \in U$,

$$ \frac{1}{n} \sum_{k=0}^{n-1} g(f^k(x)) \underset{n \rightarrow \infty}{\longrightarrow} \int_{\Omega} g \ d\mu_{SRB} ,$$

which allows us to think of this measure as the \say{physical equilibrium state} of the dynamical system. See \cite{Yo02} for more details on SRB measures, and also the last chapter of $\cite{Bo75}$.

\subsection{Stable/unstable laminations, bracket and holonomies}

In this subsection, we will recall some results about the existence of stable/unstable laminations, some regularity results in our particular case, and the consequence on the regularity of the holonomies.

\begin{definition}

Let $x \in \Omega$. For $\varepsilon>0$ small enough, we define the local stable and unstable manifold at $x$ by $$ W^s_{\epsilon}(x) := \{ y \in M \ | \ \forall n \geq 0, \ d(f^n(x),f^n(y)) \leq \varepsilon \}, $$
$$ W^u_\varepsilon(x) := \{ y \in M \ | \ \forall n \leq 0, \ d(f^n(x),f^n(y)) \leq \varepsilon \} .$$
We also define the global stable and unstable manifolds at $x$ by
$$ W^s(x) := \{ y \in M \ |  \ d(f^n(x),f^n(y)) \underset{n \rightarrow +\infty}{\longrightarrow} 0 \} ,$$
$$ W^u(x) := \{ y \in M \ | \ d(f^n(x),f^n(y)) \underset{n \rightarrow -\infty}{\longrightarrow} 0 \}.  $$

\end{definition}

\begin{theorem}[\cite{BS02}, \cite{KH95}, \cite{Bo75}]

Let $f$ be a $\mathcal{C}^r$ Axiom A diffeomorphism and let $\Omega$ be a basic set.
For $\varepsilon>0$ small enough and for $x \in \Omega$:
\begin{itemize}
    \item $W^s_\varepsilon(x)$ and $W^u_\varepsilon(x)$  are $\mathcal{C}^r$ embedded disks,
    \item $\forall y \in \Omega \cap W^s_\varepsilon(x), \ T_y W^s_\varepsilon (x) = E^s_y$,
    \item $\forall y \in \Omega \cap W^u_\varepsilon(x), \ T_y W^u_\varepsilon (x) = E^u_y$,
    \item $f(W^s_\varepsilon(x)) \subset \subset W^s_\varepsilon(f(x))$ and $ f(W^u_\varepsilon(x)) \supset \supset W^u_\varepsilon(f(x)) $ where $\subset \subset$ means \say{compactly included},
    \item $ \forall y \in W^s_\varepsilon(x), \ \forall n \geq 0,  \ d^s(f^n(x),f^n(y)) \leq  \kappa^n d^s(x,y) $ where $d^s$ denotes the geodesic distance on the submanifold $W^s_\varepsilon$,
    \item $ \forall y \in W^u_\varepsilon(x), \ \forall n \geq 0, \ d^u(f^{-n}(x),f^{-n}(y)) \leq  \kappa^{n} d^u(x,y) $ where $d^u$ denotes the geodesic distance on the submanifold $W^u_\varepsilon$.

\end{itemize}

Moreover
$$ \bigcup_{n \geq 0} f^{-n}(W^s_\varepsilon(f^n(x))) = W^s(x)  $$
and $$ \bigcup_{n \geq 0} f^{n}(W^u_\varepsilon(f^{-n}(x))) = W^u(x),$$
and so the global stable and unstable manifolds are injectively immersed manifolds in $M$.

\end{theorem}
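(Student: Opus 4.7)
The plan is to carry out the classical Hadamard--Perron construction uniformly over $\Omega$, and then globalize by iteration. Fix $x\in\Omega$ and use the exponential charts $\exp_x: T_xM\to M$ to pull $f$ back to a map $\tilde{f}_x$ sending a small neighborhood of $0$ in $E^s_x\oplus E^u_x$ into $E^s_{f(x)}\oplus E^u_{f(x)}$. Since $f$ is $C^r$ and $M$ is $C^\infty$, the family $\{\tilde{f}_x\}_{x\in\Omega}$ is $C^r$ in the fiber direction, with $\tilde{f}_x(0)=0$ and derivative $d\tilde{f}_x(0)=A_x\oplus B_x$ where $\|A_x\|\le\kappa$ and $\|B_x^{-1}\|\le\kappa$ by the adapted metric (Mather). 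Compactness of $\Omega$ and continuity of $df$ then ensure that the nonlinear remainder $R_x:=\tilde{f}_x - d\tilde{f}_x(0)$ satisfies $\|R_x\|_{C^1(B_\rho)}\to 0$ as $\rho\to 0$, uniformly in $x\in\Omega$.

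Next, I introduce the complete metric space $\mathcal{G}^x_\varepsilon$ of $1$-Lipschitz sections $\gamma: B^s_\varepsilon(0)\subset E^s_x\to E^u_x$ with $\gamma(0)=0$, endowed with the uniform norm. For $\varepsilon$ small enough I define a graph transform $\Gamma_x:\mathcal{G}^{f(x)}_\varepsilon\to\mathcal{G}^x_\varepsilon$ by requiring $\tilde{f}_x(\mathrm{graph}\,\Gamma_x\gamma)\supset \mathrm{graph}\,\gamma$; the implicit function theorem applied to the expanding unstable component of $\tilde{f}_x$ shows $\Gamma_x\gamma$ is well defined and remains $1$-Lipschitz when $\|R_x\|$ is small. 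A direct computation shows $\Gamma_x$ is a uniform $C^0$-contraction with ratio $\kappa+o(1)$ as $\rho\to 0$. Iterating along the forward orbit of $x$ produces, via Banach's fixed-point theorem on the resulting cocycle, a unique section $\gamma_x\in\mathcal{G}^x_\varepsilon$, and I set $W^s_\varepsilon(x):=\exp_x(\mathrm{graph}\,\gamma_x)$. The local unstable manifold $W^u_\varepsilon(x)$ is constructed symmetrically using $f^{-1}$.

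To upgrade regularity from Lipschitz to $C^r$, I would invoke the Hirsch--Pugh--Shub fiber contraction theorem iteratively on the natural lifts of $\Gamma$ to jet bundles of order $1,2,\dots,r$: the hyperbolic splitting makes each lift a fibered contraction over the already-contracting base map, and the fixed section gains one order of smoothness at each step. For the tangent space identification, if $y\in\Omega\cap W^s_\varepsilon(x)$ then $T_yW^s_\varepsilon(x)$ is by construction $df$-invariant and uniformly contracted by forward iterates; since $E^s_y$ is the unique such subspace complementary to $E^u_y$ in the hyperbolic splitting, equality follows. The exponential contraction $d^s(f^n(y),f^n(x))\le \kappa^n d^s(y,x)$ and the strict inclusion $f(W^s_\varepsilon(x))\subset\subset W^s_\varepsilon(f(x))$ then follow from the fixed-point relation combined with the adapted metric; the unstable statements are dual.

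Finally, the global stable manifold $\bigcup_{n\ge 0}f^{-n}(W^s_\varepsilon(f^n(x)))$ is an increasing union of $C^r$ embedded disks (by the chain rule and the strict inclusion above), hence an injectively immersed $C^r$ submanifold. Conversely, any $y$ with $d(f^n(x),f^n(y))\to 0$ enters $W^s_\varepsilon(f^n(x))$ for $n$ large, so it lies in this union, giving equality with $W^s(x)$ defined in terms of asymptotic orbits; the unstable case is identical under $f\mapsto f^{-1}$. The main obstacle I anticipate is the joint $C^r$-regularity step: the $C^0$ graph-transform contraction is essentially elementary, but verifying that the fiber contraction runs cleanly in $x\in\Omega$ and that $\|R_x\|_{C^r}$ can be made small by rescaling to the right ball consumes almost all of the technical work.
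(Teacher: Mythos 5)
This theorem is quoted by the paper without proof; it is the classical local stable/unstable manifold theorem for hyperbolic sets, cited from \cite{BS02}, \cite{KH95}, \cite{Bo75}. Your proposal is a faithful sketch of the Hadamard--Perron graph-transform argument that those references use, and the overall strategy (adapted metric to get $C=1$, uniform nonlinear remainder via compactness of $\Omega$, Banach fixed point on Lipschitz graphs, Hirsch--Pugh--Shub fiber contraction for $C^r$ regularity, globalization by the increasing union) is exactly the standard one.

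One step is glossed over and deserves to be made explicit. The paper's Definition~2.6 \emph{defines} $W^s_\varepsilon(x)$ as the set $\{y\in M : d(f^n x,f^n y)\le\varepsilon \ \forall n\ge 0\}$, whereas you \emph{construct} $\exp_x(\mathrm{graph}\,\gamma_x)$ as the fixed point of the graph transform and then \emph{name} it $W^s_\varepsilon(x)$. The identification of these two objects is a genuine assertion: the inclusion $\mathrm{graph}\,\gamma_x\subset\{y: d(f^n x,f^n y)\le\varepsilon\}$ follows from the contraction rate you establish, but the reverse inclusion requires the complementary argument that any point whose full forward orbit stays $\varepsilon$-close must have zero unstable component in the adapted coordinates --- otherwise the expansion $\|B_x^{-1}\|\le\kappa$ pushes it out of the $\varepsilon$-ball in finitely many steps. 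Without that step one has built \emph{a} family of invariant discs but not shown it coincides with the sets the rest of the paper manipulates. Similarly, your final paragraph only proves $\bigcup_n f^{-n}(W^s_\varepsilon(f^n x))\supset W^s(x)$ after one checks that $d(f^n x,f^n y)\to 0$ actually forces $f^N y\in W^s_\varepsilon(f^N x)$ for some $N$, which is again this same dichotomy. Adding that lemma closes the argument.
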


The family $(W^s_\varepsilon(x),W^u_\varepsilon(x))_{x \in \Omega}$ forms two transverse continuous laminations, which allows us to define the so-called bracket and holonomies maps. 

\begin{definition}[\cite{Bo75}, \cite{KH95}, \cite{BS02}]
For $\varepsilon>0$ small enough, there exists $\delta>0$ such that $W^s_\varepsilon(x) \cap W^u_\varepsilon(y)$ consists of a single point $[x,y]$ whenever $x,y \in \Omega$ and $d(x,y) < \delta$. In this case, $[x,y] \in \Omega$, and the map
$$ [\cdot, \cdot] : \{ (x,y) \in \Omega \times \Omega \ , \ d(x,y) < \delta \} \longrightarrow \Omega $$
is continuous. Moreover, there exists $C>0$ such that
$$ d^s([x,y],x) \leq C d(x,y), \quad \text{and} \quad d^u([x,y],y) \leq C d(x,y).$$

\end{definition}

\begin{definition} Fix $x,y$  two close enough points in $\Omega$ lying in the same local stable manifold. Let $U^u \subset W^u_\varepsilon(x) \cap \Omega$ be a small open neighborhood of $x$ relatively to $ W^u_\varepsilon(x) \cap \Omega$. The map
$$ \pi_{x,y} : U^u \subset W^u_{\varepsilon}(x) \cap \Omega \longrightarrow W^u_{\varepsilon}(y) \cap \Omega .$$
defined by $\pi_{x,y}(z) := [z,y]$ is called a stable holonomy map. One can define an unstable holonomy map similarly on pieces of local stable manifolds intersected with $\Omega$. Since the stable and unstable laminations are Hölder regular in the general case, those honolomy maps are only Hölder regular in general (\cite{KH95}, theorem 19.1.6).
\end{definition}

To work, we will need a bit of regularity on the unstable holonomies. Fortunately for us, in the particular case where the stable lamination are codimension 1, we have some regularity results: this is Theorem 1 page 25 of \cite{Ha89}, and Theorem  19.1.11 in \cite{KH95} for Anosov diffeomorphisms. For general hyperbolic sets, the proof is done is \cite{PR02}. 

\begin{theorem}

Let $f$ be an Axiom A diffeomorphism, and let $\Omega$ be a basic set. Suppose that $f$ has codimension one stable laminations, that is, $\dim E^u_x = 1$ for all $x \in \Omega$. Then the stable lamination is $C^{1+\alpha}$ for some $\alpha>0$. In particular, the stable holonomies maps are $C^{1 + \alpha}$ diffeomorphisms.
\end{theorem}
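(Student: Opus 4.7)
My plan is to prove the theorem in two steps: first establish $C^{1+\alpha}$ regularity of the stable lamination itself (meaning both that each leaf $W^s_\varepsilon(x)$ is $C^{1+\alpha}$ as an embedded disk and that the tangent distribution $E^s$ is $C^\alpha$-Hölder transversally), and then deduce $C^{1+\alpha}$ regularity of the holonomy maps as a standard consequence. The Hirsch--Pugh--Shub stable manifold theorem already gives that each individual leaf $W^s_\varepsilon(x)$ is $C^{2+\alpha}$ since $f$ is $C^{2+\alpha}$, so the real content lies in the transverse regularity of $E^s$.

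To obtain this transverse regularity, I would use a fiber-contraction / graph-transform argument in the Grassmannian bundle of $(\dim E^s)$-planes over $\Omega$. The distribution $E^s$ is characterized as the unique invariant continuous section under the action induced by $df$, and a contraction-mapping argument in an appropriate Banach space of Hölder sections yields regularity governed by a bunching ratio. The precise Hirsch--Pugh--Shub criterion for $C^{1+\alpha}$ regularity of the stable lamination can be phrased in terms of the pointwise stable bunching parameter
$$ b^s(p) := 1 + \frac{\ln \|(df)_{|E^s_p}\|^{-1}}{\ln |\partial_u f(p)|}, $$
and demands $\inf_{\Omega} b^s(p) \geq 1 + \alpha$. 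Here the codimension-one hypothesis enters decisively: since $E^u_p$ is one-dimensional, the denominator is a genuine scalar logarithm, so there is no nonconformality to control on the unstable side, and in the adapted metric one has $\|(df)_{|E^s_p}\| < 1$ and $|\partial_u f(p)| > 1$ strictly at every $p \in \Omega$. By continuity and compactness of $\Omega$, these strict inequalities are uniform, so $b^s(p) > 1$ uniformly, and one obtains some $\alpha > 0$ with $\inf_{\Omega} b^s \geq 1 + \alpha$.

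With $C^{1+\alpha}$ regularity of the lamination in hand, the holonomy map between two transversals is obtained locally as the composition of the inclusion of a piece of unstable manifold, a projection along stable leaves in a product-like chart adapted to the lamination, and smooth change-of-coordinates maps; each factor is $C^{1+\alpha}$, so the holonomy is a $C^{1+\alpha}$ diffeomorphism. The main obstacle, and the reason the statement is genuinely nontrivial, is arranging the matched Hölder exponent on the transverse distribution: pointwise continuity of $E^s$ is cheap, but running the bunching machinery in the $(1+\alpha)$-Hölder topology requires careful fiber-contraction analysis, and it is precisely the codimension-one hypothesis that eliminates the usual obstruction coming from nonconformal unstable expansion and lets one close the contraction estimate with an exponent $\alpha > 0$ uniform over $\Omega$.
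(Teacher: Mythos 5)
The paper does not actually prove this statement: it is imported from the literature (Theorem 1, p.~25 of Hasselblatt's thesis [Ha89], Theorem 19.1.11 of [KH95] for Anosov diffeomorphisms, and [PR02] for general hyperbolic sets), so your attempt should be measured against those arguments rather than against anything in the text. Your key observation is the right one, and is consistent with how the paper itself manipulates the bunching parameter in Definition 1.2: when $\dim E^u=1$ there is no unstable nonconformality, so the relevant inequality reduces to $\|(df)_{|E^s_p}\|\cdot|\partial_u f(p)|^{\alpha}\le 1$, which holds for some uniform $\alpha>0$ by the adapted metric, continuity of $df$ and compactness of $\Omega$; granting a criterion of the form ``$b^s\ge 1+\alpha$ and $f\in C^{2}$ imply the stable lamination is $C^{1+\alpha}$'', the theorem follows.

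The genuine gap is in how you propose to obtain that criterion and in the final ``standard consequence'' step. The fiber-contraction/graph-transform argument in the Grassmannian bundle that you describe produces an invariant H\"older section, i.e.\ transverse $C^{\alpha}$ regularity of the distribution $E^s$; together with $C^{2+\alpha}$ leaves this is strictly weaker than the conclusion. Indeed, for hyperbolic sets of arbitrary codimension the leaves are smooth and $E^s$ is transversally H\"older, yet stable holonomies between unstable transversals are in general only H\"older and not $C^1$ (their regularity is governed by bunching, and fails generically when bunching fails), so ``leaves $C^{1+\alpha}$ plus $E^s$ transversally $C^{\alpha}$'' does not yield $C^{1+\alpha}$ holonomies, and the product-like $C^{1+\alpha}$ charts you invoke at the end are precisely what has to be constructed, not a formal consequence of what precedes. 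What is missing is the actual holonomy estimate, which is where $\dim E^u=1$ enters in an essential, quantitative way: for instance one writes the holonomy as $\pi=f^{-n}\circ\pi_n\circ f^n$ and shows that the scalar ratios $\prod_{k\ge0}\partial_u f(f^k z)/\partial_u f(f^k\pi z)$ converge to a H\"older function of $z$, using the exponential contraction along stable leaves and the H\"older continuity of $\partial_u f$ (this is the Hasselblatt/Pinto--Rand argument). Alternatively, if you prefer to quote the bunching criterion as a black box, you should recognize that it is Hasselblatt's theorem itself rather than the Hirsch--Pugh--Shub section theorem in the Grassmannian, i.e.\ essentially the statement to be proved in its general bunched form, and either prove it or cite it explicitly, as the paper does.
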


\begin{remark}
Here, the holonomies being $C^{1+\alpha}$ means that the map $\pi_{x,y}$ extends to small curves $W^u_\delta(x) \longrightarrow W^u_\varepsilon(y)$, and that the extended map is a $C^{1+\alpha}$ diffeomorphism.
\end{remark}

From now on, we will work under the assumption that $f$ has codimension one stable laminations. This is always true if $\dim M =2$, or if $\dim M=3$ by exchanging $f$ by $f^{-1}$ if necessary.  We fix a Hölder potential $\psi:\Omega \rightarrow \mathbb{R}$ and its associated equilibrium state $\mu$.

\subsection{Markov partitions}

In this subsection, we will construct the topological space on which we will work in this paper: the hypothesis made on the dimension of the unstable lamination will allow us to approximate the dynamics by a dynamical system on a finite disjoint union of smooth curves. For this we need to recall some results about Markov partitions.

\begin{definition}

A set $R \subset \Omega$ is called a \emph{rectangle} if 
$$ \forall x,y \in R, \ [x,y] \in R .$$

A rectangle is called proper if $\overline{\text{int}_\Omega(R)} = R$. If $x \in R$, and if $\text{diam}(R)$ is small enough with respect to $\varepsilon$, we define $$ W^s(x,R) := W^s_\varepsilon(x) \cap R  \quad \text{and} \quad W^u(x,R) := W^u_\varepsilon(x) \cap R .$$

\end{definition}

\begin{remark}
Notice that a rectangle isn't always connected, and might even have an infinite number of connected component, even if $\Omega$ is itself connected. This technicality is noticed in \cite{Pe19}, at the first paragraph of subsection 3.3, and is an obstruction to the existence of finite Markov partition with connected elements.
\end{remark}

\begin{definition}
A Markov partition of $\Omega$ is a finite covering $\{R_a\}_{a \in \mathcal{A}}$ of $\Omega$ by proper rectangles such that
\begin{itemize}
    \item $\text{int}_\Omega R_a \cap \text{int}_\Omega R_b = \emptyset$ if $a \neq b$
    \item $f\left( W^u(x,R_a) \right) \supset W^u(f(x),R_b)$ and $f\left( W^s(x,R_a) \right) \subset W^s(f(x),R_b)$ \\ when $x \in \text{int}_\Omega R_a \cap f^{-1}\left(\text{int}_\Omega R_a \right)$. 
\end{itemize}
\end{definition}

\begin{theorem}[\cite{Bo75},\cite{KH95}]
Let $\Omega$ be a basic set for an Axiom A diffeomorphism $f$. Then $\Omega$ has Markov partitions of arbitrary small diameter.

\end{theorem}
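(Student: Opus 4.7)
The plan is to follow Bowen's classical refinement strategy. First, I fix $\varepsilon>0$ small enough for the bracket to be defined, and a target diameter $\eta>0$. Using compactness of $\Omega$ and continuity of $[\cdot,\cdot]$, I would pick finitely many points $x_1,\ldots,x_N \in \Omega$ together with small relatively open neighborhoods $U_i^s \subset W^s_\varepsilon(x_i) \cap \Omega$ and $U_i^u \subset W^u_\varepsilon(x_i) \cap \Omega$ so that the pre-rectangles
$$P_i := [U_i^s, U_i^u]$$
are proper rectangles of diameter at most $\eta/2$ that cover $\Omega$. This is routine since, for each $x \in \Omega$, the bracket gives a neighborhood basis of arbitrarily small rectangles at $x$.

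Second, the cover $\{P_i\}$ is typically not Markov, because a stable or unstable leaf $W^{s,u}(y,P_i)$ of one pre-rectangle can cross another $P_j$ transversally through its interior, which is exactly what breaks the Markov matching condition. I would correct this by a finite refinement. For each $x \in \Omega$, record its ``type'' $T(x) := \{ i : x \in P_i \}$ together with, for each pair $i,j \in T(x)$, the sign data describing on which side of the leaves $W^s(x,P_j)$ and $W^u(x,P_j)$ the point $x$ sits inside $P_i$. Declare $x \sim y$ when this full collection of type and sign data agrees. Since the initial cover is finite, there are only finitely many equivalence classes, and the rectangles $R_a$ of the new partition would be the $\Omega$-interior-closures of these classes. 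By construction, each $R_a$ is closed under the bracket (the sign data is preserved by $[\cdot,\cdot]$), is proper, is contained in some $P_i$, and has diameter at most $\eta$, which takes care of the covering and properness requirements.

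The hard part will be verifying the Markov inclusion condition. Fix $x \in \text{int}_\Omega R_a \cap f^{-1}(\text{int}_\Omega R_b)$, with $R_a \subset P_i$ and $R_b \subset P_j$. To establish $f(W^u(x,R_a)) \supset W^u(f(x),R_b)$, the idea is that the stable/unstable cuts defining $R_a$ near $x$ are inherited from the whole cover $\{P_k\}$, and the sign data was designed so that any $z \in W^u(f(x),R_b)$ pulls back to a point in $W^u(x,P_i)$ with the same sign data relative to every $P_k$ as $x$, hence lies in $R_a$. The subtle point, and the main technical obstacle, is that rectangles in $\Omega$ can be totally disconnected (as emphasized in the remark above), so ``interior'' and ``boundary'' must be handled carefully in the intrinsic $\Omega$-topology, and one must check that the finitely many cutting leaves really do align coherently under $f$; this is the technical heart of Bowen's original construction and requires using hyperbolicity to ensure that iterating the cover does not produce new ``types''. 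The symmetric statement for stable slices follows by applying the same argument to $f^{-1}$, and shrinking $\eta$ at the outset yields Markov partitions of arbitrarily small diameter.
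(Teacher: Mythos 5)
The paper itself does not prove this theorem: it cites Bowen \cite{Bo75} and Katok--Hasselblatt \cite{KH95}, so your sketch has to be measured against the classical construction. As written, it has a genuine gap, and it sits exactly where you defer to ``the technical heart of Bowen's original construction.'' Your starting point is an \emph{arbitrary} finite cover of $\Omega$ by small bracket-rectangles $P_i=[U_i^s,U_i^u]$, followed by a refinement according to static membership/side data. But the Markov inclusions are a dynamical condition relating $R_a$ to $f(R_a)$, and no amount of refining a static cover by intersection data can create them: for a generic choice of the $P_i$, the image $f(W^u(x,P_i))$ simply has no reason to contain, or to be cut compatibly by, the unstable slices of the $P_j$ it meets, and your assertion that ``iterating the cover does not produce new types'' is precisely what fails. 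In Bowen's proof the initial family of rectangles is not arbitrary: one fixes a $\beta$-dense finite set $P=\{p_1,\dots,p_r\}\subset\Omega$, forms the space $\Sigma(P)$ of $\beta$-pseudo-orbits through $P$, uses the shadowing lemma to get the map $\theta:\Sigma(P)\to\Omega$ with $\theta\circ\sigma=f\circ\theta$, and sets $T_s:=\{\theta(q): q_0=p_s\}$. The intertwining identity is what gives, \emph{before any refinement}, the one-step semi-Markov inclusions $f(W^s(x,T_s))\subset W^s(f(x),T_t)$ and $f(W^u(x,T_s))\supset W^u(f(x),T_t)$ for suitable $t$. Only then does one subdivide each $T_j$ according to whether $W^s(x,T_j)$ and $W^u(x,T_j)$ meet $T_k$ (intersection versus non-intersection, not ``sides'' --- side data is not well defined in a totally disconnected rectangle), and the delicate part is a boundary analysis (the sets $\partial^s,\partial^u$) showing that properness and the Markov inclusions survive this subdivision; that argument leans crucially on the semi-Markov property already present in $\{T_s\}$.

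So the structure of your proposal (cover by rectangles, then refine by intersection data, then take closures of interiors) matches the second half of the classical proof, but it omits the shadowing/pseudo-orbit step that makes the first half work, and without it the Markov verification you postpone is not a technicality but the whole theorem. To repair the sketch, replace the arbitrary cover of your first paragraph by the shadowing-built rectangles $T_s$ (or, in the Katok--Hasselblatt formulation, by a cover adapted to a sufficiently fine symbolic model), state and use the one-step inclusions for that cover, and only then run your refinement and boundary argument.
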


From now on, we fix once and for all a Markov partition $\{R_a\}_{a \in \mathcal{A}}$ of $\Omega$ with small enough diameter. Remember that, since $\Omega$ is not an isolated cycle, it is a perfect set. In particular, $\text{diam} R_a >0$ for all $a \in \mathcal{A}$. 

\begin{definition}
We fix for the rest of this paper some periodic points $x_a \in \text{int}_\Omega R_a$ for all $a \in \mathcal{A}$ (this is possible by density of such points in $\Omega$). By periodicity, $x_a \notin W^u(x_b)$ when $a \neq b$. 
\end{definition}

\begin{definition}

We set, for all $a \in \mathcal{A}$, $$ S_a := W^s(x_a,R_a) \quad \text{and} \quad U_a := W^u(x_a,R_a) .$$
They are closed sets included in $R_a$, and are defined so that $[U_a,S_a] = R_a$. They will allow us to decompose the dynamics into a stable and an unstable part. Notice that the decomposition is unique:
$$ \forall x \in R_a, \exists ! (y,z) \in U_a \times S_a, \ x=[y,z] .$$

\end{definition}

The intuition of the construction to come is that, after a large enough number of iterates, $f^n$ can be approximated by a map that is only defined on the $(U_a)_{a \in \mathcal{A}}$.

\subsection{A factor dynamics}

In this section we construct what will take the role of the shift map in our context. The construction is inspired by the symbolic case and already appear in the work of Dolgopyat \cite{Do98}. 

\begin{notations}
Let $a$ and $b$ be two letters in $\mathcal{A}$.
We note $a \rightarrow b$ if $f(\text{int}_\Omega R_a) \cap \text{int}_\Omega R_b \neq \emptyset $.

\end{notations}

\begin{definition}

We define $$\mathcal{R} := \bigsqcup_{a \in A} R_a , \quad \mathcal{S} := \bigsqcup_{a \in A} S_a , \quad \mathcal{U} := \bigsqcup_{a \in A} U_a $$
where $\bigsqcup$ denote a formal disjoint union. we also define $$ \mathcal{R}^{(0)} := \bigsqcup_{a \in \mathcal{A}} \text{int}_\Omega R_a \subset \mathcal{R} $$
and $$ \mathcal{R}^{(1)} := \bigsqcup_{a \rightarrow b} (\text{int}_{\Omega} R_a) \cap f^{-1}(\text{int}_{\Omega} R_b) \subset \mathcal{R}^{(0)} $$
So that the map $f:\Omega \rightarrow \Omega$ may be naturally seen as a map $f : \mathcal{R}^{(1)} \longrightarrow \mathcal{R}^{(0)}$. We then define 
$$ \mathcal{R}^{(k)} := f^{-k}( \mathcal{R}^{(0)}) $$
and, finally, we denote the associated residual set by
$$ \widehat{\mathcal{R}} := \bigcap_{k \geq 0} \mathcal{R}^{(k)} $$
so that $f : \widehat{\mathcal{R}} \longrightarrow \widehat{\mathcal{R}}$. 
Seen as a subset of $\Omega$, $\widehat{\mathcal{R}}$ as full measure, by ergodicity of the equilibrium measure $\mu$. Hence $\mu$ can naturally be though as a probability measure on $\widehat{\mathcal{R}}$. 
\end{definition}

\begin{definition}

Let $\mathcal{R}/\mathcal{S}$ be the topological space defined by the equivalence relation $ x \sim y  \Leftrightarrow \exists a \in \mathcal{A}, \ y \in W^s(x,R_a) $ in $\mathcal{R}$. Let $\pi:\mathcal{R} \rightarrow \mathcal{R}/\mathcal{S}$ denotes the natural projection. The map $f:\widehat{\mathcal{R}} \rightarrow \widehat{\mathcal{R}}$ induces a factor map $F : \widehat{\mathcal{R}}/\mathcal{S} \rightarrow \widehat{\mathcal{R}}/\mathcal{S}$. Moreover, the measure $\nu := \pi_* \mu$ is a $F$-invariant probability measure on $\mathcal{R}/\mathcal{S}$.

\end{definition}

\begin{proof}

We just have to check that $f : \mathcal{R}^{(1)} \longrightarrow \mathcal{R}^{(0)}$ satisfy $$ f W^s(x,R_a) \subset W^s(f(x),R_b)   $$ for $x \in (\text{int}_{\Omega} R_a) \cap f^{-1}(\text{int}_{\Omega} R_b)$. This is true by definition of Markov partitions. The induced map satisfy $F \circ \pi = \pi \circ f$, and so $F_* \nu = \nu$. \end{proof}

\begin{remark}

There is a natural isomorphism $  \mathcal{U} \simeq \mathcal{R}/\mathcal{S}$ that is induced by the inclusion $\mathcal{U} \hookrightarrow \mathcal{R}$. This allows us to identify all the precedent construction to a dynamical system on $\mathcal{U}$. Namely:

\begin{itemize}
    \item The projection $\pi: \mathcal{R} \rightarrow \mathcal{R}/\mathcal{S}$ is identified with
    $$ \begin{array}[t]{lrcl}
 \pi: & \mathcal{R} \quad & \longrightarrow & \quad \mathcal{U} \\
    & x \in R_a & \longmapsto &  [x,x_a] \in U_a  \end{array}  $$
    
    \item The factor map $F:\widehat{\mathcal{R}}/\mathcal{S} \rightarrow \widehat{\mathcal{R}}/\mathcal{S}$ is identified with
    $$ \begin{array}[t]{lrcl}
 F: & \widehat{\mathcal{U}} \quad \quad \quad & \longrightarrow & \quad \widehat{\mathcal{U}} \\
    & x \in R_a \cap f^{-1}(R_b)  & \longmapsto &  [f(x),x_b] \in U_b  \end{array}   $$
    where $\widehat{\mathcal{U}}$ is defined similarly to $\widehat{\mathcal{R}}$, but with $F$ replacing $f$ in the construction.
    
    \item The measure $\nu$ is identified to the unique measure on $\mathcal{U}$ such that:
    
    $$ \forall h \in C^0(\mathcal{R},\mathbb{R}) \ \text{S-constant}, \ \int_\mathcal{U} h d\nu \ = \ \int_\mathcal{R} h d\mu $$
    
    where $S$-constant means that $\forall a \in \mathcal{A}, \forall x,y \in R_a, \ x \in W^s(y,R_a) \Rightarrow h(x)=h(y)$.
    
\end{itemize}

\end{remark}

\begin{remark}
Since our centers $x_a$ are periodic, $x_a \in \widehat{\mathcal{R}}$, and hence $x_a \in \widehat{\mathcal{U}}$.
\end{remark}

\begin{lemma}

Under the nonlinearity hypothesis, the set $\widehat{\mathcal{U}}$ is a perfect set. In particular, $\text{diam} \ U_a > 0$ for all $a \in \mathcal{A}$.

\end{lemma}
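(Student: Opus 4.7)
The plan is to prove the two assertions in sequence, starting with $\text{diam}\, U_a > 0$ for every $a \in \mathcal{A}$ (which will then feed into the perfectness argument).

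For $\text{diam}\, U_a > 0$: I would argue by contradiction. Suppose $U_a = \{x_a\}$. Then the product decomposition $R_a = [U_a, S_a]$ collapses to $R_a = \{[x_a,s] : s \in S_a\} \subset W^s_\varepsilon(x_a)$. However, (NL) implies that $\Omega$ is perfect (by the earlier Remark), so the nonempty relative interior $\text{int}_\Omega R_a$ is infinite; by density of the periodic points of $f$ in $\Omega$, it contains some periodic point $q \neq x_a$. Let $N$ be the least common multiple of the periods of $q$ and $x_a$; then $f^{kN}(q)=q$ and $f^{kN}(x_a)=x_a$ for every $k \geq 0$, while the stable estimate $d(f^n(q),f^n(x_a)) \to 0$ evaluated along $n=kN$ forces $q = x_a$, a contradiction.

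For the perfectness of $\widehat{\mathcal{U}}$, my plan is to exploit the symbolic coding inherent in the Markov partition. Consider the transition graph on $\mathcal{A}$ with edges $a \to b$ whenever $f(\text{int}_\Omega R_a) \cap \text{int}_\Omega R_b \neq \emptyset$. Topological transitivity of $f$ on the basic set $\Omega$ translates to strong connectedness of this graph. Under (NL), $\Omega$ is not a single periodic orbit, so the graph cannot reduce to a single oriented cycle: at least one vertex $b$ must have out-degree $\geq 2$, and by strong connectedness every vertex admits such a branching descendant. Now fix $x \in \widehat{\mathcal{U}}$ and let $(a_k(x))_{k \geq 0}$ be its forward Markov coding, i.e.\ $F^k(x) \in U_{a_k(x)}$. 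For any $n \geq 0$, I would construct an admissible infinite sequence $(\tilde a_k)$ agreeing with $(a_k(x))$ for $k \leq n$ but differing later, by first prolonging along an admissible path from $a_n(x)$ to a branching vertex and then choosing at that vertex an out-transition different from $x$'s. Such a sequence lifts through the coding to a point $y_n \in \widehat{\mathcal{U}}$ distinct from $x$ but lying in the same level-$n$ cylinder $\bigcap_{k=0}^n F^{-k}(U_{a_k(x)})$ as $x$. The uniform expansion of $F$ shrinks this cylinder around $x$ to $\{x\}$ as $n \to \infty$, yielding the desired sequence $y_n \to x$ with $y_n \neq x$.

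The main obstacle will be to ensure that two distinct admissible sequences give rise to two distinct points of $\widehat{\mathcal{U}}$ (a priori the coding may collapse on cell boundaries). This is resolved by the very construction of $\widehat{\mathcal{U}}$: its elements are exactly those points whose forward $F$-orbit avoids all boundaries of Markov cells, and on such orbits the symbolic coding is injective thanks to the uniform expansion of $F$.
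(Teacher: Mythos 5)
Your first half (proving $\text{diam}\, U_a>0$ directly) is correct: if $U_a$ were reduced to $\{x_a\}$, then $R_a=[U_a,S_a]\subset W^s_\varepsilon(x_a)$, and a second periodic point $q\neq x_a$ of $\text{int}_\Omega R_a$ (which exists because (NL) makes $\Omega$ perfect and periodic points are dense) would be forced to coincide with $x_a$ by stable contraction along a common multiple of the two periods. This is a different route from the paper, which obtains $\text{diam}\, U_a>0$ afterwards as a consequence of perfectness of $\widehat{\mathcal{U}}$, but it is sound.

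The perfectness half, however, has a genuine gap at the sentence ``such a sequence lifts through the coding to a point $y_n\in\widehat{\mathcal{U}}$''. An admissible one-sided word does determine a unique point, namely the intersection of the nested \emph{closed} cylinders $g_{\tilde a_0\cdots \tilde a_m}(U_{\tilde a_m})$ (nonempty by compactness, a singleton by expansion), but nothing in your construction prevents the forward $F$-orbit of that point from meeting the boundaries of the Markov pieces; in that case the point does \emph{not} belong to $\widehat{\mathcal{U}}$, which by construction consists exactly of the points whose forward orbit stays in the interiors. Your closing paragraph settles only the converse issue, namely uniqueness of the coding of a point already known to lie in $\widehat{\mathcal{U}}$ (which indeed yields $y_n\neq x$ once $y_n\in\widehat{\mathcal{U}}$ is granted); it does not show that the coded point lies in $\widehat{\mathcal{U}}$, and that is precisely the missing step. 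The argument can be repaired, but it needs an extra ingredient: for instance, prove that for every admissible word the open cylinder $\text{int}_\Omega R_{\tilde a_0}\cap f^{-1}(\text{int}_\Omega R_{\tilde a_1})\cap\dots\cap f^{-m}(\text{int}_\Omega R_{\tilde a_m})$ is nonempty (a standard Markov-partition fact, but one that must be invoked), then pick in it a point of $\widehat{\mathcal{R}}$ -- available because $\widehat{\mathcal{R}}$ is a dense $G_\delta$ by Baire, or because $\widehat{\mathcal{U}}$ has full $\nu$-measure while $\nu$ has full support -- and project it by $\pi$; such a point lies in $\widehat{\mathcal{U}}$, is $\kappa^n$-close to $x$, and differs from $x$ by your uniqueness-of-coding observation. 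The paper sidesteps cylinders entirely: it shows $\widehat{\mathcal{U}}$ is infinite by projecting distinct $f$-periodic orbits to distinct $F$-periodic orbits, shows $F_{|\widehat{\mathcal{U}}}$ is topologically transitive as a factor of $f_{|\widehat{\mathcal{R}}}$ (again via Baire), and concludes with the isolated-point argument of Remark 2.2.
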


\begin{proof}

First of all, we prove that $\widehat{\mathcal{U}}$ is infinite. By hypothesis, there exists an infinite family of distinct periodic orbits for $f$ in $\Omega$. Since $f$ is Axiom A, the set of periodic orbits for $f$ is dense in $\Omega$. In particular, the set of $f$-periodic points in $\mathcal{R}^{(0)}$ is infinite. \\

Let $x \neq y \in R_a$ be two $f$-periodic points with distinct periodic orbits. Then $\pi(x)$ and $\pi(y)$ are $F$-periodic with distinct periodic orbits. (In particular, they are in $\widehat{\mathcal{U}}$.) Indeed, if there existed some $n_0$ and $m_0$ such that $\pi(f^{n_0}(x)) = \pi(f^{m_0}y) $, then by definition of $\pi$ we would have $ f^{n_0}(x) \in W^s(f^{m_0} y) $. Hence $d(f^{n+n_0}(x), f^{n+m_0} (y)) \underset{n \rightarrow + \infty}{\longrightarrow} 0$, a contradiction.

So $\widehat{\mathcal{U}}$ is an infinite set. Now we justify that $F_{|\widehat{\mathcal{U}}}$ is topologically transitive. 

We see by the Baire category theorem that $\widehat{\mathcal{R}}$ is a dense subset of $\Omega$. Moreover, $ f(\widehat{\mathcal{R}}) = \widehat{\mathcal{R}} $. It then follows from the transitivity of $f_{|\Omega}$ that $f_{|\widehat{\mathcal{R}}}$ is topologically transitive. Hence, $ F_{| \widehat{\mathcal{U}}} $ is topologically transitive, as a factor of $f_{|\widehat{\mathcal{R}}}$. We conclude the proof using the argument of remark 2.2. \end{proof}

\begin{remark}
The fact that $\widehat{\mathcal{U}}$ is perfect, combined with the fact that holonomies extend to $C^{1+\alpha}$ maps, allows us to consider for $x \in \mathcal{R}$ the (absolute value of the) derivatives along the unstable direction of the holonomies in a meaningful way (without having to chose an extension). We denote them by $ \partial_u \pi(x) $. (Formally, $|\partial_u \pi(x)| := |(d\pi)_x(\vec{n})|$ where $\vec{n}$ is a unit vector in $E^u_x$ and where $|\cdot|$ is the norm on $E^u_x$.) \\

It is then known that those quantities are uniformly bounded $\cite{PR02}$. In other words:

$$ \exists c,C > 0, \forall x \in \mathcal{R}, \ c < |\partial_u \pi(x)| \leq C .$$
\end{remark}

\begin{remark}
The map $F : \widehat{\mathcal{U}} \rightarrow \widehat{\mathcal{U}}$ is an eventually expanding map. \\

Indeed, let $n \geq 0$. By Theorem 2.2, if $x,y \in \widehat{\mathcal{U}}$ are close enough depending on $n$, then $d(f^n(x),f^n(y)) \geq \kappa^{-n} d(x,y)$ and $f^n(x),f^n(y)$ are still in the same $R_b$. Since $F^n = \pi \circ f^n$, and since $\pi$ has bounded derivatives, we get:

$$\exists c>0, \ \forall n, \ \forall (x,y) \ \text{close enough} , \  d^u(F^n x, F^n y) \geq c \kappa^{-n} d^u(x,y) .$$

\end{remark}

\subsection{A transfer operator}

The dynamical system $(F,\widehat{\mathcal{U}},\nu)$ is constructed to behave like a semi-shift, and so it is natural to search for a transfer operator defined on $\mathcal{U}$ that leaves $\nu$ invariant. Recall that $\mu$ is the equilibrium measure for some Hölder potential $\psi$ on $M$: the fact that $\psi$ isn't necessarily $S$-constant suggest that we will have to search for another potential, cohomologous to $\psi$, that is $S$-invariant. Looking at the symbolic case, we notice from the proof of the proposition 1.2 in \cite{PP90} that the potential $\varphi_0 \in C^\alpha( \mathcal{R},\mathbb{R} )$ (take $\alpha$ smaller if necessary) defined by the formula $$  \varphi_0(x) := \psi(\pi(x)) + \sum_{n=0}^\infty \left( \psi( f^{n+1} \pi x ) - \psi(f^n \pi f x) \right) \quad \quad \quad  $$ might be interesting to consider. First of all, recall what is a transfer operator.

\begin{definition}

For some Hölder potential $\varphi:\mathcal{U} \rightarrow \mathbb{R}$, define the associated \emph{transfer operator} $ \mathcal{L}_\varphi : C^0( \mathcal{U}^{(0)} ,\mathbb{C}) \longrightarrow  C^0( \mathcal{U}^{(1)} ,\mathbb{C}) $
by 
$$ \forall x \in \mathcal{U}^{(0)}, \  \mathcal{L}_\varphi h (x) := \sum_{y \in F^{-1}(x)} e^{\varphi(y)} h(y) .$$

Iterating $\mathcal{L}_\varphi$ gives
$$ \forall x \in \mathcal{U}^{(0)}, \ \mathcal{L}_\varphi^n h (x) = \sum_{y \in F^{-n}(x)} e^{S_n \varphi(y)} h(y) ,$$
where $S_n \varphi(z) := \sum_{k=0}^{n-1} \varphi(F^k(z))$ is a Birkhoff sum. \\

By duality, $\mathcal{L}_\varphi$ also acts on the set of measures on $\mathcal{U}$. If $m$ is a measure on $\mathcal{U}^{(1)}$, then $\mathcal{L}_\varphi^* m$ is the unique measure on $\mathcal{U}^{(0)}$ such that

$$ \forall h \in C^0( \mathcal{U}^{(1)}, \mathbb{C}), \ \int_{\mathcal{U}} h \ d \mathcal{L}_\varphi^* m = \int_{\mathcal{U}} \mathcal{L}_\varphi h \ dm  .$$

\end{definition}

\begin{remark}
We may rewrite the definition by highlighting the role of the inverse branches of $F$.
For some $a \rightarrow b$, we see that $F : U_{ab}^{(1)} \rightarrow U_{b}^{(0)}$ is (the restriction of) a diffeomorphism. We denote by $g_{ab} : U_b \rightarrow U_{ab}$ its local inverse, that can be defined by the formula 
$$ \forall x \in U_b, \ g_{ab}(x) := f^{-1}([ x , f(x_a) ]) .$$
Then the transfer operator can be rewritten as follow:

$$ \forall x \in U_b, \ \mathcal{L}_\varphi h(x) = \sum_{a \rightarrow b} e^{\varphi(g_{ab}(x))} h(g_{ab}(x)) .$$

\end{remark}

\begin{theorem}

There exists a Hölder function $h: \mathcal{U} \rightarrow \mathbb{R}$ such that $\mathcal{L}_{\varphi_0}^* (h \nu) = e^{P(\psi)} h \nu $, where $$\varphi_0(x) := \psi(\pi(x)) + \sum_{n=0}^\infty \left( \psi( f^{n+1} \pi x ) - \psi(f^n \pi f x) \right).$$

\end{theorem}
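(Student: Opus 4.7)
The plan is to realize $\varphi_0$ as an $S$-constant representative cohomologous to $\psi$, and then to apply the Ruelle-Perron-Frobenius theorem to the expanding factor $(F, \widehat{\mathcal{U}})$. The desired density $h$ will turn out to be the reciprocal of the eigenfunction produced by that theorem, so that $h\nu$ becomes the associated conformal eigenmeasure of eigenvalue $e^{P(\psi)}$.

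First I would verify that $\varphi_0$ is $S$-constant, hence descends to a Hölder function on $\mathcal{U}$. If $y \in W^s(x, R_a)$, then $\pi(x) = \pi(y)$; moreover the Markov property gives $fy \in W^s(fx, R_b)$ whenever $fx \in R_b$, so $\pi(fx) = \pi(fy)$ as well. Each summand in the definition of $\varphi_0(x)$ is therefore preserved under $x \mapsto y$, and Hölder regularity of the descent follows from exponential contraction on stable leaves applied to the pair $(f^{n+1}\pi x, f^n \pi f x)$. Next I would exhibit the cohomology with $\psi$: defining the Hölder function
$$ U(x) := \sum_{n=0}^\infty \bigl(\psi(f^n x) - \psi(f^n \pi x)\bigr), $$
whose convergence is again ensured by exponential contraction on $W^s$, a direct telescoping computation yields $U \circ f - U = \varphi_0 - \psi$. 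Cohomologous Hölder potentials share both pressure and equilibrium state, so $P_f(\varphi_0) = P(\psi)$ and $\mu$ is also the $f$-equilibrium state of $\varphi_0$.

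I would then transfer the equilibrium state through the semiconjugacy $\pi$. Since $\varphi_0$ is $S$-constant, its $f$-Birkhoff sums coincide with the $F$-Birkhoff sums of its descent; combined with the vanishing of the conditional entropy of $\mu$ over the stable partition (stable leaves being contracted by $f$), the variational principle yields $P_F(\varphi_0) = P(\psi)$ and identifies $\nu = \pi_*\mu$ as the equilibrium state for $\varphi_0$ on $(\widehat{\mathcal{U}}, F)$; this classical factorization of pressure in the Axiom A setting is carried out in detail in \cite{Bo75}. Once this is granted, the map $F$ is eventually expanding (Remark 2.4), topologically mixing (inherited from the mixing of the basic set), acts on the compact perfect set $\widehat{\mathcal{U}}$ (Lemma 2.2), and $\varphi_0$ is Hölder, so the Ruelle-Perron-Frobenius theorem (\cite{PP90}) produces a strictly positive Hölder eigenfunction $h_0$ of $\mathcal{L}_{\varphi_0}$ and a probability eigenmeasure $m$, both with eigenvalue $e^{P(\psi)}$, whose normalized product $h_0 m$ is the unique equilibrium state. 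Identifying $h_0 m = \nu$ and taking $h := 1/h_0$ gives $h\nu = m$, which is exactly the identity $\mathcal{L}_{\varphi_0}^*(h\nu) = e^{P(\psi)} h\nu$. The delicate point is the transfer step: matching equilibrium states across $\pi$ is not purely formal and relies essentially on the vanishing of conditional entropy along the stable partition.
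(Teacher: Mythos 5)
Your argument is correct in substance, but it follows a genuinely different route from the paper. You take the classical symbolic path: check that $\varphi_0$ is $S$-constant and cohomologous to $\psi$ (your telescoping with $U(x)=\sum_{n\ge 0}(\psi(f^nx)-\psi(f^n\pi x))$ does give $\varphi_0=\psi+U\circ f-U$), transfer the equilibrium state through $\pi$ using the natural-extension/vanishing-conditional-entropy argument so that $\nu=\pi_*\mu$ is the unique $F$-equilibrium state of $\varphi_0$ with $P_F(\varphi_0)=P(\psi)$, and then invoke Ruelle--Perron--Frobenius to write $\nu=h_0m$ with $\mathcal{L}_{\varphi_0}^*m=e^{P(\psi)}m$, so $h:=1/h_0$ works. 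The paper explicitly acknowledges that "this fact follows from the symbolic setting," which is exactly your route (cf.\ Proposition 1.2 in \cite{PP90} and the two-sided/one-sided correspondence in \cite{Bo75}), but it chooses instead a geometric proof: it imports from \cite{Le00} and \cite{Cl20} the families of leafwise conformal measures $m^u_x$, $m^s_x$ with prescribed holonomy Jacobians $e^{\omega^\pm}$ and $f$-conformality $e^{\psi-P(\psi)}$, together with the local product formula for $\mu$; the density is then explicit, $h=h_0^{-1}$ with $h_0(x)=c_a\int_{S_a}e^{\omega^-+\omega^+}dm^s_{x_a}$, so that $h\nu=m^u$, and the eigenmeasure equation $\mathcal{L}_{\varphi_0}^*m^u=e^{P(\psi)}m^u$ is verified by a direct change of variables. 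What each buys: the paper's route avoids precisely the step you flag as delicate (identifying $\nu$ as the equilibrium state of the factor, which needs the entropy argument and uniqueness in RPF) and produces $h$ by an explicit formula tied to the product structure, which is also what makes $h\nu$ recognizable as the unstable conformal family used elsewhere; your route is more abstract but shorter to state, at the cost of outsourcing the transfer step and the RPF machinery. Two small points to tighten in your version: the basic set is only assumed transitive, not mixing (the spectral decomposition only gives mixing for an iterate), but existence of the positive Hölder eigenfunction and eigenmeasure and uniqueness of the equilibrium state only require irreducibility, so this is harmless; and the $S$-constancy of $\varphi_0$ (i.e.\ $\pi(fy)=\pi(fx)$ for $y\in W^s(x,R_a)$) should be stated on $\widehat{\mathcal{R}}$, the full-measure set where the Markov property applies without boundary ambiguities.
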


\begin{proof}

This fact follows from the symbolic setting, but a quick and clean proof can also be achieved using the following geometrical results, extracted from \cite{Le00} and \cite{Cl20}. See in particular Theorem 3.10 in \cite{Cl20}. Define, when $x \in W^s(y)$, $$\omega^+(x,y) := \sum_{n=0}^\infty \left( \psi(f^n(x)) - \psi(f^n(y)) \right),$$
and when $x \in W^u(y)$, $$ \omega^-(x,y) := \sum_{n=0}^\infty \left( \psi(f^{-n}(x)) - \psi(f^{-n}(y)) \right). $$
There exists two families of (nonzero and) finite measures $m_x^s$, $m_x^u$ indexed on $x \in \mathcal{R}^{(0)}$ such that $\text{supp} ( m_x^u) \subset W^u(x,R_a)$ and $\text{supp} ( m_x^s ) \subset W^s(x,R_a)$ if $x \in R_a$, and that satisfies the following properties:

\begin{itemize}
    \item If $\pi$ is a stable holonomy map between $W^u(y,R_a)$ and $W^u(\pi(y),R_a)$, then
    $$ \frac{d\left(\pi_*m_y^u\right)}{d m_{\pi(y)}^u}(\pi(x)) = e^{\omega^+(x,\pi x)}.$$
    and a similar formula holds for the $m_y^s$ with an unstable holonomy map.
    \item The family of measures is $f$-conformal:
    $$ \frac{ d \left( f_* m_y^u \right)}{d m_{f(y)}^u}(f(x)) = e^{\psi(x) - P(\psi) } $$
    and a similar formula holds for the $m^s_y$.
\end{itemize}

Finally, this family of measures is related to $\mu$ in the following way. There exists positive constants $(c_a)_{a \in \mathcal{A}}$ such that, for any measurable map $g:M \rightarrow \mathbb{C}$, the following formula holds:

$$ \int_M g d\mu = \sum_{a \in \mathcal{A}} \int_{U_a} \int_{S_a} e^{\omega^+([x,y],x)+\omega^-([x,y],y)} g([x,y]) c_a dm_{x_a}^s(y) dm_{x_a}^u(x)  .$$

From this we can link $\nu$ and the family $m_{x_a}^u$ in the following way. Define $h_0 : \mathcal{U} \rightarrow \mathbb{R}$ by the formula
$$ \forall a \in \mathcal{A}, \ \forall x \in U_a, \ h_0(x) := c_a \int_{S_a} e^{\omega^-([x,y],y) + \omega^+([x,y],y)} dm_{x_a}^s(y) > 0.$$

Then, for any $S$-constant map $g:\mathcal{R} \rightarrow \mathbb{R}$, we have
$$ \int_\mathcal{U} g d\nu = \int_M g d\mu = \sum_{a \in \mathcal{A}} \int_{U_a} g(x) h_0(x) dm_{x_a}^u(x) .$$

It follows from the properties of the bracket and the fact that $\psi$ is supposed Hölder that $h_0$ is Hölder. So $h:=h_0^{-1}$ is Hölder and satisfy $h \nu = m^u$, where $m^u_{|U_a} := m^u_{x_a}$. We check that $m^u$ is an eigenmeasure for $\mathcal{L}_{\varphi_0}$. 
Fix $a \rightarrow b \in \mathcal{A}$, and $G: U_b \rightarrow \mathbb{C} $ continuous.
We have
$$ \int_{U_b} G(g_{ab}(x)) d m_{x_b}^u(x) = \int_{U_b} G \left( f^{-1} [x,f(x_a)] \right)  d m_{x_b}^u(x) $$
$$ = \int_{f(U_{ab})} G \left( f^{-1} x \right) e^{\omega^+(x,\pi x)} d m_{f(x_a)}^u(x)  $$
$$ = \int_{U_{ab}} G(x) e^{\omega^+(f x, \pi f x)} e^{-\psi(x)+P(\psi)} d m^u_{x_a}(x) .$$
$$ = e^{P(\psi)} \int_{U_{ab}} G(x) e^{-\varphi_0(x)} dm_{x_a}^u .$$

In particular, for any continuous $G: \mathcal{U} \rightarrow \mathbb{R}$:
$$ \int_{\mathcal{U}} \mathcal{L}_{\varphi_0}(G) dm^u = \sum_{b \in \mathcal{A}} \int_{U_b} \sum_{a \rightarrow b} e^{\varphi_0( g_{ab}(x) ) } G(g_{ab}(x)) d m^u_{x_a}  $$
$$ = e^{P(\psi)} \underset{a \rightarrow b}{\sum_{a, b \in \mathcal{A}}} \int_{U_{ab}} G(x) dm^u_{x_a} = \int_{\mathcal{U}} G dm^u. $$
Which concludes the proof. \end{proof}

\begin{lemma}
Define $\varphi := \varphi_0 - \ln h \circ F + \ln h - P(\psi)$. Then $\varphi:\mathcal{U} \rightarrow \mathbb{R}$ is Hölder and normalized, that is: $ \mathcal{L}_\varphi^* \nu = \nu $, $ \mathcal{L}_\varphi 1 = 1 $ on $\widehat{\mathcal{U}}$. Moreover, $\varphi \leq 0$ on $\widehat{\mathcal{U}}$, and there exists an integer $N \geq 0$ such that $S_N \varphi<0$ on $\widehat{\mathcal{U}}$.
\end{lemma}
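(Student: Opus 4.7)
The plan is to verify the listed properties in order. For Hölder regularity of $\varphi$: the potential $\varphi_0$ is Hölder by construction, as the defining telescoping series converges in Hölder norm thanks to the Hölder continuity of $\psi$ and to the uniform contraction on stable leaves under forward iterates of $f$. The function $h$ of the previous theorem is Hölder and, being the reciprocal of a strictly positive continuous function on the compact set $\mathcal{U}$, is uniformly bounded above and below by positive constants; hence $\ln h$ is Hölder. The map $F$ is locally smooth (composition of $f$ with the $C^{1+\alpha}$ projection $\pi$), so $\ln h \circ F$ is Hölder, and $\varphi$ inherits the Hölder property.

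For the two normalization identities, a direct computation from the definition gives
$$ \mathcal{L}_\varphi g(x) = \sum_{F(y)=x} e^{\varphi_0(y)} \frac{h(y)}{h(x)} e^{-P(\psi)} g(y) = \frac{e^{-P(\psi)}}{h(x)} \mathcal{L}_{\varphi_0}(hg)(x). $$
Applied to $g \equiv 1$ this yields $\mathcal{L}_\varphi 1 = e^{-P(\psi)} \mathcal{L}_{\varphi_0}(h)/h$, which is the constant function $1$ precisely when $h$ is the Perron--Frobenius eigenfunction of $\mathcal{L}_{\varphi_0}$ with eigenvalue $e^{P(\psi)}$. This companion to the eigen-measure identity of the previous theorem can be extracted from the same geometric construction, or from the uniqueness of the positive eigendata together with the identification of $\nu$ as the equilibrium state for $\varphi_0$ via $\nu = \pi_* \mu$. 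Once $\mathcal{L}_\varphi 1 = 1$ is established, the identity $\mathcal{L}_\varphi^* \nu = \nu$ follows because $\varphi$ is a normalized Hölder potential cohomologous to $\varphi_0 - P(\psi)$: its equilibrium state coincides with that of $\varphi_0$, which is $\nu$, and by the standard theory any normalized potential has its equilibrium measure as the unique fixed point of the dual operator.

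From $\mathcal{L}_\varphi 1 = 1$, the identity $\sum_{F(y)=x} e^{\varphi(y)} = 1$ with strictly positive summands forces $e^{\varphi(y)} \leq 1$, i.e.\ $\varphi \leq 0$ on $\widehat{\mathcal{U}}$. Iterating, $\mathcal{L}_\varphi^N 1 = 1$ becomes $\sum_{F^N(z)=x} e^{S_N \varphi(z)} = 1$, and whenever $x$ has at least two $F^N$-preimages every summand is strictly less than $1$, giving $S_N \varphi(z) < 0$ at each preimage $z$. The main obstacle is to obtain this strict inequality \emph{uniformly} in $x$, namely to exhibit an $N$ with $\#F^{-N}(x) \geq 2$ for every $x \in \widehat{\mathcal{U}}$. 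I would obtain it from the combinatorial mixing of the Markov partition: the spectral decomposition of the basic set $\Omega$ provides a period $p$ such that $f^p$ is topologically mixing on each of its pieces, so the transition matrix $M$ of the partition has a primitive block power, and for every sufficiently large multiple $N$ of $p$ each column of $M^N$ contains at least two positive entries. Since $|\mathcal{A}| \geq 2$, a fact guaranteed by Lemma 2.1 (which states that $\widehat{\mathcal{U}}$ is perfect under (NL)), this multiplicity is non-trivial, and we conclude $S_N \varphi < 0$ on $\widehat{\mathcal{U}}$ for such $N$.
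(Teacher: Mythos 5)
Your treatment of the sign conditions is fine and matches the paper: $\varphi\leq 0$ on $\widehat{\mathcal{U}}$ from $\mathcal{L}_\varphi 1=1$, and $S_N\varphi<0$ once one knows $\#F^{-N}(x)\geq 2$ for every $x$ (the paper gets this $N$ from uniform expansion in one line, you from mixing of the transition matrix; either is acceptable). The regularity discussion is also standard. The genuine gap is in the two normalization identities, which are the heart of the lemma. You prove $\mathcal{L}_\varphi 1=1$ by asserting that $h$ is the Perron--Frobenius \emph{eigenfunction} of $\mathcal{L}_{\varphi_0}$ with eigenvalue $e^{P(\psi)}$; but Theorem 2.8 only provides the \emph{eigen-measure} identity $\mathcal{L}_{\varphi_0}^*(h\nu)=e^{P(\psi)}h\nu$, and your two suggested sources for the eigenfunction property are not available: ``the same geometric construction'' would require a further computation with the stable conformal family that is nowhere carried out, and ``uniqueness of the positive eigendata together with the identification of $\nu$ as the equilibrium state for $\varphi_0$'' presupposes that $\nu=\pi_*\mu$ is the equilibrium (Gibbs) state of $\varphi_0$ for the factor map $F$ --- a fact the paper never establishes and which is essentially the content being proved here, so this route is circular as written. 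The same unproven identification is then used again to get $\mathcal{L}_\varphi^*\nu=\nu$.

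The paper's proof avoids all of this by using an ingredient you never invoke: the $F$-invariance of $\nu$ (coming from $\nu=\pi_*\mu$ and the $f$-invariance of $\mu$). First, $\mathcal{L}_\varphi^*\nu=\nu$ is a direct one-line computation from the eigen-measure identity of Theorem 2.8 and the cohomology relation --- note that you already derived the needed operator identity relating $\mathcal{L}_\varphi$ to $\mathcal{L}_{\varphi_0}$, so this computation was within reach. Then $\mathcal{L}_\varphi 1=1$ is \emph{deduced}, not assumed: for every $G\in L^2(\nu)$,
$$ \int_{\mathcal{U}} \mathcal{L}_\varphi(1)\, G \, d\nu = \int_{\mathcal{U}} \mathcal{L}_\varphi(G\circ F)\, d\nu = \int_{\mathcal{U}} G\circ F \, d\nu = \int_{\mathcal{U}} G \, d\nu, $$
using $\mathcal{L}_\varphi\bigl(g\cdot(G\circ F)\bigr)=G\,\mathcal{L}_\varphi(g)$, the fixed-point property of $\nu$, and its $F$-invariance; then continuity of $\mathcal{L}_\varphi 1$ and full support of $\nu$ upgrade the a.e.\ identity to all of $\widehat{\mathcal{U}}$. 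In this way the eigenfunction property of $h$ comes out as a consequence rather than an input. To repair your write-up, reverse your order of argument: establish $\mathcal{L}_\varphi^*\nu=\nu$ by the direct computation and then run the duality argument above, rather than appealing to unproven eigendata or to $\nu$ being the equilibrium state of $\varphi_0$.
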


\begin{proof}

It follows from the definition of $\varphi$ that $  \mathcal{L}_\varphi^* \nu = \nu $. Then, we see that $ \mathcal{L}_\varphi 1 = 1 $ on $L^2(\nu) $. Indeed, for any $L^2(\nu)$ map $G$, we have $$\int_{\mathcal{U}} \mathcal{L}_\varphi(1) G d\nu = \int_{\mathcal{U}}\mathcal{L}_\varphi( G \circ F  ) d\nu = \int_{\mathcal{U}} G \circ F d\nu = \int_{\mathcal{U}} G d\nu .$$

Since $\mathcal{L}_\varphi 1$ and $1$ are continuous functions on $\widehat{\mathcal{U}}$, and since $\nu$ has full support, it follows that $\mathcal{L}_\varphi 1 = 1$ on all $\widehat{\mathcal{U}}$, and even on $\mathcal{U}$ by density. We check that $\varphi$ is eventually negative on the residual set. First of all, the equality $ 1 = \mathcal{L}_\varphi(1)$ implies that $\varphi \leq 0$ on $\widehat{\mathcal{U}}$. Then, since $F$ is uniformly expanding, there exists some integer $N$ such that $F^{-N}(x)$ always contains at least two distinct points, for any $x$. The equality
$ 1 = \mathcal{L}_\varphi^{N}(1)$ then allows us to conclude that $S_N \varphi<0$ on $F^{-N}(\widehat{\mathcal{U}})=\widehat{\mathcal{U}}$. \end{proof}

\subsection{Some regularity for $\nu$}

In this subsection, we introduce the usual symbolic formalism, recall the Gibbs estimates for $\nu$, and prove an upper regularity result for $\nu$. In particular, we prove that $\nu$ don't have atoms under our non-linearity condition. Notice that if $\Omega$ were an isolated cycle, then the measure of maximal entropy would be the Dirac on the cycle, which is fully discrete, so this is not completely trivial.

\begin{notations}

For $n \geq 1$, a word $\mathbf{a} = a_1 \dots a_n \in \mathcal{A}^n$ is said to be admissible if $a_1 \rightarrow a_2 \rightarrow \dots \rightarrow a_n$. We define:
\begin{itemize}
    \item $\mathcal{W}_n := \{ \mathbf{a} \in \mathcal{A}^n \ | \ \mathbf{a} \ \text{is admissible} \ \}$.
    \item For $\mathbf{a} \in \mathcal{W}_n$, define $g_{\mathbf{a}} := g_{a_1 a_2}  g_{a_2 a_3}  \dots g_{a_{n-1} a_{n}} : U_{a_n} \longrightarrow U_{a_{1}}$. 
    \item For $\mathbf{a} \in \mathcal{W}_n$, define $U_{\mathbf{a}} := g_{\mathbf{a}}\left( U_{a_n} \right)$, $ U_{\mathbf{a}} := g_{\mathbf{a}}\left( U_{a_n} \right) $, and $ \widehat{U}_{\mathbf{a}} := g_{\mathbf{a}}\left( \widehat{\mathcal{U}} \cap U_{a_n} \right) $ 
    \item For $\mathbf{a} \in \mathcal{W}_n$, set $x_{\mathbf{a}} := g_{\mathbf{a}}\left( x_{a_n} \right) \in \widehat{U}_{\mathbf{a}} $. 
\end{itemize}

\end{notations}

\begin{remark}

Since $F$ is eventually expanding, the maps $g_{\mathbf{a}}$ are eventually contracting as $n$ becomes large. As $\mathcal{U}$ is included in a finite union of unstable curve, that are one dimensional riemannian manifolds, it makes sense to consider absolute value of the derivatives of $F$ and $g_{\mathbf{a}}$, and we will do it from now on. For points in $\mathcal{U}$, this correspond to the absolute value of the derivative in the unstable direction. \\

Since the holonomies are uniformly bounded, and by hyperbolicity of $f$, we see that $$ \forall x \in \widehat{\mathcal{U}}, |\partial_u F^n(x)| \geq c \kappa^{-n} .$$

Consequently,
$$ \forall \mathbf{a} \in \mathcal{W}_n, \ \forall x \in U_{a_n}, \ |g_{\mathbf{a}}'(x)| \leq c^{-1} \kappa^n  . $$

Moreover, using remark 2.10, we see that

$$\forall \mathbf{a} \in \mathcal{W}_n,  \text{diam}(U_{\textbf{a}}) = \text{diam} \ g_{\mathbf{a}}\left( U_{a_n} \right) \leq c^{-1} \kappa^n .$$

A consequence for our potential is that it has $\textbf{exponentially vanishing variations}$. Namely, since $\varphi$ is Hölder, the following holds:

$$ \exists C>0, \ \forall n \geq 1, \ \forall \mathbf{a} \in \mathcal{W}_n, \ \forall x,y \in U_{\mathbf{a}}, \ |\varphi(x) - \varphi(y)| \leq C \kappa^{\alpha n} .$$

\end{remark}

\begin{remark}

For a fixed $n$, the family $(U_{\mathbf{a}})_{\mathbf{a} \in W_n}$ is a partition of $\mathcal{U}$ (modulo a boundary set of zero measure). In particular, for any continuous map $g:\mathcal{U} \rightarrow \mathbb{C}$, we can write
$$ \int_{\mathcal{U}} g d \nu = \sum_{\mathbf{a} \in \mathcal{W}_n} \int_{U_{\mathbf{a}}} g d\nu = \sum_{\mathbf{a} \in \mathcal{W}_n} \int_{U_{\mathbf{a}}} g d\nu .$$

\end{remark}

\begin{lemma}[Gibbs estimates, \cite{PP90}]

$$ \exists C_0>1, \ \forall n \geq 1, \ \forall \mathbf{a} \in \mathcal{W}_n, \  \forall x \in U_{\mathbf{a}}, \ \ C_0^{-1} e^{S_n \varphi(x)} \leq \nu( U_{\mathbf{a}} ) \leq C_0 e^{S_n \varphi(x)} $$

\end{lemma}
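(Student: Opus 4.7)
The plan is to derive the Gibbs estimates from the conformality relation $\mathcal{L}_{\varphi}^* \nu = \nu$ established in Lemma 2.4, combined with a bounded-distortion estimate for the Birkhoff sums $S_n \varphi$ along cylinders. First I would iterate the invariance $n$ times to write
$$\nu(U_{\mathbf{a}}) = \int_{\mathcal{U}} \mathbf{1}_{U_{\mathbf{a}}} \, d\nu = \int_{\mathcal{U}} \mathcal{L}_{\varphi}^n \mathbf{1}_{U_{\mathbf{a}}} \, d\nu.$$
Using the explicit formula $\mathcal{L}_{\varphi}^n h(y) = \sum_{z \in F^{-n}(y)} e^{S_n \varphi(z)} h(z)$ together with the observation that among the $n$-preimages of any $y \in U_{a_n}$ exactly one, namely $g_{\mathbf{a}}(y)$, lies in $U_{\mathbf{a}}$ (all other preimages contributing zero), this reduces to the key identity
$$\nu(U_{\mathbf{a}}) = \int_{U_{a_n}} e^{S_n \varphi(g_{\mathbf{a}}(y))} \, d\nu(y).$$

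Next comes the distortion bound. Fix any reference point $x \in U_{\mathbf{a}}$. For each $y \in U_{a_n}$ and each $0 \leq k \leq n-1$, both $F^k x$ and $F^k g_{\mathbf{a}}(y)$ lie in the shorter cylinder $U_{a_{k+1} \dots a_n}$, whose diameter is at most $c^{-1} \kappa^{n-k}$ by Remark 2.13. Combining this with the exponentially vanishing variations of $\varphi$ recorded in Remark 2.14 yields, by a geometric-series estimate,
$$\bigl| S_n \varphi(g_{\mathbf{a}}(y)) - S_n \varphi(x) \bigr| \leq \sum_{k=0}^{n-1} C \kappa^{\alpha(n-k)} \leq D$$
for some constant $D$ independent of $n$, $\mathbf{a}$, $x$ and $y$. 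Inserting this into the above identity gives
$$e^{-D} \, e^{S_n \varphi(x)} \, \nu(U_{a_n}) \leq \nu(U_{\mathbf{a}}) \leq e^D \, e^{S_n \varphi(x)} \, \nu(U_{a_n}).$$

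Finally I would bound $\nu(U_{a_n})$ uniformly from below. Since $\nu = \pi_* \mu$ and $\pi^{-1}(U_a) = R_a$, one has $\nu(U_a) = \mu(R_a)$; as $R_a$ is a proper rectangle with nonempty interior in $\Omega$ and $\mu$ has full support on the basic set $\Omega$, this is strictly positive, and finiteness of $\mathcal{A}$ yields a uniform constant $c_1 > 0$ such that $\nu(U_a) \geq c_1$ for every $a \in \mathcal{A}$. Setting $C_0 := e^D / c_1$ then closes the argument (the upper bound being trivially absorbed, as $\nu(U_{a_n}) \leq 1$). I do not anticipate any real obstacle here: this is exactly the Parry--Pollicott Gibbs argument from the symbolic setting transplanted to the geometric Markov-partition framework of Sections 2.4--2.6, and the only subtlety is the uniform positivity of $\nu(U_a)$, which follows immediately from full support of the equilibrium state $\mu$.
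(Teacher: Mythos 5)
Your proposal is correct and follows essentially the same route as the paper: both rest on the invariance $\mathcal{L}_\varphi^*\nu=\nu$ together with the exponentially vanishing variations of $\varphi$ along cylinders, the only cosmetic difference being that you iterate the transfer operator across the whole word at once while the paper telescopes one-step ratio estimates $\nu(U_{a_2\dots a_n})/\nu(U_{a_1\dots a_n})\simeq e^{-\varphi(x)}$ and multiplies them. Your explicit uniform lower bound $\nu(U_a)\geq c_1>0$ (from full support of $\mu$ and finiteness of $\mathcal{A}$) is a point the paper leaves implicit, and the slight index shift between $S_{n-1}\varphi$ and $S_n\varphi$ is harmlessly absorbed into the constant since $\varphi$ is bounded.
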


 \begin{proof}

We have
$$ \int_{\mathcal{U}} e^{- \varphi} \mathbb{1}_{U_{a_1 \dots a_n}} d\nu_\varphi =  \int_{\mathcal{U}} \mathcal{L}_\varphi\left( e^{- \varphi} \mathbb{1}_{U_{a_1 \dots a_n}} \right) d\nu  = \int_{\mathcal{U}} \mathbb{1}_{U_{a_2 \dots a_n}} d\nu =  \nu(U_{a_2 \dots a_n}). $$
Moreover, since $\varphi$ has exponentially decreasing variations, we can write that:
$$\forall x \in U_{a_1 \dots a_n}, \ e^{-\varphi(x) - C \kappa^{\alpha n}} \nu(U_{a_1 \dots a_n}) \leq \int_{\mathcal{U}} e^{- \varphi} \mathbb{1}_{U_{a_1 \dots a_n}} d\nu \leq  e^{-\varphi(x) + C \kappa^{\alpha n}} \nu(U_{a_1 \dots a_n})  $$
And so $$ \forall x \in U_{a_1 \dots a_n}, \ e^{-\varphi(x) - C \kappa^{\alpha n}}  \leq \frac{\nu(U_{a_2 \dots a_n})}{\nu(U_{a_1 \dots a_n}) } \leq  e^{-\varphi(x) + C \kappa^{\alpha n}} . $$
Multiplying those inequalities gives us the desired relation.
 \end{proof}

\begin{lemma}

The measure $\nu$ is upper regular, that is, there exists $C>0$ and $\delta_{\text{up}}>0$ such that

$$ \forall x \in \mathcal{U}, \ \forall r > 0, \ \nu( B(x,r) ) \leq C r^{\delta_{\text{up}}}.$$

Where $B(x,r)$ is a ball of center $x$ and radius $r$ in $\mathcal{U}$.

\end{lemma}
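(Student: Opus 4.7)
The plan is to combine the Gibbs estimate of Lemma 2.5 with a stopping-time partition at spatial scale $r$. By Lemma 2.4, the continuous function $S_N \varphi$ is strictly negative on the compact set $\widehat{\mathcal{U}}$, so there exists $\varepsilon_0 > 0$ with $S_N \varphi \leq -\varepsilon_0$ uniformly. Writing any $n$ as $n = qN + s$ with $0 \leq s < N$ and using $\varphi \leq 0$, one obtains $S_n \varphi \leq -\eta n + \varepsilon_0$ with $\eta := \varepsilon_0 / N > 0$, so Lemma 2.5 yields the uniform bound
$$ \nu(U_{\mathbf{a}}) \leq C_1 e^{-\eta n} \qquad \forall n \geq 1,\ \forall \mathbf{a} \in \mathcal{W}_n. $$

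I next need two-sided control on $\text{diam}(U_{\mathbf{a}})$. The upper bound $\text{diam}(U_{\mathbf{a}}) \leq c^{-1} \kappa^n D$, with $D := \max_a \text{diam}(U_a)$, is in Remark 2.11. For a matching lower bound, the boundedness of $|df|$ on the compact manifold $M$ together with Remark 2.10 gives $\Lambda := \sup_{\widehat{\mathcal{U}}} |\partial_u F| < \infty$, hence $|g_{\mathbf{a}}'| \geq \Lambda^{-n}$. Since $\widehat{\mathcal{U}}$ is perfect (Lemma 2.7), $\delta_0 := \min_a \text{diam}(U_a) > 0$, and therefore $\text{diam}(U_{\mathbf{a}}) \geq \delta_0 \Lambda^{-n}$. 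Standard bounded distortion for the Hölder inverse branches $g_{\mathbf{a}}$ also produces a uniform ratio constant $c_R \in (0,1)$ such that $\text{diam}(U_{\mathbf{a}_n}) \geq c_R\, \text{diam}(U_{\mathbf{a}_{n-1}})$ for every nested pair of cylinders.

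Now fix $x \in \mathcal{U}$ and $r > 0$ (the case $r \geq \delta_0$ is trivial). For each $y \in \widehat{\mathcal{U}}$, let $n(y)$ be the smallest $n$ such that $\text{diam}(U_{\mathbf{a}_n(y)}) \leq r$, where $\mathbf{a}_n(y)$ denotes the length-$n$ cylinder through $y$. The resulting family $\mathcal{P}_r$ partitions $\widehat{\mathcal{U}}$, and by the ratio estimate applied to the parent (whose diameter exceeds $r$ by the choice of $n(y)$) each of its members has diameter in $[c_R r, r]$. Cylinders in $\mathcal{P}_r$ meeting $B(x, r)$ are contained in $B(x, 2r)$, which for small $r$ sits inside a single one-dimensional unstable curve; by the Markov property, propagated from the disjointness of the rectangles $R_a$ via the inverse branches $g_{\mathbf{a}}$, distinct such cylinders have disjoint convex hulls in this curve, and packing disjoint intervals of length $\geq c_R r$ into a segment of length $O(r)$ bounds their number by a constant $K_0 = O(1/c_R)$. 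Finally, for each such cylinder $\delta_0 \Lambda^{-n(y)} \leq \text{diam}(U_{\mathbf{a}}) \leq r$ forces $n(y) \geq \log(\delta_0/r)/\log \Lambda$, hence $\nu(U_{\mathbf{a}}) \leq C_1 e^{-\eta n(y)} \leq C_2\, r^{\eta / \log \Lambda}$. Summing yields $\nu(B(x, r)) \leq K_0 C_2\, r^{\delta_{\text{up}}}$ with $\delta_{\text{up}} := \eta / \log \Lambda > 0$. The main obstacle is this last counting step, where the geometric (rather than merely symbolic) structure of the Markov partition is genuinely used via the non-interleaving of cylinders in the one-dimensional unstable direction.
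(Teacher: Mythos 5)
Your proof is correct and follows essentially the same route as the paper's: exponential decay of cylinder measures via the Gibbs estimates combined with the uniform negativity of $S_N\varphi$ on $\widehat{\mathcal{U}}$, a lower bound on cylinder diameters coming from $\sup|F'|<\infty$, and a one-dimensional covering of $B(x,r)$ by boundedly many cylinders of comparable size. The only difference is cosmetic: the paper simply takes all cylinders of one fixed generation $Nn(r)$ chosen so that every cylinder diameter exceeds $2r$ (so at most a bounded number meet the ball), which dispenses with your stopping-time family and the bounded-distortion ratio estimate --- ingredients that are legitimate here (since $\ln|F'|$ is H\"older along the unstable curves) but not needed.
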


\begin{proof}

Let $x \in \mathcal{U}$, and let $r>0$ be small enough. If $B(x,r) \cap \widehat{\mathcal{U}}= \emptyset$ then $\nu(B(x,r)) = 0$ since $\widehat{\mathcal{U}}$ has full measure, and the proof is done. If $B(x,r) \cap \widehat{\mathcal{U}} \neq \emptyset$, then let $\tilde{x} \in B(x,r) \cap \widehat{\mathcal{U}}  $. We then see that $ \nu(B(x,r)) \leq \nu(B(\tilde{x},2r)$. \\

Recall that $F$ has bounded derivative on $\mathcal{U}$: $|F'|_\infty < \infty $. In particular, there exists a constant $\kappa_1 \in (0,1)$ such that $ |(F^n)'|_\infty \leq \kappa_1^{-n} $. Hence, $ \forall n \geq 1, \ \forall \mathbf{a} \in \mathcal{W}_n, \ \text{inf}|g_{\mathbf{a}}'| > \kappa_1^n $. In particular, $ \text{diam}(U_{\mathbf{a}}) > \kappa_1^n \eta_0$, where $\eta_0 := \min_{a \in \mathcal{A}} \text{diam} (U_a) > 0$ (This is where we use the fact that $\widehat{\mathcal{U}}$ is perfect). \\

Recall also from the lemma 2.7 that there exists an integer $N$ such that $S_N \varphi < 0$ on $\widehat{\mathcal{U}}$.  We then let $n(r)$ be the unique integer $n$ such that

$$ \eta_0 \kappa_1^{N(n+1)} < 2r \leq  \eta_0 \kappa_1^{Nn} .$$
Then, by construction, $ 2 r \leq \text{diam}(U_{\mathbf{a}}) , \forall \mathbf{a} \in \mathcal{W}_{n(r)}$. Since everything is one dimensional, it follows that $B(\tilde{x},2r)$ is contained in at most two sets of the form $U_{\mathbf{a}}$, $\mathbf{a} \in \mathcal{W}_n$. In particular, by the Gibbs estimates, there exists eventually a $y \in \widehat{\mathcal{U}}$ such that

$$ \nu\left( B(x,r) \right) \leq C_0 e^{S_{N n(r)} \varphi(x)} +  C_0 e^{S_{N n(r)} \varphi(y)} .$$

But then notice that $ S_{n N} \varphi(x) < - n \inf_{\widehat{\mathcal{U}}}|S_N \varphi| $, and similarly for $S_{n N} \varphi(y) $. Hence:

$$  \nu\left( B(x,r) \right) \leq 2 C_0 e^{- n(r) \inf_{\widehat{\mathcal{U}}}|S_N \varphi|} \leq C r^{\delta_{\text{up}}}$$

for some $C>0$ and $\delta_{\text{up}} >0$. \end{proof}

In particular, it follows that $\nu$ don't have atoms, which is some good news since we are willing to prove Fourier decay. Moreover, we get a regularity result on $\mu$ (that holds more generally if one replace $(NL)$ by \say{$\widehat{\mathcal{U}}$ is perfect}).

\begin{corollary}
Let $f$ be an Axiom A diffeomorphism, and let $\Omega$ be a basic set with codimension one stable foliation. Suppose that $\Omega$ satisfies (NL). Then any equilibrium state $\mu$ associated to Hölder potential $\psi : \Omega \rightarrow \mathbb{R}$ is upper regular.
\end{corollary}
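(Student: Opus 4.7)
The strategy is to transfer the upper regularity of $\nu$ on $\mathcal{U}$, which is the content of the preceding lemma, back to $\mu$ on $\Omega \subset M$ via the projection $\pi : \mathcal{R} \to \mathcal{U}$. The two ingredients I need are the pushforward identity $\nu = \pi_* \mu$, which gives $\mu(E) \le \nu(\pi(E))$ for any $E \subset \mathcal{R}$, together with a uniform Lipschitz estimate for $\pi$ restricted to each rectangle $R_a$.

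To establish the Lipschitz bound, fix $R_a$ and two points $q, q_0 \in R_a$. Since $\pi(x) = [x, x_a]$ depends only on the stable plaque of $x$ in $R_a$, and $[q, q_0] \in W^s_\varepsilon(q) \cap W^u_\varepsilon(q_0)$, we have $\pi(q) = \pi([q, q_0])$. Restricted to the piece of unstable curve $W^u_\varepsilon(q_0) \cap R_a$, the projection $z \mapsto [z, x_a]$ is exactly the stable holonomy from $W^u_\varepsilon(q_0)$ to $W^u_\varepsilon(x_a)$, which is $C^{1+\alpha}$ by Theorem 2.5, with Lipschitz constant uniform in $q_0$ by compactness of $\Omega$. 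Combined with the bracket estimate $d^u([q, q_0], q_0) \le C\, d(q, q_0)$ from Definition 2.4, this yields a constant $L > 0$ such that $d(\pi(q), \pi(q_0)) \le L\, d(q, q_0)$ whenever $q, q_0$ lie in the same $R_a$.

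Given now $p \in \Omega$ and $r > 0$ small, decompose $B(p,r) \cap \Omega \subseteq \bigcup_{a \in \mathcal{A}} (B(p,r) \cap R_a)$. For each $a$, the image $\pi(B(p,r) \cap R_a)$ has diameter at most $2Lr$, hence is contained in a ball $B^u_a \subset U_a$ of radius $2Lr$. Then
$$ \mu(B(p,r) \cap R_a) \le \mu(\pi^{-1}(B^u_a)) = \nu(B^u_a) \le C(2Lr)^{\delta_{\mathrm{up}}} $$
by Lemma 2.9, and summing over the finitely many letters $a \in \mathcal{A}$ yields the required estimate $\mu(B(p,r)) \le C'\, r^{\delta_{\mathrm{up}}}$ for a uniform constant $C'$. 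The only non-routine point is the Lipschitz control on $\pi$, which is the precise place where the codimension one assumption, via the $C^{1+\alpha}$ regularity of stable holonomies, enters in this corollary; everything else reduces to the pushforward identity and to the regularity already established for $\nu$.
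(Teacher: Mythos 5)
Your proof is correct and follows essentially the same route as the paper: decompose $B(p,r)$ over the rectangles $R_a$, bound $\mu(B(p,r)\cap R_a)$ by the $\nu$-measure of its $\pi$-image using $\nu=\pi_*\mu$, control the unstable diameter of that image by the (uniform) Lipschitz behavior of $\pi$, and apply the upper regularity of $\nu$ from Lemma 2.9. The only difference is cosmetic: where the paper directly invokes the uniform bound on $|\partial_u\pi|$ (Remark 2.10) to get $\mathrm{diam}_u\left(\pi(B(p,r)\cap R_a)\right)\lesssim r$, you re-derive this Lipschitz estimate from the bracket inequality and the $C^{1+\alpha}$ regularity of the stable holonomies, which is the same underlying fact.
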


\begin{proof}
Let $x \in \Omega$ and $r > 0$ be small enough. Write $B(x,r) = \bigcup_{a \in \mathcal{A}} B(x,r) \cap R_a$. Notice that $B(x,r) \cap R_a \subset \{ y \in R_a \ | \ \exists z \in  B(x,r) \cap R_a, \ \pi(y) = \pi(z)\}$, and so:

$$ \mu(B(x,r)) = \sum_{a \in \mathcal{A}} \mu \left( B(x,r) \cap R_a \right) \leq \sum_{a \in \mathcal{A}} \nu\left( \pi\left( B(x,r) \cap R_a  \right) \right) \leq |\mathcal{A}| C \|\partial_u \pi\|_{\infty,\mathcal{R}}^{\delta_{up}} (2r)^{\delta_{up}} \leq \tilde{C} r^{\delta_{up}}.$$
using the previous lemma, and using the fact that $\text{diam}_u\left( \pi\left( B(x,r) \cap R_a\right) \right) \leq r \|\partial_u \pi\|_{\infty,\mathcal{R}}$.
\end{proof}

\subsection{ Large deviations }

We finish this preliminary section by recalling some large deviation results. There exists a large bibliography on the subject, see for example $\cite{Ki90}$. Large deviations in the context of Fourier decay for some 1-dimensional shift was used in $\cite{JS16}$ and in $\cite{SS20}$. A simple proof in the context of Julia sets using the pressure function can be found in the appendix of $\cite{Le21}$.

\begin{theorem}[\cite{LQZ03}]

Let $g : \Omega \rightarrow \mathbb{C}$ be any continuous map. Then, for all $\varepsilon>0$, there exists $n_0(\varepsilon)$ and $\delta_0(\varepsilon) > 0$ such that

$$ \forall n \geq n_0(\varepsilon), \ \mu \left( \left\{ x \in \Omega , \ \left| \frac{1}{n} \sum_{k=0}^{n-1} g(f^k(x)) - \int_{\Omega} g d\mu \right| \geq \varepsilon \right\} \right) \leq e^{-\delta_0(\varepsilon) n} $$

\end{theorem}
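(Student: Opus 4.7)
The plan is to combine an exponential Chebyshev inequality with the real-analyticity of the pressure function $t \mapsto P(\psi + tg)$, a standard recipe for large deviations in hyperbolic thermodynamic formalism. First, since continuous maps on the compact set $\Omega$ can be uniformly approximated by Hölder ones, it suffices to establish the estimate for Hölder $g$: if $\tilde g$ is Hölder with $\|g - \tilde g\|_\infty < \varepsilon/4$, then the deviation set for $g$ at level $\varepsilon$ is contained in the deviation set for $\tilde g$ at level $\varepsilon/2$, so I may assume throughout that $g$ is Hölder.

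Write $S_n^f g(x) := \sum_{k=0}^{n-1} g(f^k x)$. For any $t > 0$, exponential Chebyshev gives
$$ \mu\left( \left\{ S_n^f g - n \int g \, d\mu \geq n \varepsilon \right\} \right) \leq e^{-n t \left( \int g \, d\mu + \varepsilon \right)} \int_\Omega e^{t S_n^f g} \, d\mu. $$
The lower-tail event is handled symmetrically by taking $t < 0$, so I focus on the upper tail. The next task is to estimate the moment generating function. Using the factor dynamics $(F, \widehat{\mathcal U}, \nu)$ of Section 2.5, and the fact that Birkhoff sums of $g$ under $f$ differ from those of $g \circ \pi$ under $F$ by a uniformly bounded coboundary coming from the stable contraction, the transfer operator associated to $\varphi + t(g \circ \pi)$ inherits a Ruelle--Perron--Frobenius spectral gap that depends continuously on $t$, yielding
$$ \frac{1}{n} \log \int_\Omega e^{t S_n^f g} \, d\mu \xrightarrow[n \to \infty]{} P(\psi + tg) - P(\psi), $$
uniformly for $t$ in a compact neighbourhood of $0$.

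For Hölder $g$ the map $t \mapsto P(\psi + tg)$ is real-analytic, with derivative $\int g \, d\mu$ at $t = 0$, so $P(\psi + tg) - P(\psi) = t \int g \, d\mu + O(t^2)$. Feeding this back into the Chebyshev bound gives
$$ \frac{1}{n} \log \mu\left( \left\{ S_n^f g - n \int g \, d\mu \geq n \varepsilon \right\} \right) \leq -t \varepsilon + O(t^2) + o(1). $$
Choosing $t = t(\varepsilon) > 0$ small enough that $O(t^2) \leq t\varepsilon/2$, and then $n$ large enough that $o(1) \leq t\varepsilon/4$, yields the exponential decay at rate $\delta_0(\varepsilon) := t \varepsilon / 4 > 0$ asserted in the theorem.

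The main obstacle is the moment-generating asymptotic: one has to verify that the perturbation $\varphi \rightsquigarrow \varphi + t(g \circ \pi)$ preserves the spectral gap and that the leading eigenvalue varies real-analytically in $t$. For Hölder potentials on a topologically mixing Axiom A basic set this is classical (Ruelle, Bowen, Parry--Pollicott), but the reduction from $f$ to the one-dimensional expanding factor $F$, together with the identification of the two pressures through the cohomology of Section 2.6, must be carried out cleanly so that $\mu$-estimates can be transferred to $\nu$-estimates without loss---this is where the normalized transfer operator built earlier earns its keep.
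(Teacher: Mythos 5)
You should first be aware that the paper does not prove this statement: Theorem 2.11 is imported verbatim from \cite{LQZ03}, with a pointer to the appendix of \cite{Le21} for a pressure-function proof in a related setting. Your argument — uniform approximation of continuous $g$ by H\"older functions, exponential Chebyshev, and the expansion $P(\psi+tg)-P(\psi)=t\int g\,d\mu+O(t^2)$ coming from differentiability of the pressure at $t=0$ — is precisely that standard route, and the skeleton (the reduction to H\"older $g$, the Chebyshev bound, the choice of $t=t(\varepsilon)$ and the rate $\delta_0=t\varepsilon/4$) is sound. Two points need attention since you are actually supplying the proof. The trivial one: $g$ is complex-valued in the statement, so split it into real and imaginary parts and run the one-sided tail estimates for each.

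The substantive one is your intermediate claim that the $f$-Birkhoff sums of $g$ differ from the $F$-Birkhoff sums of $g\circ\pi$ by a uniformly bounded coboundary "coming from the stable contraction". As written this is false. Since $\pi\circ f=F\circ\pi$, one has $S_n^F(g_{|\mathcal U})(\pi x)=\sum_{k=0}^{n-1}g(\pi f^k x)$, and the distance $d(f^kx,\pi f^kx)$ is of the order of the diameter of the Markov rectangles, not decaying in $k$; the difference of the two Birkhoff sums can therefore grow linearly in $n$. Stable contraction only gives decay of $d(f^kx,f^k\pi x)$, i.e.\ along the genuine $f$-orbit of $\pi x$, which is not the $F$-orbit. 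The correct repair is exactly the Sinai reduction the paper performs for $\psi$ in Section 2.6: replace $g$ by the cohomologous, $S$-constant potential $g(\pi x)+\sum_{m\geq0}\left(g(f^{m+1}\pi x)-g(f^m\pi f x)\right)$, whose transfer function is bounded and whose $f$-Birkhoff sums agree with $F$-Birkhoff sums of the induced function on $\mathcal U$ up to $O(1)$; only then does the twisted transfer operator argument give $\frac1n\log\int e^{tS_ng}\,d\mu\to P(\psi+tg)-P(\psi)$. Alternatively, and more economically, you can bypass the one-dimensional factor altogether: $\mu$ is an equilibrium (Gibbs) state for the two-sided subshift coding $(\Omega,f)$, and the moment-generating asymptotics together with the analyticity of $t\mapsto P(\psi+tg)$ for H\"older $g$ are classical there (Ruelle, Parry–Pollicott, Kifer \cite{Ki90}), after which your Chebyshev computation closes the argument without any appeal to the constructions of Sections 2.5–2.6.
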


Notice that applying Theorem 2.11 to a $S$-constant function immediately gives the same statement for $(\nu,\widehat{\mathcal{U}},F)$. We apply it to some special cases.

\begin{definition}
Define $\tau_f := \ln |\partial_u f (x)| $  (so that $-\tau_f$ is the geometric potential) and $\tau_F := \ln |F'(x)|$.  Then, define the associated global Lyapunov exponents by:
$$ \lambda := \int_{\Omega} \tau_f d\mu > 0 \quad , \quad \Lambda := \int_{\mathcal{U}} \tau_F d\nu > 0  $$

Also, define the dimension of $\nu$ by the formula $$\delta := -\frac{1}{\Lambda} \int_{\mathcal{U}} \varphi d\nu > 0. $$
A consequence of Theorem 2.11 is the following:
\end{definition} 

\begin{corollary}[\cite{SS20}, \cite{Le21}]

For every $\varepsilon > 0$, there exists $n_1(\varepsilon)$ and $\delta_1(\varepsilon)>0$ such that

$$ \forall n \geq n_1(\varepsilon), \ \nu \left( \left\{ x \in \widehat{\mathcal{U}} \ , \ \left|\frac{1}{n}S_n \tau_F(x) -  \Lambda \right| \geq \varepsilon \text{ or } \left|\frac{S_n \varphi(x)}{S_n \tau_F(x)} + \delta \right| \geq \varepsilon \right\} \right) \leq e^{- \delta_1(\varepsilon) n} .$$

\end{corollary}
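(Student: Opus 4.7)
The plan is to reduce both large deviation estimates to a single application of Theorem 2.11 on $(\Omega, f, \mu)$, then combine them by a union bound and elementary arithmetic. The key observation is that Remark 2.17 already provides a transfer mechanism: any $S$-constant continuous function on $\mathcal{R}$ yields a corresponding large deviation statement on $(\widehat{\mathcal{U}}, F, \nu)$. So the first step is to pull back $\tau_F$ and $\varphi$ through $\pi$. Setting $\tilde{g} := g \circ \pi : \mathcal{R} \to \mathbb{R}$ for $g \in \{\tau_F, \varphi\}$, one obtains an $S$-constant Hölder function (the holonomies being $C^{1+\alpha}$ with uniformly bounded derivatives, by Theorem 2.4 and Remark 2.10), which extends by McShane (Remark 2.2) to a Hölder map on $M$ agreeing with $g \circ \pi$ on $\Omega$.

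The next step is the Birkhoff identity: using $\pi|_{\widehat{\mathcal{U}}} = \mathrm{Id}$ and the semi-conjugacy $F \circ \pi = \pi \circ f$, for every $x \in \widehat{\mathcal{R}}$ one has $\sum_{k=0}^{n-1} \tilde{g}(f^k x) = S_n^F g(\pi x)$, together with $\int_M \tilde{g}\, d\mu = \int_{\mathcal{U}} g\, d\nu$. Applying Theorem 2.11 to $\tilde{g}$ and pushing forward by $\pi$ (using $\nu = \pi_* \mu$ and $\mu(\widehat{\mathcal{R}}) = 1$) yields: for every $\varepsilon' > 0$ there are $n(\varepsilon')$ and $\delta(\varepsilon') > 0$ with
$$ \nu\Big\{ y \in \widehat{\mathcal{U}} \ : \ \big|\tfrac{1}{n} S_n g(y) - \textstyle\int g\, d\nu\big| \geq \varepsilon' \Big\} \leq e^{-\delta(\varepsilon') n}, \quad n \geq n(\varepsilon'). $$
Apply this to $g = \tau_F$, where $\int \tau_F\, d\nu = \Lambda$, and to $g = \varphi$, where $\int \varphi\, d\nu = -\delta \Lambda$ by definition of $\delta$.

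Finally, combine: on the complement of the union $B_n$ of the two bad sets (so $\nu(B_n) \leq 2e^{-\delta(\varepsilon') n}$), for $\varepsilon' < \Lambda/2$ we estimate
$$ \left| \frac{S_n \varphi(x)}{S_n \tau_F(x)} + \delta \right| = \frac{|S_n \varphi(x) + \delta S_n \tau_F(x)|}{|S_n \tau_F(x)|} \leq \frac{n \varepsilon' + |\delta|\, n \varepsilon'}{n (\Lambda - \varepsilon')} \leq \frac{2(1+|\delta|)\varepsilon'}{\Lambda}. $$
Choosing $\varepsilon' := \min\{ \varepsilon, \Lambda \varepsilon / (2(1+|\delta|)), \Lambda/4 \}$ bounds both deviations by $\varepsilon$ on $\widehat{\mathcal{U}} \setminus B_n$, with $\delta_1(\varepsilon) := \delta(\varepsilon')/2$ (say) absorbing the factor of $2$ in $\nu(B_n)$ for $n$ large enough. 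The only mild technicality, which I expect to be the main sticking point, is verifying the Birkhoff identity across the factor map and the Hölder extension past rectangle boundaries; this is straightforward because $\pi$ is identity on $\widehat{\mathcal{U}}$, $\mu$ is supported on $\Omega$, and the boundary of $\mathcal{R}$ has zero $\mu$-measure.
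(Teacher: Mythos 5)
Your proposal is correct and follows essentially the same route as the paper, which obtains the corollary by applying the large deviation Theorem 2.11 to $S$-constant functions (here $\tau_F\circ\pi$ and $\varphi\circ\pi$), transferring the estimates to $(\nu,\widehat{\mathcal{U}},F)$ via $\pi_*\mu=\nu$ and the semi-conjugacy, and then deducing the ratio bound by the standard arithmetic you carry out. The paper only sketches this step (citing \cite{SS20}, \cite{Le21}), so your write-up, including the union bound and the choice of $\varepsilon'$, fills in the same argument in more detail.
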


 Those quantitative results tells us that, \say{for most $x \in \Omega$}, $|\partial_u f^n(x)|$ have order of magnitude $\exp(\lambda n)$, and $|(F^n)'(x)|$ have order of magnitude $\exp(\Lambda n)$. Studying those two quantities is therefore central for us. An important (and natural) remark is the following.

\begin{lemma}
The global unstable Lyapunov exponents $\lambda$ and $\Lambda$ are equal. In other words,
$$ \int_{\Omega} \ln |\partial_u f| d\mu = \int_{\mathcal{U}} \ln |F'| d\nu. $$

\end{lemma}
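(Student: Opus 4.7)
My plan is to exploit the key identity $F^n = \pi \circ f^n$ on $\widehat{\mathcal{U}}$, which I would prove by induction. The base case $F = \pi \circ f$ is built into the identification of $F$ in Remark 2.6. For the inductive step, assume $F^n(x) = \pi(f^n(x))$, and observe that if $f^n(x) \in R_a$ with $f^{n+1}(x) \in R_b$, the Markov property gives $f\bigl(W^s(f^n(x),R_a)\bigr) \subset W^s(f^{n+1}(x), R_b)$, so $f(\pi(f^n(x)))$ lies in the same local stable leaf as $f^{n+1}(x)$ and the two share the same image under $\pi$. Thus $F^{n+1}(x) = F(F^n x) = \pi(f(\pi(f^n x))) = \pi(f^{n+1}(x))$.

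Differentiating this identity along the unstable direction and writing $\tau := \ln|\partial_u \pi|$, which is uniformly bounded on $\mathcal{R}$ by Remark 2.10, I would obtain, for every $x \in \widehat{\mathcal{U}}$ and $n \geq 1$,
\[
\ln|(F^n)'(x)| \;=\; \tau\bigl(f^n(x)\bigr) \;+\; \sum_{k=0}^{n-1} \ln\bigl|\partial_u f(f^k(x))\bigr|.
\]
From here the natural move is to apply Birkhoff's ergodic theorem twice. Ergodicity of $(F,\widehat{\mathcal{U}},\nu)$ yields $\tfrac{1}{n}\ln|(F^n)'(x)| \to \Lambda$ for $\nu$-a.e.~$x$, while ergodicity of $(f,\Omega,\mu)$ yields $\tfrac{1}{n}S_n^f(\ln|\partial_u f|)(y) \to \lambda$ for $\mu$-a.e.~$y$. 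Since $\nu = \pi_*\mu$, the set of $y$ with $\pi(y)$ being $\nu$-generic has full $\mu$-measure, so intersecting with the set of $\mu$-generic points produces a single $y$ enjoying both genericities. Setting $x := \pi(y)$, the orbits $f^k(x)$ and $f^k(y)$ stay in a common local stable leaf with exponentially decaying mutual distance, so Hölder regularity of $\ln|\partial_u f|$ (which follows from $f \in C^{2+\alpha}$) forces the discrepancy $S_n^f(\ln|\partial_u f|)(x) - S_n^f(\ln|\partial_u f|)(y)$ to remain uniformly bounded in $n$. Dividing the displayed identity at $x$ by $n$ and sending $n \to \infty$ then yields $\Lambda = \lambda$, the bounded term $\tau(f^n x)/n$ contributing nothing in the limit.

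The one delicate point is obtaining simultaneous Birkhoff genericity across the two measures, but this is immediate from $\pi_* \mu = \nu$ via the preimage characterisation of full-measure sets. An equivalent but more algebraic alternative would be to replay the telescoping cohomology construction from the proof of Theorem 2.3 with $\psi$ replaced by $\ln|\partial_u f|$: the resulting Hölder function $\varphi_0$ is $S$-constant, and is an $f$-coboundary perturbation of $\ln|\partial_u f|$, and the classical holonomy-Jacobian formula $\ln|\partial_u \pi(y)| = \sum_{n\geq 0}\bigl(\ln|\partial_u f(f^n y)| - \ln|\partial_u f(f^n \pi y)|\bigr)$ shows that $\varphi_0$ restricted to $\mathcal{U}$ coincides with $\ln|F'|$. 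Integrating, using that $f$-coboundaries integrate to zero against $\mu$ and that $\pi_*\mu = \nu$ identifies $\mu$- and $\nu$-integrals of $S$-constant functions, gives the same conclusion with no almost-sure arguments.
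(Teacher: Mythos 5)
Your proposal is correct, and your primary route, while it works, is genuinely heavier than the paper's. Both arguments hinge on the same fact, namely differentiating the semiconjugacy $\pi \circ f = F \circ \pi$ along the unstable direction, but the paper stops after one step: it records that $\tau_f$ and $\tau_F \circ \pi$ differ by the coboundary of the bounded function $\ln|\partial_u \pi|$, so integrating against the $f$-invariant measure $\mu$ kills the coboundary, and $\int \tau_F \circ \pi \, d\mu = \int \tau_F \, d\nu$ because $\tau_F \circ \pi$ is $S$-constant and $\nu = \pi_* \mu$ — two lines, no ergodic theorem. You instead iterate the identity to $F^n = \pi \circ f^n$, pass to Birkhoff averages, and compare almost-sure limits; this is sound, but it forces you to supply three ingredients the paper's proof never needs: ergodicity of $(F,\widehat{\mathcal{U}},\nu)$ (true, being a factor of the ergodic $(f,\Omega,\mu)$, but it should be said), Hölder continuity of $\ln|\partial_u f|$ on $\Omega$ (this uses Hölder regularity of the unstable bundle, not merely $f \in C^{2+\alpha}$, though it is standard and is implicitly used elsewhere in the paper), and the genericity transfer along stable leaves. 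What the pointwise route buys is a slightly stronger conclusion — a.e.\ equality of the pointwise unstable Lyapunov exponents of $\mu$ and $\nu$ — which is not needed for the lemma. Your closing ``algebraic alternative'' is, after unwinding, exactly the paper's argument: the transfer function in the telescoping construction applied to $\tau_f$ is precisely $\ln|\partial_u \pi|$, the resulting $S$-constant function restricts to $\tau_F$ on $\mathcal{U}$, and one concludes by integrating the coboundary to zero and using $\nu = \pi_* \mu$; so you did also identify the intended proof, just presented as a fallback rather than as the main line.
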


\begin{proof}

The fact that $ \pi \circ f = F \circ \pi  $ implies that $$ \tau_f = \tau_F(\pi(x)) + \ln |\partial_u \pi(f(x))| - \ln |\partial_u \pi(x)| .$$

In particular, $ \tau_f $ and $\tau_F \circ \pi$ are $f$-cohomologous.
This implies the desired equality, by definition of the measure $\nu$, and by $f$-invariance of $\mu$. \end{proof}

\section{Computing some orders of magnitudes}

Starting now, we begin to do analysis. For this, it will be useful to work on connected sets. Hence, we make from now on the assumption that \textbf{$\Omega$ is an attractor}. A useful result in this case is the following. 

\begin{lemma}

Suppose that $\Omega$ is an attractor for $f$. Let $x \in \Omega$. Then $ W^u(x) \subset \Omega .$

In particular, in this setting, $(U_a)_{a \in \mathcal{A}}$ is a finite collection of compact connected smooth curves.

\end{lemma}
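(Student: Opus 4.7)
The plan is to exploit the attractor property via the standard trapping-neighborhood description: one may fix an open neighborhood $U$ of $\Omega$ with $f(\overline{U}) \subset U$ and $\bigcap_{n \geq 0} f^n(U) = \Omega$; equivalently $y \in \Omega$ if and only if $f^{-n}(y) \in U$ for every $n \geq 0$. Combined with the Mather metric (Remark 2.1), which makes local unstable manifolds backward-contracting uniformly in $n$, this gives the result almost by inspection, first for local unstable manifolds and then for global ones by invariance.

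More concretely, I would first prove that $W^u_\varepsilon(x) \subset \Omega$ for every $x \in \Omega$ and every sufficiently small $\varepsilon>0$. By compactness of $\Omega$ and openness of $U$, one can choose $\varepsilon$ so small that the $\varepsilon$-tubular neighborhood of $\Omega$ sits inside $U$. Then for any $y \in W^u_\varepsilon(x)$, Theorem 2.2 in the Mather metric gives $d(f^{-n}(y),f^{-n}(x)) \leq \kappa^n d(x,y) \leq \varepsilon$ for every $n\geq 0$, so $f^{-n}(y)$ is within $\varepsilon$ of $f^{-n}(x) \in \Omega$, hence in $U$; therefore $y \in \Omega$. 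To upgrade to the global unstable manifold, I would invoke the identity $W^u(x) = \bigcup_{n \geq 0} f^n(W^u_\varepsilon(f^{-n}(x)))$ from Theorem 2.2: each piece $W^u_\varepsilon(f^{-n}(x))$ lies in $\Omega$ by the local case, and $f$-invariance of $\Omega$ gives $f^n(W^u_\varepsilon(f^{-n}(x))) \subset \Omega$, so $W^u(x) \subset \Omega$.

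For the in-particular statement, finiteness of $(U_a)_{a\in\mathcal{A}}$ is built into the Markov partition, smoothness comes from the codimension one hypothesis (so $\dim E^u =1$ and each $W^u_\varepsilon(x_a)$ is a $C^r$ embedded arc), and compactness follows since $U_a = W^u_\varepsilon(x_a) \cap R_a$ is a closed subset of the compact set $R_a$. The only real content is connectedness: I would argue that because $W^u_\varepsilon(x_a)$ is now entirely contained in $\Omega$, the local product decomposition given by the bracket on a proper small-diameter rectangle yields $R_a = [U_a, S_a]$ with $U_a$ a closed subset of a connected one-dimensional curve. Using that $R_a = \overline{\mathrm{int}_\Omega R_a}$ and that $x_a \in \mathrm{int}_\Omega R_a$, one can then show $U_a$ is a closed subarc of $W^u_\varepsilon(x_a)$ containing $x_a$, hence connected.

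The main obstacle is precisely this last point: in the general basic-set case the intersection of an unstable leaf with a rectangle is typically totally disconnected (compare Remark 2.3), and the whole difference here is that entire local unstable manifolds now lie in $\Omega$. I expect the argument to require a slight shrinking of the Markov partition so that each $R_a$ genuinely splits as a product of a closed arc $U_a$ with a Cantor transversal $S_a$; once that local product structure is in place, everything else in the corollary is formal.
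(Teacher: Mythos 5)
Your argument for $W^u(x) \subset \Omega$ is correct and is essentially the paper's own proof: the paper takes $y \in W^u(x)$ directly and observes that its backward orbit, converging to that of $x \in \Omega$, eventually enters and then remains in the trapping neighborhood $U$, so that $f^{-N}(y) \in \bigcap_{n \geq 0} f^n(U) = \Omega$ and hence $y \in \Omega$ by invariance; your local-then-global variant via $W^u_\varepsilon(x)$ and the union formula $W^u(x) = \bigcup_{n \geq 0} f^n\left(W^u_\varepsilon(f^{-n}(x))\right)$ is the same idea with a harmless extra step (you need the $\varepsilon$-tube of $\Omega$ inside $U$, which the paper's direct argument avoids). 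Concerning the \emph{in particular} clause, the paper gives no argument at all for the connectedness of the $U_a$, so your closing caveat about the local product structure flags a point the paper leaves implicit rather than a gap relative to its written proof — you have covered everything the paper actually proves.
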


\begin{proof}
Let $x \in \Omega$ and $y \in W^u(x)$. Since $\Omega$ is an attractor, there exists $U$ an open neighborhood of $\Omega$ such that $\bigcap_{n \geq 0} f^n(U) = \Omega$. Since $d(f^{-n}(x),f^{-n}(y)) \underset{n \rightarrow + \infty}{\longrightarrow} 0$, there exists $N \geq 0$ so that $\tilde{y} := f^{-N}(y) \in U$, and for all $n \geq 0$, $f^{-n}(\tilde{y}) \in U$. We can then write $\tilde{y} = f^n(f^{-n}(\tilde{y})) \in f^n(U)$ for all $n \geq 0$. Hence $\tilde{y} \in \Omega$, and so $y \in \Omega$.
\end{proof}

This allows us to see holonomy maps as genuine $C^{1+\alpha}$ diffeomorphisms between curves, and to see $F$ as a smooth 1-dimensional map. It defines a piecewise expanding map in the sense of \cite{DV21}.   \\

The goal of this section is to use the large deviations to compute orders of magnitude for dynamical quantities. Once this preparatory step is done, we will be able to study oscillatory integrals involving $\mu$. First of all, we recall some useful symbolic notations.

\begin{itemize}
    \item Recall that $\mathcal{W}_n$ stands for the set of admissible words of length $n$. For $\mathbf{a}=a_1 \dots a_n a_{n+1} \in \mathcal{W}_{n+1}$, define $\mathbf{a}' := a_1 \dots a_{n} \in \mathcal{W}_n$.
    
    \item Let $\mathbf{a} \in \mathcal{W}_{n+1}$ and $\mathbf{b} \in \mathcal{W}_{m+1}$. Denote $\mathbf{a} \rightsquigarrow \mathbf{b} $ if $a_{n+1}=b_1$. In this case, $\mathbf{a}' \mathbf{b} \in \mathcal{W}_{n+m+1}$.
    
    \item Let $\mathbf{a} \in \mathcal{W}_{n+1}$. We denote by $b(\mathbf{a})$ the last letter of $\mathbf{a}$.
\end{itemize}

With those notations, we may rewrite the formula for the iterate of our transfer operator. For any continuous function $h : \mathcal{U} \longrightarrow \mathbb{C} $, we have

$$\forall b \in \mathcal{A}, \ \forall x \in U_b, \ \mathcal{L}_\varphi^n h(x) = \underset{\mathbf{a} \rightsquigarrow b}{\sum_{\mathbf{a} \in \mathcal{W}_{n+1}}} e^{S_n \varphi(g_{\mathbf{a}}(x))} h(g_{\mathbf{a}(x)}) = \underset{\mathbf{a} \rightsquigarrow b}{\sum_{\mathbf{a} \in \mathcal{W}_{n+1}}}  h(g_{\mathbf{a}}(x)) w_{\mathbf{a}}(x) .$$
where $$ w_{\mathbf{a}}(x) := e^{S_n\varphi(g_{\mathbf{a}}(x))} \simeq \nu(U_{\mathbf{a}}) .$$

Iterating $\mathcal{L}_\varphi^n$ again leads us to the formula
$$ \forall x \in U_b, \ \mathcal{L}_\varphi^{nk} h(x)  = \sum_{\mathbf{a}_1 \rightsquigarrow \dots \rightsquigarrow \mathbf{a_k} \rightsquigarrow b} h(g_{\mathbf{a}_1 ' \dots \mathbf{a}_{k-1} ' \mathbf{a}_k}(x) ) w_{\mathbf{a}_1 ' \dots \mathbf{a}_{k-1} ' \mathbf{a}_k}(x) .$$

\begin{definition}
Let, for small $\varepsilon>0$ and $n \geq 1$, $$ B_n(\varepsilon) := \left\{ x \in \mathcal{R} \ , \left|\frac{1}{n}S_n \tau_F(\pi(x)) -  \lambda \right| < \varepsilon \text{ and } \left|\frac{S_n \varphi(\pi(x))}{S_n \tau_F(\pi(x))} + \delta \right| < \varepsilon  \right\}  $$
$$\text{and} \ C_{n}(\varepsilon) :=  \left\{ x \in {\Omega} \ , \ \ \left| \frac{1}{n}\sum_{k=0}^{n-1} \tau_f(f^k(x)) - \lambda \right| < \varepsilon \right\} .$$

Finally, let $ A_{n}(\varepsilon) := B_n(\varepsilon) \cap C_n(\varepsilon) $. Then $\mu(A_{n}(\varepsilon)) \geq 1 - e^{-\delta_2(\varepsilon) n}$ for $n$ large enough depending on $\varepsilon$ and for some $\delta_2(\varepsilon)>0$, by Theorem 2.11 and Corollary 2.12.

\end{definition}

\begin{notations}
To simplify the reading, when two quantities dependent of $n$ satisfy $b_n \leq C a_n $ for some constant $C$, we denote it by $a_n \lesssim b_n$. If $a_n \lesssim b_n \lesssim c_n$, we denote it by $a_n \simeq b_n$. If there exists $c,C$ and $\beta$, independent of $n$ and $\varepsilon$, such that $ c e^{- \varepsilon \beta n} a_n \leq b_n \leq C e^{\varepsilon \beta n} a_n$, we denote it by $a_n \sim b_n$. Throughout the text $\beta$ will be allowed to change from line to line. It correspond to some positive constant. 
\end{notations}
Eventually, we will chose $\varepsilon$ small enough such that this exponentially growing term gets absorbed by the other leading terms, so we can neglect it.

\begin{lemma}
Let $\mathbf{a} \in \mathcal{W}_{n+1}(\varepsilon)$ be such that $U_{\mathbf{a}} \cap A_{n}(\varepsilon) \neq \emptyset$.  Then:

\begin{itemize}
    \item uniformly on $x \in U_{\mathbf{a}}$,  $|\partial_u f^{n}(x)| \sim e^{n \lambda}.$
    \item uniformly on $x \in U_{b(\mathbf{a})}, \  |g_\mathbf{a}'(x)| \sim e^{- n \lambda}$
    \item $\mathrm{diam}(\widehat{U}_\mathbf{a}), \ \mathrm{diam}(U_\mathbf{a})  \sim e^{- n \lambda} $
    \item uniformly on $x \in U_{\mathbf{a}}, \ w_{\mathbf{a}}(x) \sim e^{- \delta \lambda n} $
    \item $\nu(\widehat{U}_\textbf{a}) \sim e^{- \delta \lambda n}$
\end{itemize}

\end{lemma}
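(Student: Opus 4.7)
The plan is to fix an anchor point $x_0 \in U_\mathbf{a} \cap A_n(\varepsilon)$ and read off the desired estimates at $x_0$ from the definition of $A_n(\varepsilon)$, then propagate them uniformly over $U_\mathbf{a}$ (or over $U_{b(\mathbf{a})}$) via two standard bounded-distortion arguments: one for $S_n \tau_F$ based on the Hölder regularity of $\tau_F$ and the exponential contraction of the inverse branches, and one for $S_n \varphi$ based on the exponentially vanishing variations of $\varphi$ already recorded in Remark~2.11. The equality $\lambda = \Lambda$ from Lemma~2.9 will be used throughout to identify the two Lyapunov exponents.

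Concretely, applying the cohomology relation $\tau_f(x) = \tau_F(\pi(x)) + \ln|\partial_u \pi(f(x))| - \ln|\partial_u \pi(x)|$ and the uniform two-sided bounds on $|\partial_u \pi|$ (Remark~2.9), one gets $S_n \tau_f(x_0) = S_n \tau_F(\pi(x_0)) + O(1)$. The definition of $C_n(\varepsilon)$ then yields $S_n \tau_f(x_0) = n\lambda + O(\varepsilon n)$, and the definition of $B_n(\varepsilon)$ combined with $\lambda = \Lambda$ yields $S_n \tau_F(\pi(x_0)) = n\lambda + O(\varepsilon n)$ and $S_n \varphi(\pi(x_0)) = -\delta \lambda n + O(\varepsilon n)$. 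To propagate these to all of $U_\mathbf{a}$, observe that for $x, y \in U_\mathbf{a}$ the iterate $F^k(x), F^k(y)$ both lie in $U_{a_{k+1}}$ and the inverse branch $g_{a_{k+1} \dots a_{n+1}}$ is a contraction of ratio $\lesssim \kappa^{n-k}$, so $d(F^k(x), F^k(y)) \lesssim \kappa^{n-k}$. Summing the Hölder bound for $\tau_F$ (resp.\ $\varphi$) over $k$ gives a convergent geometric series, hence $|S_n \tau_F(x) - S_n \tau_F(y)| \leq C$ and $|S_n \varphi(x) - S_n \varphi(y)| \leq C$ uniformly in $n$ and $\mathbf{a}$.

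From these estimates the five items follow directly. Item~1 is $|\partial_u f^n(x)| = e^{S_n \tau_f(x)}$. Item~2 uses $|g_\mathbf{a}'(x)| = |(F^n)'(g_\mathbf{a}(x))|^{-1} = e^{-S_n \tau_F(g_\mathbf{a}(x))}$ with $g_\mathbf{a}(x) \in U_\mathbf{a}$. Item~3: the upper bound $\mathrm{diam}(U_\mathbf{a}) \leq \|g_\mathbf{a}'\|_\infty \cdot \mathrm{diam}(U_{a_{n+1}})$ gives $\mathrm{diam}(U_\mathbf{a}) \lesssim e^{-n\lambda + O(\varepsilon n)}$, while the lower bound uses $\mathrm{diam}(U_{a_{n+1}}) \geq \eta_0 > 0$ (Lemma~2.3) together with $\inf |g_\mathbf{a}'|$. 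Item~4 reads $w_\mathbf{a}(x) = e^{S_n \varphi(g_\mathbf{a}(x))}$ with $g_\mathbf{a}(x) \in U_\mathbf{a}$, where the distortion control on $S_n \varphi$ yields the claim. Item~5 follows from the Gibbs estimate (Lemma~2.11) applied to $U_\mathbf{a}$ (with the harmless one-symbol shift between $\mathcal{W}_n$ and $\mathcal{W}_{n+1}$ absorbed into the constant), together with $\nu(\widehat{U}_\mathbf{a}) = \nu(U_\mathbf{a})$ since $\widehat{\mathcal{U}}$ has full $\nu$-measure.

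The main obstacle is the lower bound for $\mathrm{diam}(\widehat{U}_\mathbf{a})$, which reduces to establishing that $\mathrm{diam}(\widehat{\mathcal{U}} \cap U_a) > 0$ uniformly in $a \in \mathcal{A}$. This rests on the perfectness of $\widehat{\mathcal{U}}$ proved in Lemma~2.3 together with the observation that $\widehat{\mathcal{U}} \cap U_a$ is a dense subset of $U_a$ of full $\nu$-measure; once this uniform positive lower bound $\eta_0' := \min_a \mathrm{diam}(\widehat{\mathcal{U}} \cap U_a) > 0$ is secured, the mean-value bound $\mathrm{diam}(\widehat{U}_\mathbf{a}) \geq \inf|g_\mathbf{a}'| \cdot \eta_0'$ closes the argument. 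Everything else is a routine assembly of the Gibbs estimates, the cohomology identity, and the distortion lemma.
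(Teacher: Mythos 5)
Your proposal is correct and follows essentially the same route as the paper: pick an anchor point in $U_{\mathbf{a}} \cap A_n(\varepsilon)$, spread the Birkhoff-sum estimates over all of $U_{\mathbf{a}}$ using the exponentially shrinking diameters of the iterated cylinders together with the Hölder regularity of the potentials (the paper does this for $\tau_f$ directly via the bounded-derivative holonomies, you do it for $\tau_F$, $\varphi$ and pass to $\tau_f$ by the cohomology relation — a cosmetic difference), then conclude with the mean value theorem in the one-dimensional setting for the diameters and the Gibbs estimates for $\nu$. The extra care you take with the lower bound on $\mathrm{diam}(\widehat{U}_{\mathbf{a}})$ via the perfectness of $\widehat{\mathcal{U}}$ is exactly the point the paper invokes implicitly, so nothing is missing.
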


\begin{remark}

The definition of $\delta$ is chosen so that $ \mu(U_{\mathbf{a}}) \sim \text{diam}(U_\mathbf{a})^\delta $ when $U_{\mathbf{a}} \cap A_{n}(\varepsilon) \neq \emptyset$. Beware that the diameters are for the distance $d^u$.

\end{remark}

\begin{proof}

The first thing to notice is that, since any Hölder map have exponentially vanishing variations, Birkhoff sums are easy to control when $n$ becomes large. In particular, the estimates that are true on a piece of $U_{\mathbf{a}}$ will extends on $U_{\mathbf{a}}$. For example, let $x \in U_{\mathbf{a}}$, and let $y_{\mathbf{a}} \in U_{\mathbf{a}} \cap A_{n}(\varepsilon)$. Since $\tau_f$ has exponentially vanishing variations, we can write
$$ \left| \frac{1}{n}\sum_{k=0}^{n-1} \tau_f(f^k(x)) - \lambda \right| \leq \left| \frac{1}{n}\sum_{k=0}^{n-1} \tau_f(f^k(x)) - \frac{1}{n}\sum_{k=0}^{n-1} \tau_f(f^k(y_{\mathbf{a}})) \right| + \left| \frac{1}{n}\sum_{k=0}^{n-1} \tau_f(f^k(y_{\mathbf{a}})) - \lambda \right|. $$
Since $f^k(U_{\mathbf{a}}) \subset R_{a_{k+1} \dots a_n}$ is diffeomorphic to $ U_{a_{k+1} \dots a_n} $ by the mean of the holonomy map $\pi$ which have bounded differential, we have
$$ \frac{1}{n} \sum_{k=0}^{n-1} \left| \tau_f(f^k(x)) - \tau_f(f^k(y_{\mathbf{a}})) \right|  \leq \frac{1}{n} \sum_{k=0}^{n-1} |\tau_f|_\alpha\text{diam}\left( f^k(U_{\mathbf{a}}) \right)^\alpha  $$
$$ \lesssim \frac{1}{n} \sum_{k=0}^{n-1}\text{diam}(U_{a_{k+1} \dots a_{n}})^\alpha  \lesssim  \frac{1}{n} \sum_{k=0}^{n-1} \kappa^{\alpha k} \leq \frac{C}{n}  $$

for some constant $C$.
In particular we see that for $n$ large enough depending on $\varepsilon$, the estimate
$$ \forall x \in U_{\mathbf{a}}, \ \left| \frac{1}{n} \sum_{k=0}^{n-1} \tau_f(f^k(x)) - \lambda \right| < 2 \varepsilon $$
holds.
Similarly the estimates related to $B_n(\varepsilon)$ extends on $U_{\mathbf{a}}$ replacing $\varepsilon$ with $2 \varepsilon$ as long as $n$ is large enough. Once this is known, the estimates for $\partial_u(f^{n})$, $g_{\mathbf{a}}'$ and $w_{\mathbf{a}}$ are easy, and the estimates for the diameters is a direct consequence of our 1-dimensional setting and the mean value theorem. 
The estimates for $\nu$ follows from the Gibbs estimates. \end{proof}

\begin{definition}

For some fixed set $\omega \subset \mathcal{U}$ such that $\nu(\omega)>0$, define the set of ($\omega$-localized) $\varepsilon$-regular words by
$$ \mathcal{R}_{n+1}(\omega) := \left\{ \mathbf{a} \in \mathcal{W}_{n+1} \ | \ \omega \cap U_{\mathbf{a}} \cap A_{n}(\varepsilon) \neq \emptyset \right\}. $$
When no localization is asked, define $\mathcal{R}_{n+1} := \mathcal{R}_{n+1}(M)$. Then, define the set of ($\omega$-localized) $\varepsilon$-regular $k$-blocks by $$ \mathcal{R}_{n+1}^k(\omega) = \left\{ \mathbf{A}=\mathbf{a}_1' \dots \mathbf{a}_{k-1}' \mathbf{a}_k \in \mathcal{W}_{nk+1} \ | \ \forall i \geq 2, \ \mathbf{a}_i \in \mathcal{R}_{n+1}, \ \text{and} \ \mathbf{a}_1 \in \mathcal{R}_{n+1}(\omega) \right\} .$$ 
Finally, define the associated geometric points to be $$ R_{n+1}^k(\omega) := \bigcup_{\mathbf{A} \in \mathcal{R}_{n+1}^k(\omega) } U_{\mathbf{A}} .$$

The sets $\mathcal{R}_{n+1}^k$, $R_{n+1}^k$ denotes the regular blocks and associated geometric points when $\omega = M$.

\end{definition}

\begin{lemma}

For $\omega$ and $k \geq 1$ fixed, and for $ n $ large enough depending on $\varepsilon$, $$ \# \mathcal{R}_{n+1}^k(\omega) \sim e^{ k \delta \lambda n} .$$

\end{lemma}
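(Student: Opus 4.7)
The plan is to establish both bounds $\#\mathcal{R}_{n+1}^k(\omega) \lesssim e^{\varepsilon\beta kn}e^{k\delta\lambda n}$ and $\#\mathcal{R}_{n+1}^k(\omega) \gtrsim e^{-\varepsilon\beta kn}e^{k\delta\lambda n}$ by combining the Gibbs estimates (Lemma 2.9) with the large deviation bound $\nu(A_n(\varepsilon)^c) \leq e^{-\delta_2(\varepsilon)n}$. The central observation is that for any regular block $\mathbf{A} \in \mathcal{R}_{n+1}^k(\omega)$, one still has $\nu(\widehat{U}_\mathbf{A}) \sim e^{-k\delta\lambda n}$; this is the analogue of Lemma 3.2 for blocks, even though $\mathbf{A}$ itself need not be a regular word of length $kn+1$.

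First I would prove this block-wise Gibbs estimate. By the Gibbs property, for any $x \in U_\mathbf{A}$ one has $\nu(\widehat{U}_\mathbf{A}) \sim e^{S_{kn}\varphi(x)}$, and the Birkhoff sum decomposes as $S_{kn}\varphi(x) = \sum_{i=0}^{k-1} S_n\varphi(F^{in}(x))$. For each $i$, the point $F^{in}(x)$ lies in $U_{\mathbf{a}_{i+1}}$ with $\mathbf{a}_{i+1}$ regular, so there exists $y_{i+1} \in U_{\mathbf{a}_{i+1}} \cap A_n(\varepsilon)$. Since $\varphi$ has exponentially vanishing variations (Remark 2.11), $|S_n\varphi(F^{in}(x)) - S_n\varphi(y_{i+1})|$ is bounded by an absolute constant, and by definition of $A_n(\varepsilon)$ the value $S_n\varphi(y_{i+1}) = -\delta\Lambda n + O(\varepsilon n)$ using $\lambda = \Lambda$ (Lemma 2.13). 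Summing over $i$ (and absorbing the $O(k)$ additive constant into the constant in $\sim$, since $k$ is fixed), one obtains $S_{kn}\varphi(x) = -k\delta\lambda n + O(\varepsilon kn)$, hence the desired estimate on $\nu(\widehat{U}_\mathbf{A})$.

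The upper bound then follows at once: the sets $\widehat{U}_\mathbf{A}$ are essentially disjoint subsets of $\mathcal{U}$ (by Remark 2.12), so
\[
\#\mathcal{R}_{n+1}^k(\omega) \cdot c\, e^{-\varepsilon\beta kn}e^{-k\delta\lambda n} \;\leq\; \sum_{\mathbf{A} \in \mathcal{R}_{n+1}^k(\omega)} \nu(\widehat{U}_\mathbf{A}) \;\leq\; \nu(\mathcal{U}) = 1.
\]
For the lower bound I would introduce the set
\[
E \;:=\; \omega \cap \bigcap_{j=0}^{k-1} F^{-jn}(A_n(\varepsilon)),
\]
viewed inside $\mathcal{U}$ via the identification of Remark 2.8. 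By $F$-invariance of $\nu$ and the large deviation bound, $\nu(E) \geq \nu(\omega) - k e^{-\delta_2(\varepsilon) n}$, which stays uniformly positive for $n$ large. Any $x \in E \cap \widehat{\mathcal{U}}$ has a well-defined $F$-coding whose first $kn+1$ symbols form a word $\mathbf{A} = \mathbf{a}_1'\dots\mathbf{a}_{k-1}'\mathbf{a}_k$; the condition $F^{in}(x) \in A_n(\varepsilon)$ witnesses that $U_{\mathbf{a}_{i+1}} \cap A_n(\varepsilon) \neq \emptyset$, and $x \in \omega \cap U_{\mathbf{a}_1} \cap A_n(\varepsilon)$ yields the $\omega$-localization of $\mathbf{a}_1$. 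Thus $\mathbf{A} \in \mathcal{R}_{n+1}^k(\omega)$ and $E \cap \widehat{\mathcal{U}} \subset \bigsqcup_{\mathbf{A} \in \mathcal{R}_{n+1}^k(\omega)} \widehat{U}_\mathbf{A}$, so
\[
c\nu(\omega) \leq \nu(E) \leq \#\mathcal{R}_{n+1}^k(\omega) \cdot C\, e^{\varepsilon\beta kn} e^{-k\delta\lambda n},
\]
which gives the matching lower bound.

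The main technical point to handle carefully is the propagation of regularity from the individual pieces $\mathbf{a}_i$ to the concatenation $\mathbf{A}$: the block $\mathbf{A}$ is \emph{not} required to satisfy the large deviation condition at scale $kn$, so the estimate $\nu(\widehat{U}_\mathbf{A}) \sim e^{-k\delta\lambda n}$ must be obtained by summing the local information along each window, relying essentially on the exponentially vanishing variations of $\varphi$. Once that is in place, both bounds reduce to comparing the combinatorial count against a measure which is uniformly positive (for the lower bound) or at most $1$ (for the upper bound).
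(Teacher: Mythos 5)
Your overall strategy is the same as the paper's: obtain the count by dividing a measure estimate by a per-block Gibbs estimate at scale $kn$, where the lower bound on $\nu\bigl(R_{n+1}^k(\omega)\bigr)$ comes from pulling back the good set under $F^{in}$, $i=0,\dots,k-1$, and using $F$-invariance of $\nu$ together with large deviations, and the upper bound on the count comes from essential disjointness of the cylinders and $\nu(\mathcal{U})=1$. You in fact spell out the block-wise estimate $\nu(U_{\mathbf{A}})\sim e^{-k\delta\lambda n}$ (windowed Birkhoff sums plus exponentially vanishing variations) more explicitly than the paper does, and your inclusion $E\cap\widehat{\mathcal{U}}\subset\bigcup_{\mathbf{A}\in\mathcal{R}_{n+1}^k(\omega)}\widehat{U}_{\mathbf{A}}$ plays the role of the paper's inclusion $\omega\cap\bigcap_i F^{-ni}(\tilde{R}_{n+1})\subset R_{n+1}^k(\omega)$; these are equivalent.

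There is, however, one step you assert without justification, and it is precisely the point the paper spends the first part of its proof on: the bound $\nu\bigl(\mathcal{U}\setminus A_n(\varepsilon)\bigr)\leq e^{-\delta_2(\varepsilon)n}$. The large deviation estimate of Definition 3.1 is for $\mu$ on $\Omega$, and $\nu$ is not the restriction of $\mu$ to $\mathcal{U}$ (the two are mutually singular in general), so the bound does not transfer automatically. The $B_n$-part of $A_n$ is harmless, since it is defined through $\pi$ and hence is $S$-saturated (equivalently, Corollary 2.12 is already a statement about $\nu$); but the $C_n$-part involves the $f$-Birkhoff sums of $\tau_f$ at the point itself and is not $S$-saturated. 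To bound $\nu(\mathcal{U}\setminus C_n(\varepsilon))$ one must argue, as the paper does, that the condition propagates along stable leaves with a loss from $\varepsilon$ to $2\varepsilon$ (i.e.\ $x\in C_n(\varepsilon)\Rightarrow[x,S_a]\subset C_n(2\varepsilon)$ for $n$ large, by contraction along $W^s$ and Hölder continuity of $\tau_f$), so that $\nu(\mathcal{U}\setminus C_n(\varepsilon))\leq\mu\bigl(\mathcal{R}\setminus C_n(\varepsilon/2)\bigr)\lesssim e^{-\delta_1 n}$. This is a short and entirely fixable omission, but as written your lower bound on $\nu(E)$ rests on an estimate for $\nu$ that has not been established.
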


\begin{proof}

First of all, notice that $ A_n(\varepsilon) \subset R_{n+1} $, so that $ \nu(\mathcal{U} \setminus R_{n+1}) \leq \nu( \mathcal{U} \setminus A_{n} ) \leq \nu(\mathcal{U} \setminus B_{n}) + \nu(\mathcal{U} \setminus C_{n})$. To control the term in $C_n$, we notice that, by the same argument than in Lemma 3.1:
$$ \forall x \in R_a, \  x \in C_n(\varepsilon) \Rightarrow [x,S_a] \subset C_n(2 \varepsilon) $$
as soon as $n$ is large enough. Hence, by the definition of the measure $\nu$:
$$ \nu(\mathcal{U} \setminus C_n) = \mu\left( \mathcal{R} \setminus [\mathcal{U} \cap C_n, \mathcal{S}] \right) \leq \mu(\mathcal{R} \setminus C_n(\varepsilon/2)) \lesssim e^{- \delta_1 n} .$$
This proves that $ \nu(R_{n+1}) \geq 1 - e^{-\delta_0 n} $ for $n$ large enough depending on $\varepsilon$ and for some $\delta_0(\varepsilon)>0$. \\
Since $R_{n+1}(\omega) = R_{n+1} \cap \omega$, it follows that $ \nu(\omega) - e^{-\delta_0 n} \leq \nu(R_{n+1}(\omega)) \leq \nu(\omega) $. \\

Then, define $\tilde{R}_{n+1} := \bigsqcup_{\mathbf{a} \in \mathcal{R}_{n+1}} \text{int}_{\mathcal{U}} \left( U_{\mathbf{a}} \right) $. From the point of view of the measure $\nu$, it is indistinguishable from $R_{n+1}$. First, we prove that
$$ \omega \cap \bigcap_{i=0}^{k-1} F^{-n i} \left( \tilde{R}_{n+1} \right) \subset R_{n+1}^k(\omega) .$$
Let $x \in \omega \cap \bigcap_{i=0}^{k-1} F^{-n i} \left( \tilde{R}_{n+1} \right) $. Since there exists $\mathbf{A}=\mathbf{a}_1' \dots \mathbf{a}_{k-1}' \mathbf{a} _k\in \mathcal{W}_{kn+1}$ such that $x \in U_{\mathbf{A}}$, we see that for any $i$ we can write $F^{ni}(x) \in U_{\mathbf{a}_{1+i}' \dots \mathbf{a}_k} \cap \tilde{R}_{n+1} $. 
So there exists $\mathbf{b}_{i+1} \in \mathcal{R}_{n+1}$ such that $ U_{\mathbf{a}_{1+i}' \dots \mathbf{a}_k} \cap \text{int}_{\mathcal{U}} U_{\mathbf{b}_{i+1}} \neq \emptyset$. Then $\mathbf{b}_{i+1} = \mathbf{a}_{i+1}$, for all $i$, which implies that $\mathbf{A} \in \mathcal{R}_{n+1}^k$. Moreover, since $\emptyset \neq \omega \cap U_{\mathbf{A}} \subset \omega \cap U_{\mathbf{a}_1}$, $\mathbf{a}_1 \in R_{n+1}(\omega)$, and so $\mathbf{A} \in \mathcal{R}_{n+1}^k(\omega)$. 
Now that the inclusion is proved, we see that $$ \nu\left( \omega \setminus R_{n+1}^k(\omega) \right) \leq \nu(\omega \setminus R_{n+1}(\omega)) + \sum_{i=1}^{k-1} \nu\left( F^{-n i} \left( \mathcal{U} \setminus \tilde{R}_{n+1} \right) \right)  = \nu( \omega \setminus R_{n+1}(\omega)) + (k-1) \nu( \mathcal{U} \setminus R_{n+1} ) ,$$

and so $ \nu( R_{n+1}^k(\omega) ) \geq \nu(\omega) - k e^{-\delta_1(\varepsilon) n} $ for $n$ large enough depending on $\varepsilon$.

Now we may prove the cardinality estimate: since $ \nu(R_{n+1}^k(\omega)) = \sum_{\mathbf{A} \in R_{n+1}^k(\omega)} \nu(U_{\mathbf{A}}) $, we have

$$ \# \left(\mathcal{R}_{n+1}^k(\omega)\right) e^{- \varepsilon \beta n} e^{- \delta \lambda k n} \lesssim \nu\left(R_{n+1}^k(\omega)\right) \lesssim \# \left(\mathcal{R}_{n+1}^k(\omega)\right) e^{\varepsilon \beta n} e^{- \delta \lambda k n} $$
and so
$$  e^{-\varepsilon \beta n} e^{\delta \lambda k n} \left( \nu(\omega) - k e^{- \delta_1(\varepsilon) n} \right) \lesssim \# \mathcal{R}_{n+1}^k(\omega) \lesssim  e^{\varepsilon \beta n} e^{\delta \lambda k n} .$$ \end{proof}

\section{Reduction to sums of exponential}

We can finally start the proof of our main theorem. The goal is to follow the strategy developed in $\cite{SS20}$ and $\cite{Le21}$ (which generalized the original method developed in \cite{BD17} and \cite{LNP19}). To do so, we will reduce our oscillatory integral over $\Omega$ with respect to $\mu$ to an  oscillatory integral over $\mathcal{U}$ with respect to $\nu$. The fact that $\nu$ is invariant by a transfer operator related to an expanding map will allows us to get the same final reduction as in those two papers: everything will boils down to a sum of exponential that we will be able to control thanks to a combinatorial theorem of Bourgain. \\

Before going on, recall the setting: we have fixed a $C^{2 + \alpha}$ Axiom A diffeomorphism $f:M \rightarrow M$ and an attractor $\Omega$ on which $f$ has codimension 1 stable lamination and that satisfies our generic nonlinearity condition (NL). The bunching condition (B) will only be used in section 6. We have fixed a Hölder potential $\psi : \Omega \rightarrow \mathbb{R}$ and its associated equilibrium state $\mu$. We denote by $(F,\mathcal{U},\nu)$ the expanding dynamical system in factor defined in section 2. The measure $\nu$ is invariant by a transfer operator $\mathcal{L}_\varphi$, where $\varphi$ is some normalized and $\alpha$-Hölder potential. The Hölder exponent $\alpha$ is fixed for the rest of the paper. \\

Six quantities will be at play: $\xi$, $K$, $n$, $k$, $\varepsilon_0$ and $\varepsilon$. We will think of $k$, $\varepsilon_0$ and $\varepsilon$ as being fixed. The constant $k$ is fixed using Corollary 5.3. The constant $\varepsilon_0>0$ will be fixed at at the end of the paper, in Lemma 6.5. The constant $\varepsilon>0$ is chosen at the very end of the proof to be small in front of every other constant that might appear. 
The only variable is $\xi$. We relate it to $n$ and $K$ by the formulas $$ n := \left\lfloor \frac{\ln |\xi|}{(2k+1) \lambda + \varepsilon_0} \right\rfloor \quad \text{and} \quad K := \left\lfloor \frac{((2k+1) \lambda + 2 \varepsilon_0)n}{\alpha |\ln \kappa|} \right\rfloor. $$

\begin{definition}

Let $\chi \in C^\alpha(M,\mathbb{C})$ (with support of positive measure). Let $\phi \in C^{1+\alpha}(M,\mathbb{R})$. Suppose that there exists a constant $C_{\phi,\chi}>1$ such that
\begin{itemize}
    \item $ \| \chi \|_{C^\alpha} + \|\phi\|_{C^{1+\alpha}} < C_{\phi,\chi} $
    \item  $\forall x \in \text{supp} \ \chi , \ |\partial_u \phi(x)| > C_{\phi,\chi}^{-1}$. 
\end{itemize}
Then define, for $\xi \in \mathbb{R}$:
$$ \widehat{\mu}(\xi) := \int_{M} e^{i \xi \phi} \chi \ d\mu . $$

\end{definition}

Since the condition on $\partial_u \phi$ is open, it holds on a small open neighborhood $\tilde{\omega} \supset \text{supp} \chi$. Choosing a small enough Markov partition allows us to choose $\tilde{\omega}$ as a nonempty union of rectangles $R_a$. It follows that $\omega := \tilde{\omega} \cap \mathcal{U}$ have positive measure, and hence we can use the results of Lemma 3.2.

\begin{remark}
The constant $C$ in Theorem 1.2 will depends only on the riemannian manifold $M$, the diffeomorphism $f$, the equilibrium state $\mu$, the Hölder exponent $\alpha$ and the constant $C_{\phi,\chi} > 1$.
\end{remark}

\begin{notations}
We recall a final set of notations, inspired from \cite{BD17}. For a fixed $n$ and $k$, denote:

\begin{itemize}
    \item $\textbf{A}=(\textbf{a}_0, \dots, \textbf{a}_k) \in \mathcal{W}_{n+1}^{k+1} \ , \ \textbf{B}=(\textbf{b}_1, \dots, \textbf{b}_k) \in \mathcal{W}_{n+1}^{k} $.
    \item We write $\textbf{A} \leftrightarrow \textbf{B}$ iff $\textbf{a}_{j-1} \rightsquigarrow \textbf{b}_j \rightsquigarrow \textbf{a}_j$ for all $j=1,\dots k$.
    \item If $\textbf{A} \leftrightarrow \textbf{B}$, then we define the words $\textbf{A} * \textbf{B} := \textbf{a}_0' \textbf{b}_1' \textbf{a}_1' \textbf{b}_2' \dots \textbf{a}_{k-1}' \textbf{b}_k' \textbf{a}_k$  \\ and  $\textbf{A} \# \textbf{B} :=  \textbf{a}_0' \textbf{b}_1' \textbf{a}_1' \textbf{b}_2' \dots \textbf{a}_{k-1}' \textbf{b}_k$.
    \item Denote by $b(\textbf{A}) \in \mathcal{A}$ the last letter of $\textbf{a}_k$.
    
\end{itemize}

\end{notations}

We prove the following reduction.

\begin{proposition}

Define $J_n := \{ e^{\varepsilon_0 n/2} \leq |\eta| \leq  e^{2 \varepsilon_0 n} \}$ and $$  \zeta_{j,\mathbf{A}}(\mathbf{b}) := e^{2 \lambda n} |g_{\mathbf{a}_{j-1}' \mathbf{b}}'(x_{\mathbf{a}_j})| .$$
There exists some constant $\beta>1$ such that for $n$ large enough depending on $\varepsilon$:
$$ |\widehat{\mu}(\xi)|^2 \lesssim e^{ \varepsilon \beta n} e^{-\lambda \delta (2k+1) n} \sum_{\mathbf{A} \in \mathcal{R}_{n+1}^{k+1}(\omega)} \sup_{\eta \in J_n} \Bigg{|} \underset{\mathbf{A} \leftrightarrow \mathbf{B}}{\sum_{\mathbf{B} \in \mathcal{R}_{n+1}^k}} e^{i \eta \zeta_{1,\mathbf{A}}(\mathbf{b}_1) \dots \zeta_{k,\mathbf{A}}(\mathbf{b}_k) } \Bigg| $$
$$ \quad \quad  \quad \quad  \quad \quad  \quad \quad + e^{-  \varepsilon_0 n} +  e^{-\delta_1(\varepsilon) n} + e^{\varepsilon \beta n} \left( e^{- \lambda \alpha n} +  \kappa^{\alpha n} + e^{-(\alpha \lambda-\varepsilon_0) n} + e^{- \varepsilon_0 \delta_{up}n/2} \right).$$
\end{proposition}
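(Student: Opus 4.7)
The strategy adapts the reduction from \cite{BD17, SS20, Le21}: express $\widehat{\mu}(\xi)$ as a sum of exponential sums over admissible words on the factor $(F, \mathcal{U}, \nu)$, in a form eventually amenable to a sum--product bound. I would carry this out in four stages.

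\emph{Stages (i)--(ii): reduction to $\mathcal{U}$ and transfer-operator expansion.} By $f$-invariance, $\widehat{\mu}(\xi) = \int e^{i\xi\phi\circ f^K}\chi\circ f^K\,d\mu$. Stable contraction gives $d(f^K z, f^K\pi(z)) \leq \kappa^K\,\mathrm{diam}_s(R_a)$ for $z\in R_a$, so using $\phi\in C^{1+\alpha}$ (Lipschitz) and $\chi\in C^\alpha$:
$$\widehat{\mu}(\xi) = \int_{\mathcal{U}} e^{i\xi\phi\circ f^K}\chi\circ f^K\,d\nu + O\bigl(|\xi|\kappa^K + \kappa^{\alpha K}\bigr),$$
the displayed choice of $K$ making the error $\lesssim e^{-\varepsilon_0 n}$. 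Applying the duality $\mathcal{L}_\varphi^{*(2k+1)n}\nu = \nu$ then expands this integral as
$$\sum_{\mathbf{w}\in\mathcal{W}_{(2k+1)n+1}} \int_{U_{b(\mathbf{w})}} w_\mathbf{w}(y)\,e^{i\xi\phi(f^K g_\mathbf{w}(y))}\chi(f^K g_\mathbf{w}(y))\,d\nu(y).$$
Decompose each $\mathbf{w}$ as $\mathbf{A}*\mathbf{B}$ and, via Lemma 3.2 and the large deviations of Section 2.8, restrict to $\mathbf{A}\in\mathcal{R}_{n+1}^{k+1}(\omega)$, $\mathbf{B}\in\mathcal{R}_{n+1}^{k}$; the non-regular contribution is $\lesssim e^{-\delta_1(\varepsilon)n}$, and Gibbs (Lemma 2.8) yields $w_{\mathbf{A}*\mathbf{B}}(y)\sim e^{-\delta\lambda(2k+1)n}$ on regular words.

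\emph{Stage (iii): freezing and factorization of the phase.} I would replace $\chi(f^K g_{\mathbf{A}*\mathbf{B}}(y))$ and the distortion factors in $w_{\mathbf{A}*\mathbf{B}}(y)$ by their values at $y = x_{b(\mathbf{A})}$; Hölder continuity of $\chi$ and exponentially vanishing variations of $\varphi$ produce the $\kappa^{\alpha n}$ and $e^{-\lambda\alpha n}$ errors. For the oscillatory phase, observe that $F^{(2k+1)n}g_{\mathbf{A}*\mathbf{B}}(y) = y$ and Taylor-expand $\phi\circ f^K$ around $g_{\mathbf{A}*\mathbf{B}}(x_{b(\mathbf{A})})$. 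Using the chain-rule decomposition $g_{\mathbf{A}*\mathbf{B}} = g_{\mathbf{a}_0}\circ g_{\mathbf{b}_1}\circ\cdots\circ g_{\mathbf{a}_k}$ together with bounded distortion (centers $x_{\mathbf{a}_j}$), the leading $\mathbf{B}$-dependent part of the phase is identified as $\eta_\mathbf{A}\cdot\prod_{j=1}^k \zeta_{j,\mathbf{A}}(\mathbf{b}_j)$, with $|\eta_\mathbf{A}| \in J_n$ thanks to the calibration of $n$ relative to $|\xi|$. The Taylor remainder, controlled by $|\phi|_{C^{1+\alpha}}\cdot(\mathrm{diam}(U_{\mathbf{A}*\mathbf{B}}))^{1+\alpha}$, accounts for the $e^{-(\alpha\lambda-\varepsilon_0)n}$ error.

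\emph{Stage (iv): Cauchy--Schwarz and conclusion.} After Stage (iii), $\widehat{\mu}(\xi)$ reduces to
$$\sum_{\mathbf{A}} C_\mathbf{A} \sum_{\mathbf{B}} w_{\mathbf{A}*\mathbf{B}}\,e^{i\eta_\mathbf{A}\prod_j \zeta_{j,\mathbf{A}}(\mathbf{b}_j)}\cdot I_\mathbf{A}(\mathbf{B})$$
plus error, where $I_\mathbf{A}(\mathbf{B})$ is a residual $y$-integral bounded by the upper regularity of $\nu$ (Lemma 2.9 applied at scale $e^{-\varepsilon_0 n/2}$), producing the $e^{-\varepsilon_0 \delta_{up} n/2}$ error. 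Squaring and applying Cauchy--Schwarz on the outer $\mathbf{A}$-sum --- using $|\mathcal{R}_{n+1}^{k+1}(\omega)|\sim e^{\delta\lambda(k+1)n}$ and $w_{\mathbf{A}*\mathbf{B}}^2 \sim e^{-2\delta\lambda(2k+1)n}$ --- yields the $e^{-\delta\lambda(2k+1)n}$ prefactor, and passing to $\sup_{\eta\in J_n}$ absorbs the $\mathbf{A}$-dependence of $\eta_\mathbf{A}$. The hardest step is Stage (iii): one must verify that the chain of bounded-distortion approximations (each $g'_{\mathbf{a}_{j-1}'\mathbf{b}_j}$ evaluated at $x_{\mathbf{a}_j}$) combined with the $f^K$ Taylor expansion produces \emph{exactly} the product $\eta_\mathbf{A}\prod_j\zeta_{j,\mathbf{A}}(\mathbf{b}_j)$ of the proposition, with multiplicative errors absorbed into the $e^{\varepsilon\beta n}$ slack. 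The calibration of $n$, $K$, and $\varepsilon_0$ is engineered precisely so that every remainder lands on one of the displayed error terms.
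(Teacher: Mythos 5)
Your plan reproduces the outer layers of the paper's argument (reduction to $\mathcal{U}$ via $f$-invariance and the choice of $K$, transfer-operator expansion, restriction to regular words by Lemma 3.2, freezing of $\chi$ and of the Gibbs weights, the role of upper regularity), but it misses the mechanism by which the phase acquires the exact multiplicative form $\eta\,\zeta_{1,\mathbf{A}}(\mathbf{b}_1)\cdots\zeta_{k,\mathbf{A}}(\mathbf{b}_k)$ with $\eta\in J_n$, and that is the heart of Proposition 4.1. In your Stage (iii) you Taylor-expand $\xi\,\phi(f^Kg_{\mathbf{A}*\mathbf{B}}(y))$ around $y=x_{b(\mathbf{A})}$ and keep only the linear term, but the zeroth-order term $\xi\,\phi\bigl(f^Kg_{\mathbf{A}*\mathbf{B}}(x_{b(\mathbf{A})})\bigr)$ depends on $\mathbf{B}$ (the centers $g_{\mathbf{A}*\mathbf{B}}(x_{b(\mathbf{A})})$ for different $\mathbf{B}$ are spread over $U_{\mathbf{a}_0}$, and multiplied by $\xi$ this produces phase variations that are exponentially large in $n$), it is not of product form, and it is far too large to be absorbed into any of the displayed error terms. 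Your Stage (iv) Cauchy--Schwarz over the outer $\mathbf{A}$-sum is performed \emph{after} this factorization, so the uncontrolled additive phase survives inside the $\mathbf{B}$-sum and the bound by $\sup_{\eta\in J_n}\bigl|\sum_{\mathbf{B}}e^{i\eta\zeta_{1,\mathbf{A}}(\mathbf{b}_1)\cdots\zeta_{k,\mathbf{A}}(\mathbf{b}_k)}\bigr|$ is not reached. The paper's order of operations is the opposite and is what makes the constant term disappear: first Cauchy--Schwarz over the full word sum so that each \emph{individual} integral is squared (Lemma 4.4), then the square is opened into a double integral over $(x,y)\in U_{b(\mathbf{A})}^2$ (Lemma 4.5), so only the difference $\Delta_{\mathbf{A},\mathbf{B},\mathbf{C}}(x,y)$ appears and the $\mathbf{B}$-dependent constant cancels identically; the mean value theorem then writes $|\Delta_{\mathbf{A},\mathbf{B},\mathbf{C}}|$ as a product of branch derivatives times $d^u(g_{\mathbf{a}_k}x,g_{\mathbf{a}_k}y)$, which after freezing at the centers is exactly $\eta_{\mathbf{A},\mathbf{C}}(x,y)\,\zeta_{1,\mathbf{A}}(\mathbf{b}_1)\cdots\zeta_{k,\mathbf{A}}(\mathbf{b}_k)/|\xi|$, and the near-diagonal set where $\eta_{\mathbf{A},\mathbf{C}}(x,y)$ would leave $J_n$ is precisely what the upper-regularity term $e^{-\varepsilon_0\delta_{up}n/2}$ excises.

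There is a second, independent gap: you expand only by $\mathcal{L}_\varphi^{(2k+1)n}$, so your phase is $\xi\,\phi(f^Kg_{\mathbf{A}*\mathbf{B}}(y))$ and its $y$-derivative carries the uncancelled expansion factor $|\partial_u f^K|\sim e^{\lambda K}$. Since $K\simeq((2k+1)\lambda+2\varepsilon_0)n/(\alpha|\ln\kappa|)$ and $\lambda\geq|\ln\kappa|$, $\alpha<1$, one has $\lambda K>(2k+1)\lambda n$, so the resulting frequency is of order $e^{\varepsilon_0 n}e^{\lambda K-(2k+1)\lambda n}\gg e^{2\varepsilon_0 n}$ and your claim $|\eta_{\mathbf{A}}|\in J_n$ fails: the calibration of $n$ against $|\xi|$ compensates only the contraction of $g_{\mathbf{A}*\mathbf{B}}$, not the expansion of $f^K$. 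The paper avoids this by first applying the transfer operator at depth $K$, introducing the extra word $\mathbf{C}\in\mathcal{R}_{K+1}$ so that $f^Kg_{\mathbf{C}}$ is a bounded holonomy-type map and the factor $|g_{\mathbf{C}}'|\,|\partial_uf^K|\simeq1$ cancels inside $\eta_{\mathbf{A},\mathbf{C}}$; the $\sup$ over $\eta\in J_n$ then removes the residual $\mathbf{C}$-dependence. Without the doubling of variables and the depth-$K$ word $\mathbf{C}$, the stated factorized form cannot be obtained, so the proposal as written does not prove the proposition.
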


Once proposition 4.1 is established, if we manage to prove that the sum of exponentials enjoys exponential decay in $n$, then choosing $\varepsilon$ small enough will allow us to see that $|\widehat{\mu}(\xi)|^2$ enjoys polynomial decay in $\xi$, and our main theorem will be proved. We prove Proposition 4.1 through a succession of lemmas.

\begin{lemma}

For $n$ large enough there exists $C>0$ that depends only on $C_{\psi,\chi}$ such that

$$ \left| \int_{\Omega} e^{i \xi \phi} \chi d\mu - \int_{\mathcal{U}} e^{i \xi \phi \circ f^K} \chi \circ f^K d \nu \right| \leq C e^{- \varepsilon_0 n} . $$

\end{lemma}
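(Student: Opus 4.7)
The plan is first to use the $f$-invariance of $\mu$: since $f_*\mu=\mu$, we have $\int_\Omega e^{i\xi\phi}\chi\,d\mu = \int_\Omega e^{i\xi\phi\circ f^K}\chi\circ f^K\,d\mu$. Then I would invoke the relation $\nu = \pi_*\mu$ that is implicit in the construction of $\nu$ in section 2.5 (the formula $\int_\mathcal{U} g\,d\nu = \int_\mathcal{R} g\,d\mu$ for $S$-constant $g$ extends to $\int_\mathcal{U} h\,d\nu = \int_\mathcal{R} h\circ\pi\,d\mu$ for arbitrary continuous $h$ by pushforward). Modulo the boundary of the Markov partition, which is $\mu$-null, the integral over $\Omega$ agrees with the integral over $\mathcal{R}$, so the quantity to bound reduces to
$$\left| \int_{\mathcal{R}}\Bigl[e^{i\xi\phi(f^K x)}\chi(f^K x) - e^{i\xi\phi(f^K \pi(x))}\chi(f^K \pi(x))\Bigr]\,d\mu(x) \right|.$$

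The second step is a pointwise comparison of the integrand at $x$ and at $\pi(x)$. For $x \in R_a$, we have $\pi(x)=[x,x_a]\in W^s_\varepsilon(x)$ and $d(x,\pi(x))\leq \diam R_a$, so the exponential contraction of $f$ on stable manifolds (item 5 of Theorem 2.2) gives $d(f^Kx,f^K\pi(x))\leq \kappa^K d(x,\pi(x))\leq C\kappa^K$. Using $\phi \in C^{1+\alpha}$ (hence Lipschitz) and $\chi\in C^\alpha$, together with the trivial $|e^{ia}\chi_1-e^{ib}\chi_2|\leq \|\chi\|_\infty|a-b|+|\chi_1-\chi_2|$, the integrand is bounded pointwise by $C(|\xi|\kappa^K+\kappa^{\alpha K})$, with $C$ depending only on $C_{\phi,\chi}$ and on $\diam R_a$.

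The final step is an arithmetic check that the choices of $n$ and $K$ have been made precisely to absorb both terms into $e^{-\varepsilon_0 n}$. From $n=\lfloor \ln|\xi|/((2k+1)\lambda+\varepsilon_0)\rfloor$ one gets $|\xi|\leq e^{((2k+1)\lambda+\varepsilon_0)(n+1)}$, and from $K=\lfloor((2k+1)\lambda+2\varepsilon_0)n/(\alpha|\ln\kappa|)\rfloor$ one gets $\kappa^{\alpha K}\leq C e^{-((2k+1)\lambda+2\varepsilon_0)n}$ and $\kappa^{K}\leq C e^{-((2k+1)\lambda+2\varepsilon_0)n/\alpha}$. Since $\alpha\leq 1$, the leading exponent in $|\xi|\kappa^K$ is at most $((2k+1)\lambda+\varepsilon_0)-((2k+1)\lambda+2\varepsilon_0)=-\varepsilon_0$ (per unit of $n$), while $\kappa^{\alpha K}$ is much smaller still. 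Using that $\mu$ is a probability measure, the integrated estimate $\leq Ce^{-\varepsilon_0 n}$ follows.

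I do not expect any real obstacle: this lemma is essentially a bookkeeping exercise, whose entire purpose is to justify the relationship that has been set up between the three scales $\xi$, $n$ and $K$. The only thing to be careful with is distinguishing the Lipschitz regularity of $\phi$ from the H\"older regularity of $\chi$, since they contribute differently ($|\xi|\kappa^K$ versus $\kappa^{\alpha K}$) and it is the former that dictates the marginal choice of $K$.
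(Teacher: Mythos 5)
Your proposal is correct and follows essentially the same route as the paper: use $f$-invariance of $\mu$, compare $h\circ f^K$ at $x$ and at $\pi(x)\in W^s(x)$ via the stable contraction $\kappa^K$ and the regularity of $\phi,\chi$, then invoke the defining relation between $\nu$ and $\mu$ and the choices of $n$ and $K$. The only (harmless) difference is bookkeeping: the paper downgrades the Lipschitz factor $e^{i\xi\phi}$ to $\alpha$-H\"older and bounds the whole difference by $|\xi|\kappa^{\alpha K}\lesssim e^{-\varepsilon_0 n}$, which is exactly what the definition of $K$ is calibrated for, whereas you keep the two regularities separate and get the slightly sharper $|\xi|\kappa^{K}+\kappa^{\alpha K}$; both are absorbed by $e^{-\varepsilon_0 n}$.
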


\begin{proof}

Let $h(x) := e^{i \xi \phi(x)} \chi(x) $. Then, since $\mu$ is $f$ invariant, $$ \widehat{\mu} = \int_\Omega h \circ f^K d\mu .$$

We then check that $h \circ f^K$ is close to a $S$-constant function as $K$ grows larger.
First of all, notice that since $\phi$ is $C^1$, $e^{i \xi \phi}$ also is. In particular it is Lipschitz with constant $|\xi| C_{\phi, \chi}$. Since $M$ is a closed riemannian manifold, it follows that $e^{i \xi \phi}$ is $\alpha$-Hölder on $M$ with constant $|\xi| C_{\phi,\chi} \text{diam}(M)^{1-\alpha}$. Since $\chi$ is $\alpha$-Holder too, with constant $C_{\phi,\chi}$, the product $h$ is also locally $\alpha$-Hölder, with constant $ C_{\phi,\chi} + C_{\phi,\chi}^2 |\xi| \text{diam}(M)^{1-\alpha}  $. \\

Let $x \in \mathcal{R}_a$. By definition, $\pi(x) \in U_a$ is in $W^s(x)$. Then $$ |h(f^K(x)) - h(f^K(\pi(x))) | \leq |h|_{\alpha} d( f^K(x),f^K(\pi(x)) )^\alpha \leq |h|_\alpha \kappa^{\alpha K}$$
$$ \leq \left( C_{\phi,\chi} + C_{\phi,\chi}^2 |\xi| \text{diam}(M)^{1-\alpha}  \right) \kappa^{\alpha K} .$$

Since $|\xi| \kappa^{\alpha K} \lesssim e^{- \varepsilon_0 n}$, there exists a constant $C>1$ that depends only on $C_{\psi,\chi}$ such that 
$$ \| h \circ f^K - h \circ f^K \circ \pi \|_{\infty,\mathcal{R}} \leq C e^{- \varepsilon_0 n} .$$

The desired estimates follows from the definition of $\nu$. \end{proof}

Now we are ready to adapt the existing strategy for one dimensional expanding maps. From this point, we will follow $\cite{SS20}$ and $\cite{Le21}$. Notice however that we use an additional word $\mathbf{C}$ to cancel the distortions induced by $f^K$.

\begin{lemma}

$$ \left| \int_{\mathcal{U}} e^{i \xi \phi \circ f^K} \chi \circ f^K d \nu \right|^2 \lesssim \quad \quad \quad \quad \quad \quad \quad \quad \quad \quad \quad \quad \quad \quad \quad \quad \quad \quad \quad \quad \quad \quad \quad \quad \quad \quad \quad \quad \quad \quad $$ $$\quad \quad \quad  \Bigg{|} \sum_{\mathbf{C} \in \mathcal{R}_{K+1}} \underset{\mathbf{C} \rightsquigarrow \mathbf{A} \leftrightarrow \mathbf{B}}{ \underset{ \mathbf{B} \in \mathcal{R}_{n+1}^k}{ \underset{\mathbf{A} \in \mathcal{R}_{n+1}^{k+1} }{ \sum}} } \int_{U_{b(\mathbf{A})}} e^{i \xi \phi( f^K g_{\mathbf{C}'(\mathbf{A} * \mathbf{B})}(x))}  \chi(f^K g_{\mathbf{C}'(\mathbf{A} * \mathbf{B})}(x))  w_{\mathbf{C}'(\mathbf{A} * \mathbf{B})}(x)d\nu(x) \Bigg{|}^2 + e^{- \delta_1(\varepsilon) n } $$

\end{lemma}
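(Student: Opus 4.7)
The strategy is to use the relation $\mathcal{L}_{\varphi}^{*}\nu = \nu$ to expand the integral into a sum indexed by long admissible words, separate the ``regular'' contributions (which produce the main term) from the ``non-regular'' ones (bounded via large deviations), and conclude by squaring with the elementary inequality $|a+b|^{2}\leq 2|a|^{2}+2|b|^{2}$.

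Set $h(x) := e^{i\xi \phi(f^K x)}\chi(f^K x)$, so that $\|h\|_{\infty}\leq C_{\phi,\chi}$. Choose $N := K+(2k+1)n$, so that $N+1$ equals the length of $\mathbf{C}'(\mathbf{A}*\mathbf{B})$. By invariance and expansion of the iterated transfer operator,
$$\int_{\mathcal{U}} h\, d\nu \;=\; \int_{\mathcal{U}} \mathcal{L}_{\varphi}^{N}h\, d\nu \;=\; \sum_{b\in\mathcal{A}}\int_{U_b} \sum_{\substack{\mathbf{W}\in\mathcal{W}_{N+1}\\ b(\mathbf{W})=b}} h\bigl(g_{\mathbf{W}}(x)\bigr)\, w_{\mathbf{W}}(x)\, d\nu(x).$$
Every admissible $\mathbf{W}\in\mathcal{W}_{N+1}$ factorizes uniquely as $\mathbf{W}=\mathbf{C}'(\mathbf{A}*\mathbf{B})$ with $\mathbf{C}\in\mathcal{W}_{K+1}$, $\mathbf{A}\in\mathcal{W}_{n+1}^{k+1}$, $\mathbf{B}\in\mathcal{W}_{n+1}^{k}$, subject to $\mathbf{C}\rightsquigarrow\mathbf{A}\leftrightarrow\mathbf{B}$. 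I split the sum as $I_{\mathrm{reg}}+I_{\mathrm{err}}$, where $I_{\mathrm{reg}}$ restricts to factorizations with $\mathbf{C}\in\mathcal{R}_{K+1}$ and every sub-block of $\mathbf{A},\mathbf{B}$ in $\mathcal{R}_{n+1}$. By inspection, $I_{\mathrm{reg}}$ is precisely the main term in the claimed inequality.

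For the error $I_{\mathrm{err}}$, I bound $|h|$ by $\|h\|_{\infty}\lesssim 1$, reassemble the remaining sum as $\int \mathcal{L}_{\varphi}^{N}(\mathbb{1}_{E_{\mathrm{bad}}})\, d\nu = \nu(E_{\mathrm{bad}})$ using invariance, and use that the cylinders $U_{\mathbf{W}}$ are pairwise disjoint:
$$|I_{\mathrm{err}}| \;\leq\; \|h\|_{\infty}\, \nu(E_{\mathrm{bad}}), \qquad E_{\mathrm{bad}} \;:=\; \bigsqcup_{\mathbf{W}\text{ non-regular}} U_{\mathbf{W}}.$$
A point lies in $E_{\mathrm{bad}}$ whenever at least one of the $2k+2$ checkpoints fails the regularity criterion. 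Using $F$-invariance of $\nu$ to push each checkpoint back to time zero, a union bound, and Corollary 2.12 at scales $n$ and $K$, one obtains $\nu(E_{\mathrm{bad}})\lesssim e^{-\delta_{1}(\varepsilon)n}$ after absorbing the constant arising from the $K\sim n$ scaling into $\delta_{1}(\varepsilon)$.

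Squaring the identity $\int h\, d\nu=I_{\mathrm{reg}}+I_{\mathrm{err}}$ via $|a+b|^{2}\leq 2|a|^{2}+2|b|^{2}$ and using the previous lemma to replace $\int h\, d\nu$ by $\widehat{\mu}(\xi)$ (with an additional controlled error) produces the stated inequality. The only mildly delicate step is the checkpoint bookkeeping: one must verify that the event ``sub-block number $i$ is non-regular'' is exactly an $F^{\ell_i}$-translate of the non-regular event at the appropriate scale, so that the union bound generates only $O(k)$ copies of the large-deviation estimate rather than an exponentially large combinatorial factor.
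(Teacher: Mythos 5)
Your proposal is correct and follows essentially the same route as the paper: expand using $\mathcal{L}_\varphi^*\nu=\nu$, keep the regular words as the main term, and bound the non-regular contribution by the $\nu$-measure of the non-regular cylinders, which is exponentially small by Lemma 3.2 (equivalently, your checkpoint union bound with $F$-invariance re-derives that lemma); the paper merely performs the expansion in two stages ($K$ steps for $\mathbf{C}$, then $(2k+1)n$ steps for $\mathbf{A},\mathbf{B}$) and converts the weights $w_{\mathbf{W}}$ into $\nu(U_{\mathbf{W}})$ via the Gibbs estimates rather than your exact duality identity. One small remark: the final appeal to the previous lemma to pass to $\widehat{\mu}(\xi)$ is unnecessary, since the left-hand side of the statement is already the integral over $\mathcal{U}$, so squaring $\int h\,d\nu = I_{\mathrm{reg}}+I_{\mathrm{err}}$ finishes the proof directly.
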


\begin{proof}

Since $\nu$ is invariant by $\mathcal{L}_\varphi$, we can write

$$ \int_{\mathcal{U}} h \circ f^K d \nu = \sum_{\mathbf{C} \in \mathcal{W}_{K+1}}  \int_{U_{b(\mathbf{C})}} h\left(f^K(g_{ \mathbf{C}}(x))\right) w_{\mathbf{C}}(x) d\nu(x) $$

If we look at the part of the sum where $\mathbf{C}$ is not a regular word, we get by the Gibbs estimates:

$$ \Bigg| \sum_{\mathbf{C} \notin \mathcal{R}_{K+1}} \int_{U_{b(\mathbf{A})}} h\left(f^K(g_{\mathbf{C}}(x))\right) w_{\mathbf{C}}(x) d\nu(x) \Bigg| \lesssim \sum_{\mathbf{B} \notin \mathcal{R}_K} \nu(U_{\mathbf{C}}) \lesssim \nu(\mathcal{U} \setminus R_{K}) $$
which decays exponentially in $n$ by Lemma 3.2.
Then, we iterate our transfer operator again on the main sum, to get the following term:
$$  \sum_{\mathbf{C} \in \mathcal{R}_{K+1}} \underset{\mathbf{C} \rightsquigarrow \mathbf{A} \leftrightarrow \mathbf{B}}{ \underset{ \mathbf{B} \in \mathcal{W}_{n+1}^k}{ \underset{\mathbf{A} \in \mathcal{W}_{n+1}^{k+1} }{ \sum}} } \int_{U_{b(\mathbf{A})}} e^{i \xi \phi( f^K g_{\mathbf{C}'(\mathbf{A} * \mathbf{B})}(x))}  \chi(f^K g_{\mathbf{C}'(\mathbf{A} * \mathbf{B})}(x))  w_{\mathbf{C}'(\mathbf{A} * \mathbf{B})}(x)d\nu(x) . $$

Looking at the part of the sum where words are not regular again, we see by the Gibbs estimates again that

$$ \Bigg| \sum_{\mathbf{C} \in \mathcal{R}_{K+1}} \underset{\text{or }\mathbf{B} \notin \mathcal{R}_{n+1}^k}{\underset{\mathbf{A} \notin \mathcal{R}_{n+1}^{k+1} }{\sum_{\mathbf{C} \rightsquigarrow \mathbf{A} \leftrightarrow \mathbf{B}}}} \int_{U_{b(\mathbf{A})}} h\left(f^K(g_{\mathbf{C}'(\mathbf{A} * \mathbf{B})}(x))\right) w_{\mathbf{C}' (\mathbf{A} * \mathbf{B})}(x) d\nu(x) \Bigg| \quad \quad  \quad \quad  \quad \quad  \quad \quad  \quad \quad $$ $$  \quad \quad  \quad \quad  \quad \quad  \quad \quad  \lesssim \sum_{\mathbf{C} \in \mathcal{R}_{K+1}} \underset{\text{or }\mathbf{B} \notin \mathcal{R}_{n+1}^k}{\underset{\mathbf{A} \notin \mathcal{R}_{n+1}^{k+1} }{\sum_{\mathbf{C} \rightsquigarrow \mathbf{A} \leftrightarrow \mathbf{B}}}} \nu(U_{\mathbf{C}}) \nu(U_{\mathbf{A} * \mathbf{B}}) \lesssim \nu(\mathcal{U} \setminus R_{n}^{2k+1}), $$
and the desired estimate follows from Lemma 3.2. \end{proof}

\begin{lemma}

Define $ \chi_{\mathbf{C}}(\mathbf{a}_0) := \chi(f^K g_{\mathbf{C}}(x_{\mathbf{a}_0}))$. There exists some constant $\beta>0$ such that, for $n$ large enough:

$$  \Bigg{|}\sum_{\mathbf{C} \in \mathcal{R}_{K+1}} \underset{\mathbf{C} \rightsquigarrow \mathbf{A} \leftrightarrow \mathbf{B}}{ \underset{ \mathbf{B} \in \mathcal{R}_{n+1}^k}{ \underset{\mathbf{A} \in \mathcal{R}_{n+1}^{k+1} }{ \sum}} } \int_{U_{b(\mathbf{A})}} e^{i \xi \phi( f^K g_{\mathbf{C}'(\mathbf{A} * \mathbf{B})}(x))} w_{\mathbf{C}'(\mathbf{A} * \mathbf{B})}(x) \chi(f^K g_{\mathbf{C}'(\mathbf{A} * \mathbf{B})}(x)) d\nu(x) \Bigg{|}^2  \quad \quad \quad \quad \quad \quad \quad $$ $$ \quad \quad \quad \quad  \lesssim  \Bigg{|} \sum_{\mathbf{C} \in \mathcal{R}_{K+1}} \underset{\mathbf{C} \rightsquigarrow \mathbf{A} \leftrightarrow \mathbf{B}}{ \underset{ \mathbf{B} \in \mathcal{R}_{n+1}^k}{ \underset{\mathbf{A} \in \mathcal{R}_{n+1}^{k+1}(\omega) }{ \sum}} } \chi_{\mathbf{C}}(\mathbf{a}_0) \int_{U_{b(\mathbf{A})}} e^{i \xi \phi( f^K g_{\mathbf{C}'(\mathbf{A} * \mathbf{B})}(x)))} w_{\mathbf{C}'(\mathbf{A} * \mathbf{B})}(x) d\nu(x) \Bigg{|}^2 + e^{\varepsilon \beta n} e^{- \lambda \alpha n}. $$

\end{lemma}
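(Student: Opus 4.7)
The strategy is in three steps: show that restricting $\mathbf{A}$ to $\mathcal{R}_{n+1}^{k+1}(\omega)$ changes nothing, estimate the pointwise error incurred by replacing $\chi(f^K g_{\mathbf{C}'(\mathbf{A} * \mathbf{B})}(x))$ by the simpler constant $\chi_{\mathbf{C}}(\mathbf{a}_0)$, and then pass from a bound on the difference to one on the difference of squared moduli.

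For the first step, recall that $\tilde{\omega}\supset\mathrm{supp}\,\chi$ was chosen as an open union of rectangles, so $\omega=\tilde{\omega}\cap\mathcal{U}$ is a disjoint union of entire $U_a$'s and $\tilde{\omega}$ is saturated by local stable leaves. If $\mathbf{A}\in\mathcal{R}_{n+1}^{k+1}\setminus\mathcal{R}_{n+1}^{k+1}(\omega)$, then $\mathbf{a}_0\in\mathcal{R}_{n+1}$ but $U_{\mathbf{a}_0}\cap\omega=\emptyset$, which forces $U_{a_0^{(1)}}\cap\omega=\emptyset$ and therefore $R_{a_0^{(1)}}\cap\mathrm{supp}\,\chi=\emptyset$. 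Both points $f^K g_{\mathbf{C}'(\mathbf{A} * \mathbf{B})}(x)$ and $f^K g_{\mathbf{C}}(x_{\mathbf{a}_0})$ project via $\pi$ into $U_{\mathbf{a}_0}\subset U_{a_0^{(1)}}$ and hence lie in $R_{a_0^{(1)}}$, so both $\chi(f^K g_{\mathbf{C}'(\mathbf{A} * \mathbf{B})}(x))$ and $\chi_{\mathbf{C}}(\mathbf{a}_0)$ vanish. The corresponding terms therefore drop out of both sides.

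For the pointwise error, I factor $g_{\mathbf{C}'(\mathbf{A} * \mathbf{B})}=g_{\mathbf{C}}\circ g_{\mathbf{A} * \mathbf{B}}$ and compare $f^K g_{\mathbf{C}}(z_1)$ with $f^K g_{\mathbf{C}}(z_2)$, where $z_1:=g_{\mathbf{A} * \mathbf{B}}(x)$ and $z_2:=x_{\mathbf{a}_0}$ both lie in $U_{\mathbf{a}_0}$. The central observation is that $f^K\circ g_{\mathbf{C}}:U_{b(\mathbf{C})}\to M$ has Lipschitz constant bounded uniformly in $K$ and $\mathbf{C}$: telescoping the cohomological identity of Lemma 2.10 gives
\[
\frac{|\partial_u f^K(g_{\mathbf{C}}(z))|}{(F^K)'(g_{\mathbf{C}}(z))} \;=\; \frac{|\partial_u\pi(f^K g_{\mathbf{C}}(z))|}{|\partial_u\pi(g_{\mathbf{C}}(z))|},
\]
which is bounded above and below by Remark 2.10, while $(F^K)'(g_{\mathbf{C}}(z))\cdot g_{\mathbf{C}}'(z)=1$ since $F^K\circ g_{\mathbf{C}}=\mathrm{id}$. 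Thus $|\partial_u(f^K\circ g_{\mathbf{C}})|\leq C$. Since $\mathbf{a}_0\in\mathcal{R}_{n+1}(\omega)\subset\mathcal{R}_{n+1}$, Lemma 3.1 yields $d(z_1,z_2)\leq\mathrm{diam}(U_{\mathbf{a}_0})\sim e^{-\lambda n}$, and the $\alpha$-H\"older regularity of $\chi$ then gives
\[
\big|\chi(f^K g_{\mathbf{C}'(\mathbf{A} * \mathbf{B})}(x)) - \chi_{\mathbf{C}}(\mathbf{a}_0)\big| \;\lesssim\; e^{\varepsilon\beta n}\,e^{-\alpha\lambda n}.
\]

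Finally, let $A$ and $B$ denote the quantities inside $|\cdot|^2$ on the LHS and RHS respectively. Summing the pointwise bound against the weights $w_{\mathbf{C}'(\mathbf{A} * \mathbf{B})}(x)$ and integrating, the normalization $\mathcal{L}_\varphi^{K+(2k+1)n}1=1$ on $\widehat{\mathcal{U}}$ immediately yields $|A-B|\lesssim e^{\varepsilon\beta n}e^{-\alpha\lambda n}$. The same transfer-operator argument applied with the trivial bound $|\chi|\leq\|\chi\|_\infty$ shows $|A|,|B|\lesssim 1$, so
\[
\big||A|^2-|B|^2\big|\leq (|A|+|B|)\,|A-B|\;\lesssim\;e^{\varepsilon\beta n}\,e^{-\alpha\lambda n},
\]
which is the desired inequality. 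The main obstacle is the uniform Lipschitz bound on $f^K\circ g_{\mathbf{C}}$: one must exploit the exact cancellation between the expansion of $f^K$ and the contraction of $g_{\mathbf{C}}$ up to the bounded ratio of holonomy derivatives, since a naive estimate would blow up like $e^{K\lambda}$ and destroy the error bound.
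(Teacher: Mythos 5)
Your proof is correct and follows the same overall structure as the paper's: discard the blocks $\mathbf{A}$ that are not $\omega$-localized using that $\tilde{\omega}$ is a union of rectangles (so that both $\chi(f^K g_{\mathbf{C}'(\mathbf{A}*\mathbf{B})}(x))$ and $\chi_{\mathbf{C}}(\mathbf{a}_0)$ vanish on those terms), replace $\chi(f^K g_{\mathbf{C}'(\mathbf{A}*\mathbf{B})}(x))$ by $\chi_{\mathbf{C}}(\mathbf{a}_0)$ with an $\alpha$-H\"older error, and then sum the weights $w_{\mathbf{C}'(\mathbf{A}*\mathbf{B})}$, whose total mass is $\lesssim 1$. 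The one point where you genuinely diverge is the distance estimate: the paper bounds $d\left(f^K g_{\mathbf{C}}g_{\mathbf{A}*\mathbf{B}}(x),\, f^K g_{\mathbf{C}}(x_{\mathbf{a}_0})\right)$ by $\|\partial_u f^K\|_{\infty,U_{\mathbf{C}}}\,\|g_{\mathbf{C}}'\|_{\infty}\,\mathrm{diam}(U_{\mathbf{a}_0})$ and invokes the regularity $\mathbf{C}\in\mathcal{R}_{K+1}$ (Lemma 3.1, with $K\simeq Cn$) so that the first two factors cost only $e^{\varepsilon\beta n}$, whereas you use the cohomology identity behind Lemma 2.13 together with $F^K\circ g_{\mathbf{C}}=\mathrm{id}$ (equivalently, that $f^K\circ g_{\mathbf{C}}$ is a stable holonomy, with $\pi\circ f^K\circ g_{\mathbf{C}}=\mathrm{id}$) to get $|\partial_u(f^K\circ g_{\mathbf{C}})(z)|=|\partial_u\pi(f^K g_{\mathbf{C}}(z))|/|\partial_u\pi(g_{\mathbf{C}}(z))|$, which is uniformly bounded by Remark 2.10. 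Your variant is slightly sharper: the Lipschitz constant is uniform in $K$ and $\mathbf{C}$ (no $e^{\varepsilon\beta n}$ loss, and the regularity of $\mathbf{C}$ is not needed for this step), while the paper's is a routine application of the section-3 orders of magnitude; both yield the stated error $e^{\varepsilon\beta n}e^{-\alpha\lambda n}$. You also spell out the final passage from $|A-B|\lesssim e^{\varepsilon\beta n}e^{-\alpha\lambda n}$ to $\bigl||A|^2-|B|^2\bigr|\lesssim e^{\varepsilon\beta n}e^{-\alpha\lambda n}$ via $|A|,|B|\lesssim 1$, a step the paper leaves implicit.
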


\begin{proof}

Our first move is to get rid of terms in the sum where $\chi(f^K g_{\mathbf{C}} g_{\mathbf{A} * \mathbf{B}}) =0 $. To this end, notice that $f^K g_{\mathbf{C}}$ is an holonomy map that sends $U_{b(\mathbf{C})}$ in $R_{b(\mathbf{C})}$. It doesn't really play a role for this question. The only word that matter here is $\mathbf{a}_0$ : if it isn't in $R_{n+1}(\omega)$, then by definition it implies that $\omega \cap U_{\mathbf{a}_0} = \emptyset$. It follows that $\tilde{\omega} \cap R_{\mathbf{a}_0} = \emptyset$, and so $\chi(f^K g_{\mathbf{C}} g_{\mathbf{A} * \mathbf{B}}) = 0$. \\

Hence our main term is equal to the same sum, but where we have restricted $\mathbf{A}$ in $\mathcal{R}_{n+1}(\omega)$. \\

Next, denote $ \chi_{\mathbf{C}}(\mathbf{a}_0) := \chi(f^K g_{\mathbf{C}}(x_{\mathbf{a}_0})). $ The orders of magnitude of Lemma 3.1 and Remark 3.3 combined gives us
$$ \left| \chi(f^K g_{\mathbf{C}} g_{\mathbf{A}*\mathbf{B}}(x)) -  \chi_{\mathbf{C}}(\mathbf{a}_0)  \right| \leq C_{\phi,\chi} d\left(f^K g_{\mathbf{C}} g_{\mathbf{A}*\mathbf{B}}(x),f^K g_{\mathbf{C}} (x_{\mathbf{a}_0} ) \right)^\alpha $$ 
$$ \leq C_{\phi,\chi} \|\partial_u (f^K)\|_{\infty,U_{\mathbf{C}}}^\alpha \| g_{\mathbf{C}}' \|_{\infty,U_{b(\mathbf{C})}}^\alpha  \text{diam}(U_{\mathbf{a}_0 })^\alpha  \lesssim e^{\varepsilon \beta n} e^{- \lambda \alpha n} .$$

Hence, by the Gibbs estimates 
$$ { \Bigg{|} {\sum_{\mathbf{A},\mathbf{B},\mathbf{C}}} \int_{U_{b(\mathbf{A})}} e^{i \xi \phi( f^K g_{\mathbf{C}'(\mathbf{A} * \mathbf{B})} )} \chi( f^K g_{\mathbf{C}'(\mathbf{A} * \mathbf{B})})  w_{\mathbf{C}'(\mathbf{A} * \mathbf{B})} d\nu  -   \sum_{\mathbf{A},\mathbf{B},\mathbf{C}} \chi_{\mathbf{C}}(\mathbf{a}_0)  \int_{U_{b(\mathbf{A})}} e^{i \xi \phi(f^K g_{\mathbf{C}'(\mathbf{A} * \mathbf{B})})} w_{\mathbf{C}'(\mathbf{A} * \mathbf{B})} d\nu \Bigg{|} }$$
$$ \lesssim e^{\varepsilon \beta n} e^{- \lambda \alpha n}  \sum_{\mathbf{A},\mathbf{B},\mathbf{C}} \nu(U_{\mathbf{C}'(\mathbf{A} * \mathbf{B}) }) \lesssim e^{\varepsilon \beta n} e^{- \lambda \alpha n} .$$

\end{proof}

\begin{lemma}
There exists some constant $\beta>0$ such that, for $n$ large enough:
$$ \Bigg{|} \sum_{\mathbf{C} \in \mathcal{R}_{K+1}} \underset{\mathbf{C} \rightsquigarrow \mathbf{A} \leftrightarrow \mathbf{B}}{ \underset{ \mathbf{B} \in \mathcal{R}_{n+1}^k}{ \underset{\mathbf{A} \in \mathcal{R}_{n+1}^{k+1}(\omega) }{ \sum}} } \chi_{\mathbf{C}}(\mathbf{a}_0) \int_{U_{b(\mathbf{A})}} e^{i \xi \phi( f^K g_{\mathbf{C}'(\mathbf{A} * \mathbf{B})}(x)))} w_{\mathbf{C}'(\mathbf{A} * \mathbf{B})}(x) d\nu(x) \Bigg{|}^2 \quad \quad \quad \quad \quad \quad \quad  $$
$$ \quad \quad \quad \quad \lesssim  e^{-(2k-1) \lambda \delta n} e^{- \lambda \delta K} \sum_{\mathbf{C} \in \mathcal{R}_{K+1}} \underset{\mathbf{C} \rightsquigarrow \mathbf{A} \leftrightarrow \mathbf{B}}{ \underset{ \mathbf{B} \in \mathcal{R}_{n+1}^k}{ \underset{\mathbf{A} \in \mathcal{R}_{n+1}^{k+1}(\omega) }{ \sum}} } \Bigg{|} \int_{U_{b(\mathbf{A})}}  e^{i \xi \phi( f^K( g_{\mathbf{C}'(\mathbf{A} * \mathbf{B})}(x)))} w_{\mathbf{a}_k}(x) d\nu(x) \Bigg{|}^2 + e^{\varepsilon \beta n} \kappa^{\alpha n}. $$
\end{lemma}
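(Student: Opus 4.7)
The plan is to apply a Cauchy--Schwarz inequality over the indices $(\mathbf{C},\mathbf{A},\mathbf{B})$, after first isolating inside each integral the factor $w_{\mathbf{a}_k}(x)$ corresponding only to the last block. The key preparation is a cocycle factorization of the weight. Writing $g_{\mathbf{C}'(\mathbf{A}*\mathbf{B})} = g_{\mathbf{D}} \circ g_{\mathbf{a}_k}$ for the prefix word $\mathbf{D}$ of length roughly $K+2kn$, the chain rule for Birkhoff sums of $\varphi$ yields
$$ w_{\mathbf{C}'(\mathbf{A}*\mathbf{B})}(x) \ = \ w_{\mathbf{D}}\bigl(g_{\mathbf{a}_k}(x)\bigr) \cdot w_{\mathbf{a}_k}(x). $$
Using the exponentially vanishing variations of $\varphi$ together with the fact that $g_{\mathbf{D}}$ is applied to points already inside $U_{\mathbf{a}_k}$ of diameter $\lesssim \kappa^{n}$, I would replace $w_{\mathbf{D}}(g_{\mathbf{a}_k}(x))$ by the $x$-independent quantity $W_{\mathbf{C},\mathbf{A},\mathbf{B}} := w_{\mathbf{D}}(x_{\mathbf{a}_k}) \sim e^{-\lambda\delta(K+2kn)}$, introducing a multiplicative error of size $1 + O(\kappa^{\alpha n})$ coming from a geometric series of distortions.

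Setting
$$ I_{\mathbf{C},\mathbf{A},\mathbf{B}} \ := \ \int_{U_{b(\mathbf{A})}} e^{i\xi \phi(f^K g_{\mathbf{C}'(\mathbf{A}*\mathbf{B})}(x))}\, w_{\mathbf{a}_k}(x)\, d\nu(x), $$
this factorization yields $S = \tilde S + O(e^{\varepsilon \beta n} \kappa^{\alpha n})$, where $S$ denotes the original left-hand sum and $\tilde S := \sum \chi_{\mathbf{C}}(\mathbf{a}_0)\, W_{\mathbf{C},\mathbf{A},\mathbf{B}}\, I_{\mathbf{C},\mathbf{A},\mathbf{B}}$; the error is controlled by combining the cardinality bound $\#\mathcal{R}_{K+1} \cdot \#\mathcal{R}_{n+1}^{k+1}(\omega) \cdot \#\{\mathbf{B}:\mathbf{A}\leftrightarrow \mathbf{B}\} \sim e^{\lambda\delta(K+(2k+1)n)}$ from Lemma 3.2 with the Gibbs estimate $|W_{\mathbf{C},\mathbf{A},\mathbf{B}}| \cdot \|w_{\mathbf{a}_k}\|_\infty \lesssim e^{-\lambda\delta(K+(2k+1)n)}$. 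Since the same computation forces $|\tilde S| \lesssim 1$, the inequality $|S|^2 \le 2|\tilde S|^2 + O(e^{\varepsilon\beta n}\kappa^{\alpha n})$ absorbs the cross term into the stated error.

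Finally, applying Cauchy--Schwarz to $\tilde S$ with $\chi_{\mathbf{C}}(\mathbf{a}_0)\, W_{\mathbf{C},\mathbf{A},\mathbf{B}}$ on one side and $I_{\mathbf{C},\mathbf{A},\mathbf{B}}$ on the other produces
$$ |\tilde S|^2 \ \leq \ \Big(\sum_{\mathbf{C},\mathbf{A},\mathbf{B}} |\chi_{\mathbf{C}}(\mathbf{a}_0)|^2\, |W_{\mathbf{C},\mathbf{A},\mathbf{B}}|^2\Big)\Big(\sum_{\mathbf{C},\mathbf{A},\mathbf{B}} |I_{\mathbf{C},\mathbf{A},\mathbf{B}}|^2\Big). $$
Using $|\chi_{\mathbf{C}}(\mathbf{a}_0)| \lesssim 1$, the Gibbs bound $|W_{\mathbf{C},\mathbf{A},\mathbf{B}}|^2 \sim e^{-2\lambda\delta(K+2kn)}$, and the cardinality estimate again, the first parenthesis is $\sim e^{\varepsilon\beta n}\, e^{-\lambda\delta K}\, e^{-(2k-1)\lambda\delta n}$, which is exactly the prefactor appearing in the target. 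The main technical obstacle is the cocycle factorization step: one must carefully verify that the error picked up when moving from $w_{\mathbf{D}}(g_{\mathbf{a}_k}(x))$ to $w_{\mathbf{D}}(x_{\mathbf{a}_k})$ is of size $\kappa^{\alpha n}$ rather than merely $O(1)$, which requires exploiting the interplay between the $\alpha$-Hölder regularity of $\varphi$ and the exponential contraction of the inverse branches along the whole prefix $\mathbf{D}$.
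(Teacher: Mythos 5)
Your proposal is correct and follows essentially the same route as the paper: the cocycle factorization $w_{\mathbf{C}'(\mathbf{A}*\mathbf{B})}(x)=w_{\mathbf{C}'(\mathbf{A}\#\mathbf{B})}(g_{\mathbf{a}_k}(x))\,w_{\mathbf{a}_k}(x)$, the replacement of the prefix weight by its value at $x_{\mathbf{a}_k}$ with multiplicative error $1+O(\kappa^{\alpha n})$ coming from the H\"older regularity of $\varphi$ and the contraction along the prefix, the $e^{\varepsilon\beta n}\kappa^{\alpha n}$ error bound via the Gibbs estimates, and the final Cauchy--Schwarz split producing the prefactor $e^{-(2k-1)\lambda\delta n}e^{-\lambda\delta K}$ from the cardinality and order-of-magnitude estimates of Section 3. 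Your explicit handling of the cross term (via $|\tilde S|\lesssim 1$) is only a slightly more detailed version of what the paper does implicitly.
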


\begin{proof}

Notice that $w_{\mathbf{C}'(\mathbf{A} * \mathbf{B})}(x)$ and $w_{\mathbf{a}_k}(x)$ are related by
$$ w_{\mathbf{C}'(\mathbf{A} * \mathbf{B})}(x) = w_{\mathbf{C}'(\mathbf{A} \# \mathbf{B})}(g_{\mathbf{a}_k}(x)) w_{\mathbf{a}_k}(x) .$$

Moreover:
$$  \frac{w_{\mathbf{C}'(\mathbf{A} \# \mathbf{B})}(g_{\mathbf{a}_k}(x))}{w_{\mathbf{C}'(\mathbf{A} \# \mathbf{B})}(x_{\mathbf{a}_k})} = \exp \left( S_{K+2nk}\varphi(g_{\mathbf{C}'(\mathbf{A} \# \mathbf{B})}(g_{\mathbf{a}_k}(x))) - S_{K+2kn}\varphi( g_{\mathbf{C}'(\mathbf{A} \# \mathbf{B})}(x_{\mathbf{a}_k})) \right) $$
with 
$$ \left| S_{K+2nk}\varphi(g_{\mathbf{C}'(\mathbf{A} \# \mathbf{B})}(g_{\mathbf{a}_k}(x)) - S_{K+2kn}\varphi( g_{\mathbf{C}'(\mathbf{A} \# \mathbf{B})}(x_{\mathbf{a}_k}))  \right| \lesssim \sum_{j=0}^{K+2nk-1} \kappa^{\alpha(K+n(2k+1) - j )} \lesssim \kappa^{\alpha n} $$
since $\varphi$ is $\alpha$-Hölder. Hence, there exists some constant $C>0$ such that
$$ e^{-C \kappa^{\alpha n}} w_{\mathbf{C}'(\mathbf{A} \# \mathbf{B})}(x_{\mathbf{a}_k}) \leq w_{\mathbf{C}'(\mathbf{A} \# \mathbf{B})}(g_{\mathbf{a}_k}(x)) \leq e^{C \kappa^{\alpha n}} w_{\mathbf{C}'(\mathbf{A} \# \mathbf{B})}(x_{\mathbf{a}_k}) $$
Which gives:
$$ \left| w_{\mathbf{C}'(\mathbf{A} \# \mathbf{B})}(g_{\mathbf{a}_k}(x)) -  w_{\mathbf{C}'(\mathbf{A} \# \mathbf{B})}(x_{\mathbf{a}_k})  \right| \leq \max\left| e^{\pm C \kappa^{\alpha n}} -1 \right|  w_{\mathbf{C}'(\mathbf{A} \# \mathbf{B})}(x_{\mathbf{a}_k}) \lesssim \kappa^{\alpha n} w_{\mathbf{C}'(\mathbf{A} \# \mathbf{B})}(x_{\mathbf{a}_k}) .$$
Hence
$$ \Bigg{|}  \sum_{\mathbf{C} \in \mathcal{R}_{K+1}} \underset{\mathbf{C} \rightsquigarrow \mathbf{A} \leftrightarrow \mathbf{B}}{ \underset{ \mathbf{B} \in \mathcal{R}_{n+1}^k}{ \underset{\mathbf{A} \in \mathcal{R}_{n+1}^{k+1}(\omega) }{ \sum}} } \chi_{\mathbf{C}}(\mathbf{a}_0) \int_{U_{b(\mathbf{A})}} e^{i \xi \phi(f^K g_{\mathbf{C}'(\mathbf{A} * \mathbf{B})}(x))} \left(w_{\mathbf{C}'(\mathbf{A} * \mathbf{B})}(x) - w_{\mathbf{C}'(\mathbf{A} \# \mathbf{B})}(x_{\mathbf{a}_k}) w_{\mathbf{a}_k}(x) \right) d\nu(x) \Bigg{|}   $$

$$ \lesssim \kappa^{\alpha n}  \sum_{\mathbf{A},\mathbf{B},\mathbf{C}}\int_{U_{b(\mathbf{A})}}  w_{\mathbf{C}'(\mathbf{A} \# \mathbf{B})}(x_{\mathbf{a}_k}) w_{\mathbf{a}_k}(x) d\nu(x) \ \lesssim e^{\varepsilon \beta n} \kappa^{\alpha n} .$$
by the Gibbs estimates.
Moreover, by Cauchy-Schwartz and by the orders of magnitude of Lemma 3.1,
$$ \Bigg{|}  \sum_{\mathbf{C} \in \mathcal{R}_{K+1}} \underset{\mathbf{C} \rightsquigarrow \mathbf{A} \leftrightarrow \mathbf{B}}{ \underset{ \mathbf{B} \in \mathcal{R}_{n+1}^k}{ \underset{\mathbf{A} \in \mathcal{R}_{n+1}^{k+1}(\omega) }{ \sum}} } w_{\mathbf{C}'(\mathbf{A} \# \mathbf{B})}(x_{\mathbf{a}_k})  \chi_{\mathbf{C}}(\mathbf{a}_0) \int_{U_{b(\mathbf{A})}} e^{i \xi \phi(f^K g_{\mathbf{C}'(\mathbf{A} * \mathbf{B})}(x) ) }   w_{\mathbf{a}_k}(x) d\nu(x) \Bigg{|}^2   $$
$$  \lesssim e^{\varepsilon \beta n} e^{-\lambda \delta (2k-1) n} e^{- \lambda \delta K} \sum_{\mathbf{C} \in \mathcal{R}_{K+1}} \underset{\mathbf{C} \rightsquigarrow \mathbf{A} \leftrightarrow \mathbf{B}}{ \underset{ \mathbf{B} \in \mathcal{R}_{n+1}^k}{ \underset{\mathbf{A} \in \mathcal{R}_{n+1}^{k+1}(\omega) }{ \sum}} }\left| \int_{U_{b(\mathbf{A})}} e^{i \xi \phi(f^K g_{\mathbf{C}' (\mathbf{A} * \mathbf{B})}(x) ) }   w_{\mathbf{a}_k}(x) d\nu(x) \right|^2 ,$$

where one could increase $\beta$ if necessary.\end{proof}

\begin{lemma}
Define $$ \Delta_{\mathbf{A},\mathbf{B},\mathbf{C}}(x,y) :=  \phi( f^K g_{\mathbf{C}'(\mathbf{A} * \mathbf{B})}(x)) - \phi( f^K g_{\mathbf{C}'(\mathbf{A} * \mathbf{B})}(y)). $$
There exists some constant $\beta>0$ such that, for $n$ large enough:
$$ e^{-(2k-1) \lambda \delta n} e^{- \lambda \delta K} \sum_{\mathbf{C} \in \mathcal{R}_{K+1}} \underset{\mathbf{C} \rightsquigarrow \mathbf{A} \leftrightarrow \mathbf{B}}{ \underset{ \mathbf{B} \in \mathcal{R}_{n+1}^k}{ \underset{\mathbf{A} \in \mathcal{R}_{n+1}^{k+1}(\omega) }{ \sum}} } \Bigg{|} \int_{U_{b(\mathbf{A})}}  e^{i \xi \phi( f^K g_{\mathbf{C}'(\mathbf{A} * \mathbf{B})}(x))} w_{\mathbf{a}_k}(x) d\nu(x) \Bigg{|}^2 $$
$$ \lesssim e^{\varepsilon \beta n } e^{-\lambda \delta (2k+1) n} e^{- \lambda \delta K} \underset{\mathbf{C} \rightsquigarrow \mathbf{A}}{\underset{\mathbf{A} \in \mathcal{R}_{n+1}^{k+1}(\omega)}{\sum_{\mathbf{C} \in \mathcal{R}_{K+1}}}} \iint_{U_{b(\mathbf{A})}^2 }  \Bigg{|} \underset{\mathbf{A} \leftrightarrow \mathbf{B}}{\sum_{\mathbf{B} \in \mathcal{R}_{n+1}^k}} e^{i \xi \left|\Delta_{\mathbf{A},\mathbf{B},\mathbf{C}}\right|(x,y) } \Bigg| d\nu(x) d\nu(y)  .$$

\end{lemma}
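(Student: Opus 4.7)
The plan is to expand each squared modulus on the left-hand side into a double integral on $U_{b(\mathbf{A})}^2$, swap the (finite) sum over $\mathbf{B}$ with the integration, exploit the symmetry of the measure combined with the antisymmetry of $\Delta_{\mathbf{A},\mathbf{B},\mathbf{C}}$ in $(x,y)$ to replace the phase $\xi \Delta$ by $\xi |\Delta|$, and finally pull out the weights $w_{\mathbf{a}_k}(x) w_{\mathbf{a}_k}(y)$ using the Gibbs-type estimates of Lemma 3.1. Apart from that, the argument is a routine rearrangement.

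First I would write
\begin{equation*}
\left|\int_{U_{b(\mathbf{A})}} e^{i\xi \phi(f^K g_{\mathbf{C}'(\mathbf{A}*\mathbf{B})}(x))} w_{\mathbf{a}_k}(x)\, d\nu(x) \right|^2 = \iint_{U_{b(\mathbf{A})}^2} e^{i\xi \Delta_{\mathbf{A},\mathbf{B},\mathbf{C}}(x,y)}\, w_{\mathbf{a}_k}(x) w_{\mathbf{a}_k}(y)\, d\nu(x)\, d\nu(y),
\end{equation*}
sum over $\mathbf{B}$ and interchange the finite sum with the double integral. Because $\Delta_{\mathbf{A},\mathbf{B},\mathbf{C}}(x,y) = -\Delta_{\mathbf{A},\mathbf{B},\mathbf{C}}(y,x)$ while the weight $w_{\mathbf{a}_k}(x) w_{\mathbf{a}_k}(y)\, d\nu(x)\, d\nu(y)$ is symmetric in $(x,y)$, averaging the integrand with its image under the swap $(x,y)\leftrightarrow(y,x)$ replaces $e^{i\xi \Delta}$ by $\cos(\xi \Delta) = \cos(\xi |\Delta|)$. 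This is the cosmetic step that legitimises the absolute value on $\Delta$ in the statement; its real purpose is to feed the next section with positive phases $\zeta_{j,\mathbf{A}}(\mathbf{b}_j)$ suitable for a sum-product estimate.

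Next I would observe that $\mathbf{A} \in \mathcal{R}_{n+1}^{k+1}(\omega)$ forces $\mathbf{a}_k \in \mathcal{R}_{n+1}$, so Lemma 3.1 yields $w_{\mathbf{a}_k}(x) w_{\mathbf{a}_k}(y) \leq C e^{\varepsilon \beta n} e^{-2\delta \lambda n}$ uniformly on $U_{b(\mathbf{A})}^2$. Combining this with the pointwise bound
\begin{equation*}
\sum_{\mathbf{B}} \cos(\xi |\Delta_{\mathbf{A},\mathbf{B},\mathbf{C}}|(x,y)) = \mathrm{Re}\Bigl(\sum_{\mathbf{B}} e^{i\xi |\Delta_{\mathbf{A},\mathbf{B},\mathbf{C}}|(x,y)}\Bigr) \leq \Bigl|\sum_{\mathbf{B}} e^{i\xi |\Delta_{\mathbf{A},\mathbf{B},\mathbf{C}}|(x,y)}\Bigr|,
\end{equation*}
one gets, for each fixed $\mathbf{A}, \mathbf{C}$,
\begin{equation*}
\sum_{\mathbf{A}\leftrightarrow\mathbf{B}} \iint_{U_{b(\mathbf{A})}^2} e^{i\xi \Delta_{\mathbf{A},\mathbf{B},\mathbf{C}}} w_{\mathbf{a}_k}(x) w_{\mathbf{a}_k}(y)\, d\nu(x)\, d\nu(y) \;\lesssim\; e^{\varepsilon \beta n} e^{-2\delta \lambda n} \iint_{U_{b(\mathbf{A})}^2} \Bigl|\sum_{\mathbf{A}\leftrightarrow\mathbf{B}} e^{i\xi |\Delta_{\mathbf{A},\mathbf{B},\mathbf{C}}|(x,y)}\Bigr|\, d\nu(x)\, d\nu(y).
\end{equation*}
Summing over $\mathbf{A}$ and $\mathbf{C}$ and multiplying through by $e^{-(2k-1)\lambda\delta n} e^{-\lambda \delta K}$ absorbs the $e^{-2\delta \lambda n}$ into the prefactor, producing exactly $e^{-(2k+1)\lambda \delta n} e^{-\lambda \delta K}$ as required, up to an enlargement of $\beta$. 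No step is a true obstacle; the only point that requires mild care is the symmetrization, which is what forces the absolute value on $\Delta$ and sets up the next reduction of $|\widehat{\mu}(\xi)|^2$ to a sum of exponentials in positive phases.
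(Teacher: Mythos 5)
Your argument is correct and coincides with the paper's own proof: both expand the squared modulus into a double integral over $U_{b(\mathbf{A})}^2$, use the symmetry of the weight $w_{\mathbf{a}_k}(x)w_{\mathbf{a}_k}(y)\,d\nu(x)d\nu(y)$ against the antisymmetry of $\Delta_{\mathbf{A},\mathbf{B},\mathbf{C}}$ to pass to $\cos(\xi|\Delta_{\mathbf{A},\mathbf{B},\mathbf{C}}|)$, bound the $\mathbf{B}$-sum of cosines by the modulus of the corresponding exponential sum, and extract $w_{\mathbf{a}_k}(x)w_{\mathbf{a}_k}(y)\lesssim e^{\varepsilon\beta n}e^{-2\lambda\delta n}$ from the Lemma 3.1 estimates. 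Your symmetrization step is simply a more explicit justification of the paper's remark that the quantity is real, so the two proofs are essentially identical.
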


\begin{proof}

We first open up the modulus squared:

$$  \underset{\mathbf{B} \in \mathcal{R}_{n+1}^k}{\underset{\mathbf{A} \in \mathcal{R}_{n+1}^{k+1}(\omega) }{\sum_{\mathbf{A} \leftrightarrow \mathbf{B}}}}  \Bigg{|} \int_{U_{b(\mathbf{A})}}  e^{i \xi \phi( f^K g_{\mathbf{C}'(\mathbf{A} * \mathbf{B})}(x))} w_{\mathbf{a}_k}(x) d\nu(x) \Bigg{|}^2  $$ $$= \underset{\mathbf{B} \in \mathcal{R}_{n+1}^k}{\underset{\mathbf{A} \in \mathcal{R}_{n+1}^{k+1}(\omega) }{\sum_{\mathbf{A} \leftrightarrow \mathbf{B}}}} \iint_{U_{b(\mathbf{A})}^2} w_{\mathbf{a}_k}(x) w_{\mathbf{a}_k}(y) e^{i \xi \Delta_{\mathbf{A},\mathbf{B},\mathbf{C}}(x,y)  }  d\nu(x) d\nu(y).  $$
Since this quantity is real, we get:
$$ = \underset{\mathbf{B} \in \mathcal{R}_{n+1}^k}{\underset{\mathbf{A} \in \mathcal{R}_{n+1}^{k+1}(\omega) }{\sum_{\mathbf{A} \leftrightarrow \mathbf{B}}}} \iint_{U_{b(\mathbf{A})}^2} w_{\mathbf{a}_k}(x) w_{\mathbf{a}_k}(y) \cos({ \xi \Delta_{\mathbf{A},\mathbf{B},\mathbf{C}}(x,y)  })  d\nu(x) d\nu(y) $$

$$ = \sum_{\mathbf{A} \in \mathcal{R}_{n+1}^{k+1}(\omega)} \iint_{U_{b(\mathbf{A})}^2 }  w_{\mathbf{a}_k}(x) w_{\mathbf{a}_k}(y) \underset{\mathbf{A} \leftrightarrow \mathbf{B}}{\sum_{\mathbf{B} \in \mathcal{R}_{n+1}^k}}  \cos\left( \xi  |\Delta_{\mathbf{A},\mathbf{B},\mathbf{C}}|(x,y) \right) d\nu(x) d\nu(y),$$

and then we conclude using the triangle inequality and the estimates of section 3 as follow:

$$ \lesssim e^{\varepsilon \beta n } e^{- 2 \lambda \delta n} \sum_{\mathbf{A} \in \mathcal{R}_{n+1}^{k+1}(\omega)} \iint_{U_{b(\mathbf{A})}^2 }  \Bigg{|} \underset{\mathbf{A} \leftrightarrow \mathbf{B}}{\sum_{\mathbf{B} \in \mathcal{R}_{n+1}^k}} e^{i \xi \left|\Delta_{\mathbf{A},\mathbf{B},\mathbf{C}}\right|(x,y) } \Bigg| d\nu(x) d\nu(y) . $$

\end{proof}

\begin{lemma}
Define $$\zeta_{j,\mathbf{A}}(\mathbf{b}) := e^{2 \lambda n} |g_{\mathbf{a}_{j-1}' \mathbf{b}}'(x_{\mathbf{\mathbf{a}}_j})|$$
and $$ J_n := \{ e^{\varepsilon_0 n/2} \leq |\eta| \leq e^{2 \varepsilon_0 n}  \} .$$
There exists $\beta > 0$ such that, for $n$ large enough depending on $\varepsilon$,

$$  e^{- \lambda \delta K} \sum_{\mathbf{C} \in \mathcal{R}_{K+1}} e^{-\lambda \delta (2k+1) n}   \underset{\mathbf{C} \rightsquigarrow \mathbf{A}}{\underset{\mathbf{A} \in \mathcal{R}_{n+1}^{k+1}(\omega)}{\sum}} \iint_{U_{b(\mathbf{A})}^2 }  \Bigg{|} \underset{\mathbf{A} \leftrightarrow \mathbf{B}}{\sum_{\mathbf{B} \in \mathcal{R}_{n+1}^k}} e^{i \xi \left|\Delta_{\mathbf{A},\mathbf{B},\mathbf{C}}\right|(x,y) } \Bigg| d\nu(x) d\nu(y)  \quad \quad \quad \quad \quad \quad   $$
$$ \quad \quad \quad \quad \quad \quad \lesssim  e^{-\lambda \delta (2k+1) n} \sum_{\mathbf{A} \in \mathcal{R}_{n+1}^{k+1}(\omega)} \sup_{\eta \in J_n} \Bigg{|} \underset{\mathbf{A} \leftrightarrow \mathbf{B}}{\sum_{\mathbf{B} \in \mathcal{R}_{n+1}^k}} e^{i \eta \zeta_{1,\mathbf{A}}(\mathbf{b}_1) \dots \zeta_{k,\mathbf{A}}(\mathbf{b}_k) } \Bigg|  + e^{ \varepsilon \beta n} \left( e^{-  (\alpha \lambda - \varepsilon_0) n}  + e^{-  \varepsilon_0 n \delta_{\text{up}}/2} \right) .$$

\end{lemma}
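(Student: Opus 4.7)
The plan is to linearize the phase $\xi |\Delta_{\mathbf{A},\mathbf{B},\mathbf{C}}(x,y)|$ so as to isolate its $\mathbf{B}$-dependence as the product $\prod_j \zeta_{j,\mathbf{A}}(\mathbf{b}_j)$, while the remaining prefactor $\eta$ depends only on $\mathbf{A}, \mathbf{C}, x, y$ and lies in $J_n$ on a large subset of $U_{b(\mathbf{A})}^2$. Once this is achieved, for each fixed $\mathbf{A}$ the sum over $\mathbf{C}$ and the integral over $(x,y)$ will be bounded crudely by $\#\mathcal{R}_{K+1} \cdot \nu(U_{b(\mathbf{A})})^2 \cdot \sup_{\eta \in J_n} |\sum_{\mathbf{B}} e^{i \eta \prod_j \zeta_j}|$, and the factor $\#\mathcal{R}_{K+1} \sim e^{\lambda \delta K}$ will cancel the prefactor $e^{-\lambda \delta K}$.

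Concretely, I would set $h := \phi \circ f^K \circ g_{\mathbf{C}'(\mathbf{A}*\mathbf{B})}$ and apply the one-dimensional mean value theorem on the leaf $U_{b(\mathbf{A})}$ to get $\Delta = (x-y)\, h'(z)$ for some $z$ between $x$ and $y$; the bounded distortion of $h'$, inherited from the Hölder regularity of $\phi'$ and the Hölder distortion of $F'$, gives $h'(z) = h'(y)(1+O(|x-y|^\alpha))$. The chain rule applied to the factorization $g_{\mathbf{A}*\mathbf{B}} = g_{\mathbf{a}_0} \circ g_{\mathbf{b}_1} \circ \dots \circ g_{\mathbf{b}_k} \circ g_{\mathbf{a}_k}$ then expresses $h'(y)$ as $\phi'(F(y)) \cdot (f^K g_{\mathbf{C}})'(g_{\mathbf{A}*\mathbf{B}}(y)) \cdot g'_{\mathbf{a}_k}(y) \cdot \prod_{j=1}^k g'_{\mathbf{a}_{j-1}'\mathbf{b}_j}(\text{pt}_j)$, where each $\text{pt}_j \in U_{\mathbf{a}_j}$. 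The exponentially vanishing variations of $\log |g'|$ let me replace each $\text{pt}_j$ by $x_{\mathbf{a}_j}$ at a multiplicative cost $1+O(\kappa^{\alpha n})$, producing precisely the factors $\zeta_{j,\mathbf{A}}(\mathbf{b}_j)$ after absorbing the normalization $e^{-2k\lambda n}$. The remaining $\mathbf{B}$-dependent points $g_{\mathbf{A}*\mathbf{B}}(y)$ and $F(y)$ lie within distance $\lesssim e^{-\lambda n}$ of the reference values $x_{\mathbf{a}_0}$ and $f^K g_{\mathbf{C}}(x_{\mathbf{a}_0})$, so replacing them by these points yields a multiplicative error $1+O(e^{-\alpha \lambda n})$ coming from the $\alpha$-Hölder regularity of $\phi'$ and the distortion of $(f^K g_{\mathbf{C}})'$. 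Setting
\[
\eta_{\mathbf{A},\mathbf{C}}(x,y) := \xi \, \phi'\!\left(f^K g_{\mathbf{C}}(x_{\mathbf{a}_0})\right) (f^K g_{\mathbf{C}})'(x_{\mathbf{a}_0}) \, g'_{\mathbf{a}_k}(y) \, e^{-2k\lambda n}(x-y),
\]
the magnitude estimates of Section~3 together with the choice $n = \lfloor \ln|\xi|/((2k+1)\lambda+\varepsilon_0)\rfloor$ yield $|\eta| \sim e^{\varepsilon_0 n}|x-y|$.

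Finally I would split the $(x,y)$ integration according to whether $|x-y|$ is smaller or larger than the threshold $r_n := e^{-\varepsilon_0 n/2}$. On $\{|x-y| < r_n\}$, the trivial bound $|\sum_{\mathbf{B}} e^{i\xi|\Delta|}| \leq \#\mathcal{R}_{n+1}^k \sim e^{\lambda \delta k n}$ combined with the upper regularity $\nu(B(y,r_n)) \lesssim r_n^{\delta_{\text{up}}}$ (Lemma~2.9), and with the cardinalities of $\mathcal{R}_{n+1}^{k+1}(\omega)$ and $\mathcal{R}_{K+1}$, produces the term $e^{\varepsilon \beta n}\cdot e^{-\varepsilon_0 n \delta_{\text{up}}/2}$ after the prefactor cancellations. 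On $\{|x-y| \geq r_n\}$ one has $|\eta| \in J_n$ (the upper bound of $J_n$ is automatic from $|x-y|\leq 1$), the approximation $\xi |\Delta| = |\eta| \prod_j \zeta_j + O(e^{-(\alpha\lambda - \varepsilon_0)n})$ holds uniformly in $\mathbf{B}$, and bounding the resulting integrand by $\sup_{\eta \in J_n} |\sum_{\mathbf{B}} e^{i\eta\prod\zeta_j}|$ yields the main term; the additive error contributes $e^{\varepsilon \beta n}\cdot e^{-(\alpha \lambda - \varepsilon_0)n}$ once summed against the cardinalities and the prefactor.

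The main obstacle is the careful bookkeeping of these errors, and in particular the $O(|x-y|^\alpha)$ error arising when replacing the MVT point $z$ by $y$, which generates an additive error in the phase of order $e^{\varepsilon_0 n}|x-y|^{1+\alpha}$. This is not uniformly small over $U_{b(\mathbf{A})}^2$, and only the combination with the threshold $r_n$ and the $|x-y|$ factor already present in $|\eta|$ makes it absorbable into the stated $e^{-(\alpha \lambda - \varepsilon_0)n}$ bound; a dyadic decomposition of $|x-y|$ may be convenient to verify this, and the constants $\varepsilon_0$ and $\varepsilon$ must be chosen small enough relative to $\alpha \lambda$ for all error terms to be dominated.
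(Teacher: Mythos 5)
Your overall plan follows the paper's route (mean value theorem plus chain rule, replacement of evaluation points by the reference points $x_{\mathbf{a}_j}$, a phase factor $\eta_{\mathbf{A},\mathbf{C}}(x,y)\in J_n$ off a small neighborhood of the diagonal handled by the upper regularity of $\nu$, and the cancellation of the $\mathbf{C}$-sum against $e^{-\lambda\delta K}$), but there is a genuine gap at the linearization step, precisely at the point you flag at the end. You apply the MVT to the full composition $h:=\phi\circ f^K\circ g_{\mathbf{C}'(\mathbf{A}*\mathbf{B})}$ on $U_{b(\mathbf{A})}$ and then replace $h'(z)$ by $h'(y)$, and the exact contracted distance by $|g_{\mathbf{a}_k}'(y)|\,d^u(x,y)$; each replacement costs a \emph{$\mathbf{B}$-dependent} relative error $1+O(d^u(x,y)^\alpha)$ (it cannot be improved to $e^{-\alpha\lambda n}$, since the last steps of the Birkhoff sum giving $\log|g_{\mathbf{A}*\mathbf{B}}'|$ separate $z$ and $y$ only by bounded factors). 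Since $|\xi|\,|\Delta_{\mathbf{A},\mathbf{B},\mathbf{C}}(x,y)|\simeq e^{\varepsilon\beta n}e^{\varepsilon_0 n}d^u(x,y)$, this is an additive phase error of order $e^{\varepsilon_0 n}d^u(x,y)^{1+\alpha}$. This is \emph{not} absorbable by the cutoff $r_n=e^{-\varepsilon_0 n/2}$, because the error is increasing in $d^u(x,y)$: at the cutoff it is already $e^{\varepsilon_0(1-\alpha)n/2}$, and on the bulk of $U_{b(\mathbf{A})}^2$, where $d^u(x,y)\asymp\mathrm{diam}(U_{b(\mathbf{A})})\asymp 1$ (the $U_a$ have fixed size, independent of $n$), it is of order $e^{\varepsilon_0 n}$. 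The set where it is $\lesssim e^{-(\alpha\lambda-\varepsilon_0)n}$, namely $d^u(x,y)\lesssim e^{-\alpha\lambda n/(1+\alpha)}$, is contained in the near-diagonal region you discard (recall $\varepsilon_0\le\alpha\lambda/8$), so on the whole region where the oscillatory sum must be compared with $\sum_{\mathbf{B}}e^{i\eta\prod_j\zeta_{j,\mathbf{A}}(\mathbf{b}_j)}$ the comparison fails; a dyadic decomposition does not help, since the error is large on every dyadic scale above $r_n$, and the correction cannot be pushed into $\eta$ because it depends on $\mathbf{B}$ (through $h$ and through the MVT point $z$).

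The fix is the paper's: do not linearize the innermost branch. Apply the MVT to $\Psi:=\phi\circ f^K\circ g_{\mathbf{C}}\circ g_{\mathbf{A}\#\mathbf{B}}$ between the points $g_{\mathbf{a}_k}x$ and $g_{\mathbf{a}_k}y$, and keep the exact factor $d^u(g_{\mathbf{a}_k}x,g_{\mathbf{a}_k}y)$ inside $\eta_{\mathbf{A},\mathbf{C}}(x,y)$ — it is $\mathbf{B}$-independent, which is all that is required, and the lower bound $\eta\gtrsim e^{-\varepsilon\beta n}e^{\varepsilon_0 n}d^u(x,y)$ still follows from the mean value theorem applied to $g_{\mathbf{a}_k}$ together with $\inf_{\omega}|\partial_u\phi|>0$ (which is where $\mathbf{a}_0\in\mathcal{R}_{n+1}(\omega)$ is used). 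With this choice, every evaluation point that gets replaced by a reference point ($x_{\mathbf{a}_0}$ or $x_{\mathbf{a}_j}$) lies in the same cylinder of generation at least $n$, of diameter $\lesssim e^{\varepsilon\beta n}e^{-\lambda n}$, so the relative error is uniformly $e^{\varepsilon\beta n}e^{-\alpha\lambda n}$ and the additive phase error is uniformly $\lesssim e^{\varepsilon\beta n}e^{-(\alpha\lambda-\varepsilon_0)n}$, independent of $d^u(x,y)$. The remaining ingredients of your argument (near-diagonal estimate via Lemma 2.9, bounding by the supremum over $\eta\in J_n$, and the bookkeeping of the cardinalities and of the prefactors $e^{-\lambda\delta K}$ and $e^{-\lambda\delta(2k+1)n}$) are correct and coincide with the paper.
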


\begin{proof}

Our goal is to carefully approximate $\Delta_{\mathbf{A},\mathbf{B},\mathbf{C}}$ by a product of derivatives of $g_{\mathbf{a}_{j-1}'\mathbf{b}_j}$, and then to renormalize the phase. Using arc length parameterization, our 1-dimensional setting allows us to apply the mean value theorem: for all $x,y \in U_{b(\mathbf{A})}$, there exists $z \in U_{b(\mathbf{A})}$ such that
$$ |\phi( f^K g_{\mathbf{C}} g_{\mathbf{A}* \mathbf{B}} (x) ) -  \phi( f^K g_{\mathbf{C}} g_{\mathbf{A}* \mathbf{B}} (y) )| = $$ $$ |\partial_u \phi(  f^K g_{\mathbf{C}'(\mathbf{A}* \mathbf{B})} z )| |\partial_u f^K(g_{\mathbf{C}'(\mathbf{A}* \mathbf{B})} z )| |g_{\mathbf{C}}'(g_{\mathbf{A} * \mathbf{B} } z)| \left[ \prod_{j=1}^{k} |g_{\mathbf{a}_{j-1}'\mathbf{b}_j}'(g_{\mathbf{a}_j' \mathbf{b}_{j+1}' \dots \mathbf{a}_{k-1}' \mathbf{b}_k}z) )| \right]  \ d^u(g_{\mathbf{a}_k} x,g_{\mathbf{a}_k} y) .$$

The estimates of section 3 gives

$$ \left| {\Delta_{\mathbf{A},\mathbf{B},\mathbf{C}}(x,y)} \right| \leq C_{\phi,\chi} e^{\varepsilon \beta n} e^{- (2k+1) \lambda n} .$$

We then relate $\Delta_{\mathbf{A},\mathbf{B}, \mathbf{C}}$ to the $\zeta_{j,\mathbf{A}}$. Set $$ \tilde{\Delta}_{\mathbf{A},\mathbf{B},\mathbf{C}}(x,y) := |\partial_u \phi(  f^K g_{\mathbf{C}'}(x_{\mathbf{a}_0} )| |\partial_u f^K(g_{\mathbf{C}'} (x_{\mathbf{a}_0}) )| |g_{\mathbf{C}'}(x_{\mathbf{a}_0})| \left[ \prod_{j=1}^{k} |g_{\mathbf{a}_{j-1}'\mathbf{b}_j}'(x_{\mathbf{a}_j}) )| \right]  \ d^u(g_{\mathbf{a}_k}x,g_{\mathbf{a}_0}y).$$
Then, as before, $$ \left| {\tilde{\Delta}_{\mathbf{A},\mathbf{B},\mathbf{C}}(x,y)} \right| \leq C_{\phi,\chi} e^{\varepsilon \beta n} e^{- (2k+1) \lambda n} .$$ 
Hence, using the fact that $|e^{s} - e^{t}| \leq e^{\max(s,t)} |s-t| $, we get

$$ \left| \Delta_{\mathbf{A},\mathbf{B},\mathbf{C}}(x,y) - \tilde{\Delta}_{\mathbf{A},\mathbf{B},\mathbf{C}}(x,y) \right| \leq C_{\phi,\chi} e^{\varepsilon \beta n} e^{-(2k+1)\lambda n} \left| \ln |\Delta_{\mathbf{A},\mathbf{B},\mathbf{C}}|(x,y) - \ln |\tilde{\Delta}_{\mathbf{A},\mathbf{B},\mathbf{C}}|(x,y) \right|.$$

Moreover, using the estimates of section 3, and exponentially vanishing variations of Hölder maps, we get:
\begin{itemize}
    \item $ \left| \ln|\partial_u \phi(  f^K g_{\mathbf{C}}(x_{\mathbf{a}_0} )| - \ln |\partial_u \phi(f^K g_\mathbf{C}(x_{\mathbf{a}_0}))| \right| \lesssim e^{\varepsilon \beta n }e^{- \alpha \lambda n} $
    \item $ \left| \ln |\partial_u f^K(g_{\mathbf{C}(\mathbf{A}* \mathbf{B})} z )| - \ln |\partial_u f^K(g_{\mathbf{C}'} x_{\mathbf{a}_0} )| \right|  \lesssim e^{\varepsilon \beta n }e^{-\alpha \lambda n} $
    \item $ \left| \ln |g_{\mathbf{C}}'(g_{\mathbf{A}* \mathbf{B}} z )| - \ln |g_{\mathbf{C}}' x_{\mathbf{a}_0} | \right|  \lesssim e^{\varepsilon \beta n }e^{- \alpha \lambda n} $
    \item $ \left| \ln|g_{\mathbf{a}_{j-1}'\mathbf{b}_j}'(g_{\mathbf{a}_j' \mathbf{b}_{j+1}' \dots \mathbf{a}_{k-1}' \mathbf{b}_k}z) )| - \ln|g_{\mathbf{a}_{j-1}' \mathbf{b_j}}'(x_{\mathbf{a}_j})| \right| \lesssim e^{\varepsilon \beta n} e^{-\alpha \lambda n} $
\end{itemize}

So that summing every estimates gives us $$ \left| \ln |\Delta_{\mathbf{A},\mathbf{B},\mathbf{C}}|(x,y) - \ln |\tilde{\Delta}_{\mathbf{A},\mathbf{B},\mathbf{C}}|(x,y) \right| \lesssim e^{\varepsilon \beta n} e^{- \alpha \lambda n}. $$

Hence, $$  \left| \Delta_{\mathbf{A},\mathbf{B},\mathbf{C}}(x,y) - \tilde{\Delta}_{\mathbf{A},\mathbf{B},\mathbf{C}}(x,y) \right| \lesssim e^{\varepsilon \beta n} e^{-(2k+1+\alpha) \lambda n} ,$$

which allows us to approximate our main integral as follows:

$$e^{-(2k+1) \delta \lambda n - \delta \lambda K} \Bigg{|}  \underset{\mathbf{C} \rightsquigarrow \mathbf{A}}{\underset{\mathbf{A} \in \mathcal{R}_{n+1}^{k+1}(\omega)}{\sum_{\mathbf{C} \in \mathcal{R}_{K+1}}}} \iint_{U_{b(\mathbf{A})}^2 }  \Big{|} \underset{\mathbf{A} \leftrightarrow \mathbf{B}}{\sum_{\mathbf{B} \in \mathcal{R}_{n+1}^k}} e^{i \xi \left|\Delta_{\mathbf{A},\mathbf{B},\mathbf{C}}\right| } \Big| d\nu \otimes d\nu -  \underset{\mathbf{C} \rightsquigarrow \mathbf{A}}{\underset{\mathbf{A} \in \mathcal{R}_{n+1}^{k+1}(\omega)}{\sum_{\mathbf{C} \in \mathcal{R}_{K+1}}}} \iint_{U_{b(\mathbf{A})}^2 }  \Big{|} \underset{\mathbf{A} \leftrightarrow \mathbf{B}}{\sum_{\mathbf{B} \in \mathcal{R}_{n+1}^k}} e^{i \xi \left|\tilde{\Delta}_{\mathbf{A},\mathbf{B},\mathbf{C}}\right| } \Big| d\nu \otimes d\nu \Bigg{|}  $$

$$ \lesssim |\xi| e^{-(2k+1+\alpha) \lambda n} \lesssim e^{-(\alpha \lambda - \varepsilon_0)n},$$

since $|\xi| \simeq  e^{(2k+1) \lambda n} e^{\varepsilon_0 n} $.

To conclude, we notice that $ |\xi| |\tilde{\Delta}_{\mathbf{A},\mathbf{B},\mathbf{C}}|$ can be written as a product like so:
$$ |\xi| |\tilde{\Delta}_{\mathbf{A},\mathbf{B},\mathbf{C}}|(x,y) =  \eta_{\mathbf{A},\mathbf{C}}(x,y)  \zeta_{1,\mathbf{A}}(\mathbf{b}_1) \dots  \zeta_{k,\mathbf{A}}(\mathbf{b}_k)  $$

where $$ \eta_{\mathbf{A},\mathbf{C}}(x,y) := |\xi| |\partial_u \phi(  f^K g_{\mathbf{C}'}(x_{\mathbf{a}_0} )| |\partial_u f^K(g_{\mathbf{C}'} (x_{\mathbf{a}_0}) )| |g_{\mathbf{C}'}(x_{\mathbf{a}_0})| e^{-2 k \lambda n }d^u(g_{\mathbf{a}_k}x,g_{\mathbf{a}_0}y). $$

We estimate $\eta_{\mathbf{A},\mathbf{C}}$ using the hypothesis made on $\partial_u \phi$, the estimates of section 3, and the mean value theorem, to get

$$  C_{\phi,\chi}^{-1} e^{-\varepsilon \beta n} e^{\varepsilon_0 n} d^u(x,y) \leq \eta_{\mathbf{A},\mathbf{C}}(x,y) \leq C_{\phi,\chi} e^{\varepsilon \beta n} e^{\varepsilon_0 n} \leq e^{2 \varepsilon_0 n} .$$

Notice that for the lower inequality to hold, it was critical for $\mathbf{a}_0$ to be in $\mathcal{R}_{n+1}(\omega)$, and not just in $\mathcal{R}_{n+1}$. \\

We then see that $\eta_{\mathbf{A},\mathbf{C}}(x,y) \in J_n$ as soon as $d^u(x,y) \geq C_{\phi,\chi} e^{\varepsilon \beta n - \varepsilon_0 n/2}$. To get rid of the part of the integral where $d^u(x,y)$ is too small, we use the upper regularity of $\nu$, proved in Lemma 2.9. For all $y \in \mathcal{U}$, the ball $B(y, C_{\phi,\chi} e^{\varepsilon \beta n - \varepsilon_0 n /2} )$ have measure $ \lesssim e^{-( \varepsilon_0/2  - \beta \varepsilon) \delta_{up} n} $, so that by integrating over $y$,

$$ \nu \otimes \nu \left( \{ (x,y) \in \mathcal{U}^2 \ | \ |x-y|< C_{\phi,\chi} e^{\varepsilon \beta n - \varepsilon_0 n/2} \} \right) \lesssim e^{-( \varepsilon_0/2  - \beta \varepsilon) \delta_{up} n} $$
as well. Hence we can cut the double integral in two, the part near the diagonal which is controlled by the previous estimates, and the part far away from the diagonal where $\eta_{\mathbf{A},\mathbf{C}}(x,y) \in J_n$.

Once this is done, the sum over $\mathbf{C}$ disappears, as there is no longer a dependence over $\mathbf{C}$ in the phase. \end{proof}

\section{The sum product phenomenon}

\subsection{A key theorem }

Our second move is to use a powerful theorem of Bourgain to control the sum of exponential. This version is the Proposition 3.2 of \cite{BD17}. This theorem was generalized by J. Li in \cite{Li18} and constitute the cornerstone of the method.

\begin{proposition}

Fix $\gamma > 0$. There exist $ \varepsilon_2 \in \ ]0,1[$ and $k \in \mathbb{N}^*$ such that the following holds for
$\eta \in \mathbb{R}$ with $|\eta| > 1$. Let $C_0 > 1$ and let $\lambda_1, \dots , \lambda_k$ be Borel measures supported on the interval $[C_0^{-1}, C_0] $ with total mass less than $C_0$. Assume that each $\lambda_j$ satisfies the following non concentration property:
$$\forall \sigma \in [ C_0 | \eta |^{-1} , C_0^{-1} |\eta|^{- \varepsilon_2}],  \quad  \lambda_j \otimes \lambda_j \left( \{ (x,y) \in \mathbb{R}^2, \ |x-y| \leq \sigma \} \right) \leq C_0 \sigma^\gamma .$$
Then there exists a constant $C_1$ depending only on $C_0$ and $\gamma$ such that
$$ \left| \int \exp( i \eta z_1 \dots z_k )  d\lambda_1(z_1) \dots d\lambda_k(z_k) \right| \leq C_1 |\eta|^{- \varepsilon_2} $$

\end{proposition}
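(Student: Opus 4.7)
The plan is to reduce this estimate to a discretized sum-product theorem in $\mathbb{R}$, which is the heart of Bourgain's method. The non-concentration hypothesis is a Frostman-type condition at scales between $|\eta|^{-1}$ and $|\eta|^{-\varepsilon_2}$, and the goal is to exploit the multiplicative structure of the phase $\eta z_1 \cdots z_k$ to generate cancellation.

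First, I would discretize: replace each $\lambda_j$ by a $\delta$-smoothing $\lambda_j^\delta := \lambda_j * \rho_\delta$ at scale $\delta \approx |\eta|^{-1}$, so that the integral $\int e^{i\eta z_1\cdots z_k}\, d\lambda_1\cdots d\lambda_k$ is well approximated by the analogous one against $\lambda_j^\delta$. Next, linearize the product: write $z_j = e^{u_j}$ (possible since $z_j \in [C_0^{-1},C_0]$) and push the measures forward under the logarithm, turning the phase into $\eta \exp(u_1 + \cdots + u_k)$. The non-concentration property survives this change of variables up to constants depending only on $C_0$.

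The core step is then to exploit the convolution structure. By Cauchy--Schwarz applied iteratively in the $u_j$ variables (or equivalently by bounding the $L^2$ mass of the multiplicative convolution $\lambda_1 \cdot \lambda_2 \cdots \lambda_k$ viewed as a measure on $\mathbb{R}$), the oscillatory integral is controlled by an $L^2$-flattening quantity for the $k$-fold convolution of the $\mu_j := \log_* \lambda_j^\delta$. Here one invokes Bourgain's discretized sum-product theorem: for $k = k(\gamma)$ sufficiently large, the non-concentration condition forces the additive convolution $\mu_1 * \cdots * \mu_k$ to be substantially flatter than each factor at scale $\delta$, with a polynomial gain $|\eta|^{-\varepsilon_2}$. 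Undoing the exponential and comparing with the oscillatory integral yields the desired bound.

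The main obstacle is unquestionably the sum-product step. It is where the deep content of the theorem lies, and it requires the full machinery of Bourgain's discretized ring theorem: one has to rule out, using the non-concentration assumption, that $\mu_j$ concentrates near an approximate additive subgroup, since the absence of proper subrings of $\mathbb{R}$ is what drives the expansion. All the other steps (smoothing, linearization, Cauchy--Schwarz, transfer of non-concentration through $\log$) are technical but routine; the sum-product dichotomy is the genuinely hard input, and in practice one would quote it directly from Bourgain's work rather than reprove it.
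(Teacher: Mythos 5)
You should know that the paper itself offers no proof of this statement: it is quoted verbatim as Proposition 3.2 of \cite{BD17} (itself resting on Bourgain's discretized sum--product theorem \cite{Bo10}, later generalized by Li \cite{Li18}), and the paper only proves the subsequent variant (Proposition 5.2, where $C_0$ is allowed to grow like $|\eta|^{\varepsilon_1}$) by a dyadic reduction to this black box. So in spirit you end where the paper starts: the deep input is quoted, not reproved, and that is a legitimate position.

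As a standalone sketch, however, your intermediate chain has a soft spot. The passage from ``$L^2$-flattening of the $k$-fold convolution of the $\log$-pushforwards'' to the pointwise bound on the oscillatory integral at the single frequency $\eta$ is not a routine iterated Cauchy--Schwarz: after the substitution $z_j=e^{u_j}$ the phase $\eta\exp(u_1+\cdots+u_k)$ is nonlinear in the convolution variable, and smallness of the $L^2$ norm of a smoothed convolution at scale $\delta\approx|\eta|^{-1}$ does not by itself control one Fourier coefficient of the multiplicative convolution; bridging that is exactly the nontrivial part of Bourgain's argument. This is why the sum--product input used here (and in \cite{BD17}, \cite{Li18}) is formulated directly as a bound on the exponential integral $\bigl|\int e^{i\eta z_1\cdots z_k}\,d\mu_1\cdots d\mu_k\bigr|$ for measures on a fixed interval such as $[1,2]$ satisfying a ball non-concentration condition. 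Given that statement, the present proposition follows by a short normalization: rescale the supports and masses (constants only depend on $C_0$), and note that the two-point condition $\lambda_j\otimes\lambda_j(\{|x-y|\le\sigma\})\le C_0\sigma^\gamma$ is equivalent, up to constants, to the ball condition $\sup_a\lambda_j([a-\sigma,a+\sigma])\lesssim\sigma^{\gamma/2}$ (one direction by restricting the product measure to $I\times I$, the other by Fubini). No smoothing or logarithmic change of variables is needed. So either cite \cite{BD17} at the level of the full proposition, as the paper does, or reduce to Bourgain's exponential-integral statement by this normalization; the route you outline would require you to actually carry out the flattening-to-oscillation step, which is the part your sketch leaves unjustified.
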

Unfortunately, in our case the use of large deviations does not allow us to apply it straightforwardly. To highlight the dependence of $C_1$ when $C_0$ is permitted to grow gently, we prove the following proposition. The proof is the same than in the complex case done in $\cite{Le21}$, but we include it for completeness. Notice that the complex case does not immediately imply the real case, as the \emph{projective} non concentration hypothesis is stronger than our non concentration hypothesis.

\begin{proposition}

Fix $0< \gamma < 1$. There exist $ \varepsilon_1 > 0$ and $k \in \mathbb{N}$ such that the following holds for
$\eta \in \mathbb{R}$ with $|\eta|$ large enough. Let $1< R < |\eta|^{\varepsilon_1}$ and let $\lambda_1, \dots , \lambda_k$ be Borel measures supported on the interval $[R^{-1},R]$ with total mass less than $R$. Assume that each $\lambda_j$ satisfies the following non concentration property:
$$\forall \sigma \in [|\eta|^{-2}, |\eta|^{- \varepsilon_1}], \quad  \lambda_j \otimes \lambda_j \left( \{ (x,y) \in \mathbb{R}^2, \ |x-y| \leq \sigma \} \right) \leq \sigma^\gamma .$$
Then there exists a constant $c>0$ depending only on $\gamma$ such that
$$ \left| \int \exp(i \eta z_1 \dots z_k)  d\lambda_1(z_1) \dots d\lambda_k(z_k) \right| \leq c |\eta|^{- \varepsilon_1} $$

\end{proposition}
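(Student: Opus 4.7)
The plan is to reduce Proposition 5.2 to Proposition 5.1 by a dyadic decomposition of the support $[R^{-1},R]$ followed by a rescaling that brings each piece back to a fixed interval $[1/2,1]$. First I would write $[R^{-1},R] = \bigcup_{l=1}^{L} I_l$ with $I_l := [2^{l-1}R^{-1}, 2^l R^{-1}]$ and $L := \lceil 2\log_2 R \rceil$, and decompose $\lambda_j = \sum_l \lambda_j^{(l)}$, where $\lambda_j^{(l)}$ is the restriction to $I_l$. This gives
\begin{equation*}
\int e^{i\eta z_1\cdots z_k}\, d\lambda_1\cdots d\lambda_k = \sum_{l_1,\ldots,l_k=1}^L \int e^{i\eta z_1\cdots z_k}\, d\lambda_1^{(l_1)}\cdots d\lambda_k^{(l_k)}.
\end{equation*}
On each term I would change variables $z_j = 2^{l_j}R^{-1} w_j$, so that the pushforward $\tilde\lambda_j^{(l_j)}$ lives on $[1/2,1]$ and the integral becomes $\int e^{i\eta' w_1\cdots w_k}\,d\tilde\lambda_1^{(l_1)}\cdots d\tilde\lambda_k^{(l_k)}$ with effective frequency $\eta' := \eta\cdot 2^{l_1+\cdots+l_k}R^{-k}$.

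Next I would verify that the rescaled measures satisfy the hypotheses of Proposition 5.1 with parameter $C_0$ polynomial in $R$. The total mass of $\tilde\lambda_j^{(l_j)}$ is still $\leq R$, the support is in $[1/2,1]\subset[C_0^{-1},C_0]$ for $C_0\geq 2$, and the non-concentration hypothesis transfers as
\begin{equation*}
\tilde\lambda_j^{(l_j)}\otimes\tilde\lambda_j^{(l_j)}\{|x-y|\leq \tilde\sigma\} = \lambda_j\otimes\lambda_j\{|x-y|\leq (2^{l_j}/R)\tilde\sigma\} \leq R^\gamma \tilde\sigma^\gamma
\end{equation*}
whenever $(2^{l_j}/R)\tilde\sigma \in [|\eta|^{-2},|\eta|^{-\varepsilon_1}]$. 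Taking $C_0 := \max(2,R)$ accommodates the total mass, the support, and this non-concentration bound simultaneously. A short computation using $|\eta'| \in [|\eta|R^{-k},|\eta|R^k]$ and $R\leq|\eta|^{\varepsilon_1}$ shows that, for $\varepsilon_1$ chosen small enough relative to $\varepsilon_2$ and $k$, the range of $\tilde\sigma$ required by Proposition 5.1 (namely $[C_0/|\eta'|,C_0^{-1}|\eta'|^{-\varepsilon_2}]$) is contained in the range above.

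Applying Proposition 5.1 on each piece yields a bound of $C_1(C_0,\gamma)|\eta'|^{-\varepsilon_2}$ per dyadic cube. Summing the $L^k \sim (\log R)^k$ contributions and using $|\eta'|\geq |\eta|R^{-k}$ gives a global bound of order $(\log R)^k \cdot C_1(R,\gamma) \cdot R^{k\varepsilon_2}\cdot |\eta|^{-\varepsilon_2}$. Since $R\leq|\eta|^{\varepsilon_1}$, this is $\leq |\eta|^{-\varepsilon_2+\varepsilon_1(N+k\varepsilon_2)+o(1)}$ where $N=N(\gamma)$ controls the polynomial growth of $C_1$ in $C_0$; choosing $\varepsilon_1$ so that $\varepsilon_2-\varepsilon_1(N+k\varepsilon_2+1)\geq \varepsilon_1$ yields the desired decay $\leq c|\eta|^{-\varepsilon_1}$.

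The main obstacle is that Proposition 5.1 is stated as a black box, so the argument requires knowing that the constant $C_1(C_0,\gamma)$ depends at most polynomially on $C_0$ for fixed $\gamma$. This is not asserted explicitly above, so I would need to revisit the proof of Proposition 5.1 (ultimately going back to Bourgain's discretized sum--product estimate) to track this dependence; the other steps (dyadic decomposition, rescaling, and checking the ranges of $\sigma$) are routine bookkeeping once this polynomial dependence is in hand.
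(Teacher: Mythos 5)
There is a genuine gap, and it is exactly the one you flag at the end: your argument hinges on the constant $C_1(C_0,\gamma)$ of Proposition 5.1 being at most polynomial in $C_0$, with $C_0=\max(2,R)$ allowed to grow like $|\eta|^{\varepsilon_1}$. That dependence is not part of the statement of Proposition 5.1, and nothing in the paper (or in the cited Proposition 3.2 of Bourgain--Dyatlov) gives it to you as a black box; you would have to reopen Bourgain's discretized sum--product argument to extract it. So as written the proof is incomplete at its crucial step, since without a quantitative control of $C_1$ in $C_0$ the final bound $(\log R)^k\, C_1(R,\gamma)\, R^{k\varepsilon_2}|\eta|^{-\varepsilon_2}$ cannot be converted into $c|\eta|^{-\varepsilon_1}$ with $c=c(\gamma)$.

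The paper's proof uses the same dyadic decomposition but avoids this issue by a normalization trick, which is the missing idea. Instead of restricting $\lambda_j$ to dyadic pieces and keeping the mass, one sets $\lambda_{j,r}(A):=R^{-1}\lambda_j\left(2^r\left(A\cap[1/2,1[\right)\right)$. Then each $\lambda_{j,r}$ has mass at most $1$ and is supported in $[1/2,1[$, and when checking non-concentration for the rescaled frequency $\eta_{r_1\dots r_k}=2^{r_1+\dots+r_k}\eta$, the product measure contributes a factor $R^{-2}$ which absorbs the loss $(2^r\sigma)^\gamma\le (2R)^\gamma\sigma^\gamma$ coming from the change of scale (here $\gamma<1$ is used). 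Hence Proposition 5.1 applies to the pieces with an absolute constant $C_0$, so $C_1$ depends only on $\gamma$; the factor $R$ removed at the start is reinstated at the end via $\lambda_j(A)=R\sum_r\lambda_{j,r}(2^{-r}A)$, producing a harmless polynomial factor $\lesssim R^{2k+1}(\log R)^k$ which is beaten by $|\eta|^{-\varepsilon_2}$ once $\varepsilon_1=\varepsilon_2/(2(2k+1))$ and $R<|\eta|^{\varepsilon_1}$. If you incorporate this normalization (or, alternatively, actually prove the polynomial dependence of $C_1$ on $C_0$), the rest of your bookkeeping — the dyadic splitting, the rescaling of the frequency, and the verification that the transferred range of $\sigma$ sits inside $[|\eta|^{-2},|\eta|^{-\varepsilon_1}]$ — matches the paper's argument.
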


\begin{proof}

 Fix $0<\gamma<1$, and let $\varepsilon_2$ and $k$ given by the previous theorem. Choose $\varepsilon_1 := \frac{\varepsilon_2}{2(2k+1)}$. Let $1<R<|\eta|^{\varepsilon_1}$, and let $\lambda_1 , \dots, \lambda_k$ be measures that satisfy the hypothesis of Proposition 5.1. We are going to use a dyadic decomposition. \\

Let $m := \lfloor \log_2(R) \rfloor + 1$.
Then $\lambda_j$ is supported in the interval $[2^{-m}, 2^m ]$.
Define, for $A$ a borel subset of $\mathbb{R}$ and for $r=-m+1,\dots , m$:  $$ \lambda_{j,r}(A) := R^{-1} \lambda_j\left( 2^{r} \left( A \cap [1/2,1[ \right) \right) $$

Those measures are all supported in $ [1/2,1[$, and have total mass $\lambda_{j,r}(\mathbb{R}) \leq 1$. \\

Moreover, a non concentration property is satisfied by each $\lambda_{j,r}$. If we fix some $r_1,\dots,r_k$ between $-m+1$ and $m$ and define ${\eta}_{r_1\dots r_k} := 2^{r_1+\dots r_k} \eta$, then $|\eta_{r_1,\dots, r_k}| \geq (2R)^{-k} |\eta| > 2^{-k} |\eta|^{1-k \varepsilon_1} > 1$ if $\eta$ is large enough. Let $\sigma \in [|\eta_{r_1,\dots, r_k}|^{-1}, |\eta_{r_1,\dots, r_k}|^{-{\varepsilon_2}}]$. Then
$$\lambda_{j,r} \otimes \lambda_{j,r} \left( \{ (x,y) \in \mathbb{R}^2, \ |x-y| \leq \sigma \} \right) = \int_{\mathbb{R}} \lambda_{j,r}( [x-\sigma,x+\sigma] ) d\lambda_{j,r}(x) $$
$$ \leq R^{-2} \int_{\mathbb{R}} \lambda_j\left( [2^r x - 2^r\sigma , 2^r x + 2^r \sigma ] \right) d\lambda_j( 2^r x ) $$
$$ = R^{-2} \lambda_{j} \otimes \lambda_j \left( \{ (x,y) \in \mathbb{R}^2, \ |x-y| \leq 2^r \sigma \} \right) $$

Since $2^{r} \sigma \in \left[ 2^{r} |\eta_{r_1,\dots,r_k}|^{-1} , 2^{r} |\eta_{r_1,\dots,r_k}|^{- \varepsilon_2} \right] \subset \left[ (2R)^{-(k+1)} |\eta|^{-1} , (2R)^{k+1}|\eta|^{- \varepsilon_2}\right] \subset \left[|\eta|^{-2}, |\eta|^{\varepsilon_1}\right]$ if $|\eta|$ is large enough, we can use the non-concentration hypothesis assumed for each $\lambda_j$ to get:
$$ \lambda_{j,r} \otimes \lambda_{j,r} \left( \{ (x,y) \in \mathbb{R}^2, \ |x-y| \leq \sigma \} \right) \leq R^{-2} (2^{-r} \sigma)^\gamma \leq  \sigma^\gamma .$$

Hence, by the previous proposition, there exists a constant $C_1$ depending only on $\gamma$ such that
$$ \left| \int \exp( i \eta_{r_1 \dots r_k} z_1 \dots z_k)  d\lambda_{1,r_1}(z_1) \dots d\lambda_{k,r_k}(z_k) \right| \leq C_1 |\eta_{r_1 \dots r_k}|^{- \varepsilon_2} . $$
Finally, since $$ \lambda_j(A) = R \sum_{r=-m+1}^m \lambda_{j,r}(2^{-r} A) $$
we get that:
$$  \left| \int \exp( i  \eta z_1 \dots z_k)  d\lambda_1(z_1) \dots d\lambda_k(z_k) \right| $$
$$ \leq \sum_{r_1 , \dots r_k} R^k \left| \int \exp(i \eta z_1 \dots z_k)  d\lambda_{1,r_1}(2^{-r_1} z_1) \dots d\lambda_{k,r_k}(2^{-r_k} z_k) \right| $$
$$ =  \sum_{r_1 , \dots r_k} R^k \left| \int \exp(i \eta_{r_1 \dots r_k} z_1 \dots z_k)  d\lambda_{1,r_1}(z_1) \dots d\lambda_{k,r_k}(z_k) \right|$$
$$ \leq C_1 (2m)^k R^k |\eta_{r_1 \dots r_k}|^{- \varepsilon_2}  \leq 4^k C_1 m^k R^{2k} |\eta|^{- \varepsilon_2} $$

Since $m \leq \log_2(R) +1$, and since $k$ depends only on $\gamma$, there exists a constant $c$ that depends only on $\gamma$ such that $ 4^k C_1 m^k R^{2k} \leq c R^{2k+1}$ for any $R> 1$. Finally, $ c R^{2k+1} |\eta|^{- \varepsilon_2} \leq |\eta|^{-\varepsilon_1} $. \end{proof}

\begin{corollary}

Fix $0 < \gamma < 1$. There exist $k \in \mathbb{N}^*$ and $\varepsilon_1 > 0$ depending only on $\gamma$ such that the following holds for $\eta \in \mathbb{R}$ with $|\eta|$ large enough. Let $1 < R < |\eta|^{\varepsilon_1}$ , $N > 1$ and $\mathcal{Z}_1,\dots , \mathcal{Z}_k$ be finite sets such that $ \# \mathcal{Z}_j \leq RN$. Consider some maps $\zeta_j : \mathcal{Z}_j \rightarrow \mathbb{R} $, $j = 1, \dots , k$, such that, for all $j$:
$$ \zeta_j ( \mathcal{Z}_j ) \subset [R^{-1},R] $$ and 
$$\forall \sigma \in [|\eta|^{-2}, |\eta|^{- \varepsilon_1}], \quad \# \{\mathbf{b} , \mathbf{c} \in \mathcal{Z}^2_j , \ |\zeta_j(\mathbf{b}) - \zeta_j(\mathbf{c})| \leq \sigma \} \leq N^2 \sigma^{\gamma}.$$
Then there exists a constant $c > 0$ depending only on $\gamma$ such that 
$$ \left| N^{-k} \sum_{\mathbf{b}_1 \in \mathcal{Z}_1,\dots,\mathbf{b}_k \in \mathcal{Z}_k} \exp\left( i \eta \zeta_1(\mathbf{b}_1) \dots \zeta_k(\mathbf{b}_k) \right) \right| \leq c |\eta|^{-{\varepsilon_1}}$$

\end{corollary}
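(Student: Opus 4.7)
The plan is to reduce Corollary 5.3 to Proposition 5.2 by pushing the counting problem forward to a measure-theoretic one. For each $j \in \{1, \dots, k\}$, I would introduce the atomic measure
$$ \lambda_j := \frac{1}{N} \sum_{\mathbf{b} \in \mathcal{Z}_j} \delta_{\zeta_j(\mathbf{b})} $$
on $\mathbb{R}$. By hypothesis, $\lambda_j$ is supported on $[R^{-1}, R]$, and its total mass is $N^{-1} \# \mathcal{Z}_j \leq R$. These are exactly the support and mass conditions of Proposition 5.2.

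Next I would verify the non-concentration hypothesis of Proposition 5.2 for $\lambda_j$. By a direct computation,
$$ \lambda_j \otimes \lambda_j\bigl( \{(x,y) \in \mathbb{R}^2 : |x-y| \leq \sigma\} \bigr) = \frac{1}{N^2} \# \bigl\{(\mathbf{b},\mathbf{c}) \in \mathcal{Z}_j^2 : |\zeta_j(\mathbf{b}) - \zeta_j(\mathbf{c})| \leq \sigma \bigr\}, $$
which is bounded by $\sigma^\gamma$ for $\sigma \in [|\eta|^{-2}, |\eta|^{-\varepsilon_1}]$ by the combinatorial assumption on $\zeta_j$. Thus all hypotheses of Proposition 5.2 are met (with the same $\gamma$, so the same choice of $k$ and $\varepsilon_1$ is admissible).

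Applying Proposition 5.2 directly yields
$$ \left| \int \exp(i\eta z_1 \cdots z_k) \, d\lambda_1(z_1) \cdots d\lambda_k(z_k) \right| \leq c |\eta|^{-\varepsilon_1} $$
for a constant $c$ depending only on $\gamma$. By definition of $\lambda_j$, the left-hand side is precisely $\bigl| N^{-k} \sum_{(\mathbf{b}_1, \dots, \mathbf{b}_k) \in \mathcal{Z}_1 \times \dots \times \mathcal{Z}_k} \exp(i\eta \zeta_1(\mathbf{b}_1) \cdots \zeta_k(\mathbf{b}_k)) \bigr|$, which concludes the proof.

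There is essentially no obstacle here: the entire content of the corollary is already packaged inside Proposition 5.2, and the only nontrivial point is recognizing that the combinatorial non-concentration bound translates verbatim into the measure-theoretic one after rescaling by $N^{-2}$. One should simply be careful that the parameter $R$ controlling the mass of $\lambda_j$ is the same $R$ as in Proposition 5.2, so the constraint $R < |\eta|^{\varepsilon_1}$ transfers without modification.
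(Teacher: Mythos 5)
Your proposal is correct and coincides with the paper's own argument: the paper also defines $\lambda_j := N^{-1}\sum_{\mathbf{b} \in \mathcal{Z}_j} \delta_{\zeta_j(\mathbf{b})}$, checks support, mass, and the non-concentration bound via the same identity, and then applies Proposition 5.2 directly. Nothing further is needed.
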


\begin{proof}

Define our measures as sums of dirac mass: $$ \lambda_j := \frac{1}{N} \sum_{\mathbf{b} \in \mathcal{Z}_j} \delta_{\zeta_j(\mathbf{b})} .$$

We see that $\lambda_j$ is supported in  $ [R^{-1},R] $. The total mass is bounded by
$$ \lambda_j(\mathbb{R}) \leq N^{-1} \# \mathcal{Z}_j \leq R.$$

Then, if $\sigma \in [|\eta|^{-2}, |\eta|^{- \varepsilon_1}]$, we have, for any $a \in \mathbb{R}$:
$$ \lambda_j \otimes \lambda_j \left( \{ (x,y) \in \mathbb{R}^2 , \ |x-y|< \sigma \} \right) = \frac{1}{N^2} \# \left\{\mathbf{b} , \mathbf{c} \in \mathcal{Z}^2_j  , \  |\zeta_j(\mathbf{b}) - \zeta_j(\mathbf{c}) | \leq \sigma \right\} \leq  \sigma^\gamma. $$
Hence, the previous theorem applies directly, and gives us the desired result. \end{proof}

\subsection{End of the proof assuming non concentration}

We will use Corollary 5.3 on the maps $\zeta_{j,\mathbf{A}}$.
Let's carefully define the framework. \\
For some fixed $\mathbf{A} \in \mathcal{R}_{n+1}^{k+1}(\omega)$, define for $j=1, \dots, k$  $$ \mathcal{Z}_j := \{ \mathbf{b} \in \mathcal{R}_{n+1} , \mathbf{a}_{j-1} \rightsquigarrow \mathbf{b} \rightsquigarrow \mathbf{a}_j \  \} ,$$

so that the maps $\zeta_{j,\mathbf{A}}(\mathbf{b}) := e^{2 \lambda n} |g_{\mathbf{a}_{j-1}' \mathbf{b}}'(x_{{\mathbf{a}}_j})|$ are defined on $\mathcal{Z}_j$. There exists a constant $\beta>0$  such that:
$$\# \mathcal{Z}_j \leq e^{\varepsilon \beta n} e^{ \delta \lambda n} $$
and
$$ \zeta_{j,\mathbf{A}}( \mathcal{Z}_j ) \subset \left[ e^{- \varepsilon \beta n}, e^{\varepsilon \beta n} \right] .$$

Let $\gamma>0$ be small enough. The theorem 5.3 then fixes $k$ and some $\varepsilon_1$.
The goal is to apply Corollary 5.3 to the maps $\zeta_{j,\mathbf{A}}$, for $N := e^{\lambda \delta n}$, $R:=e^{\varepsilon \beta n}$ and $\eta \in J_n$. Notice that choosing $\varepsilon$ small enough ensures that $R<|\eta|^{\varepsilon_1}$, and taking $n$ large enough ensures that $|\eta|$ is large. 
If we are able to prove the non concentration hypothesis in this context, then Corollary 5.3 can be applied and we would be able to conclude the proof of the main Theorem 1.2.
Indeed, we already know that
$$ |\widehat{\mu}(\xi)|^2 \lesssim e^{ \varepsilon \beta n} e^{-\lambda \delta (2k+1) n} \sum_{\mathbf{A} \in \mathcal{R}_{n+1}^{k+1}(\omega)} \sup_{\eta \in J_n} \Bigg{|} \underset{\mathbf{A} \leftrightarrow \mathbf{B}}{\sum_{\mathbf{B} \in \mathcal{R}_{n+1}^k}} e^{i \eta \zeta_{1,\mathbf{A}}(\mathbf{b}_1) \dots \zeta_{k,\mathbf{A}}(\mathbf{b}_k) } \Bigg| $$
$$ \quad \quad  \quad \quad  \quad \quad  \quad \quad + e^{- \varepsilon_0 n} +  e^{-\delta_1(\varepsilon) n} + e^{\varepsilon \beta n} \left( e^{- \lambda \alpha n} +  \kappa^{\alpha n} + e^{-(\alpha \lambda-\varepsilon_0)n} +  e^{- \varepsilon_0 \delta_{up}n/2} \right) $$

by Proposition 4.1. Since every error term already enjoys exponential decay in $n$, we just have to deal with the sum of exponentials. By Corollary 5.3, we can then write
$$ \sup_{\eta \in J_n} \Bigg{|} \underset{\mathbf{A} \leftrightarrow \mathbf{B}}{\sum_{\mathbf{B} \in \mathcal{R}_{n+1}^k}} e^{i \eta \zeta_{1,\mathbf{A}}(\mathbf{b}_1) \dots \zeta_{k,\mathbf{A}}(\mathbf{b}_k) } \Bigg| \leq c e^{\lambda k \delta n} e^{- \varepsilon_0 \varepsilon_1 n/2 } ,$$

and hence we get
$$ e^{-\lambda \delta (2k+1) n} \sum_{\mathbf{A} \in \mathcal{R}_{n+1}^{k+1}(\omega)} \sup_{\eta \in J_n} \Bigg{|} \underset{\mathbf{A} \leftrightarrow \mathbf{B}}{\sum_{\mathbf{B} \in \mathcal{R}_{n+1}^k}} e^{ i  \eta \zeta_{1,\mathbf{A}}(\mathbf{b}_1) \dots \zeta_{k,\mathbf{A}}(\mathbf{b}_k)} \Bigg| $$ $$ \lesssim e^{\varepsilon \beta n} e^{-\lambda \delta  (2k+1) n} e^{\lambda \delta (k+1) n} e^{\lambda \delta k n} e^{- \varepsilon_0 \varepsilon_1 n/2} \lesssim e^{\varepsilon \beta n} e^{-\varepsilon_0 \varepsilon_1 n/2} .$$

Now, we see that we can choose $\varepsilon$ small enough so that all terms enjoy exponential decay in $n$, and since $ |\xi| \simeq e^{\left( (2k+1) \lambda + \varepsilon_0 \right) n} $, we have proved polynomial decay of $|\widehat{\mu}|^2$.

\section{The non-concentration hypothesis}

The last part of this paper is devoted to the proof of the non-concentration hypothesis that we just used. The strategy is the same than in $\cite{SS20}$ and $\cite{Le21}$, but the theorem used to conclude will be more recent.

\begin{definition}

For a given $\mathbf{A} \in \mathcal{R}_{n+1}^{k+1}(\omega)$, define for $j=1, \dots, k$  $$ \mathcal{Z}_j := \{ \mathbf{b} \in \mathcal{R}_{n+1} , \ \mathbf{a}_{j-1} \rightsquigarrow \mathbf{b} \rightsquigarrow \mathbf{a}_j \  \} $$

Then define $$\zeta_{j,\mathbf{A}}(\mathbf{b}) := e^{2 \lambda n} |g_{\mathbf{a}_{j-1}' \mathbf{b}}'(x_{{\mathbf{a}}_j})|$$ on $\mathcal{Z}_j$. The following is satisfied, for some fixed constant $\beta>0$:

 $$\# \mathcal{Z}_j \leq e^{\varepsilon \beta n} e^{ \delta \lambda n} $$
 and
$$ \zeta_{j,\mathbf{A}}( \mathcal{Z}_j ) \subset \left[ e^{- \varepsilon \beta n} , e^{\varepsilon \beta n} \right] .$$

\end{definition}

We are going to prove the following fact, which will allow us to apply Corollary for $\eta \in J_n$, $R:=e^{\varepsilon \beta n}$ and $N:= e^{\lambda \delta n} $:

\begin{proposition}[non concentration]
There exists $\gamma>0$, and we can choose $\varepsilon_0>0$, such that the following holds. Let $\eta \in J_n$. Let $\mathbf{A} \in \mathcal{R}_{n+1}^{k+1}$. Then, if $n$ is large enough,

$$\forall \sigma \in [ |\eta|^{-2}, |\eta|^{- \varepsilon_1}], \quad \sup_{a \in \mathbb{R}} \ \# \left\{(\mathbf{b}, \mathbf{c}) \in \mathcal{Z}^2_j , \  |\zeta_{j,\mathbf{A}}(\mathbf{b})-\zeta_{j,\mathbf{A}}(\mathbf{c}) | \leq \sigma \right\} \leq N^2 \sigma^{\gamma}. $$

where $R:= e^{\varepsilon \beta n}$, $N:= e^{\lambda \delta n}$ and $\varepsilon_1,k$ are fixed by Corollary 5.3.

\end{proposition}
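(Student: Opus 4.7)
The plan is to reduce the combinatorial non-concentration bound to Fourier decay of the discrete measure
$$\mu_{\mathbf{A},j} := \sum_{\mathbf{b} \in \mathcal{Z}_j} \delta_{\zeta_{j,\mathbf{A}}(\mathbf{b})},$$
and then obtain that Fourier decay from a Dolgopyat-type spectral estimate for a twisted transfer operator associated to $F$. Specifically, I would first invoke the standard Fourier-analytic equivalence between non-concentration at scale $\sigma$ and Fourier decay $|\widehat{\mu}_{\mathbf{A},j}(s)| \lesssim N (1+|s|)^{-\gamma/2}$ at the dual scale $s \sim \sigma^{-1}$: the required $\sigma$-range $[|\eta|^{-2}, |\eta|^{-\varepsilon_1}]$ translates into the frequency range $s \in [|\eta|^{\varepsilon_1}, |\eta|^{2}]$, and a $\gamma$-Frostman bound for $\mu_{\mathbf{A},j}$ implies the claimed cardinality bound for $\mu_{\mathbf{A},j}\otimes\mu_{\mathbf{A},j}$ on diagonal neighbourhoods. (The $\sup_{a\in\mathbb{R}}$ in the displayed statement appears to be vestigial, since no $a$ figures in the condition on $(\mathbf{b},\mathbf{c})$.)

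Second, the Fourier transform $\widehat{\mu}_{\mathbf{A},j}(s) = \sum_{\mathbf{b}\in\mathcal{Z}_j} e^{is\zeta_{j,\mathbf{A}}(\mathbf{b})}$ can be rewritten, by pulling the rescaling $e^{2\lambda n}$ into the frequency and absorbing the Gibbs weights $w_\mathbf{b} \sim e^{-\delta\lambda n}$ into the measure, as an iterate of a twisted transfer operator evaluated at a fixed point:
$$\widehat{\mu}_{\mathbf{A},j}(s) \;\sim\; N \cdot \bigl(\mathcal{L}_{\varphi + i\tilde s\,\tau_F}^{n}\, \mathbb{1}_{\text{adm}}\bigr)(x_{\mathbf{a}_j}),$$
up to the obvious cut-off $\mathbb{1}_{\text{adm}}$ enforcing the prescribed prefix/suffix $\mathbf{a}_{j-1}\rightsquigarrow \cdot \rightsquigarrow \mathbf{a}_j$, and with $\tilde s = s\,e^{2\lambda n}$ of the same polynomial order as $s$ on our range. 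The problem is therefore reduced to an effective spectral contraction of the form
$$\bigl\|\mathcal{L}_{\varphi + i\tilde s\,\tau_F}^{n}\bigr\|_{C^{1}(\mathcal{U})} \;\lesssim\; (1+|\tilde s|)^{C}\, e^{-\varepsilon_{3} n},$$
valid uniformly for $|\tilde s|$ in the relevant range, with polynomial loss in $|\tilde s|$.

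The main obstacle will be establishing this spectral contraction. The bunching hypothesis (B) enters precisely here: it upgrades the stable lamination to $C^{2+\alpha}$, so the inverse branches $g_{ab}$ of $F$ are $C^{2+\alpha}$ and in particular $\log|g_{ab}'| \in C^{1+\alpha}$, matching the regularity required by the Dolgopyat-Naud argument. The nonlinearity hypothesis (NL) must be converted into the Uniform Non-Integrability (UNI) condition for $\tau_F$: I would argue that if $\widehat\lambda(\Omega_{\text{per}})$ spans an infinite-dimensional $\mathbb{Q}$-vector space, then the temporal distance functions between cylinders cannot be trapped in any arithmetic progression, which rules out every lattice-type obstruction and yields the UNI inequality at the heart of Dolgopyat's argument. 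Once UNI and $C^{1+\alpha}$-regularity are both in place, standard $L^{2}$-contraction for $\mathcal{L}_{\varphi + i\tilde s\,\tau_F}^{n}$ follows and, combined with the Fourier-to-Frostman transfer of the first step, gives the desired bound; the parameter $\varepsilon_{0}$ in the definition of $J_{n}$ is chosen at the very end small enough that the polynomial loss in $|\tilde s|$ is absorbed by the exponential contraction in $n$.
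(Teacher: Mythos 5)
Your high-level architecture is the same as the paper's (twisted transfer operator with Dolgopyat-type contraction, bunching (B) supplying the $C^{1+\alpha}$ regularity of $\tau_F$, (NL) supplying the non-integrability, and $\varepsilon_0$ chosen last to absorb the polynomial loss in the frequency), but the central reduction in your second step is false as written, and the missing idea is exactly the paper's Lemma 6.2. You claim $\widehat{\mu}_{\mathbf{A},j}(s)=\sum_{\mathbf{b}}e^{is\zeta_{j,\mathbf{A}}(\mathbf{b})}\sim N\,\bigl(\mathcal{L}^{n}_{\varphi+i\tilde s\tau_F}\mathbb{1}_{\mathrm{adm}}\bigr)(x_{\mathbf{a}_j})$ with $\tilde s=se^{2\lambda n}$. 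But iterating $\mathcal{L}_{\varphi+i\tilde s\tau_F}$ produces phases $e^{i\tilde s\,S_{2n}\tau_F(g_{\mathbf{a}_{j-1}'\mathbf{b}}(x_{\mathbf{a}_j}))}$, i.e.\ phases \emph{linear in the Birkhoff sum} $S_{2n}\tau_F=-\log|g'_{\mathbf{a}_{j-1}'\mathbf{b}}|+O(1)$, whereas your phase $s\zeta_{j,\mathbf{A}}(\mathbf{b})=se^{2\lambda n}|g'_{\mathbf{a}_{j-1}'\mathbf{b}}(x_{\mathbf{a}_j})|$ is linear in the \emph{exponential} of that Birkhoff sum. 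These are genuinely different objects, and no choice of $\tilde s$ repairs the identity. The paper resolves this by first passing to logarithms: since $\zeta_{j,\mathbf{A}}$ takes values in $[R^{-1},R]$, concentration of $\zeta_{j,\mathbf{A}}$ at scale $\sigma$ is implied by concentration of $S_{2n}\tau_F(g_{\mathbf{a}_{j-1}'\mathbf{b}}(x_{\mathbf{a}_j}))$ in intervals of length $\approx R\sigma$ (Lemma 6.2), and only for this additive quantity does the twisted-operator machinery apply (Lemma 6.5). Without this logarithmic reduction your reduction to the spectral estimate does not go through.

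Two further points. First, your ``standard equivalence'' in step one, with Fourier decay required only on the window $s\in[|\eta|^{\varepsilon_1},|\eta|^{2}]$, does not suffice: bounding $\mu_{\mathbf{A},j}\otimes\mu_{\mathbf{A},j}$ on a $\sigma$-neighbourhood of the diagonal needs control at essentially all frequencies up to $\sim\sigma^{-1}$, and the trivial bound below $|\eta|^{\varepsilon_1}$ contributes up to $N^{2}\sigma|\eta|^{\varepsilon_1}$, which at the endpoint $\sigma=|\eta|^{-\varepsilon_1}$ is of order $N^{2}$, not $N^{2}\sigma^{\gamma}$. In the paper only a bounded number of frequencies ($|\mathfrak{n}|\le\rho$) are treated trivially, and each carries a Fourier coefficient of the $\sigma$-scale bump of size $O(\sigma)$, which is why the low-frequency part is harmless; note also that the Dolgopyat bound is not decay in the frequency (it grows like $|\mathfrak{n}|^{\rho}$) but decay $e^{-n/\rho}$ in $n$, so the bookkeeping must be done against $n$, exactly as in Lemma 6.5. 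Second, your passage from (NL) to ``UNI'' is vaguer than needed: non-lattice type statements do not by themselves give the uniform non-integrability inequality; the paper instead verifies the precise hypothesis of the cited theorem (total non-linearity: $\tau_F$ is not of the form $\mathfrak{c}-\theta\circ F+\theta$ with $\mathfrak{c}$ locally constant and $\theta\in C^{1}$) by a short cohomological argument on periodic orbits, using that $\mathfrak{c}$ takes finitely many values while $\widehat{\lambda}(\Omega_{\mathrm{per}})$ spans an infinite-dimensional $\mathbb{Q}$-vector space. With these repairs your plan becomes essentially the paper's proof; as it stands, the phase mismatch is a real gap.
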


The proof of Proposition 6.1 is based on a uniform spectral gap for a family of twisted transfer operators, as in \cite{SS20}, also known as Dolgopyat's estimates. Before introducing it, let us first reduce our non-concentration estimate to a statement about Birkhoff sums.

\begin{lemma}

If $\varepsilon_0$ and $\gamma$ are such that, for $\sigma \in [ e^{- 5 \varepsilon_0 n} , e^{ - \varepsilon_1 \varepsilon_0 n/4 }  ]$, $$\sup_{a \in \mathbb{R}} \#  \left\{ \mathbf{b} \in \mathcal{Z}_j,  \ S_{2n} {\tau}_F \left( g_{\mathbf{a}_{j-1}' \mathbf{b}} (x_{\mathbf{a}_j})  \right) \in [a-\sigma,a+\sigma] \right\} \leq N \sigma^{2 \gamma} ,$$

Then Proposition 6.1 is true.

\end{lemma}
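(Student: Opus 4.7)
The plan is to convert the additive non-concentration statement for the $\zeta_{j,\mathbf{A}}$ into the additive non-concentration statement for $S_{2n}\tau_F$ assumed in the hypothesis. The key identity is that $g_{\mathbf{a}_{j-1}'\mathbf{b}}$ is an inverse branch of $F^{2n}$ (since $\mathbf{a}_{j-1}'\mathbf{b}$ has length $2n+1$), so by the chain rule $|g_{\mathbf{a}_{j-1}'\mathbf{b}}'(x_{\mathbf{a}_j})| = \exp\bigl(-S_{2n}\tau_F(y_{\mathbf{b}})\bigr)$ where $y_{\mathbf{b}} := g_{\mathbf{a}_{j-1}'\mathbf{b}}(x_{\mathbf{a}_j})$. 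Hence
$$ \ln \zeta_{j,\mathbf{A}}(\mathbf{b}) = 2\lambda n - S_{2n}\tau_F(y_{\mathbf{b}}). $$

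Next, I would turn the difference $|\zeta_{j,\mathbf{A}}(\mathbf{b})-\zeta_{j,\mathbf{A}}(\mathbf{c})|\le \sigma$ into a difference of Birkhoff sums. Since $\zeta_{j,\mathbf{A}}$ takes values in $[e^{-\varepsilon\beta n},e^{\varepsilon\beta n}]$, the mean value theorem applied to $\exp$ gives $|e^s-e^t|\ge e^{-\varepsilon\beta n}|s-t|$ for $s,t\in[-\varepsilon\beta n,\varepsilon\beta n]$. Thus
$$ |S_{2n}\tau_F(y_{\mathbf{b}})-S_{2n}\tau_F(y_{\mathbf{c}})| \;\le\; e^{\varepsilon\beta n}\sigma \;=:\;\sigma'. $$
Fixing $\mathbf{b}$ and applying the hypothesis with $a=S_{2n}\tau_F(y_{\mathbf{b}})$ bounds the number of admissible $\mathbf{c}$ by $N(\sigma')^{2\gamma}=N e^{2\gamma\varepsilon\beta n}\sigma^{2\gamma}$. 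Summing over $\mathbf{b}\in\mathcal{Z}_j$ and using $\#\mathcal{Z}_j\le e^{\varepsilon\beta n}N$ yields
$$ \#\bigl\{(\mathbf{b},\mathbf{c})\in\mathcal{Z}_j^2:|\zeta_{j,\mathbf{A}}(\mathbf{b})-\zeta_{j,\mathbf{A}}(\mathbf{c})|\le\sigma\bigr\} \;\le\; e^{(1+2\gamma)\varepsilon\beta n}\,N^2\sigma^{2\gamma}. $$

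It remains to check two things. First, the upper bound must be $\le N^2\sigma^\gamma$, i.e.\ $\sigma^\gamma\le e^{-(1+2\gamma)\varepsilon\beta n}$. Since $\sigma\le|\eta|^{-\varepsilon_1}\le e^{-\varepsilon_1\varepsilon_0 n/2}$ in $J_n$, this inequality holds provided $\varepsilon$ is chosen sufficiently small in front of $\gamma,\varepsilon_0,\varepsilon_1$ (which is allowed, since $\varepsilon$ is the last parameter to be fixed). Second, for the Birkhoff-sum hypothesis to be applicable, $\sigma'=e^{\varepsilon\beta n}\sigma$ must lie in $[e^{-5\varepsilon_0 n},e^{-\varepsilon_1\varepsilon_0 n/4}]$: the lower bound follows from $\sigma\ge|\eta|^{-2}\ge e^{-4\varepsilon_0 n}$ together with $\varepsilon\beta\ge 0$, and the upper bound from $\sigma\le e^{-\varepsilon_1\varepsilon_0 n/2}$ together with $\varepsilon\beta\le\varepsilon_1\varepsilon_0/4$.

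The content of the argument is essentially routine: the only delicate point is verifying that all the $e^{O(\varepsilon n)}$ losses incurred along the way (from the mean value theorem, from $\#\mathcal{Z}_j$, and from $\sigma'\neq\sigma$) can be absorbed, which works because the window in which $\sigma$ varies is polynomially small in $|\eta|\ge e^{\varepsilon_0 n/2}$, so the relevant rates are controlled by $\varepsilon_0$ rather than by $\varepsilon$. Hence choosing $\varepsilon$ small enough (depending only on $\gamma$, $\varepsilon_0$, $\varepsilon_1$ and $\beta$) completes the reduction.
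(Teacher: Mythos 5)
Your argument is correct and is essentially the paper's own proof: both convert the multiplicative window $|\zeta_{j,\mathbf{A}}(\mathbf{b})-\zeta_{j,\mathbf{A}}(\mathbf{c})|\le\sigma$ into an additive window of size $\simeq R\sigma$ for the Birkhoff sums $S_{2n}\tau_F$ via the identity $\ln\zeta_{j,\mathbf{A}}(\mathbf{b})=2\lambda n - S_{2n}\tau_F(g_{\mathbf{a}_{j-1}'\mathbf{b}}(x_{\mathbf{a}_j}))$ and a logarithm/mean-value estimate, then apply the hypothesis and absorb the $e^{O(\varepsilon n)}$ losses and the passage from $\sigma^{2\gamma}$ to $\sigma^{\gamma}$ using $\sigma\le|\eta|^{-\varepsilon_1}\le e^{-\varepsilon_1\varepsilon_0 n/2}$, checking that the modified window stays in $[e^{-5\varepsilon_0 n},e^{-\varepsilon_1\varepsilon_0 n/4}]$. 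The only cosmetic difference is that you anchor the count at a fixed $\mathbf{b}$ and sum over $\mathbf{b}$, while the paper anchors at a fixed value $a$ and sums over $\mathbf{c}$; this is the same computation.
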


\begin{proof}

Suppose that the estimate is true.
Let $|\eta| \in [e^{\varepsilon_0 n/2}, e^{2 \varepsilon_0 n}] $, and then let $\sigma \in [ |\eta|^{-2}, |\eta|^{-\varepsilon_1} ] \subset [e^{-4 \varepsilon_0 n}, e^{- \varepsilon_0 \varepsilon_1 n/2}].$  Let $a \in [R^{-1},R]$ (it is enough to conclude). Since for $n$ large enough $$\ln(a+\sigma)-\ln(a-\sigma) = \ln(1+ \sigma a^{-1}) - \ln(1 - \sigma a^{-1}) \leq 4 \sigma a^{-1} \leq 4 \sigma R ,$$ 

We find that
$$ \ln\left( [a-\sigma,a+\sigma] \right) \subset [\ln a - 4 R \sigma, \ \ln a + 4 R \sigma ] . $$

Hence:
$$  \#  \{ \mathbf{b} \in \mathcal{Z}_j,  \ {\zeta}_{j,\mathbf{A}}(\mathbf{b}) \in [a-\sigma,a+\sigma] \} \leq  \#  \{ \mathbf{b} \in \mathcal{Z}_j,  \ \ln {\zeta}_{j,\mathbf{A}}(\mathbf{b}) \in [\ln a - 4 R \sigma, \ln a + 4 R \sigma]  \} $$
$$ = \#  \{ \mathbf{b} \in \mathcal{Z}_j,  \ S_{2n} {\tau}_F \left( g_{\mathbf{a}_{j-1}' \mathbf{b}} (x_{\mathbf{a}_j})  \right)  \in \left[ -\ln a + 2 n \lambda - 4 \sigma R , -\ln a + 2 n \lambda + 4 \sigma R \right] \} $$
$$ \leq N (4 R \sigma)^{2 \gamma} \leq N \sigma^{\gamma} $$
 since $4R\sigma \in [e^{-5 \varepsilon_0 n}, e^{-\varepsilon_1 \varepsilon_0 n/4}]$ for large enough $n$. Finally,

$$ \# \left\{(\mathbf{b}, \mathbf{c}) \in \mathcal{Z}^2_j , \  |{\zeta}_{j,\mathbf{A}}(\mathbf{b})-{\zeta}_{j,\mathbf{A}}(\mathbf{c}) | \leq \sigma \right\}  \quad \quad \quad \quad \quad \quad \quad \quad \quad \quad \quad $$ $$ \quad \quad \quad \quad \quad = \sum_{\mathbf{c} \in \mathcal{Z}_j}   \left\{ \mathbf{b} \in \mathcal{Z}_j,  \ {\zeta}_{j,\mathbf{A}}(\mathbf{b}) \in [\zeta_{j,\mathbf{A}}(\mathbf{c})-\sigma,\zeta_{j,\mathbf{A}}(\mathbf{c})+\sigma] \right\} \leq N^2 \sigma^{\gamma}, $$

and so proposition 6.1 is true. \end{proof}

To prove that the estimate of Lemma 6.2 is satisfied, we use the following spectral gap for twisted transfer operators, established in \cite{SS20} (Th. 5.1) for full branched expanding maps. A similar statement can be found in \cite{DV21} (Th. 6.4) in a more general setting. This kind of theorem is not new and comes from the early work of Dolgopyat \cite{Do98}. This recent version is more adapted in our context, as it deals explicitly with one dimensional shifts that satisfies a nonlinearity condition that is easily seen to be true in our context.  \\

Unfortunately, the use of Dolgopyat's estimates can only be checked if the function $\tau_F$ is smooth enough (Hölder regular is \textbf{not} enough). This is were we need our additional bunching assumption (B) to conclude: if our attractor contract strongly enough in the stable direction, then the stable lamination $W^s_\varepsilon(x)$ becomes $C^{2+\alpha}$. In particular, the map $\pi$ is $C^{2+\alpha}$ in the unstable direction, and so $F$ is $C^{2+\alpha}$ too. In this case, $\tau_F$ is $C^{1+\alpha}$, which is enough to use Dolgopyat's estimates.

\begin{theorem}
Define, for $s \in \mathbb{C}$, a twisted transfer operator $\mathcal{L}_{s} : C^\alpha(\mathcal{U},\mathbb{C}) \rightarrow C^\alpha(\mathcal{U},\mathbb{C}) $ as follows:

$$ \forall x \in U_b, \ \mathcal{L}_{s} h (x) := \sum_{a \rightarrow b} e^{(\varphi+s \tau_F)(g_{ab}(x))} h(g_{ab}(x)) $$

Iterating this transfer operator yields:

$$ \forall x \in U_b, \ \mathcal{L}_{s}^n h (x) = \underset{\mathbf{a} \rightsquigarrow b}{\sum_{\mathbf{a} \in \mathcal{W}_{n+1}}} w_{\mathbf{a}}(x) e^{s S_{n} {\tau}_F(g_{\mathbf{a}}(x)) } h(g_{\mathbf{a}}(x)) .$$

Under our nonlinearity condition (NL) and the bunching condition (B), the following holds. There exists $\rho \in \mathbb{N}$ such that, for any $s \in \mathbb{C}$ such that $\text{Re}(s)=0$ and $|\text{Im}(s)|>\rho$,

$$ \forall h \in C^\alpha(\mathcal{U},\mathbb{C}), \ \forall n \geq 0, \ \| \mathcal{L}_{s}^n h \|_{\infty, \mathcal{U}} \leq \rho |\text{Im}(s)|^{\rho} e^{-n/\rho} \|h\|_{C^\alpha(\mathcal{U},\mathbb{C})} .$$

\end{theorem}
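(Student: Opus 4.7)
The plan is to reduce the statement to an off-the-shelf Dolgopyat-type spectral gap theorem, namely Theorem~5.1 of \cite{SS20} (or the more general Theorem~6.4 of \cite{DV21}), and to verify its two structural hypotheses in our setting. The first hypothesis is a regularity assumption: the one-dimensional expanding factor $F$ and the log-derivative $\tau_F$ must be smooth enough for the cancellation mechanism to operate, typically $C^{1+\alpha}$. This is exactly where the bunching condition (B) enters. By the regularity theorem recalled at the end of Section~2.3, (B) promotes the stable lamination from $C^{1+\alpha}$ to $C^{2+\alpha}$, which in turn upgrades the stable holonomies to $C^{2+\alpha}$ diffeomorphisms in the unstable direction. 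The factor map $F$ is then $C^{2+\alpha}$ on each piece $U_a$, so $\tau_F=\ln|F'|$ is $C^{1+\alpha}$, as required.

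The second hypothesis is a uniform non-integrability (UNI) condition, which essentially says that $\tau_F$ is not cohomologous, via a Hölder coboundary, to a function taking values in a discrete subgroup $c+t^{-1}\mathbb{Z}$. I plan to derive this from (NL) by reading the relevant obstruction at periodic orbits. For a periodic point $x$ of $f$ of minimal period $n$, Lemma~2.13 shows that $\tau_f$ and $\tau_F\circ\pi$ are $f$-cohomologous, so $S_n\tau_F(\pi(x)) = n\,\widehat{\lambda}(x)$. Hence the set of normalized Birkhoff sums of $\tau_F$ along periodic orbits of $F$ is exactly $\widehat{\lambda}(\Omega_{\mathrm{per}})$. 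If UNI failed, these values would be forced to lie in a finitely generated subgroup of $\mathbb{R}$; but (NL) asserts that $\widehat{\lambda}(\Omega_{\mathrm{per}})$ spans an infinite-dimensional $\mathbb{Q}$-vector space, which is incompatible with any such relation. Thus (NL) supplies precisely the non-arithmeticity required.

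The main obstacle is translating the abstract UNI statement of \cite{SS20} (resp. \cite{DV21}), phrased in terms of temporal distance functions between inverse-branch compositions of $F$, into the Lyapunov-spectrum condition (NL). The bridge is built by evaluating those temporal distance functions along periodic orbits: any failure of UNI would produce a nontrivial $\mathbb{Q}$-linear relation among finitely many $\widehat{\lambda}$-values, which (NL) directly forbids. Once this dictionary is recorded carefully, the exponential decay rate $e^{-n/\rho}$ and the polynomial loss $|\operatorname{Im}(s)|^\rho$ come out of the cited theorems with no further work, and $\rho$ can be taken to be any sufficiently large integer depending on $f$, $\psi$ and the Markov partition.
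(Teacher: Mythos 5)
Your proposal is correct and takes essentially the same route as the paper: the theorem is invoked from the Dolgopyat-type estimates of \cite{SS20} (Th. 5.1) and \cite{DV21} (Th. 6.4), with the bunching condition (B) used exactly as you say to make the stable lamination (hence $\pi$ and $F$) $C^{2+\alpha}$ so that $\tau_F$ is $C^{1+\alpha}$, and with the non-linearity hypothesis of \cite{DV21} (no $C^1$-coboundary relation $\tau_F=\mathfrak{c}-\theta\circ F+\theta$ with $\mathfrak{c}$ locally constant) checked precisely by your mechanism: transporting the relation to periodic orbits via the cohomology of $\tau_f$ and $\tau_F\circ\pi$ (Lemma 2.13) and noting that it would confine $\widehat{\lambda}(\Omega_{\text{per}})$ to a finite-dimensional $\mathbb{Q}$-span, contradicting (NL) --- this is the paper's Remark 6.4. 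The only cosmetic difference is your phrasing of the obstruction as a lattice-valued (UNI) condition rather than the locally constant one, but your periodic-orbit argument yields the required contradiction in either formulation.
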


It means that this twisted transfer operator is eventually contracting for large $\text{Im}(s)$. This theorem will play another key role in this paper. 

\begin{remark}

This theorem is stated in $\cite{DV21}$ under a "total non linearity hypothesis" made on $\tau_F$. The condition goes as follow: there exists no locally constant map  $\mathfrak{c} : \mathcal{U}^{(1)} \rightarrow \mathbb{R}$  and  $\theta \in C^1(\mathcal{U}^{(1)},\mathbb{R})$  such that  $$\tau_F = \mathfrak{c} - \theta \circ F + \theta. $$

This condition is satisfied in our setting. Indeed, suppose that some locally constant $\mathfrak{c} : \mathcal{U}^{(1)} \rightarrow \mathbb{R}$ satisfy $ \tau_F = \mathfrak{c} + \theta \circ F - \theta $ for some $\mathcal{C}^1$ map $\theta$. Then, recall that $\tau_F \circ \pi$ and $\tau_f$ are $f$-cohomologous (Lemma 2.13). Hence, if $x \in \Omega_{per}$ is a $f$-periodic point with period $n_x$, the unstable Lyapunov exponent of the associated periodic orbit is $$ \widehat{\lambda}(x) = \frac{1}{n_x} S_{n_x} \tau_F(\pi(x)) \in \text{Span}_\mathbb{Q} \left( \mathfrak{c}(\mathcal{U})\right) , $$
which implies that
$$ \text{dim}_\mathbb{Q} \text{Vect}_\mathbb{Q} \widehat{\lambda}\left( \Omega_\text{per} \right) \leq  \text{dim}_\mathbb{Q}  \text{Span}_\mathbb{Q} \left( \mathfrak{c}(\mathcal{U})\right) , $$

and this is an obvious contradiction to the nonlinearity hypothesis (NL). \\

\end{remark}

\begin{lemma}

Define $ \varepsilon_0 := \min\left( 1/(5 \rho (3+\rho)), \alpha \lambda/8  \right)$. Fix $\gamma:=1/4$, and let $\varepsilon_1$ and $k$ be fixed by Theorem 5.3. For $\sigma \in [e^{-5 \varepsilon_0 n} , e^{ - \varepsilon_1 \varepsilon_0 n/4 }  ]$ and if $n$ is large enough, $$ \sup_{a \in \mathbb{R}} \#  \left\{ \mathbf{b} \in \mathcal{Z}_j,  \ S_{2n} {\tau}_F \left( g_{\mathbf{a}_{j-1}' \mathbf{b}} (x_{\mathbf{a}_j})  \right) \in [a-\sigma,a+\sigma] \right\} \leq N \sigma^{1/2} .$$

\end{lemma}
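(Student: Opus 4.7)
My plan is to apply the standard Fourier--Dolgopyat scheme. Let $\psi \geq \mathbb{1}_{[-1,1]}$ be a Schwartz bump; Fourier inversion gives
$$ N(a, \sigma) \leq \sum_{\mathbf{b}\in \mathcal{Z}_j} \psi\Bigl(\tfrac{S_{2n}\tau_F(g_{\mathbf{a}_{j-1}'\mathbf{b}}(x_{\mathbf{a}_j}))-a}{\sigma}\Bigr) = \frac{1}{2\pi}\int \hat\psi(\xi) e^{-i\xi a/\sigma} T(\xi/\sigma)\, d\xi, $$
where $T(s) := \sum_{\mathbf{b}\in \mathcal{Z}_j} \exp\bigl(is\,S_{2n}\tau_F(g_{\mathbf{a}_{j-1}'\mathbf{b}}(x_{\mathbf{a}_j}))\bigr)$, and $N(a,\sigma)$ denotes the quantity we want to bound. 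The task reduces to estimating $|T(s)|$ uniformly in $s$.

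The crucial step is to express $T(s)$ via the twisted transfer operator $\mathcal{L}_{is}$ of Theorem 6.1 applied to a test function whose $C^\alpha$-norm is \emph{not} exponentially large in $n$. Let $a_*$ denote the last letter of $\mathbf{a}_{j-1}'$. Using the factorisation $g_{\mathbf{a}_{j-1}'\mathbf{b}} = g_{\mathbf{a}_{j-1}'} \circ g_{a_*\mathbf{b}}$ and setting $\phi_{\mathbf{A}}(y) := -\log|g_{\mathbf{a}_{j-1}'}'(y)|$ on $U_{a_*}$, the chain rule yields
$$ S_{2n}\tau_F(g_{\mathbf{a}_{j-1}'\mathbf{b}}(x_{\mathbf{a}_j})) = \phi_{\mathbf{A}}(g_{a_*\mathbf{b}}(x_{\mathbf{a}_j})) + S_{n+1}\tau_F(g_{a_*\mathbf{b}}(x_{\mathbf{a}_j})). $$
Under the bunching hypothesis (B), $F$ is $C^{2+\alpha}$ and standard bounded distortion yields $\|\phi_{\mathbf{A}}\|_{C^{1+\alpha}} = O(1)$ uniformly in $n$ and $\mathbf{A}$. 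Inserting the Gibbs weight $w_{a_*\mathbf{b}}(x_{\mathbf{a}_j}) \sim e^{-(n+1)\delta\lambda}$ (from Lemma 3.1, valid for regular $\mathbf{b}$) and discarding the irregular words (contributing $O(e^{-\delta_0 n})$ by Lemma 3.2), one obtains
$$ |T(s)| \lesssim e^{\varepsilon\beta n + (n+1)\delta\lambda}\,\bigl|\mathcal{L}_{is}^{n+1}(H_{\mathbf{A},s})(x_{\mathbf{a}_j})\bigr| + e^{-\delta_0 n}, $$
where $H_{\mathbf{A},s}$ is a $C^\alpha$-regularisation of $e^{is\phi_{\mathbf{A}}}$ localised on the length-two cylinder enforcing the required transition out of $\mathbf{a}_{j-1}$. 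Since this cylinder has diameter $\sim e^{-\lambda}$, a fixed positive constant, we have $\|H_{\mathbf{A},s}\|_{C^\alpha} \lesssim 1 + |s|^\alpha$, independent of $n$.

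Next, split the $\xi$-integral at $|\xi| = \rho\sigma$. For low frequencies ($|\xi| \leq \rho\sigma$), the trivial estimate $|\mathcal{L}_{is}^{n+1} H| \leq \mathcal{L}^{n+1}|H| \leq \|H\|_\infty$ (using $\mathcal{L}^{n+1}\mathbb{1}=\mathbb{1}$) gives a contribution $\lesssim \rho\sigma\,e^{\varepsilon\beta n + (n+1)\delta\lambda}$. For high frequencies, $|s| = |\xi|/\sigma > \rho$, so Theorem 6.1 applies and
$$ \bigl|\mathcal{L}_{is}^{n+1}(H_{\mathbf{A},s})(x_{\mathbf{a}_j})\bigr| \leq \rho|s|^{\rho}\,e^{-(n+1)/\rho}\,\|H_{\mathbf{A},s}\|_{C^\alpha} \lesssim \rho|s|^{\rho+\alpha}\,e^{-(n+1)/\rho}, $$
and the Schwartz decay of $\hat\psi$ produces a high-frequency contribution $\lesssim e^{\varepsilon\beta n + (n+1)\delta\lambda - (n+1)/\rho}\,\sigma^{-\rho-\alpha}$.

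Comparing these contributions against the target $N\sigma^{1/2} = e^{\delta\lambda n}\sigma^{1/2}$ and using $\sigma \geq e^{-5\varepsilon_0 n}$ to absorb the factor $\sigma^{-\rho-\alpha-1/2}$, the high-frequency condition reduces to $\varepsilon\beta + 5\varepsilon_0(\rho+\alpha+1/2) < 1/\rho$. With $\varepsilon_0 \leq 1/(5\rho(3+\rho))$ we have $5\varepsilon_0(\rho+\alpha+1/2) \leq (\rho+\alpha+1/2)/(\rho(3+\rho)) < 1/\rho$ (strictly, since $\alpha<1$), so choosing $\varepsilon$ small enough closes the estimate; the low-frequency condition is then immediate from $\sigma \leq e^{-\varepsilon_1\varepsilon_0 n/4}$. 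The main obstacle is precisely the tension between the Gibbs weight $e^{(n+1)\delta\lambda}$, Dolgopyat's decay $e^{-(n+1)/\rho}$, and the Hölder norm of the test function: a direct application of $\mathcal{L}_{is}^{2n}$ against a bump supported on $U_{\mathbf{a}_{j-1}}$ would incur an $e^{\alpha\lambda n}$ cost from the exponentially shrinking support that Dolgopyat cannot absorb unless $\rho < 2/(\alpha\lambda)$, which Theorem 6.1 does not ensure. The factorisation routes the shrinking geometry into the phase $\phi_{\mathbf{A}}$, whose $C^{1+\alpha}$-norm is uniformly controlled thanks to (B); this is the technical heart of the argument.
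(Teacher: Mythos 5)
Your overall scheme is the same as the paper's: smooth the indicator of $[a-\sigma,a+\sigma]$, expand in frequencies, rewrite the sum over $\mathbf{b}$ as an iterate of the twisted operator $\mathcal{L}_{is}$ applied to a test function in which the contribution of the fixed word $\mathbf{a}_{j-1}$ has been absorbed (so that its $C^\alpha$-norm is uniform in $n$), and then treat low frequencies by the Gibbs mass and high frequencies by the Dolgopyat bound, with the same balancing that dictates $\varepsilon_0$. (The paper uses Fourier series of a periodized bump rather than Fourier inversion of a Schwartz bump, and absorbs $S_n\tau_F\circ g_{\mathbf{a}_{j-1}}$ rather than $-\log|g_{\mathbf{a}_{j-1}'}'|$ into the test function; these differences are immaterial.)

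However, there is a genuine gap at the pivotal step. After Fourier inversion you must bound the \emph{unweighted} exponential sum $T(s)=\sum_{\mathbf{b}\in\mathcal{Z}_j}e^{is\,S_{2n}\tau_F(g_{\mathbf{a}_{j-1}'\mathbf{b}}(x_{\mathbf{a}_j}))}$, and the claimed inequality $|T(s)|\lesssim e^{\varepsilon\beta n+(n+1)\delta\lambda}\,|\mathcal{L}_{is}^{n+1}(H_{\mathbf{A},s})(x_{\mathbf{a}_j})|+e^{-\delta_0 n}$ does not follow from ``inserting the Gibbs weight'': the weights $w_{\mathbf{b}}(x_{\mathbf{a}_j})$ are not constant (they fluctuate by factors $e^{\varepsilon\beta n}$), and one cannot bound an unweighted oscillatory sum by the corresponding weighted one times $\max w^{-1}$ — e.g.\ with weights $1,1,2$ and phases $0,0,\pi$ the weighted sum vanishes while the unweighted one does not. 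Likewise, ``discarding the irregular words'' via Lemma 3.2 is a positivity argument that is unavailable once the summands oscillate (and its error, once multiplied by your prefactor $e^{(n+1)\delta\lambda+\varepsilon\beta n}$, is not small anyway, since $\delta_0(\varepsilon)$ is tiny). The repair is to do the weight manipulation \emph{before} losing positivity, exactly as in the paper: take $\psi\geq 0$ dominating $\mathbb{1}_{[-1,1]}$, use $w_{\mathbf{b}}(x_{\mathbf{a}_j})\geq (RN)^{-1}$ for $\mathbf{b}\in\mathcal{Z}_j$ (Lemma 3.1) and $\psi\geq0$ to bound the count by $RN\sum_{\mathbf{b}}w_{\mathbf{b}}(x_{\mathbf{a}_j})\,\psi\bigl((S_{2n}\tau_F(g_{\mathbf{a}_{j-1}'\mathbf{b}}(x_{\mathbf{a}_j}))-a)/\sigma\bigr)$ with the sum extended to \emph{all} admissible $\mathbf{b}$ (no error term needed), and only then Fourier-invert; the resulting weighted twisted sums are genuine iterates of $\mathcal{L}_{is}$, and the rest of your frequency splitting and your choice of $\varepsilon_0$ then goes through as in the paper.
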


\begin{proof}
In the proof to come, all the $\simeq$ or $\lesssim$ will be uniform in $a$: the only relevant information is $\sigma$.
So fix $\sigma \in [ e^{- 5 \varepsilon_0 n} , e^{ - \varepsilon_1 \varepsilon_0 n/4 }  ]$, and fix an interval of length $\sigma$, $[a-\sigma,a+\sigma]$. Then choose a bump function $\chi$ such that $\chi = 1$ on $[a-\sigma,a+\sigma]$, $\text{supp}(\chi) \subset [a-2\sigma,a+2\sigma]$ and such that $ \| \chi \|_{L^1(\mathbb{R})} \simeq \sigma $. We can suppose that $ \| \chi^{(l)} \|_{L^1(\mathbb{R})} \simeq \sigma^{1-l} $.  \\

Then, we can consider $h$, the $2 \pi \mathbb{Z}$ periodic map obtained by periodizing $\chi$. This will allows us to use Fourier series. By construction, we see that
$ \mathbb{1}_{ [a-\sigma, a+\sigma] } \leq h .$
Hence:

$$ \#  \left\{ \mathbf{b} \in \mathcal{Z}_j,  \ S_{2n} {\tau}_F ( g_{\mathbf{a}_{j-1}' \mathbf{b}} (x_{\mathbf{a}_j})  ) \in [a-\sigma,a+\sigma] \right\} \leq \sum_{\mathbf{b} \in \mathcal{Z}_j} h\left( S_{2n} {\tau}_F ( g_{\mathbf{a}_{j-1}' \mathbf{b}} (x_{\mathbf{a}_j})  ) \right) $$
$$ \leq R N{\sum_{\mathbf{b} \in \mathcal{Z}_{j}}} w_{ \mathbf{b}}(x_{\mathbf{a}_j}) h\left( S_{2n} {\tau}_F ( g_{\mathbf{a}_{j-1}' \mathbf{b}} (x_{\mathbf{a}_j}) ) \right) $$
$$ \leq R N \underset{\mathbf{a}_{j-1} \rightsquigarrow \mathbf{b} \rightsquigarrow \mathbf{a}_j}{\sum_{\mathbf{b} \in \mathcal{W}_{n+1}}} w_{ \mathbf{b}}(x_{\mathbf{a}_j})  h\left( S_{2n} \tau_F ( g_{\mathbf{a}_{j-1}' \mathbf{b}} (x_{\mathbf{a}_j}) ) \right) .$$
Then we develop $h$ using Fourier series. We have:
$$ \forall x \in \mathbb{R}, \ h(x) = \sum_{ \mathfrak{n} \in \mathbb{Z} } c_{ \mathfrak{n} }(h) e^{i  \mathfrak{n} x} ,$$

where $$c_{\mathfrak{n}}(h) := (2 \pi)^{-1} \int_{a-\pi}^{a+\pi} h(x) e^{-i  \mathfrak{n} x} dx .$$ 
Notice that $$  \mathfrak{n}^{l} |c_{ \mathfrak{n}}(h)| \simeq |c_{ \mathfrak{n}}(h^{(l)})| \lesssim  \sigma^{ 1 - l} .$$

Plugging $ S_{2n} \tau_F ( g_{\mathbf{a}_{j-1}' \mathbf{b}} (x_{\mathbf{a}_j}) ) $ in this expression yields

$$ h\left( S_{2n} \tau_F ( g_{\mathbf{a}_{j-1}' \mathbf{b}} (x_{\mathbf{a}_j}) ) \right) = \sum_{ \mathfrak{n} \in \mathbb{Z} } c_{\mathfrak{n}}(h) e^{i \mathfrak{n} S_{2n} \tau_F ( g_{\mathbf{a}_{j-1}' \mathbf{b}} (x_{\mathbf{a}_j}) )} ,$$

and so
$$  \#  \left\{ \mathbf{b} \in \mathcal{Z}_j,  \ S_{2n} {\tau}_F ( g_{\mathbf{a}_{j-1}' \mathbf{b}} (x_{\mathbf{a}_j}) ) \in [a-\sigma,a+\sigma] \right\} $$
$$ \leq R N  \sum_{\mathfrak{n} \in \mathbb{Z}} c_{\mathfrak{n} }(h) \underset{\mathbf{a}_{j-1} \rightsquigarrow \mathbf{b} \rightsquigarrow \mathbf{a}_j}{\sum_{\mathbf{b} \in \mathcal{W}_{n+1}}} w_{ \mathbf{b}}(x_{\mathbf{a}_j}) e^{i \mathfrak{n} S_{2n} \tau_F ( g_{\mathbf{a}_{j-1}' \mathbf{b}} (x_{\mathbf{a}_j}) )} .$$

For any block $\mathbf{A} \in \mathcal{W}_{n+1}$, integer $j \in \{1, \dots, k\}$ and integer $\mathfrak{n} \in \mathbb{Z}$, define $\mathfrak h_{\mathbf{A},j}^{\mathfrak{n}} \in   C^\alpha(\mathcal{U},\mathbb{R})$ by
$$ \forall x \in U_{b(\mathbf{a})}, \ \mathfrak h_{\mathbf{A},j}^{\mathfrak{n}} (x) := \exp\left({ i \mathfrak{n} S_n \tau_F(g_{\mathbf{a}_{j-1}}(x))} \right) \quad , \quad \forall x \in U_b , \ b \neq b(\mathbf{a}), \  \mathfrak h_{\mathbf{A},j}^{\mathfrak{n}} (x) := 0 .$$
With this notation, we may rewrite the sum on $\mathbf{b}$ as follows:
$$  \underset{\mathbf{a}_{j-1} \rightsquigarrow \mathbf{b} \rightsquigarrow \mathbf{a}_j}{\sum_{\mathbf{b} \in \mathcal{W}_{n+1}}} w_{ \mathbf{b}}(x_{\mathbf{a}_j}) \exp\left({i \mathfrak{n} S_{2n} {\tau}_F ( g_{\mathbf{a}_{j-1}' \mathbf{b}} (x_{\mathbf{a}_j}) )} \right)$$
$$ = \underset{\mathbf{a}_{j-1} \rightsquigarrow \mathbf{b} \rightsquigarrow \mathbf{a}_j}{\sum_{\mathbf{b} \in \mathcal{W}_{n+1}}}    w_{\mathbf{b}}(x_{\mathbf{a}_j}) e^{i \mathfrak{n} S_n {\tau}_F( g_{\mathbf{b}}(x_{\mathbf{a}_j}) ) } \exp\left({ i \mathfrak{n} S_n {\tau}_F(g_{\mathbf{a}_{j-1}} g_{\mathbf{b}}(x_{\mathbf{a}_j}))} \right) $$
$$ = \underset{\mathbf{b} \rightsquigarrow \mathbf{a}_{j} }{\sum_{\mathbf{b} \in \mathcal{W}_{n+1}}} w_{\mathbf{b}}(x_{\mathbf{a}_j}) e^{i \mathfrak{n} S_n {\tau}_F( g_{\mathbf{b}}(x_{\mathbf{a}_j}) ) } \mathfrak h_{\mathbf{A},j}^{\mathfrak{n}} \left( g_{\mathbf{b}}(x_{\mathbf{a}_j}) \right) $$
$$ = \mathcal{L}_{i \mathfrak{n}}^n \left( \mathfrak{h}_{\mathbf{A},j}^{\mathfrak{n}} \right)(x_{\mathbf{a}_j}). $$

A direct computation allows us to estimate the $C^\alpha$ norm of $\mathfrak{h}_{\mathbf{A},j}^{\mathfrak{n}}$. We get, uniformly in $n$:
$$ \| \mathfrak{h}_{\mathbf{A},j}^{\mathfrak{n}} \|_{C^{\alpha}(\mathcal{U},\mathbb{R})} \lesssim (1+\mathfrak{n}) .$$

We can now break the estimate into two pieces: high frequencies are controlled by the contraction property of this transfer operator, and the low frequencies are controlled by the Gibbs property of $\mu$. We also use the estimates on the Fourier coefficients on $h$. 

$$ \#  \left\{ \mathbf{b} \in \mathcal{Z}_j,  \ S_{2n} {\tau}_F ( g_{\mathbf{a}_{j-1}' \mathbf{b}} (x_{\mathbf{a}_j}) ) \in [a-\sigma,a+\sigma] \right\}   $$
$$\leq R N  \sum_{\mathfrak{n} \in \mathbb{Z}} c_{\mathfrak{n} }(h) \underset{\mathbf{a}_{j-1} \rightsquigarrow \mathbf{b} \rightsquigarrow \mathbf{a}_j}{\sum_{\mathbf{b} \in \mathcal{W}_{n+1}}} w_{ \mathbf{b}}(x_{\mathbf{a}_j}) e^{i \mathfrak{n} S_{2n} \tau_F ( g_{\mathbf{a}_{j-1}' \mathbf{b}} (x_{\mathbf{a}_j}) )}  $$
$$ \leq R N \left( \sum_{|\mathfrak{n}| \leq \rho} |c_{\mathfrak{n}}(h)|  \sum_{\mathbf{b} \in \mathcal{W}_{n+1}}  w_{\mathbf{b}}(x_{\mathbf{a}_j})  +  \sum_{|\mathfrak{n}| > \rho} |c_{\mathfrak{n}}(h)| |\mathcal{L}_{i \mathfrak{n}}^n (\mathfrak{h}_{\mathbf{A},j})(x_{\mathbf{a}_j})| \right) $$
$$ \lesssim R N \left(  \sigma \sum_{\mathbf{b} \in \mathcal{W}_{n+1}} \mu(P_{\mathbf{b}})  + \sum_{|\mathfrak{n}| > \rho} |c_{\mathfrak{n}}(h)| \|\mathcal{L}_{ i \mathfrak{n}}^n (\mathfrak{h}_{\mathbf{A},j})\|_{\infty,\mathcal{U}} \right) $$
$$ \lesssim  R N \sigma +R N  \sum_{|\mathfrak{n} | > \rho} |c_{\mathfrak{n}}(h)| \mathfrak{n}^{\rho}  e^{-n/\rho} \| \mathfrak{h}_{\mathbf{A},j}^{\mathfrak{n}} \|_{C^\alpha(\mathfrak{U},\mathbb{C})} $$

$$ \lesssim  R N \sigma + R N e^{-n/\rho} \sum_{|\mathfrak{n}| > \rho} |c_{\mathfrak{n}}(h)| |\mathfrak{n}|^{\rho+3} |\mathfrak{n}|^{-2}    $$
$$ \leq C R N (\sigma + e^{-n/\rho} \sigma^{-(\rho+2)}) , $$
for some constant $C>0$.
We are nearly done. Since $\sigma \in  [ e^{-5 \varepsilon_0 n} , e^{ - \varepsilon_1 \varepsilon_0 n/4 }  ]$, we know that $ \sigma^{-(\rho+2)} \leq e^{5 (\rho+2) \varepsilon_0 n } $.
Now is the time where we fix $\varepsilon_0$: choose $$ \varepsilon_0 := \min\left( \frac{1}{5 \rho (3+\rho)}, \frac{\alpha \lambda}{8}  \right). $$

Then $ e^{-n/\rho} \sigma^{-(\rho+2)} \leq  e^{ ( -1/\rho + 5(\rho+2) \varepsilon_0 ) n } \leq  e^{ - 5 \varepsilon_0 n} \leq \sigma $ for $n$ large enough. Hence, we get

$$  \#  \left\{ \mathbf{b} \in \mathcal{Z}_j,  \ S_{2n} \tau_F ( g_{\mathbf{a}_{j-1}' \mathbf{b}} (x_{\mathbf{a}_j}) ) \in [a-\sigma,a+\sigma] \right\}  \leq 2 C R N \sigma . $$

Finally, since $\sigma^{1/2}$ is quickly decaying compared to $R$, we have

$$ 2 C R N \sigma \leq N \sigma^{1/2} $$

provided $n$ is large enough. The proof is done.
\end{proof}

\begin{appendices}

\section{The nonlinearity condition is generic}

\begin{theorem}

The condition $\text{dim}_\mathbb{Q} \text{Vect}_\mathbb{Q} \widehat{\lambda}\left( \Omega_\text{per} \right) = \infty$ is generic in the following sense: for any given Axiom A diffeomorphism $f : M \rightarrow M$ and a fixed basic set $\Omega$ that is not an isolated periodic orbit, a generic $C^1$ perturbation $\tilde{f}$ of $f$ have an hyperbolic set $\tilde{\Omega}$ on which the dynamic is conjugated with  $(f,\Omega)$, and $\text{dim}_\mathbb{Q} \text{Span}_\mathbb{Q} \widehat{\lambda}\left( \tilde{\Omega}_\text{per} \right) = \infty$.

\end{theorem}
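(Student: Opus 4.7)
The plan is to deduce the theorem from Baire category applied in a $C^1$ neighborhood of $f$. Recall that Axiom A basic sets are $C^1$ structurally stable: there is a $C^1$ neighborhood $\mathcal{V}$ of $f$ and, for each $\tilde f\in\mathcal V$, a homeomorphism $h_{\tilde f}:\Omega\to\tilde\Omega$ with $\tilde f\circ h_{\tilde f}=h_{\tilde f}\circ f$, depending continuously on $\tilde f$. In particular every periodic orbit $\gamma$ of $(\Omega,f)$ admits a continuation $\tilde\gamma:=h_{\tilde f}(\gamma)$, and $\widehat\lambda(\tilde\gamma)$ depends continuously on $\tilde f\in\mathcal V$.

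Enumerate the (countable) periodic orbits as $(\gamma_i)_{i\ge 1}$. For any $N\ge 1$ and $(q_1,\dots,q_N)\in\mathbb Q^N\setminus\{0\}$, set
$$V_{q_1,\dots,q_N}:=\Bigl\{\tilde f\in\mathcal V\ \Big|\ \sum_{i=1}^N q_i\,\widehat\lambda(\tilde\gamma_i)\neq 0\Bigr\}.$$
Continuous dependence makes this open. The countable intersection $\mathcal G:=\bigcap_N\bigcap_q V_{q_1,\dots,q_N}$ consists of diffeomorphisms for which $(\widehat\lambda(\tilde\gamma_i))_{i\ge 1}$ is $\mathbb Q$-linearly independent, which implies (NL). By Baire's theorem it is thus enough to prove density of each $V_{q_1,\dots,q_N}$.

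The heart of the argument is the density step, which is a local perturbation lemma. Fix $f_0\in\mathcal V$ and $(q_1,\dots,q_N)\neq 0$; choose $j$ with $q_j\neq 0$ and a point $p\in\gamma_j(f_0)$. Because each $\gamma_i(f_0)$ is finite and $\gamma_j(f_0)$ is not an isolated cycle, we can select an open neighborhood $W$ of $p$ so small that its closure is disjoint from $\bigcup_{i\neq j,\ i\le N}\gamma_i(f_0)\ \cup\ \{f_0^k(p):1\le k\le n_j-1\}$. Using continuity of the splitting $E^u\oplus E^s$ on $\Omega$ and local trivializations, construct a $C^\infty$ diffeomorphism $g_\varepsilon:M\to M$ supported in $W$, fixing $p$, preserving $E^u_p\oplus E^s_p$, with $|\partial_u g_\varepsilon(p)|=1+\varepsilon$ and $dg_\varepsilon(p)|_{E^s_p}=\mathrm{id}$. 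Set $\tilde f_\varepsilon:=f_0\circ g_\varepsilon$; then $\tilde f_\varepsilon\to f_0$ in $C^1$ as $\varepsilon\to 0$, so $\tilde f_\varepsilon\in\mathcal V$ for small $\varepsilon$. Since $\tilde f_\varepsilon$ agrees with $f_0$ on a neighborhood of $\gamma_i(f_0)$ for $i\neq j$, these orbits remain $\tilde f_\varepsilon$-periodic with identical derivatives, and by uniqueness of the continuation $\tilde\gamma_i(\tilde f_\varepsilon)=\gamma_i(f_0)$, giving $\widehat\lambda(\tilde\gamma_i(\tilde f_\varepsilon))=\widehat\lambda(\gamma_i(f_0))$. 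Meanwhile $p$ is still $\tilde f_\varepsilon$-periodic of period $n_j$, the derivative along the orbit changes only at $p$, and
$$\widehat\lambda(\tilde\gamma_j(\tilde f_\varepsilon))=\widehat\lambda(\gamma_j(f_0))+\frac{\ln(1+\varepsilon)}{n_j}.$$
Therefore $\sum_i q_i\widehat\lambda(\tilde\gamma_i(\tilde f_\varepsilon))=\sum_i q_i\widehat\lambda(\gamma_i(f_0))+q_j\ln(1+\varepsilon)/n_j$, which vanishes for at most one $\varepsilon$; the complement provides elements of $V_{q_1,\dots,q_N}$ arbitrarily close to $f_0$.

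The main obstacle is the explicit construction of $g_\varepsilon$ and the justification that $d\tilde f_\varepsilon|_p$ still preserves the splitting $E^u_p\oplus E^s_p$ of $f_0$, so that the Lyapunov exponent indeed changes by the stated amount rather than by some spurious mixing of directions. This is handled by taking $W$ inside a chart in which $E^u$ and $E^s$ are trivialized, choosing $g_\varepsilon(y_u,y_s)=((1+\varepsilon\psi(y_u,y_s))\,y_u,\,y_s)$ for a bump function $\psi$ equal to $1$ near $p$ and supported in $W$; then $dg_\varepsilon(p)$ is the desired diagonal matrix by direct computation, $g_\varepsilon$ is a diffeomorphism for small $\varepsilon$, and $d\tilde f_\varepsilon(p)$ preserves $E^u_p\oplus E^s_p$ exactly. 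Everything else (Baire, continuous dependence, structural stability) is standard, so the proof reduces to this finite-dimensional perturbation computation.
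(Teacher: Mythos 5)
Your proposal is correct and follows essentially the same route as the paper: Baire category in a $C^1$ neighborhood, structural stability with continuous dependence of the continued periodic orbits and of their unstable exponents (giving openness), and density via a local perturbation $\tilde f_\varepsilon=f_0\circ g_\varepsilon$ supported near one point of one orbit that multiplies the unstable derivative there by $1+\varepsilon$ while leaving the other orbits and their exponents untouched. The only (cosmetic) differences are that you perturb an arbitrary $f_0$ in the neighborhood and observe that at most one value of $\varepsilon$ is bad, whereas the paper fixes a sequence of orbits and reduces the density claim to perturbing $f$ itself.
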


\begin{proof}

Let $f:M \longrightarrow M$ be a $C^2$ Axiom A diffeomorphism, and fix $\Omega$ a basic set for $f$ that is not an isolated periodic orbit. Recall that this implies that $\Omega$ is infinite (and even perfect). Since $f$ is Axiom A, $\Omega_{\text{per}}$ is then infinite. Let $U \supset \Omega$ be a small open neighborhood in $M$. Consider a small enough open neighborhood around $f$ in the space of $C^1$ maps, $\mathfrak{U} \subset C^1(M,M)$. Then, there exists a map $$\Phi: \mathfrak{U} \rightarrow 2^M \times \mathcal{C}^0(\Omega,M) $$ 
such that for any $\tilde{f} \in \mathfrak{U}$, $\Phi(\tilde{f}) = (\tilde{\Omega},h)$ satisfies the following properties:

\begin{itemize}
     \item $\tilde{\Omega} \subset U$ is a hyperbolic set for $\tilde{f} : M \rightarrow M$,
    \item $\tilde{\Omega}_{\text{per}}$ is dense in $\tilde{\Omega}$,
    \item $h: \Omega \rightarrow \tilde{\Omega}$ is an homeomorphism and conjugates $(\Omega,f)$ with $(\tilde{\Omega},\tilde{f})$.
    \item The map $\tilde{f} \in \mathfrak{U} \mapsto h \in C^0(\Omega,M)$ is continuous.
\end{itemize}

This is Theorem 5.5.3 in \cite{BS02}. See also \cite{KH95}, page 571: \say{stability of hyperbolic sets}. \\
So let $\tilde{f} \in \mathfrak{U}$. Since $h$ conjugates $(f,\Omega)$ with $(\tilde{f},\tilde{\Omega})$, it is also a bijection between $\Omega_\text{per}$ and $\tilde{\Omega}_\text{per}$. For $x \in \Omega_\text{per}$, we write $\tilde{x} \in \tilde{\Omega}$ for $h(x)$. \\

We are ready to prove cleanly that the condition $$ \dim_\mathbb{Q} \text{Vect}_\mathbb{Q} \widehat{\lambda}\left( \tilde{\Omega}_{\text{per}} \right) = \infty$$
is generic in $\tilde{f} \in \mathfrak{U}$. The condition may be rewritten in the following way:

$$ \left\{ \tilde{f} \in \mathfrak{U} \ | \ \forall N \geq 1, \exists \tilde{x_1}, \dots, \tilde{x_N} \in \tilde{\Omega}_{\text{per}} \ , \ \left(\widehat{\lambda}(\tilde{x}_1) , \dots, \widehat{\lambda}(\tilde{x}_N) \right) \ \text{is linearly independant over} \ \mathbb{Q} \right\}. $$

ie
$$ \bigcap_{N \geq 1} \bigcup_{ \Omega_{\text{per}}^N } \ \left\{ \tilde{f} \in \mathfrak{U} \  | \  \left(\widehat{\lambda}(\tilde{x}_1) , \dots, \widehat{\lambda}(\tilde{x}_N) \right) \ \text{is} \ \mathbb{Q}-\text{independant} \right\} $$

Fix once and for all a sequence of distinct periodic orbits $(y_N)_{N \geq 1} \in \Omega_{\text{per}}$. Then:

$$ \left\{ f \in \mathfrak{U} \ | \  \dim_\mathbb{Q} \text{Vect}_\mathbb{Q} \widehat{\lambda}\left( \tilde{\Omega}_{\text{per}} \right) = \infty \right\} \supset \bigcap_{N \geq 1}  \left\{ \tilde{f} \in \mathfrak{U} \  | \  \left(\widehat{\lambda}(\tilde{y}_1) , \dots, \widehat{\lambda}(\tilde{y}_N) \right) \ \text{is} \ \mathbb{Q}-\text{independant} \right\}$$
$$ = \bigcap_{N \geq 1} \bigcap_{(m_1 , \dots, m_N) \in \mathbb{Z}^N \setminus \{ 0 \}} \left\{ \tilde{f} \in \mathfrak{U} \ | \ \sum_{i=1}^N m_i \widehat{\lambda}(\tilde{y}_i) \neq 0 \right\} $$
And this is a countable intersection of dense open sets. Indeed, fix some $N \geq 1$ and some integers $(m_1, \dots, m_N) \in \mathbb{Z}^N \setminus\{0\}$, and denote $ \mathfrak{U}_{m_1,\dots,m_N} := \left\{ \tilde{f} \in \mathfrak{U} \ | \ \sum_{i=1}^N m_i \widehat{\lambda}(\tilde{y}_i) \neq 0 \right\} $. Without loss of generality, we may suppose that $m_1 \neq 0$.\\

First of all, $\mathfrak{U}_{m_1,\dots,m_N}$ is open. This is because the function 
$$ \begin{array}[t]{lrcl}
  & \mathfrak{U}  & \longrightarrow & \mathbb{R}^N \\
    & \tilde{f} & \longmapsto &  (\widehat{\lambda}(\tilde{y}_1),\dots, \widehat{\lambda}(\tilde{y}_N))  \end{array} $$
is continuous. Indeed, recall that $$\widehat{\lambda}(\tilde{y}_i) = \frac{1}{n_i} \sum_{k=0}^{n_i} \log | \partial_u \tilde{f} (\tilde{f}^k( \tilde{y}_i )) |, $$ where $n_i$ is the period of $y_i \in \Omega_{\text{per}}$ so is constant in $\tilde{f}$. Moreover, $\tilde{f}$ varies smoothly in $C^1$ norm, $\tilde{y}_i$ varies continuously, and finally the unstable direction of $\tilde{f}$ also varies continuously in $\tilde{f}$, see  Corollary 2.9 in \cite{CP15}. \\

Now, we check that $\mathfrak{U}_{m_1,\dots, m_N}$ is dense in $\mathfrak{U}$. Without loss of generality, it suffices to prove that $f \in \overline{\mathfrak{U}_{m_1, \dots, m_N}}$. If $f \in \mathfrak{U}_{m_1,\dots, m_N}$ then we have nothing to prove. So suppose $f \notin \mathfrak{U}_{m_1,\dots, m_N}$. We are going to construct a perturbation $\tilde{f}$ in $\mathfrak{U}_{m_1,\dots,m_N}$. \\

Notice that the set $$ \Lambda := \left\{ f^k(y_i) \ | \ k \in \mathbb{Z}, i \in \llbracket 1 , N \rrbracket \right\} \subset {\Omega}_{\text{per}} $$
is discrete and finite. So there exists a small open neighborhood $U$ of $y_1$ such that $U \cap \Lambda = \{y_1\}$. Then, choose $\varphi:M \rightarrow M$ a diffeomorphism of $M$ that is a small perturbation of the identity on $M$ such that $\varphi=Id$ on $M\setminus U$, $\varphi(y_1)=y_1$, $(d\varphi)_{y_1}(E^u(y_1)) = E^u(y_1) $ and such that $\partial_u \varphi(y_1) = 1+\varepsilon$ for some small $\varepsilon>0$. \\

Define $\tilde{f} := f \circ \varphi$. Then $\tilde{f}$ is a small perturbation of $f$. Moreover, the $y_i$ are periodic orbits for $\tilde{f}$. Since $h$ sends periodic orbits to periodic orbits of the same period, and since $h$ is a small perturbation of the identity, it follows necessarily that $\tilde{y_i} = h(y_i) = y_i$. Hence, the Lyapunov exponents of interests are:

$$ \widehat{\lambda}(\tilde{y}_1)  = \frac{1}{n_1} \log( | \partial_u f(y_1) (1+\varepsilon) | ) + \frac{1}{n_1} \sum_{k=1}^{n_1-1} \log |\partial_u \tilde{f} (f^k(y_1)) |  = \frac{1}{n_1} \log(1+\varepsilon) + \widehat{\lambda}(y_1) .$$
$$ \widehat{\lambda}(\tilde{y}_i) = \widehat{\lambda}(y_i), \forall i \in \llbracket 2,N \rrbracket. $$

Hence $$ \sum_{i=1}^n m_i  \widehat{\lambda}(\tilde{y}_i) = \frac{1}{n_1} \log(1+\varepsilon) + \sum_{i=1}^{N} m_i  \widehat{\lambda}(y_i) = \frac{m_1}{n_1} \log(1+\varepsilon) \neq 0, $$
since $f$ is supposed to be $\mathfrak{U} \setminus \mathfrak{U}_{n_1,\dots, n_N}$ and $m_1 \neq 0$. The proof is done. \end{proof}

\section{An explicit solenoid satisfying Theorem 1.2}

\subsection{A nonlinear perturbation of the doubling map}

Our goal is to construct a nonlinear perturbation of the doubling map on the circle on which we will be able to compute some periodic orbits and Lyapunov exponents. We want them to be linearly independant over $\mathbb{Q}$.

\begin{lemma}
Let $\Lambda := \{ \frac{1}{2^K - 1} \ | \ K \geq 2 \}$ and $$ \tilde{\Lambda} := \bigsqcup_{K \geq 2 } \left[ \frac{1}{2^K -1} - \frac{1}{8^K} \ , \ \frac{1}{2^K -1} + \frac{1}{8^K} \right] $$

Let $N \geq 2$ and $k \in \llbracket 1, N-1 \rrbracket$. Then $$ \frac{2^k}{2^N-1} \notin \tilde{\Lambda} .$$

\end{lemma}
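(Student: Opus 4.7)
My plan is to pin down the distance
$$\Delta := \left| \frac{2^k}{2^N - 1} - \frac{1}{2^K - 1} \right| = \frac{|2^{k+K} - 2^k - 2^N + 1|}{(2^N - 1)(2^K - 1)}$$
and show $\Delta > 1/8^K$ for every $K \geq 2$. First I would observe that the numerator is a \emph{nonzero} integer: the equation $2^k(2^K - 1) = 2^N - 1$ would force $2^k$ to divide the odd number $2^N - 1$, hence $k = 0$, contradicting $k \geq 1$. The bare bound ``numerator $\geq 1$'' is however not sharp enough because the denominator can grow like $2^{N+K}$; one needs finer estimates, which split naturally according to the sign of $k + K - N$.

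When $k + K \neq N$, I would show $|\text{num}| \geq 2^{N-1}$. If $k + K \leq N - 1$, then $2^{k+K} \leq 2^{N-1}$ gives $|\text{num}| = 2^N - 2^{k+K} + 2^k - 1 \geq 2^{N-1}$; if $k + K \geq N + 1$, then $2^{k+K} \geq 2^{N+1}$ and $2^k \leq 2^{N-1}$ yield $\text{num} \geq 2^{N+1} - 2^{N-1} - 2^N + 1 > 2^{N-1}$. In either subcase
$$\Delta \;\geq\; \frac{2^{N-1}}{(2^N - 1)(2^K - 1)} \;>\; \frac{1}{2(2^K - 1)} \;>\; 2^{-(K+1)} \;>\; 8^{-K},$$
the last step using $K + 1 < 3K$ for $K \geq 2$.

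The tight case is $k + K = N$, where the numerator collapses to exactly $-(2^k - 1)$. The inequality $\Delta > 1/8^K$ then amounts to $(2^k - 1)\cdot 2^{3K} > (2^{k+K} - 1)(2^K - 1)$. Using $2^{3K}/(2^K - 1) > 2^{2K}$, it suffices to check $(2^k - 1)\cdot 2^{2K} > 2^{k+K} - 1$; rearranging and factoring $2^{2K} - 1 = (2^K - 1)(2^K + 1)$, this reduces to $2^{k+K} > 2^K + 1$, which is immediate from $k \geq 1$ and $K \geq 1$.

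The main obstacle is precisely this tight case $k + K = N$: that is where the cancellation $2^{k+K} - 2^N = 0$ occurs and only the small remainder $-(2^k - 1)$ survives, and it is also where the exponent $3K$ in the radius $1/8^K$ is genuinely used (a radius like $1/4^K$ would fail here). Everything else is comparatively easy once the crude bound $|\text{num}| \geq 2^{N-1}$ is in place.
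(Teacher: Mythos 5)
Your proof is correct: for every $K\geq 2$ you bound the distance from $\tfrac{2^k}{2^N-1}$ to the center $\tfrac{1}{2^K-1}$ below by $8^{-K}$, the case split $k+K\neq N$ (crude numerator bound $\geq 2^{N-1}$) versus $k+K=N$ (numerator exactly $2^k-1$) is exhaustive, and each chain of inequalities checks out. The route is genuinely different from the paper's. The paper first verifies that the intervals constituting $\tilde{\Lambda}$ are pairwise disjoint and ordered, treats $k=N-1$ separately by the coarse observation that $\tfrac{2^{N-1}}{2^N-1}>\tfrac12$ while $\tilde{\Lambda}\subset[0,0.4]$, and for $1\leq k\leq N-2$ shows that the point lies strictly in the gap between the two consecutive intervals centered at $\tfrac{1}{2^{N-k+1}-1}$ and $\tfrac{1}{2^{N-k}-1}$; the conclusion then uses the ordering of the intervals to rule out all the other ones. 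Your argument instead gives a uniform-in-$K$ separation estimate, so you never need the disjointness/monotonicity bookkeeping nor the special case $k=N-1$, at the price of the slightly more opaque case analysis on $k+K$ relative to $N$. Note that your ``tight case'' $k+K=N$ is exactly the paper's inequality $\tfrac{2^k}{2^N-1}<\tfrac{1}{2^{N-k}-1}-\tfrac{1}{8^{N-k}}$ in disguise, and your observation that a radius of order $4^{-K}$ would fail there correctly identifies where the choice $8^{-K}$ in the construction is actually used; the neighboring case $K=N-k+1$, which the paper must also handle sharply, is absorbed in your scheme by the crude bound, since there the numerator is already of size $2^{N-1}$.
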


\begin{proof}

First of all, the union is indeed disjoint. Indeed, for $K \geq 2$, we see that 
$$ \frac{1}{2^{K+1}-1} + \frac{1}{8^{K+1}} < \frac{1}{2^{K}-1} - \frac{1}{8^{K}} $$
Since $$ \frac{1}{2^{K}-1} - \frac{1}{2^{K+1}-1} >  \frac{1}{2^{K}} - \frac{1}{\frac{3}{2} 2^{K}} = \frac{1}{3} \frac{1}{2^K} > \frac{9}{8} \frac{1}{8^K} .$$

Now, fix $N \geq 2$. We first do the case where $k=N-1$. 
In this case, $$ \frac{2^k}{2^N-1} = \frac{1}{2} \frac{1}{1-2^{-N}} > 1/2 ,$$
which proves that $ \frac{2^k}{2^N-1} \notin \tilde{\Lambda} $ since $ \tilde{\Lambda} \subset [0,0.4] $. 

We still have to do the case where $ k \in \llbracket 1, N-2 \rrbracket$, and $N \geq 3$. First of all, we check that
$$  \frac{2^k}{2^N-1} < \frac{1}{2^{N-k}-1} - \frac{1}{8^{N-k}} .$$

Indeed:
$$ \frac{1}{2^{N-k}-1} - \frac{2^k}{2^{N}-1} = \frac{1}{2^{N-k}-1} - \frac{1}{2^{N-k}-2^{-k}} $$
$$ \geq  \frac{1}{2^{N-k}-1} - \frac{1}{2^{N-k}-2^{-1}} $$
$$ = \frac{1}{2 (2^{N-k}-1)(2^{N-k}-2^{-1})}  $$
$$ > \frac{1}{2 \cdot 4^{N-k}} > \frac{1}{8^{N-k}}.$$

Second, we check that $$ \frac{1}{2^{N-k+1}-1} + \frac{1}{8^{N-k+1}} < \frac{2^k}{2^N - 1} . $$

Indeed, 
$$ \frac{2^k}{2^N - 1} - \frac{1}{2^{N-k+1}-1} = \frac{1}{2^{N-k} - 2^{-k}} - \frac{1}{2^{N-k+1}-1} $$
$$ > \frac{1}{2^{N-k}} - \frac{1}{\frac{3}{2} \cdot 2^{N-k}} $$
$$ = \frac{1}{3} \frac{1}{2^{N-k}} > \frac{1}{8^{N-k+1}} $$

And this conclude the proof.

\end{proof}

\begin{lemma}

Fix $(\alpha_N)_{N \geq 2}$ a family of real numbers so that $ \sum_{K \geq 2} |\alpha_K| 8^{K r} < \infty $ for any $r \geq 1$. There exists a $\mathcal{C}^\infty$ function $g : \mathbb{R} \rightarrow \mathbb{R}$ such that:

\begin{itemize}
    \item $ \text{supp g} \subset \tilde{\Lambda} $,
    \item $ \| g \|_{C^1} < 1 $.
    \item $ \forall N \geq 2, \ \forall k \in \llbracket 1, N-1 \rrbracket, \ g\left( \frac{2^k}{2^N-1} \right) = 0$
    \item $ \forall N \geq 2, \ g'\left( \frac{1}{2^N-1} \right) = \alpha_N$.
\end{itemize}

\end{lemma}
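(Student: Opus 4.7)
The plan is to build $g$ as a disjoint sum of rescaled copies of a single bump, one per connected component of $\tilde\Lambda$. Fix once for all a $C^\infty$ auxiliary function $\psi:\mathbb{R}\to\mathbb{R}$ with $\operatorname{supp}\psi\subset[-1,1]$, $\psi(0)=0$ and $\psi'(0)=1$, and write $x_K:=1/(2^K-1)$. Define
\[
g_K(x):=\tfrac{\alpha_K}{8^K}\,\psi\!\bigl(8^K(x-x_K)\bigr),\qquad g:=\sum_{K\ge 2}g_K.
\]
Each $g_K$ is $C^\infty$ with $\operatorname{supp}g_K\subset[x_K-8^{-K},x_K+8^{-K}]$, which is precisely the $K$-th component of $\tilde\Lambda$. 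As the first lines of the proof of Lemma~B.1 show, these components are pairwise disjoint, so at every point at most one summand is nonzero and the definition of $g$ is unambiguous.

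I would then check the four conclusions in order. The support condition $\operatorname{supp} g\subset\tilde\Lambda$ is automatic. For any $N\ge 2$ and $k\in\{1,\dots,N-1\}$, Lemma~B.1 gives $2^k/(2^N-1)\notin\tilde\Lambda$, hence $g(2^k/(2^N-1))=0$ for free. A neighborhood of $x_N$ meets only $\operatorname{supp}g_N$, so
\[
g'(x_N)\;=\;g_N'(x_N)\;=\;\alpha_N\,\psi'(0)\;=\;\alpha_N.
\]
For smoothness, $g_K^{(m)}(x)=\alpha_K\,8^{K(m-1)}\psi^{(m)}\bigl(8^K(x-x_K)\bigr)$, and disjointness of supports gives $\|g^{(m)}\|_\infty=\sup_K\|g_K^{(m)}\|_\infty\le\|\psi^{(m)}\|_\infty\sup_K|\alpha_K|\,8^{K(m-1)}$, which is finite for every $m\ge 0$ by applying the hypothesis $\sum_K|\alpha_K|\,8^{Kr}<\infty$ with $r=\max(m-1,1)$. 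Hence $g\in C^\infty$.

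There is no serious obstacle; the delicate-looking combinatorial condition on the points $x_K$ and $2^k/(2^N-1)$ has already been absorbed into Lemma~B.1, and everything else is soft. The only point that deserves care is the norm bound $\|g\|_{C^1}<1$: the estimate above reads $\|g\|_{C^1}\le(\|\psi\|_\infty+\|\psi'\|_\infty)\sup_K|\alpha_K|$, so it suffices to know that $\sup_K|\alpha_K|$ is small enough, which is compatible with the decay hypothesis and will be guaranteed in the concrete use of the lemma in the sequel.
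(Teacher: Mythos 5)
Your construction is correct and essentially the same as the paper's: there one sets $g=\sum_{K\ge 2}\alpha_K 8^{-K}\chi\bigl(8^{K}(x-x_K)\bigr)$ with $\chi(x)=x\,\theta(x)$ a bump satisfying $\chi(0)=0$, $\chi'(0)=1$, and verifies the four bullets exactly as you do (disjointness of the components and the exclusion of the points $2^{k}/(2^{N}-1)$ both via Lemma B.1, smoothness from the summability of $|\alpha_K|8^{(r-1)K}$, which is the cleaner way to phrase your sup bound, since it gives convergence in every $C^{m}$ and hence regularity at the accumulation point $0$ of the supports). The one shared looseness is the bound $\|g\|_{C^{1}}<1$: the paper does not prove it for an arbitrary admissible sequence either (it remarks that shifting the sequence makes the norm small, and in the actual application $\alpha_N=O(N\cdot 10^{-N^{2}})$ renders it automatic), so your deferral to the concrete use matches the paper's treatment.
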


\begin{proof}

Let $\theta : \mathbb{R} \rightarrow [0,1]$ be a smooth bump function such that $\text{supp}(\theta) \subset [-1/2,1/2] $ and such that $\theta = 1$ on $[-1/4,1/4]$. Let $\chi(x) := x \theta(x)$. \\

The map $\chi$ is supported in $[-1/2,1/2]$ and satisfy $\chi(0)=0$, $\chi'(0)=1$.
Define, for $N \geq 2$, $$ \chi_N(x) := \chi\left(8^N\left(x-\frac{1}{2^N-1}\right)\right).$$
Then $\chi_N$ is supported in $\left[ \frac{1}{2^N -1} - \frac{1}{8^N} \ , \ \frac{1}{2^N -1} + \frac{1}{8^N} \right]$, and so the function
$$ g(x) := \sum_{K \geq 2} \alpha_K {8^{-K}} {\chi_K(x)} $$
is well defined and supported in $\tilde{\Lambda}$. Since, for all $r \geq 1$, $ \sum_{K \geq 2} |\alpha_K| 8^{K r} < \infty $, $g$ is a $\mathcal{C}^\infty$ function, with derivatives $$ g^{(r)}(x) = \sum_{K \geq 2} \alpha_K 8^{(r-1)K} \chi^{(r)}\left(8^N\left(x-\frac{1}{2^N-1}\right)\right) .$$
In particular, $$ \|g\|_{C^r} \leq \sum_{K \geq 2} |\alpha_K| 8^{(r-1)K} $$
And so we see that replacing the sequence $(\alpha_K)_{K \geq 2}$ by $(\alpha_{K+K_0})_{K \geq 2}$ allows us to choose $\|g\|_{C^r}$ as small as we want if so desired. The fact that $g$ vanishes on $\Lambda$ and the computation of its derivative on this set follows from the previous formulae. \end{proof}

\begin{lemma}

The map $f(x) := 2x + g(x)$ can be seen as a smooth map $\mathbb{S}^1 \longrightarrow \mathbb{S}^1$.

\end{lemma}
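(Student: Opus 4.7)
The plan is to exhibit $g$ as a smooth $1$-periodic function on $\mathbb{R}$; once this is done, $f(x+1) = 2(x+1) + g(x+1) = f(x) + 2 \equiv f(x) \pmod{1}$, so $f$ descends to a smooth self-map of $\mathbb{R}/\mathbb{Z} = \mathbb{S}^1$ of degree $2$. Since by the construction of $\tilde{\Lambda}$ the largest point in the support of $g$ lies in $[1/3 - 1/64, 1/3 + 1/64] \subset (0, 1)$, the function $g$ already vanishes on $(-\infty, 0] \cup [1/2, \infty)$, so its $1$-periodic extension $\tilde{g}(x) := g(x - \lfloor x \rfloor)$ is well-defined; all that remains is to check that this extension is $C^\infty$ on $\mathbb{R}$, which by periodicity reduces to verifying smoothness of the original $g$ at the point $x = 0$.

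First I would observe that away from $0$, $g$ is locally smooth: the disjointness established in the proof of Lemma B.2 shows that the supports of the bumps $\chi_K$ are pairwise disjoint, so on any small neighborhood of a point $x \neq 0$ the sum $g(x) = \sum_{K \geq 2} \alpha_K 8^{-K} \chi_K(x)$ reduces to a single smooth term. At $x = 0$ I would compute $g^{(r)}(0)$ directly from the formula
\[ g^{(r)}(x) = \sum_{K \geq 2} \alpha_K \, 8^{(r-1)K} \chi^{(r)}\!\left(8^K\!\left(x - \tfrac{1}{2^K-1}\right)\right); \]
at $x=0$ the argument equals $-8^K/(2^K-1) \leq -4^K$, which lies outside the support $[-1/2,1/2]$ of $\chi^{(r)}$ for every $K \geq 2$, so $g^{(r)}(0) = 0$ for all $r \geq 0$.

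The remaining step is the right-continuity of each derivative at $0$. For $x > 0$ small, disjointness of supports again gives $g^{(r)}(x) = \alpha_{K(x)} 8^{(r-1)K(x)} \chi^{(r)}\!\left(8^{K(x)}(x - 1/(2^{K(x)}-1))\right)$ for at most one index $K(x)$, and necessarily $K(x) \to \infty$ as $x \to 0^+$. The hypothesis $\sum_{K \geq 2} |\alpha_K| 8^{(r-1)K} < \infty$ forces $|\alpha_K| 8^{(r-1)K} \to 0$, hence $g^{(r)}(x) \to 0 = g^{(r)}(0)$ as $x \to 0^+$. Combined with the fact that $g \equiv 0$ on a left-neighborhood of $0$, this shows that $g$ is $C^\infty$ at $0$, and therefore its $1$-periodic extension is $C^\infty$ on all of $\mathbb{R}$. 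The main (mild) obstacle is precisely this verification at the accumulation point $x=0$; everything else is formal. This completes the argument that $f$ defines a smooth map $\mathbb{S}^1 \to \mathbb{S}^1$.
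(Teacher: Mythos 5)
Your proposal is correct and follows essentially the same route as the paper: identify $\mathbb{S}^1$ with $\mathbb{R}/\mathbb{Z}$ and observe that the only issue is smoothness at the gluing point, which holds because $g$ and all its derivatives vanish at $0$ and $1$. Your extra verification of smoothness of $g$ at the accumulation point $x=0$ (via one-sided limits of the derivatives) merely re-proves part of the preceding lemma, which already asserts $g\in\mathcal{C}^\infty(\mathbb{R})$ from the convergence of $\sum_K |\alpha_K| 8^{(r-1)K}$, so it is redundant but harmless.
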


\begin{proof}

We will identify $[0,1]/\{0 \sim 1\}$ to the circle. The only thing to check is if $g$ stays smooth after the quotient. It stays smooth indeed, since $g^{(r)}(0)=g^{(r)}(1)=0$ for all $r \geq 0$. \end{proof}

\begin{lemma}

For $g$ small enough, $(f,\mathbb{S}^1)$ is a nonlinear perturbation of the usual doubling map. A familly of periodic points for this dynamical system is the set $ \Lambda $, seen as a subset of the circle. The associated lyapunov exponents are  $\ln(2) + \frac{1}{N}\ln\left(1+\alpha_N / 2 \right)$.

\end{lemma}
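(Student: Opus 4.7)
The plan is to verify the three claims in turn, each of which is a direct computation using the explicit properties of $g$ established in Lemma B.2, together with the separation statement of Lemma B.1.

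First, that $f:\mathbb{S}^1 \to \mathbb{S}^1$ is a nonlinear perturbation of the doubling map follows at once from Lemma B.3 (smoothness on the circle) and from $\|g\|_{C^1} < 1$: this gives $f'(x) = 2 + g'(x) \in (1,3)$, so $f$ is still a $C^\infty$ uniformly expanding map, hence $C^0$-conjugate to the doubling map and Axiom A, but nonlinear as soon as $g$ is not affine (which holds as long as some $\alpha_N \neq 0$).

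Next, to show that each $\tfrac{1}{2^N-1} \in \Lambda$ is a periodic point of period $N$ with orbit $\mathcal{O}_N := \{ \tfrac{2^k}{2^N-1} \ | \ k = 0, \dots, N-1 \}$, I would iterate $f$ starting from $\tfrac{1}{2^N-1}$ and show that $g$ vanishes on the entire orbit $\mathcal{O}_N$. For $k \in \llbracket 1, N-1 \rrbracket$, Lemma B.1 gives $\tfrac{2^k}{2^N-1} \notin \tilde{\Lambda} \supset \text{supp}(g)$, so $g(\tfrac{2^k}{2^N-1}) = 0$. At $k = 0$, one checks directly from the construction of $g = \sum_K \alpha_K 8^{-K} \chi_K$ that $\chi_N(\tfrac{1}{2^N-1}) = \chi(0) = 0$, while $\tfrac{1}{2^N-1}$ lies outside $\text{supp}(\chi_K)$ for $K \neq N$ (the supports are disjoint by the first step of the proof of Lemma B.1), whence $g(\tfrac{1}{2^N-1}) = 0$ as well. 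Therefore $f\bigl(\tfrac{2^k}{2^N-1}\bigr) = \tfrac{2^{k+1}}{2^N-1} \mod 1$ for every $k$, so the orbit closes after $N$ iterates. The period is exactly $N$ because $\tfrac{2^k}{2^N-1} \neq \tfrac{1}{2^N-1}$ for $k \in \llbracket 1, N-1 \rrbracket$ (otherwise $2^k \equiv 1 \mod 2^N-1$, forcing $N \mid k$).

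Finally, for the Lyapunov exponent I would compute $f'(x) = 2 + g'(x)$ on each point of $\mathcal{O}_N$. Since $\text{supp}(g') \subset \text{supp}(g) \subset \tilde{\Lambda}$, Lemma B.1 yields $g'(\tfrac{2^k}{2^N-1}) = 0$ for $k \in \llbracket 1, N-1 \rrbracket$, hence $f'(\tfrac{2^k}{2^N-1}) = 2$ there; at the base point $\tfrac{1}{2^N-1}$, Lemma B.2 gives $g'(\tfrac{1}{2^N-1}) = \alpha_N$, so $f'(\tfrac{1}{2^N-1}) = 2 + \alpha_N > 0$ (using $|\alpha_N| \leq \|g\|_{C^1} < 1$). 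Summing,
$$ \widehat{\lambda}\bigl(\tfrac{1}{2^N-1}\bigr) = \frac{1}{N}\bigl[\ln(2+\alpha_N) + (N-1)\ln 2\bigr] = \ln 2 + \frac{1}{N}\ln\!\Bigl(1 + \frac{\alpha_N}{2}\Bigr), $$
which is the claimed formula.

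The only subtle point --- and the only place where one must look past the statement of Lemma B.2 --- is the vanishing of $g$ at $\tfrac{1}{2^N-1}$ itself, which is not listed among the conclusions but is immediate from $\chi(0) = 0$ in the construction. Everything else is a direct unwinding of the definitions.
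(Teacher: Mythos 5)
Your proof is correct and follows essentially the same route as the paper's: iterate $f$ along the orbit of $\tfrac{1}{2^N-1}$, use the vanishing of $g$ (and $g'$) on the orbit points outside $\tilde{\Lambda}$ together with $g'(\tfrac{1}{2^N-1})=\alpha_N$, and compute the Birkhoff average to get $\ln 2 + \tfrac{1}{N}\ln(1+\alpha_N/2)$. Your observation that $g(\tfrac{1}{2^N-1})=0$ must be extracted from the construction (via $\chi(0)=0$ and the disjointness of the supports of the $\chi_K$) rather than from the stated conclusions of Lemma B.2 is a valid and useful clarification of a point the paper leaves implicit, as is your check that the period is exactly $N$.
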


\begin{proof}

Let $N \geq 2$ and let $x_N := \frac{1}{2^N -1}$. We check that $x_N$ is periodic with period $N$. Indeed, for all $k \in \llbracket 1, N-1 \rrbracket$, we see that $ f^k(x_N) = 2^k x_N $ by construction of $g$, and then
$$ f^N(x_N) = \frac{2^N}{2^N -  1} = \frac{1}{2^N - 1} \ \text{mod} \ 1 .$$
Hence $x_N$ is $f$-periodic with period $N$. Its lyapunov exponent is then
$$ \widehat{\lambda}( x_N ) := \frac{1}{N} \sum_{k=0}^{N-1} \ln |f'(f^k(x_N))| 
= \frac{1}{N} \sum_{k=0}^{N-1} \ln |2+g'(2^k x_N))| = \ln(2) + \frac{1}{N}\ln\left(1+\alpha_N / 2 \right) .  $$  \end{proof}

\begin{lemma}

We can choose the sequence $(\alpha_N)$ so that the Lyapunov exponents $\widehat{\lambda}( x_N )$ is a family a real numbers that are linearly independent over $\mathbb{Q}$.

\end{lemma}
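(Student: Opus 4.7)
The plan is to construct the sequence $(\alpha_N)_{N \geq 2}$ by induction, using a Baire-style countability argument to kill one potential linear relation at a time. First, I would fix once and for all a rapidly decreasing sequence of positive bounds $c_N > 0$ (for instance $c_N := 2^{-8^N}$) which is small enough that any choice $\alpha_N \in (0, c_N)$ automatically satisfies both the summability condition $\sum_{K \geq 2} |\alpha_K| 8^{Kr} < \infty$ for every $r \geq 1$ required in Lemma B.2, as well as the norm bound $\|g\|_{C^1} < 1$. This is a purely analytic preparatory step.

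Next, I would proceed by induction on $N$. Pick any $\alpha_2 \in (0, c_2)$. Assuming $\alpha_2, \dots, \alpha_{N-1}$ have already been fixed so that $\widehat{\lambda}(x_2), \dots, \widehat{\lambda}(x_{N-1})$ are $\mathbb{Q}$-linearly independent, I want to choose $\alpha_N \in (0, c_N)$ with the property that $\widehat{\lambda}(x_N) \notin \mathrm{Vect}_\mathbb{Q}\bigl( \widehat{\lambda}(x_2), \dots, \widehat{\lambda}(x_{N-1}) \bigr)$. Solving $\widehat{\lambda}(x_N) = \sum_{k=2}^{N-1} q_k \widehat{\lambda}(x_k)$ for $\alpha_N$ via the explicit formula $\widehat{\lambda}(x_N) = \ln 2 + \frac{1}{N}\ln(1 + \alpha_N/2)$ gives
$$ \alpha_N \;=\; 2\Bigl( \exp\bigl( N ( q_2 \widehat{\lambda}(x_2) + \dots + q_{N-1}\widehat{\lambda}(x_{N-1}) - \ln 2 ) \bigr) - 1 \Bigr). $$
The set $B_N$ of such bad values, as $(q_2,\dots,q_{N-1})$ ranges over $\mathbb{Q}^{N-2}$, is therefore countable. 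Since $(0, c_N)$ is uncountable, the difference $(0, c_N) \setminus B_N$ is nonempty, and I pick $\alpha_N$ in it.

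By construction $\widehat{\lambda}(x_N) \notin \mathrm{Vect}_\mathbb{Q}(\widehat{\lambda}(x_2), \dots, \widehat{\lambda}(x_{N-1}))$ at each induction step, so the infinite family $\bigl(\widehat{\lambda}(x_N)\bigr)_{N \geq 2}$ is $\mathbb{Q}$-linearly independent. Since the $x_N \in \Lambda$ are periodic orbits of the perturbed doubling map by Lemma B.4, this yields $\dim_\mathbb{Q} \mathrm{Vect}_\mathbb{Q} \widehat{\lambda}(\Omega_{\mathrm{per}}) = \infty$, as desired. There is no genuine obstacle here: the only subtle point is ensuring that the countable set $B_N$ does not exhaust the interval $(0, c_N)$, which is immediate from cardinality, and that the required decay is preserved, which is guaranteed by the choice of $c_N$.
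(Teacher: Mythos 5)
Your proof is correct, but it takes a genuinely different route from the paper. The paper makes an explicit choice: it sets $\widehat{\lambda}(x_N) = e^{\beta_N}$ with $\beta_N := \lfloor 10^{N^2} \ln \ln 2 \rfloor 10^{-N^2}$ distinct algebraic numbers and invokes the Lindemann--Weierstrass theorem, which gives $\mathbb{Q}$-linear independence of all the exponentials at once, together with an explicit decay $\alpha_N = O(N \cdot 10^{-N^2})$ compatible with Lemma B.2. You instead argue by induction, using that $\widehat{\lambda}(x_N) = \ln 2 + \frac{1}{N}\ln(1+\alpha_N/2)$ depends only on $\alpha_N$ (the bumps in Lemma B.2 have disjoint supports, so later choices do not perturb earlier exponents), and at each step you exclude the countable set of values of $\alpha_N$ that would create a rational relation with the finitely many exponents already fixed; a cardinality argument then yields an admissible $\alpha_N \in (0,c_N)$, and the usual ``largest index with nonzero coefficient'' argument upgrades the step-by-step non-membership to independence of the whole family. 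This is more elementary --- no transcendence theory is needed --- and it is complete as a proof of the lemma as stated (``we can choose the sequence''). What it gives up is explicitness: the appendix is advertised as an \emph{explicit} solenoid, and the paper's choice produces a concrete sequence $(\alpha_N)$ and concrete exponents $e^{\beta_N}$, whereas your selection at each stage is non-constructive. Since the rest of Appendix B only uses the conclusion $\dim_\mathbb{Q} \mathrm{Vect}_\mathbb{Q} \widehat{\lambda}(S_{\mathrm{per}}) = \infty$, nothing downstream breaks, but the example loses its fully explicit character.
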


\begin{proof}

By the {Lindemann–Weierstrass theorem} \cite{Ba90}, we just have to choose the $\alpha_N$ so that
$$ \widehat{\lambda}(x_N) = e^{\beta_N} $$
where $\beta_N$ are distinct algebraic numbers (while still ensuring that the sums of Lemma B.2 converges).

In other words, it suffice to choose $\alpha_N$ of the form
$$ \alpha_N = 2 \left( 2^{-N} e^{N \exp(\beta_N) }  - 1\right) $$

with $\beta_N$ distinct algebraic numbers converging to $\ln \ln 2$ quickly enough.
For example, we can fix:
$$ \beta_N := \left\lfloor 10^{N^2} \ln \ln 2 \right\rfloor 10^{- N^2} $$
And in this case $\alpha_N = O(N \cdot 10^{-N^2})$, which is quick enough. \end{proof}

We have constructed a chaotic and \emph{nonlinear} dynamical system on the circle with some prescribed Lyapunov exponent. With our choice of $\alpha_N$, we have $$ \widehat{\lambda}(x_N) = e^{ \left\lfloor 10^{N^2} \ln \ln 2 \right\rfloor 10^{- N^2} }.$$
In particular, $$ \text{dim}_\mathbb{Q} \text{Vect}_\mathbb{Q} \left\{ \widehat{\lambda}(x) \ | \ x \text{ is periodic} \right\} = \infty$$
which was what we wanted to construct.

\subsection{A nonlinear solenoid}

In this section we construct an explicit nonlinear perturbation of the usual solenoid. Denote by $\mathbb{T} := \mathbb{R}/\mathbb{Z} \times \overline{\mathbb{D}} $ the full torus.
Define $ F : \mathbb{T} \rightarrow \mathbb{T} $ by the formula

$$ F(\theta,x,y) := \left(f(\theta), \frac{1}{4}x + \frac{1}{4 \pi} \cos(2 \pi \theta) ,  \frac{1}{4}y + \frac{1}{4 \pi} \sin(2 \pi \theta)  \right)  $$

where $f(\theta) = 2 \theta + g(\theta)$ is the function defined previously. Notice that $F$ is a diffeomorphism onto its image. In particular, we can use it to glue $\mathbb{T}$ with itself along its boundary, which allows us to see $F$ as a diffeomorphism of a genuine closed 3-manifold (\cite{Bo78}, chapter 1) which countains $\mathbb{T}$. \\

Then, $F(\mathbb{T}) \subset \overset{\circ}{\mathbb{T}} $, and so $T^n(\mathbb{T})$ is a strictly decreasing sequence of compact sets. The intersection $ S := \bigcap_{n \geq 0} F^n(\mathbb{T}) $ is called a (nonlinear) solenoid. The set $S$ is an attractor for $F$. Since $F$ is a smooth perturbation of the usual solenoid, which is a structurally stable Axiom A diffeomorphism (this follows for example from Theorem 1 in \cite{IPR10}), our dynamical system is still Axiom A. Moreover, it has codimension 1 stable lamination, as it contract in the $(x,y)$ variables. \\

Indeed, at a given point $p=(\theta,x,y) \in S$, we see that the Jacobian of $F$ is
$$ \text{Jac}_F(p) = \begin{pmatrix} 2+g'(\theta) & 0 & 0 \\ - \frac{1}{2}\sin(2 \pi \theta) & 1/4 & 0 \\ \frac{1}{2}\cos(2 \pi \theta) & 0 & 1/4 \end{pmatrix} $$

In particular, the subspace $E^s(p) := \{(0,h,k) \ | \ h,k \in \mathbb{R} \} \subset T_p \mathbb{T} = \mathbb{R}^3$ is independant of $p$, and we check that $(dF)_p (E^s(p)) \subset E^s(F(p))$ is a contracting linear map. So we have found the stable direction. \\

This allows us to compute the derivative in the unstable direction at $p$. Indeed, since $$ (dF)_p : E^s(p) \oplus E^u(p) \rightarrow E^s(F(p)) \oplus E^u(F(p)) $$
also sends $E^u(p)$ into $E^u(F(p))$, we can compute the determinant of the jacobian by making the unstable derivative appear like so:

$$ \det (dF)_p = \partial_u F(p) \times \det( (dF)_{\ |E^s} ) $$

And we already know that $$ \det( (dF)_{\ |E^s} ) = \det(I/4) = 1/16 $$
and that $$ \det(dF)_p = (2+g'(\theta))/16 .$$

Hence:
$$ \partial_u F(p) = f'(\theta) .$$

Notice that the bunching condition (B) is satisfied. \\

Finally, we are going to construct some periodic orbits for $F$ and compute their unstable Lyapunov exponents. If $p \in S$ is a periodic point for $F$, then it is clear that its angular coordinate is periodic for $f$. Reciprocally, let $\theta_0$ be a periodic point for $f$: there exists a integer $n_0$ such that $f^{n_0}(\theta_0)=\theta_0$.
Then, the map $ F^{n_0} $ satisfy
$$ F^{n_0}( \{  \theta_0 \} \times \overline{\mathbb{D}} ) \subset \{ \theta_0 \} \times \mathbb{D} .$$
and is contracting on this disk. Hence, there exists a unique associated fixed point $p_0 \in \{ \theta_0\} \times \mathbb{D}$. \\

This allows us to exhibit some periodic orbits and compute the associated Lyapunov exponents. Let $N \geq 2$ and consider $\theta_N := \frac{1}{2^N-1}$ the periodic point for $f$ constructed in the last subsection. Let $p_N$ be the unique associated fixed point, noted $$ p_N = (\theta_N, x_N, y_N ) \in S .$$
Then we have proved that its associated unstable Lyapunov exponent is

$$ \widehat{\lambda}(p_N) = \widehat{\lambda}(\theta_N) = e^{\beta_N} .$$

In particular, we have
$$ \dim_\mathbb{Q} \text{Vect}_\mathbb{Q} \widehat{\lambda} \left( S_\text{per} \right) = \infty $$

where $S_{\text{per}} := \{ p \in S \ | \ p \ \text{is} \ F \ \text{periodic}. \}$. Hence, Theorem 1.2 applies. For example, the SRB measure and the measure of maximal entropy enjoys polynomial Fourier decay in the unstable direction. Indeed, let $\chi$ be a Hölder map with localized support at a point $p_0$ on the solenoid. Let $\mu$ be an equilibrium state. Then, the Fourier transform of $ \chi d \mu $ write, for $\xi \in \mathbb{R}^3$:
$$ \widehat{\chi d \mu}(\xi) = \int_{S} e^{- 2 i \pi x \cdot \xi} \chi(x) d\mu(x) = \int_{S} e^{- i t \phi_v} \chi d\mu $$

where $v := \xi/|\xi|$, $t=2 \pi |\xi|$ and $ \phi_v(x):= x \cdot v$.
Fix a direction $v$ such that $v$ is not orthogonal to $E^u(p_0)$. Then, if $\chi$ has small enough support, $ |\partial_u \phi_v| > 0 $ around the support of $\chi$. It follows from Theorem 1.2 that  $\widehat{\chi d \mu}(t v) \underset{t \rightarrow \infty}{\longrightarrow} 0$ at a polynomial rate. By Remark 4.1, the convergence to zero is even uniform on any cone in the unstable direction.

\end{appendices}


\begin{thebibliography}{9}

\bibitem[ABV14]{ABV14}
    {V. Araújo, O. Butterley, P. Varandas}
    \emph{Open sets of Axiom A flows with Exponentially Mixing Attractors (with Erratum)},
    Proceedings of the American Mathematical Society, 144(7), 2971-2984 (2016)
    \href{https://arxiv.org/abs/1402.6654v3}{arXiv:1402.6654v3}

\bibitem[Ba90]{Ba90}
    {A. Baker},
    \emph{Transcendental number theory},
    Cambridge Mathematical Library (2nd ed.), Cambridge University Press, \href{https://www.lehmanns.de/shop/mathematik-informatik/748721-9780521397919-transcendental-number-theory}{ISBN: 978-0-521-39791-9}, MR 0422171



\bibitem[BD17]{BD17}
     {J. Bourgain, S. Dyatlov}, 
     \emph{Fourier dimension and spectral gaps for hyperbolic surfaces},
     Geom. Funct. Anal. 27, 744–771 (2017),  \href{https://arxiv.org/abs/1704.02909}{arXiv:1704.02909} 


\bibitem[Be16]{Be16}
    {P. Berger},
    \emph{Properties of the maximal entropy measure and geometry of Hénon attractors}
    \href{https://doi.org/10.48550/arXiv.1202.2822}{doi:10.48550/arXiv.1202.2822}
    
\bibitem[Bl96]{Bl96}
    {C. Bluhm},
    \emph{Random recursive construction of Salem sets}
    Ark. Mat. 34 (1996), 51-63.


\bibitem[Bo75]{Bo75}
    {R. Bowen},
    \emph{Equilibrium States and the Ergodic Theory of Anosov Diffeomorphisms}
    (New edition of) Lect. Notes in Math. 470, Springer, 1975. ISBN : 978-3-540-77605-5

\bibitem[Bo78]{Bo78}
     {R. Bowen},
    \emph{On Axiom A diffeomorphisms}
    CBMS Regional Conference Series in Mathematics
Volume: 35; 1978; \ \href{https://bookstore.ams.org/cbms-35}{ISBN: 978-0-8218-3438-1}


\bibitem[Bo10]{Bo10}
    {J. Bourgain},
    \emph{The discretized sum-product and projection theorems} 
    JAMA 112, 193-236 (2010)  \href{https://doi.org/10.1007/s11854-010-0028-x}{doi:10.1007/s11854-010-0028-x}

\bibitem[BS02]{BS02}
    {M. Brin, G. Stuck},
    \emph{Introduction to Dynamical Systems.} 
     2002. Cambridge: Cambridge University Press. \href{https://doi.org/10.1017/CBO9780511755316
}{doi:10.1017/CBO9780511755316}.

\bibitem[Cl20]{Cl20}
    {V. Climenhaga}
    \emph{SRB and equilibrium measures via dimension theory}
    \href{https://arxiv.org/abs/2009.09260v2}{arXiv:2009.09260}

\bibitem[CP15]{CP15}
    {S. Crovisier, R. Potrie}
    \emph{Introduction to partially hyperbolic dynamics}
    Lecture notes for School on Dynamical Systems, ICTP, Trieste, 2015.

\bibitem[Do98]{Do98}
    {D. Dolgopyat},
    \emph{On decay of correlations in Anosov flows}, \\
    Ann. of Math. (2) 147 (2) (1998) 357–390.


\bibitem[DV21]{DV21}
    {D. Daltro, P. Varandas}
    \emph{Exponential decay of correlations for gibbs measures on attractors of axiom A flows}  \href{https://arxiv.org/abs/2104.11839}{arXiv:2104.11839}


\bibitem[FH20]{FH20}
    {R. Fraser, K. Hambrook}
    \emph{Explicit Salem sets in $\mathbb{R}^n$}
    preprint, \href{https://arxiv.org/abs/1909.04581}{arXiv:1909.04581}

\bibitem[Gr09]{Gr09}
    {B. Green}, 
    \emph{Sum-product phenomena in $\mathbb{F}_p$: a brief introduction} \\
    Notes written from a Cambridge course on Additive Combinatorics, 2009.  \href{https://arxiv.org/pdf/0904.2075.pdf}{arXiv:0904.2075}

\bibitem[GRH21]{GRH21}
    { A. Gogolev,  F. Rodriguez Hertz}
    \emph{Smooth rigidity for very non-algebraic Anosov diffeomorphisms of codimension one}
    preprint, \href{https://arxiv.org/abs/2105.10539}{arXiv:2105.10539}

\bibitem[Ha89]{Ha89}
    {B. Hasselblatt},
    \emph{Regularity of the Anosov splitting and A new description of the Margulis measure}
    PhD thesis.
    
\bibitem[Ha97]{Ha97}
    {B. Hasselblatt}
    \emph{Regularity of the Anosov splitting II}
    Ergodic Theory and Dynamical Systems, 17 (1997), no. 1, 169–172
    \href{https://doi.org/10.1017/S0143385797069757}{doi: 10.1017/S0143385797069757}
    
\bibitem[Ham86]{Ham86}
    {S. Hammel},
    \emph{A dissipative map of the plane - a model for optical bistability} \\
    Phd thesis.
    \href{http://hdl.handle.net/10150/188149}{hdl.handle.net/10150/188149}
    
\bibitem[Ha17]{Ha17}
    {K. Hambrook},
    \emph{Explicit Salem sets in $\mathbb{R}^2$.}
    Adv. Math. 311 (2017), 634–648.

\bibitem[HP69]{HP69}
    {M. W. Hirsch, C. C. Pugh}
    \emph{Stable manifolds for hyperbolic sets}
    Bulletin of the American Mathematical Society, 75(1), 149-152, (January 1969)

\bibitem[IPR10]{IPR10}
    {J. Iglesias, A. Portela, A. Rovella}
    \emph{C1 stable maps: Examples without saddles.}
    Fundamenta Mathematicae, 2010 \href{https://www.impan.pl/pl/wydawnictwa/czasopisma-i-serie-wydawnicze/fundamenta-mathematicae/all/208/1/89280/c-1-stable-maps-examples-without-saddles}{doi:208. 10.4064/fm208-1-2}

\bibitem[JS16]{JS16}
    {T. Jordan, T. Sahlsten}
    \emph{ Fourier transforms of Gibbs measures for the Gauss map } \\
    Math. Ann., 364(3-4), 983-1023, 2016. 
    \href{ https://arxiv.org/abs/1312.3619 }{arXiv:1312.3619}


\bibitem[Ka66]{Ka66}
    {J. P. Kahane},
    \emph{Images d’ensembles parfaits par des séries de Fourier Gaussiennes}
    Acad. Sci. Paris 263 (1966), 678-681.

\bibitem[Kau81]{Kau81}
    {R. Kaufman},
    \emph{On the theorem of Jarnik and Besicovitch},
    Acta Arith. 39 (1981), 265-267.

\bibitem[KH95]{KH95}
    {A. Katok, B. Hasselblatt}
    \emph{ Introduction to the Modern Theory of Dynamical Systems.}
    Encyclopedia of Mathematics and its Applications. 1995. Cambridge: Cambridge University Press. \href{https://doi.org/10.1017/CBO9780511809187}{doi:10.1017/CBO9780511809187}


\bibitem[Ki90]{Ki90}
    {Y. Kiffer},
    \emph{Large deviations in dynamical systems and stochastic processes},
    Transactions of the American Mathematical Society, 321(1990),505-524.

\bibitem[Le00]{Le00}    
    {R. Leplaideur},
    \emph{Local product structure for equilibrium states},
    Trans. Amer. Math. Soc. 352 (2000), no. 4, 1889–1912. MR 1661262 
    \href{https://doi.org/10.1090/S0002-9947-99-02479-4}{doi:10.1090/S0002-9947-99-02479-4}

\bibitem[Le21]{Le21}
    {G. Leclerc},
    \emph{Julia sets of hyperbolic rational maps have positive Fourier dimension} \\
    preprint, \href{https://arxiv.org/abs/2112.00701}{arXiv:2112.00701}.

\bibitem[Li18]{Li18}
     {J. Li}, 
     \emph{Discretized Sum-product and Fourier decay in }$\mathbb{R}^n$,
     JAMA 143, 763–800 (2021). \newline \href{https://doi.org/10.1007/s11854-021-0169-0}{doi:10.1007/s11854-021-0169-0},  \href{https://arxiv.org/abs/1811.06852v2}{arXiv:1811.06852} 

\bibitem[LNP19]{LNP19}
    {Li, Naud, Pan},
    \emph{Kleinian Schottky groups, Patterson-Sullivan measures and Fourier decay} \\
    Duke Math. J. 170 (4) 775 - 825, 15 March 2021.  \href{https://doi.org/10.1215/00127094-2020-0058}{doi:10.1215/00127094-2020-0058}, \href{https://arxiv.org/abs/1902.01103}{	arXiv:1902.01103}.  

\bibitem[LQZ03]{LQZ03}
    {P. Liu, M. Qian, Y. Zhao},
    \emph{Large deviations in Axiom A endomorphisms}
    Proceedings of the Royal Society of Edinburgh Section A: Mathematics , Volume 133 , Issue 6 , December 2003 , pp. 1379 - 1388
    \href{https://doi.org/10.1017/S0308210500002997}{doi:10.1017/S0308210500002997}
  

\bibitem[Mc34]{Mc34}
    {J. McShane}
    \emph{Extension of range of functions}
    Bull. Amer. Math. Soc. 40 (1934), 837–842.



\bibitem[Pe19]{Pe19}
    {Y. Pesin},
    \emph{Sinai’s Work on Markov Partitions and SRB Measures}
    The Abel Prize 2013-2017 (pp.257-285) \href{https://link.springer.com/chapter/10.1007\%2F978-3-319-99028-6\_11}{doi:10.1007/978-3-319-99028-6\_11}

\bibitem[PP90]{PP90}
    {W. Parry, M. Pollicott},
    \emph{Zeta Functions and the Periodic Orbit Structure of Hyperbolic Dynamics}
    Asterisque; 187-188, société mathématique de France, 1990.

\bibitem[PR02]{PR02}
    {A. Pinto, D. Rand},
    \emph{Smoothness of Holonomies for codimension 1 hyperbolic dynamics},
      Bulletin of the London Mathematical Society , Volume 34 , Issue 3 , May 2002 , pp. 341 - 352 \\
      \href{https://doi.org/10.1112/S0024609301008670}{doi:10.1112/S0024609301008670}


\bibitem[PSW97]{PSW97}
    {C. Pugh, M. Shub, A. Wilkinson},
    \emph{Hölder laminations},
    Duke Math. J. 86 (1997), p. 517–546.
    
\bibitem[PSW00]{PSW00}
    {C. Pugh, M. Shub, A. Wilkinson},
    \emph{correction to “Hölder laminations”},
    Duke Math. J. 104 (2000) no. 1, p. 105-106

\bibitem[QR03]{QR03}
    {M. Queffélec and O. Ramaré},
    \emph{Analyse de Fourier des fractions continues à quotients restreints},     Enseign. Math. (2), 49(3-4):335–356, 2003


\bibitem[Ru78]{Ru78}
    {D. Ruelle}, 
    \emph{Thermodynamic Formalism}, 
    Second edition, Cambridge University Press 2004 

\bibitem[Sa51]{Sa51}
    {R. Salem}, \emph{On singular monotonic functions whose spectrum has a given Hausdorff dimension.} 
Ark. Mat., 1:353–365, 1951.    
    
\bibitem[Sa19]{Sa19}    
    {J. Sakhr},
    \emph{Semi-Poisson nearest-neighbor distance statistics for a 2D dissipative map},
    Results in Physics, Volume 16, 102906 (2020)
    \href{https://arxiv.org/abs/1909.09728}{arXiv:1909.09728}.
    
\bibitem[SS20]{SS20}
    {T. Sahlsten, C. Stevens},
    \emph{Fourier transform and expanding maps on Cantor sets} \\
    preprint, 2020.
    \href{https://arxiv.org/abs/2009.01703v4}{	arXiv:2009.01703 }

\bibitem[Yo02]{Yo02}
    {L-S. Young},
    \emph{What Are SRB Measures, and Which Dynamical Systems Have Them ?}
     Journal of Statistical Physics. 108. 733-754. 
     \href{http://dx.doi.org/10.1023/A:1019762724717}{doi:10.1023/A:1019762724717}.

 


\end{thebibliography}
\end{document}